\documentclass[10pt]{article}
\usepackage{amsmath,amssymb}
\usepackage{mathrsfs}
\usepackage{graphicx}  
\usepackage{hyperref}
\usepackage{subfigure}
\usepackage{float}
\usepackage{color} 
\usepackage{hyperref}
\usepackage{titlesec}
\usepackage{listings}
\usepackage{algorithm}
\usepackage{algpseudocode}
\usepackage{algpseudocode}   %
\usepackage{subeqnarray}
\usepackage{cases}

\usepackage{graphicx} 
\usepackage{makecell} 
\usepackage{multirow}  

\titleformat{\section}
{\raggedright\large\bfseries}
{\thesection .\quad}
{0pt}
{}

\titleformat{\subsection}
{\raggedright\normalsize\bfseries}
{\thesection .}
{0pt}
{}
\makeatletter
\@addtoreset{equation}{section}
\@addtoreset{table}{section}
\makeatother

\usepackage{amsthm}

\newtheorem{thm}{Theorem}[section]
\newtheorem{theorem}{Theorem}[section]

\newtheorem{lemma}[thm]{Lemma}
\newtheorem{proposition}[thm]{Proposition}

\newtheorem{remark}[thm]{Remark}
\newtheorem{example}[thm]{Example}
\newtheorem{assumption}[thm]{Assumption} 

\numberwithin{equation}{section}
\numberwithin{table}{section}
\numberwithin{figure}{section}

\allowdisplaybreaks[4]
\makeatletter\@addtoreset{equation}{section}

\usepackage{array}

\usepackage{booktabs}

\title{Finite Difference Method for Stochastic Cahn-Hilliard Equation Driven by A Fractional Brownian Sheet}
 
\author{Nan Deng\thanks{Email: deng\_nan@seu.edu.cn},\ $\ $ Wanrong Cao\thanks{Corresponding author. Email address: wrcao@seu.edu.cn. Tel/Fax: +86 25 52090590}\\\\School of Mathematics, Southeast University, Nanjing 210096, P.R.China.} 
\date{}
\begin{document}
\maketitle	

\begin{abstract} 
	The stochastic Cahn–Hilliard equation driven by a fractional Brownian sheet provides a more accurate model for correlated space-time random perturbations. This study delves into two key aspects: first, it rigorously examines the regularity of the mild solution to the stochastic Cahn–Hilliard equation, shedding light on the intricate behavior of solutions under such complex perturbations. Second, it introduces a fully discrete numerical scheme designed to solve the equation effectively. This scheme integrates the finite difference method for spatial discretization with the tamed exponential Euler method for temporal discretization. The analysis demonstrates that the proposed scheme achieves a strong convergence rate of $O\big(h^{1-\epsilon}+\tau^{H_1-\frac{1}{8}-\frac{\epsilon}{2}}\big)$, where $\epsilon$ is an arbitrarily small positive constant, providing a solid foundation for the numerical treatment of such equations.
	\\
	\textbf{Keywords:} {stochastic Cahn–Hilliard equation; fractional Brownian sheet;  finite difference method\and the tamed exponential Euler method\and  strong convergence rates}\\
	
{\emph{\textbf{MSC 2020}:} 60H15; 65C30; 35Q56; 37A30; 60F15}
\end{abstract}

\section{Introduction}
\label{sec;introduction}
The Cahn–Hilliard equation has found wide application across various scientific domains, including spinodal decomposition \cite{Cahn1961}, diblock copolymer \cite{Jeong2014},
image inpainting \cite{Bertozzi2006}, multiphase fluid flows \cite{badalassi2003computation}, microstructures with elastic inhomogeneity \cite{zaeem2012effects},  tumor growth simulation \cite{wise2008three}, etc. Particularly in materials science, the equation has significantly advanced our understanding of material structures and facilitated the design of new materials \cite{ Moelans2008,Zhou2007Multimaterial}.\par

Recognizing the influence of thermal fluctuations, Cook introduced space-time white noise into the Cahn–Hilliard equation  \cite{cook1970brownian}, which has since sparked considerable interest in its stochastic variant (see, e.g., \cite{antonopoulou2016existence,cui2020absolute,cui2022wellposedness,da1996stochastic,elezovic1991stochastic,qi2022strong}). Langer further emphasized the importance of accounting for spatial correlations in the noise incorporated into the stochastic Cahn–Hilliard equation (SCHE) \cite{langer1971theory}. In practical experiments, external noise is inevitable, regardless of the precision of the equipment used. Therefore, it is crucial to consider the spatio-temporal structure of fluctuations. Stochastic models driven by fractional Brownian sheets offer a robust framework for this purpose, as they allow for the precise adjustment of fluctuation correlations through manipulation of the Hurst parameter.

In this work, we consider the following SCHE driven by a fractional Brownian sheet
\begin{subequations}\label{eq:CH_FractionalNoise}
	\begin{numcases}{}
		\frac{\partial u}{\partial t}+\Delta^2 u=\Delta f(u)+\sigma \frac{\partial^2B^H}{\partial t \partial x}, \ \text { in }[0, T] \times \mathcal{O}, \label{eq:CH_FractionalNoise a}\\ 
		\frac{\partial u}{\partial n}=\frac{\partial \Delta u}{\partial n}=0,\qquad\qquad \ \quad  \quad  \text { on }[0, T] \times \partial \mathcal{O}, \label{eq:CH_FractionalNoise b} \\  
		u(0, \cdot)=u_0, \label{eq:CH_FractionalNoise c}
	\end{numcases}
\end{subequations} 
where $T>0,\ \mathcal{O}:=(0,\pi)$ and $\left\{B^H(t,x)\right\}_{ (t,x)\in[0, T] \times \mathcal{O}} $ denotes a fractional Brownian sheet with anisotropic Hurst parameters $H=(H_1,H_2) \in[\frac{1}{2},1)\times[\frac{1}{2},1)$. The nonlinear function $f$ is a polynomial of degree three with a positive leading coefficient, i.e., $f(x)=a_0x^3+a_1x^2+a_2x+a_3$ with $a_0\in \mathbb{R}^+,\ a_1,\ a_2,\ a_3\in\mathbb{R}$.  \par 

Given the inherent challenges in obtaining explicit solutions for the stochastic Cahn–Hilliard equation (SCHE), the development of reliable numerical methods is essential. Significant progress has been made in numerical approaches for SCHEs driven by Q-Wiener processes or space-time white noise. For the additive noise case, Kov{\'a}cs \emph{et al.} \cite{kovacs2011finite} and Furihata \emph{et al.} \cite{furihata2018strong} established strong convergence for semi-discrete and fully discrete finite element methods, respectively. Building on this, Qi and Wang \cite{Qi2020} provided rigorous strong convergence rates for a fully discrete finite element scheme. Utilizing the spectral Galerkin method for spatial discretization, subsequent studies explored various time discretization techniques, including the accelerated implicit Euler method \cite{Cui2021}, the backward Euler method \cite{qi2022strong}, and the tamed exponential Euler method \cite{cai2023strong}, further refining the strong convergence rates for fully discrete schemes. Cardon-Weber introduced a finite difference scheme, establishing its convergence in probability with specific rates \cite{Cardon2000}. Hutzenthaler and Jentzen employed general perturbation theory and leveraged exponential integrability properties of both exact and numerical solutions to demonstrate a strong convergence rate for spatial spectral Galerkin approximations \cite{Hutzenthaler2014}. More recently, Cai \emph{et al.} achieved weak convergence for a fully discrete scheme \cite{cai2023weak}. For the multiplicative noise case, Hong \emph{et al.} presented strong convergence rates and density convergence results for the finite difference method \cite{Hong2022,Hong2022b}. Additionally, Feng \emph{et al.} derived strong convergence rates for an implicit, fully discrete mixed finite element method applied to the SCHE with gradient-type multiplicative noise \cite{Feng2020}. 

However, research on SCHEs driven by fractional Brownian sheets remains relatively sparse. Bo \emph{et al.} explored SCHEs with fractional noise (fractional in time and white in space), establishing existence, uniqueness, and regularity of solutions \cite{BoLijun2008}. Furthermore, Gregory \emph{et al.} demonstrated that the law of the solution for the SCHE with small perturbations satisfies the large deviation principle in the H\"{o}lder norm \cite{Gregory2023}. Most recently, Arezoomandan and Soheili examined the finite element approximation of a linearized SCHE driven by fractional Brownian motion, albeit without the nonlinear term \cite{Arezoomandan2024}.

This paper has two primary objectives. The first is to establish the regularity of the mild solution to the SCHE driven by a fractional Brownian sheet \eqref{eq:CH_FractionalNoise}. To accomplish this, we investigate the regularity of the stochastic convolution $o(t,x)$ under three different cases, categorized by the range of the Hurst parameters: $4H_1 + H_2 - 1 \leq 2$, $2 < 4H_1 + H_2 - 1 \leq 3$, and $3 < 4H_1 + H_2 - 1 < 4$. Through this analysis, we demonstrate that the mild solution to \eqref{eq:CH_FractionalNoise} exhibits the following regularity for $p \geq 1$ and $0 \leq \beta < 4H_1 - H_2 - 1$ (Theorem \ref{th: u_regularity}): 
\begin{equation*} \sup_{t \in [0, T]} \mathbb{E}\left[|u(t, \cdot)|^p_{\dot{H}^{\beta}}\right] < \infty. 
\end{equation*}
The second objective of this paper is to propose and analyze a fully discrete numerical scheme for solving the SCHE \eqref{eq:CH_FractionalNoise}. Building on the frameworks established by \cite{BoLijun2008} and \cite{Hong2022}, we employ a finite difference method for spatial semi-discretization, coupled with a tamed exponential Euler scheme for temporal discretization. In Theorem \ref{thm: ||u^{M,N}-u||}, we establish the strong convergence rate of the fully discrete scheme: 
\begin{equation*}
	\sup_{t \in[0, T]}\left\|u(t,\cdot)-u^{M,N}(t, \cdot)\right\|_{L^p(\Omega,L^\infty)}   \leq C\left(h^{1-\epsilon}+ \tau^{H_1-\frac{1}{8}-\frac{\epsilon}{2}} \right),  
\end{equation*}
where $\epsilon$ is an arbitrary small positive number, and $h$ and $\tau$ represent the spatial and temporal step size, respectively. 

We note that the noise considered in our study includes the case of white noise by setting $H_1 = H_2 = \frac{1}{2}$. In this scenario, Cai \emph{et al.} \cite{cai2023strong} and Qi \emph{et al.} \cite{qi2022strong} developed numerical schemes that achieved a convergence rate of $3/8$ for temporal discretization in the $L^2$-norm. Additionally, Cui \emph{et al.} \cite{Cui2021} presented a numerical scheme with a convergence rate of $3/4$ in the $L^2$-norm for temporal discretization. In comparison, our approach achieves a convergence rate of $3/8$ for temporal discretization in the more stringent $L^\infty$-norm.

Our analysis encounters three significant challenges.

Firstly, the technical complexity arises from demonstrating how the anisotropic Hurst parameters influence the regularity of the stochastic convolution.  Specifically, in Lemma \ref{lem: 3.1 regularity of o(t,x)}, we establish that 
\begin{equation*}
	\mathbb{E}\left[\|o(t, \cdot)-o(s, \cdot)\|^p_{\dot{H}^{\beta}}\right]\leq C(t-s)^{ \left(\frac{4H_1+H_2-1-\beta}{4}
		-\frac{\epsilon}{2} \right) p},  
\end{equation*}
where $0\leq s<t\leq T$, $p \geq 1$, and $0\leq\beta<4H_1+H_2-1$.

Secondly, unlike the Dirichlet boundary conditions considered in \cite{Hong2022, Hong2022b}, the Neumann boundary condition in the SCHE \eqref{eq:CH_FractionalNoise}  renders the discrete Laplacian matrix $A_N$ non-invertible. Consequently, we must take the inner product of the differential equation with the eigenfunctions of $A_N$ and isolate the component corresponding to the first eigenfunction for further detailed analysis.

Lastly, the presence of the unbounded operator $A_N$ preceding the nonlinear term in the numerical scheme significantly complicates the error analysis. Typically, error analysis for such problems relies on the Taylor expansion of the nonlinear term \cite{cai2023strong, Cui2021, Qi2020}. However, in the context of the SCHE \eqref{eq:CH_FractionalNoise}, this approach becomes challenging due to the necessity of $L^p$-estimates on a complex double integral related to the fractional Brownian sheet. Moreover, the first derivative of the nonlinear term in the integrand introduces additional complexities. To address this issue, Lemma \ref{lem: F local Lipschitz} establishes the local Lipschitz continuity and polynomial growth of $F_N$ under the semi-norm $|\cdot|_{k,N}$ for $k = 1, 2, 3$, which is crucial for managing the nonlinear term.

The remainder of this paper is structured as follows. Section \ref{section2} introduces the necessary notations and assumptions. In Section \ref{section3}, we present the regularity of the mild solution to the SCHE \eqref{eq:CH_FractionalNoise}. Section \ref{section4} is devoted to analyzing the strong error estimates of the spatial semi-discretization. In Section \ref{section5}, we propose a fully discrete scheme and examine its strong convergence rate. Finally, Section \ref{section6} provides a numerical example to validate our theoretical findings. 

\section{Preliminaries}\label{section2}  
Define $\mathcal{C}^k(\mathcal{O})$ as the space of $k$-times continuously differentiable functions on $\mathcal{O}$ for $k \in \mathbb{N}$. When $k=0$, $\mathcal{C}^0(\mathcal{O})=\mathcal{C}(\mathcal{O})$, which represents the space of all continuous functions on $\mathcal{O}$. For $d \geq 1$, we denote  the Euclidean norm and inner product in $\mathbb{R}^d$ by $|\cdot|$ and $\langle\cdot, \cdot\rangle$, respectively. For $1\leq p<\infty$, let $L^p(\mathcal{O}):=L^p(\mathcal{O},\mathbb{R})$ denote the space of all $\mathbb{R}$-valued, $p$-times integrable functions, with the associated inner product and norm denoted by $\langle \cdot,\cdot \rangle_{L^p}$ and $\|\cdot\|_{L^p}$.  Additionally, for any Banach space $\left(\mathcal{H},\|\cdot\|_\mathcal{H}\right)$, we denote by $L^p(\Omega,\mathcal{H})$ the space of all $\mathcal{H}$-valued, $L^p$ integrable random variables, equipped with the norm $\left\|\cdot\right\|_{L^p(\Omega,\mathcal{H})}:=\big(\mathbb{E}\big[\|\cdot \|^p_\mathcal{H}\big]\big)^\frac{1}{p}$.  Throughout this paper, $C$ denotes a generic positive constant that is independent of the discretization parameters and may change from line to line. \par 
For Hurst parameter $H=(H_1,H_2) \in [\frac{1}{2},1)\times[\frac{1}{2},1)$, let $B^H=\left\{ B^H(t, x)\right\}_{(t, x) \in[0, T] \times \mathcal{O}}$ be a fractional Brownian sheet with Hurst parameter $H$, which is a centered Gaussian process defined on $\left(\Omega ,\mathcal{F},\mathbb{P}\right)$ with the covariance function given by
$$
\mathbb{E}\left[B^H(t,x) B^H(s, y)\right]=\frac{1}{4}\left( t^{2H_1}+s^{2H_1}-|t-s|^{2H_1} \right) 
\left( x^{2H_2}+y^{2H_2}-|x-y|^{2H_2} \right), 
$$
for $s, t \in[0, T]$ and $x,y \in \mathcal{O}$. \par 
In \cite{Biagini2008}, \cite{Hu2000} and \cite{Oksendal2001},  the stochastic integral related to the fractional Brownian sheet has been defined. Due to the complexity of this definition, the details are omitted here.  However, note that the following estimates are directly derived from this definition and are crucial for our study:
for any $p\geq2$, $g \in L^{1/{H_1}}\left([0,T], L^{1/{H_2}}\left(\mathcal{O}\right)\right)$, $g_1  \in L^{1/{H_1}}\left([0,T]\right)$ and $g_2 \in L^{1/{H_2}}\left(\mathcal{O}\right)$,      it is not different to check 
\begin{align}
	&\bigg\|\int_{s}^{t} \int_{x}^y g(r,z)  B^H(\mathrm{d} r, \mathrm{d} z)\bigg\|_{L^p(\Omega,\mathbb{R})}^2\leq  C \bigg(\int_{s}^{t}\bigg(\int_{x}^y |g(r,z)|^\frac{1}{H_2} \mathrm{d} z\bigg)^\frac{H_2}{H_1}\mathrm{d} r\bigg)^{2H_1} , \label{ineq: Ito Isometry 2} \\
	&\bigg\|\int_{s}^{t} \int_{x}^y g_1  (r)g_2 (z)   B^H(\mathrm{d} r, \mathrm{d} z)\bigg\|_{L^p(\Omega,\mathbb{R})}^2\leq  C  \mathcal{I}(H_1,g_1  ,s,t)\mathcal{I}(H_2,g_2,x,y) ,  \label{ineq: Ito Isometry}
\end{align}
where $\mathcal{I}$ is defined by 
\begin{equation*}  
	\mathcal{I}(\tilde{H},\tilde{g},\tilde{x},\tilde{y})=
	\left\{
	\begin{aligned} 
		&\int_{\tilde{x}}^{\tilde{y}}|\tilde{g}(z) |^2\mathrm{d} z ,\qquad\qquad\qquad\qquad\qquad\qquad \tilde{H}=\frac{1}{2}, \\  
		& \int_{\tilde{x}}^{\tilde{y}}\int_{\tilde{x}}^{\tilde{y}}  \tilde{g}\left(z_1\right)\tilde{g}\left(z_2\right) \left|z_1-z_2\right|^{2\tilde{H}-2}\mathrm{d} z_1\mathrm{d} z_2,\ \frac{1}{2}<\tilde{H}<1,
	\end{aligned} 
	\right. 
\end{equation*} 
with $ 0\leq \tilde{x} \leq \tilde{y}$ and $\tilde{g}\in L^{1/{\tilde{H}}}\left([\tilde{x},\tilde{y}]\right).$\par 

Below, we provide the properties of $f$. Due to the polynomial assumption,
there exits  a constant $C>0$ such that 
\begin{align}
	& (y-x)( f(x)-f(y))  \leq C|x-y|^2,\quad\quad\quad \forall\ x,\ y \in \mathbb{R}, \label{es: f}\\
	&|f(x)-f(y)|\leq C(1+x^2+y^2)|x-y|,\quad \forall\ x,\ y \in \mathbb{R}.  \label{es: f 2}
\end{align}

Finally the assumption regarding the initial function  $u_0$ is specified.
\begin{assumption}\label{assu:2}
	Let the initial function satisfy $u_0\in \mathcal{C}^5(\mathcal{O}).$ 
\end{assumption}

\section{Regularity of the mild solution to the stochastic Cahn-Hilliard equation}\label{section3}
This section is devoted to the  regularity of the mild solution to the SCHE \eqref{eq:CH_FractionalNoise}.
As in \cite{da1996stochastic}, the Green function associated with the operator $\partial_t+\Delta^2$ with the homogeneous Neumann boundary conditions for the SCHE \eqref{eq:CH_FractionalNoise} is given by 
\begin{equation}  \label{eq:the series decompositionof $G$}
	G_t(x, y)=\sum_{j=0}^{\infty} e^{-\lambda_j^2 t} \phi_j(x) \phi_j(y),\quad \forall\ t \in[0, T],\ x,\ y \in \mathcal{O}, 
\end{equation}
where $\lambda_j=j^2$ and  $\phi_j(x)=\left\{\begin{array}{ll}
	\sqrt{1/ \pi}, & j=0, \\
	\sqrt{2/ \pi} \cos (j x), & j>0,
\end{array}\right.$  in which $\left\{\phi_j\right\}_{j \geq 0}$ forms an orthonormal basis of $\dot{H}(\mathcal{O}):=\left\{v\in L^2(\mathcal{O}) : \frac{\partial v}{\partial n}\big|_{x\in\partial \mathcal{O} }=0 \right\}$. For any $\beta>0$,  we define the following Banach space
$$\dot{H}^\beta(\mathcal{O}):=\bigg\{ v\in L^2(\mathcal{O}) :    \sum_{j=0}^{\infty} \lambda_j^\beta\left|\left\langle v, \phi_j\right\rangle_{L^2}\right|^2 <\infty \bigg\}$$ 
with the norm $\|\cdot\|_{\dot{H}^\beta}$, which is given by
$
\|\cdot\|_{\dot{H}^{\beta}}=\left(\sum_{j=0}^{\infty} \lambda_j^\beta\left|\left\langle\cdot\ , \phi_j\right\rangle_{L^2}\right|^2\right)^{\frac{1}{2}}.
$
The  Green function  $G_t(x, y)$ possesses the following property, as demonstrated in \cite[Lemma 1.2 \& Lemma 1.8]{Cardon-Weber2001} and \cite[Lemma 3.1]{Cardon2000}.
\begin{lemma}   
	For any $0<\alpha<1$, there exist  constants $C>0$ and $c>0$ such that 
	\begin{align} 
		&\left|\Delta G_t(x, y)\right|  \leq \frac{C}{t^{3 / 4}} \exp \left(-c \frac{|x-y|^{4 / 3}}{|t|^{1 / 3}}\right) ,\qquad\qquad\qquad\quad \ \forall\ t\in[0,T],\ x,\ y\in\mathcal{O},  \label{es: Delta G_t(x,y)} \\
		&\int_0^t  \int_\mathcal{O}|G_{t-s}(x, y)-G_{t-s}(z, y)|^2 \mathrm{d} y\mathrm{d} s \leq C|x-z|^2,\quad\quad \ \ \forall\ t\in[0,T],\ x,\ z\in\mathcal{O},  \label{es:  G_t(x,y)-G_t(z,y)}\\
		&\int_0^t  \int_\mathcal{O}|\Delta G_{t-s}(x, y)-\Delta G_{t-s}(z, y)|  \mathrm{d} y\mathrm{d} s \leq C|x-z|^\alpha,\quad\ \forall\ t\in[0,T],\ x,\ z\in\mathcal{O}.   \label{es:  DeltaG_t(x,y)-DeltaG_t(z,y)}
	\end{align} 
\end{lemma}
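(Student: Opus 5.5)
The plan is to work with the spectral representation \eqref{eq:the series decompositionof $G$} for the two integral estimates, and with a reflection (image) argument for the pointwise bound. Since $\mathcal{O}=(0,\pi)$ and the basis consists of cosines, the Neumann kernel can be written via the method of images. Let $K_t$ denote the fundamental solution of $\partial_t+\partial_x^4$ on $\mathbb{R}$. Then
\begin{equation*}
G_t(x,y)=\sum_{k\in\mathbb{Z}}\big(K_t(x-y-2k\pi)+K_t(x+y-2k\pi)\big).
\end{equation*}

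\textbf{Pointwise bound \eqref{es: Delta G_t(x,y)}.} By scaling, $K_t(z)=t^{-1/4}\kappa(zt^{-1/4})$, where $\kappa$ is the inverse Fourier transform of $e^{-\xi^4}$. The classical estimate for $\kappa''$ (steepest descent, or a contour shift in the Fourier integral) is $|\kappa''(w)|\le C e^{-c|w|^{4/3}}$. Hence
\begin{equation*}
|\partial_z^2K_t(z)|\le Ct^{-3/4}\exp\big(-c|z|^{4/3}t^{-1/3}\big).
\end{equation*}
Summing over images with $|x\pm y-2k\pi|\ge|x-y|$ for all terms, together with a Gaussian-type tail, gives the bound for $t\le T$, at the cost of possibly reducing $c$. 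The image sum is uniformly controlled because $t\le T$.

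\textbf{Estimate \eqref{es:  G_t(x,y)-G_t(z,y)}.} Here I would use orthonormality in $y$:
\begin{align*}
\int_\mathcal{O}|G_{s}(x,y)-G_{s}(z,y)|^2\,\mathrm{d}y
&=\sum_{j\ge1}e^{-2j^4 s}|\phi_j(x)-\phi_j(z)|^2\\
&\le C\sum_{j\ge1}e^{-2j^4s}\min(1,j^2|x-z|^2).
\end{align*}
Integrating in $s$ over $(0,t)$ gives at most $C\sum_{j\ge1} j^{-4}\min(1,j^2|x-z|^2)\le C|x-z|^2\sum_j j^{-2}$. This yields the claimed bound (the $j=0$ term cancels).

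\textbf{Estimate \eqref{es:  DeltaG_t(x,y)-DeltaG_t(z,y)}.} This $L^1$ estimate is the main obstacle, since orthogonality is unavailable. I would split the $s$-integral at $s_0=|x-z|^{4}$. For $s<s_0$, I bound each term separately with \eqref{es: Delta G_t(x,y)}:
\begin{equation*}
\int_\mathcal{O}|\Delta G_s(x,y)|\,\mathrm{d}y\le Cs^{-3/4}\cdot s^{1/4}=Cs^{-1/2},
\end{equation*}
which contributes $Cs_0^{1/2}=C|x-z|^2$. For $s>s_0$, I use the mean value theorem in $x$ together with the analogous bound for the third derivative, $|\partial_x^3K_s|\le Cs^{-1}e^{-c|\cdot|^{4/3}s^{-1/3}}$, obtained the same way. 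This gives
\begin{equation*}
\int_\mathcal{O}|\Delta G_s(x,y)-\Delta G_s(z,y)|\,\mathrm{d}y\le C|x-z|\,s^{-3/4}.
\end{equation*}
The interval $(s_0,t)$ then contributes $C|x-z|\,t^{1/4}$. Combining the two pieces, for $|x-z|\le\pi$ we get the bound $C|x-z|\le C|x-z|^\alpha$ for any $\alpha<1$. If the third-derivative bound causes trouble at the boundary, I would instead interpolate: $\min(2Cs^{-1/2},C|x-z|s^{-3/4})\le C|x-z|^\alpha s^{-1/2-\alpha/4}$. This is integrable for $\alpha<1$ (exponent $<3/4$), which explains the restriction $\alpha<1$.
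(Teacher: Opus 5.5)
Your proof is correct, and it takes a different route from the paper. The paper gives no proof of its own: it imports the three bounds from Cardon-Weber, whose setting is the Neumann problem on $[0,\pi]^d$ with $d\le 3$, and whose estimates rest on general Gaussian-type bounds for $G$ and its derivatives.

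You instead use the fact that on $(0,\pi)$ with cosine eigenfunctions the Neumann kernel is \emph{exactly} a reflected, periodized copy of the whole-line kernel $K_t(z)=t^{-1/4}\kappa(zt^{-1/4})$. The pointwise bound then reduces to scaling plus the decay of $\kappa''$. The $L^2$ estimate needs no kernel bounds at all, only Parseval.

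What the citation buys is generality: higher dimensions, and no reliance on an explicit eigenbasis. What your argument buys is a short self-contained proof and a stronger third estimate, namely $\alpha=1$.

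The split at $s_0=|x-z|^4$ is unnecessary. Because the reflected copies of $\mathcal{O}$ tile $\mathbb{R}$, writing $\Delta G_s(x,y)-\Delta G_s(z,y)=\int_z^x\partial_\xi^3G_s(\xi,y)\,\mathrm{d}\xi$ gives
\[
\int_{\mathcal{O}}|\Delta G_s(x,y)-\Delta G_s(z,y)|\,\mathrm{d}y\le |x-z|\,\|K_s'''\|_{L^1(\mathbb{R})}=C|x-z|\,s^{-3/4},
\]
which is already integrable at $s=0$.

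Your closing remark is inaccurate. The exponent $\frac12+\frac{\alpha}{4}$ is below $1$ for every $\alpha<2$, so the interpolation does not explain the restriction $\alpha<1$. It also still uses the third-derivative bound, so it is not a genuine fallback. The stated $\alpha<1$ is simply weaker than what holds in one dimension, and your passage from $C|x-z|$ to $C|x-z|^\alpha$ via $|x-z|\le\pi$ is fine.
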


The well-posedness of the SCHE driven by time-space white noise and fractional noise (fractional in time and white in space) has been rigorously established, see \cite[Theorem 1.3]{Cardon-Weber2001} and \cite[Theorem 1.1]{BoLijun2008}.
Employing similar arguments and utilizing estimates \eqref{ineq: Ito Isometry 2} and \eqref{ineq: Ito Isometry}, we can give the following theorem directly.
\begin{theorem}  \label{th: 2.1}
	Suppose that $u_0 \in L^p(\mathcal{O})$ for some $p>3$, then the SCHE \eqref{eq:CH_FractionalNoise} has a 
	unique mild solution $u\in \mathcal{C}\left([0, T], L^p(\mathcal{O})\right) $, which satisfies
	\begin{equation}\label{eq: CH_FractionalNoise_solution_mild}
		\begin{aligned}
			u(t, x)= & \int_{\mathcal{O}} G_t(x, y) u_0\left(y\right) \mathrm{d} y+\int_0^t \int_{\mathcal{O}} \Delta G_{t-s}(x, y) f\left(u\left(s, y\right)\right) \mathrm{d} y \mathrm{d} s  +\sigma\int_0^t \int_{\mathcal{O}} G_{t-s}(x, y) \  B^H(\mathrm{d} s, \mathrm{d} y)\\
			= &: u_1(t,x)+u_f(t,x)+o(t,x),\ \qquad\qquad \qquad\qquad\qquad \quad\qquad\qquad \qquad\forall\ (t, x) \in[0, T] \times \mathcal{O}.
		\end{aligned}
	\end{equation}
	Furthermore, for $\rho\in[p,\infty)$, one can have
	\begin{equation}\label{eq: CH_FractionalNoise_solution_mild  boundness}
		\begin{aligned}
			\mathbb{E}\left[\sup_{t\in[0,T]}\|u(t, \cdot)\|_{L^p}^\rho\right]<\infty.
		\end{aligned}
	\end{equation}
\end{theorem}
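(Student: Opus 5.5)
We will follow the strategy of \cite[Theorem 1.3]{Cardon-Weber2001} and \cite[Theorem 1.1]{BoLijun2008}: first treat the stochastic convolution $o(t,x)$ separately, then solve the random PDE satisfied by $v:=u-o$ pathwise.

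\textbf{Step 1: the stochastic convolution.} Using the series \eqref{eq:the series decompositionof $G$}, write
\[
o(t,x)=\sigma\sum_{j\ge0}\phi_j(x)\int_0^t\int_{\mathcal O}e^{-\lambda_j^2(t-s)}\phi_j(y)\,B^H(\mathrm ds,\mathrm dy).
\]
Each summand has a product integrand, so \eqref{ineq: Ito Isometry} applies. Since $\phi_j$ is bounded, $\mathcal I(H_2,\phi_j,0,\pi)\le C$. Since $H_1\ge \tfrac12$, $L^{1/H_1}\subset L^2$ and $\mathcal I(H_1,\cdot)$ is dominated by $\|\cdot\|_{L^{1/H_1}}^2$, which gives $\mathcal I(H_1,e^{-\lambda_j^2(t-\cdot)},0,t)\le C\lambda_j^{-4H_1}$. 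Hence $\mathbb E|o(t,x)|^2\le C\sum_j j^{-4H_1}<\infty$. Analogous increment estimates follow from $|\phi_j(x)-\phi_j(z)|\le Cj^\gamma|x-z|^\gamma$ and $1-e^{-\lambda_j^2 r}\le C(\lambda_j^2r)^\gamma$. Together with Gaussianity, these give moments of all orders of $|o(t,x)-o(s,z)|$ bounded by $C(|t-s|^{\delta}+|x-z|^{\delta})$. Kolmogorov's criterion then yields a continuous version with $\mathbb E\sup_{t,x}|o(t,x)|^\rho<\infty$ for all $\rho$.

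\textbf{Step 2: pathwise problem for $v=u-o$.} The function $v$ satisfies
\[
v(t)=u_1(t)+\int_0^t\int_{\mathcal O}\Delta G_{t-s}(\cdot,y)f(v(s,y)+o(s,y))\,\mathrm dy\,\mathrm ds.
\]
We first work with a truncation $f_n$, equal to $f$ on $[-n,n]$ and globally Lipschitz. By \eqref{es: Delta G_t(x,y)}, $\|\Delta G_t(x,\cdot)\|_{L^1}\le Ct^{-1/2}$, which is integrable. Young's inequality and a contraction argument on $\mathcal C([0,T],L^p)$ (on short intervals, then concatenated) give a unique solution $v_n$. Continuity in time comes from the semigroup's continuity and from \eqref{es:  DeltaG_t(x,y)-DeltaG_t(z,y)}.

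\textbf{Step 3: a priori bound and removal of the truncation (main obstacle).} We need a bound on $\sup_t\|v_n(t)\|_{L^p}$ that is uniform in $n$. The Lipschitz estimate alone fails because $f$ is cubic; what saves us is the dissipativity of the leading term $a_0x^3$. Formally, $v$ solves $\partial_t v+\Delta^2 v=\Delta f(v+o)$. We first try the $H^{-1}$ energy method of Da Prato–Debussche \cite{da1996stochastic}, applied to the mean-zero part of $v$. This part needs separate handling because of the Neumann zero mode; the mean itself is conserved, equal to that of $u_0$ plus the noise mean. The method gives
\[
\frac12\frac{\mathrm d}{\mathrm dt}\|v\|_{H^{-1}}^2+\|\nabla v\|^2+\langle f(v+o),v\rangle\le \dots
\]
Then $\langle f(v+o),v\rangle\ge \tfrac{a_0}{2}\|v\|_{L^4}^4-C(1+\|o\|_\infty^4)$. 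This bound uses \eqref{es: f} and the elementary polynomial inequality, and yields bounds on $v$ in $L^\infty_tH^{-1}\cap L^4_tL^4$ in terms of $\sup|o|$. We then bootstrap to $L^\infty_tL^p$ through the mild formulation, combining the kernel bound $t^{-1/2}$, the growth estimate \eqref{es: f 2} and a Gronwall/iteration argument in the style of Cardon-Weber. Because $p>3$, $f(v)\in L^{p/3}$ is controlled, and the smoothing of $\Delta G$ from $L^{p/3}$ to $L^p$ costs only an integrable singularity. This yields $\sup_t\|v_n(t)\|_{L^p}\le K(\|u_0\|_{L^p},\sup|o|)$, with $K$ of polynomial growth and independent of $n$.

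With this bound, define the stopping times $\tau_n$ at which $\|v_n\|$ exceeds $n$ (in the sense needed). Uniqueness gives consistency of the $v_n$ up to $\tau_n$, and the a priori bound gives $\tau_n=T$ for all large $n$ almost surely. This produces a global unique mild solution $u=v+o\in\mathcal C([0,T],L^p)$. Finally, \eqref{eq: CH_FractionalNoise_solution_mild  boundness} follows by taking $\rho$-th moments of $\sup_t\|u\|_{L^p}\le K(\|u_0\|_{L^p},\sup|o|)+C\sup|o|$. Since $K$ grows polynomially, Step 1 makes every moment finite.
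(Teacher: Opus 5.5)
Your outline follows the route the paper takes. The paper gives no argument of its own: it defers to Cardon-Weber and Bo et al.\ together with \eqref{ineq: Ito Isometry 2}--\eqref{ineq: Ito Isometry}. Steps 1 and 2 are fine, up to two harmless slips: on a bounded interval $L^2\subset L^{1/H_1}$, not the reverse, and since the modes of $o$ are correlated you should add their $L^2(\Omega)$-norms by the triangle inequality.

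\textbf{The gap: the a priori bound in Step 3.} The $H^{-1}$ estimate gives $v\in L^\infty_tH^{-1}\cap L^2_tH^1\cap L^4_{t,x}$, but these bounds are too weak to be bootstrapped through the mild formula. Saying ``$f(v)\in L^{p/3}$ is controlled'' presupposes the $L^p$ bound itself, since $\|f(v)\|_{L^{p/3}}\sim\|v\|_{L^p}^3$, and a cubic Gronwall inequality only gives bounds on a short time interval. If you linearize instead,
\[
\|v(t)\|_{L^p}\le C+C\int_0^t (t-s)^{-\frac12}\big(1+\|v(s)+o(s)\|_{L^\infty}^2\big)\|v(s)\|_{L^p}\,\mathrm{d}s,
\]
the coefficient must lie in $L^r(0,T)$ for some $r>2$. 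Interpolating $L^\infty_tH^{-1}$ with $L^2_tH^1$ only puts $\|v\|_{L^\infty}^2$ in $L^r$ for $r<4/3$. The variants with coefficient $\|v\|_{L^a}^2$ and kernel $(t-s)^{-1/2-1/(2a)}$ do no better. For example, with $a=4$ your $L^4_{t,x}$ bound would require $(t-s)^{-5/8}\in L^2(0,t)$, which fails.

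\textbf{The missing ingredient: an intermediate $L^2$ energy estimate.} This is exactly the structure of the paper's discrete Proposition~\ref{propo: perturbed problem}. Test the $v$-equation with $v$.
\begin{itemize}
\item Since $f'\ge -C$, you get $\langle f(v),\Delta v\rangle=-\int_{\mathcal O}f'(v)|\partial_xv|^2\,\mathrm{d}x\le\frac14\|\Delta v\|_{L^2}^2+C\|v\|_{L^2}^2$.
\item By \eqref{es: f 2}, $\langle f(v+o)-f(v),\Delta v\rangle\le\frac14\|\Delta v\|_{L^2}^2+C\|o\|_{L^\infty}^2(1+\|v\|_{L^4}^4+\|o\|_{L^4}^4)$, which is time-integrable by your $H^{-1}$ step.
\item Hence $v\in L^\infty_tL^2\cap L^2_tH^2$ (this uses $u_0\in L^2$).
\item Then $\|v\|_{L^\infty}^3\le C\|v\|_{L^2}^{9/4}\|v\|_{H^2}^{3/4}$ and $\int_0^t(t-s)^{-1/2}\|v(s)\|_{H^2}^{3/4}\,\mathrm{d}s\le C\big(\int_0^T\|v\|_{H^2}^2\,\mathrm{d}s\big)^{3/8}$.
\item So the mild formula bounds $\sup_t\|v(t)\|_{L^p}$ directly, with no Gronwall step.
\end{itemize}
This keeps the bound polynomial in $\sup|o|$, which you need for the moment estimate. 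A Gronwall step would instead give $\exp(\mathrm{poly}(\sup|o|))$, whose moments need not be finite.

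\textbf{Second problem: the localization does not work as written.}
\begin{itemize}
\item With a pointwise truncation $f_n$, $v_n$ solves the untruncated problem only while $\sup_x|v_n+o|\le n$. When $u_0\notin L^\infty$, this fails on a whole neighbourhood of $t=0$, because $\|v_n(s)\|_{L^\infty}\to\infty$ as $s\to0$.
\item An $L^p$-norm stopping time gives no consistency between $v_n$ and $v_m$.
\item The truncation destroys the quartic coercivity, so the a priori bound cannot be proved for $v_n$ uniformly in $n$.
\end{itemize}
You have two ways to fix this. One is a cutoff in the norm, $\chi_n(\|v\|_{L^p})f(v+o)$. The other is to prove local well-posedness in $\mathcal{C}([0,T_0],L^p)$ directly and then continue with the a priori bound. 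That local theory uses the local Lipschitz property of $f:L^p\to L^{p/3}$ and the estimate $\|\Delta G_t\|_{L^{p/3}\to L^p}\le Ct^{-1/2-1/(2p)}$; this is where $p\ge3$ actually enters.
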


To investigate the regularity of the mild solution $u$, we  begin with studying regularity of the stochastic convolution $o(t,x)$.
\begin{lemma}  \label{lem: 3.1 regularity of o(t,x)}
	For any $p \geq 1$ and $0\leq\beta<4H_1+H_2-1$, the stochastic convolution $o(t,x)$   enjoys
	the following regularity property
	\begin{equation}\label{ineq: regularity of o(t,x)}
		\mathbb{E}\left[\sup_{t\in[0,T]} \|o(t, \cdot)\|^p_{\dot{H}^{\beta}}\right] < \infty.
	\end{equation}
	Moreover, there exists a constant $C>0$ such that, for any $0\leq s<t\leq T$,
	\begin{equation}\label{ineq: continuous of o(t,x)}
		\mathbb{E}\left[\|o(t, \cdot)-o(s, \cdot)\|^p_{\dot{H}^{\beta}}\right]\leq C(t-s)^{ \left(\frac{4H_1+H_2-1-\beta}{4}
			-\frac{\epsilon}{2} \right) p},
	\end{equation}
	where $\epsilon$ is an arbitrary small positive number, and for any $x,\ z \in \mathcal{O}$,  
	\begin{equation}\label{ineq: futher property of O_t}
		\sup_{t\in[0,T]}\mathbb{E}\left[|o(t, x)-o(t, z)|^p\right]\leq C|x-z|^p. 
	\end{equation} 
\end{lemma}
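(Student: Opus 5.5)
We work mode by mode with the spectral decomposition \eqref{eq:the series decompositionof $G$}. Write $o(t,\cdot)=\sum_{j\ge0} o_j(t)\phi_j$, where
\[
o_j(t)=\langle o(t,\cdot),\phi_j\rangle_{L^2}=\sigma\int_0^t\int_{\mathcal{O}} e^{-\lambda_j^2(t-s)}\phi_j(y)\,B^H(\mathrm{d}s,\mathrm{d}y).
\]
The integrand factorizes as $g_1(s)g_2(y)$, so \eqref{ineq: Ito Isometry} bounds $\|o_j(t)\|_{L^p(\Omega)}^2$ by the product $\mathcal{I}(H_1,e^{-\lambda_j^2(t-\cdot)},0,t)\,\mathcal{I}(H_2,\phi_j,0,\pi)$.

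\textbf{Temporal factor.} For $H_1>1/2$, we use the scaling bound
\[
\int_0^t\int_0^t e^{-j^4(2t-r_1-r_2)}|r_1-r_2|^{2H_1-2}\,\mathrm{d}r_1\mathrm{d}r_2\le C\,\min\{t^{2H_1},\,j^{-8H_1}\}.
\]
The case $H_1=1/2$ is the direct computation giving $j^{-4}$.

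\textbf{Spatial factor.} For $\cos(jy)$ we need $\int_0^\pi\int_0^\pi \cos(jz_1)\cos(jz_2)|z_1-z_2|^{2H_2-2}\,\mathrm{d}z_1\mathrm{d}z_2\le C j^{1-2H_2}$. This is the Fourier transform of the Riesz kernel $|\xi|^{1-2H_2}$, plus boundary terms that decay at least as fast, which we control by integration by parts in one variable.

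Combining the two factors gives $\mathbb{E}|o_j(t)|^2\le C j^{-8H_1-2H_2+1}$. Then
\[
\mathbb{E}\|o(t)\|_{\dot H^\beta}^2\le C\sum_j j^{2\beta-8H_1-2H_2+1},
\]
which converges when $2\beta<8H_1+2H_2-2$, i.e.\ $\beta<4H_1+H_2-1$. To pass to $p$-th moments we use Gaussianity: $\|o(t)\|_{\dot H^\beta}$ is a norm of a Gaussian element, so all moments are controlled by the second moment. The ranges of $(H_1,H_2)$ in the three cases mentioned in the introduction only affect which borderline terms must be handled when summing.

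\textbf{Time increments \eqref{ineq: continuous of o(t,x)}.} We split $o_j(t)-o_j(s)$ into two pieces. The first is $\int_s^t\int e^{-\lambda_j^2(t-r)}\phi_j\,\mathrm{d}B^H$, bounded by $\min\{(t-s)^{2H_1},j^{-8H_1}\}\,j^{1-2H_2}$. The second is $\int_0^s\int(e^{-\lambda_j^2(t-r)}-e^{-\lambda_j^2(s-r)})\phi_j\,\mathrm{d}B^H$; using $1-e^{-x}\le x^\gamma$ it is bounded by $\min\{1,(j^4(t-s))^{2\gamma}\}j^{-8H_1}j^{1-2H_2}$. Interpolating $\min\{a,b\}\le a^\theta b^{1-\theta}$ with $\theta$ chosen so that the $j$-sum $\sum_j j^{2\beta}(\cdot)$ still converges up to an $\epsilon$ loss, we get $\mathbb{E}\|o(t)-o(s)\|^2_{\dot H^\beta}\le C(t-s)^{(4H_1+H_2-1-\beta)/2-\epsilon}$. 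Gaussian hypercontractivity then gives the $p$-th moment bound.

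\textbf{Supremum bound \eqref{ineq: regularity of o(t,x)}.} We apply Kolmogorov's continuity criterion in $C([0,T];\dot H^{\beta})$. Since any $\beta<4H_1+H_2-1$ admits a slightly larger $\beta'$ with a positive Hölder exponent, taking $p$ large yields the bound on $\mathbb{E}\sup_t$.

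\textbf{Spatial Lipschitz bound \eqref{ineq: futher property of O_t}.} We write $o(t,x)-o(t,z)=\sum_j o_j(t)(\phi_j(x)-\phi_j(z))$ and use $|\cos jx-\cos jz|\le \min\{2,j|x-z|\}$. This gives
\[
\mathbb{E}|o(t,x)-o(t,z)|^2\le C|x-z|^2\sum_j j^{2}j^{-8H_1-2H_2+1},
\]
and the sum converges since $8H_1+2H_2-3>1$ whenever $H_i\ge1/2$. Gaussianity again upgrades this to $p$-th moments. Alternatively, one can combine \eqref{ineq: Ito Isometry 2} with \eqref{es:  G_t(x,y)-G_t(z,y)} via Hölder's inequality.

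\textbf{Main obstacle.} The step I expect to be hardest is the sharp $j^{1-2H_2}$ decay of the spatial double integral on the bounded interval with cosines, uniformly in $j$. The boundary contributions and the diagonal singularity must be treated carefully, and this is where the case split by the size of $4H_1+H_2-1$ probably enters the authors' argument.
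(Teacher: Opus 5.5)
Your argument is correct and has the same architecture as the paper's proof. Both expand in $\{\phi_j\}$ and bound each mode through the product formula \eqref{ineq: Ito Isometry}. Both split $o(t)-o(s)$ into the $\int_0^s$ piece carrying $1-e^{-\lambda_j^2(t-s)}$ and the $\int_s^t$ piece, sum against $\lambda_j^\beta$, and use Kolmogorov's criterion for the supremum.

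The differences are in the technical inputs:
\begin{itemize}
\item \textbf{Temporal factor.} You use the exact scaling bound $C\min\{t^{2H_1},j^{-8H_1}\}$. The paper substitutes $r_1=sr_4$, $r_2=r_1r_3$ and applies $e^{-x}\le Cx^{-\alpha}$ with $\alpha_1<1$, $\alpha_2<2H_1-1$, which loses an arbitrarily small power of $j$.
\item \textbf{Spatial factor.} The paper adds the nonnegative sine integral, reduces to $\int_{\mathcal O}\int_{\mathcal O}\cos(j(z_1-z_2))|z_1-z_2|^{2H_2-2}\,\mathrm{d}z_1\mathrm{d}z_2$, and integrates by parts with H\"older, obtaining only $j^{1-2H_2+\epsilon}$. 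Your sharp bound $j^{1-2H_2}$ is true. To make your sketch precise, write $\cos(jz_1)\cos(jz_2)=\frac12[\cos(j(z_1-z_2))+\cos(j(z_1+z_2))]$. The first part equals $\int_0^\pi(\pi-u)\cos(ju)u^{2H_2-2}\,\mathrm{d}u$. One integration by parts gives $O(j^{1-2H_2})$, with no boundary terms because of the weight $\pi-u$ and $u^{2H_2-1}\to0$. The second part equals $-\frac1j\int_0^\pi\sin(ju)u^{2H_2-2}\,\mathrm{d}u=O(j^{-2H_2})$.
\item \textbf{Higher moments.} You use Gaussian equivalence of moments instead of the paper's Minkowski step with $L^{2p}(\Omega)$-norms inside the sum. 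Both work.
\end{itemize}
What your sharper inputs buy: since $1<8H_1+2H_2-1-2\beta<9$, none of the sums is borderline, so the $\epsilon$ in \eqref{ineq: continuous of o(t,x)} can in fact be dropped. Also, the case split according to $4H_1+H_2-1$ plays no role in this lemma. The paper uses it only for the nonlinear part $u_f$ in the regularity theorem for $u$.

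The one step that fails as written is your first argument for \eqref{ineq: futher property of O_t}. The bound $\mathbb{E}|o(t,x)-o(t,z)|^2\le\sum_j|\phi_j(x)-\phi_j(z)|^2\,\mathbb{E}|o_j(t)|^2$ assumes the modes are uncorrelated, which holds only for $H_2=\frac12$. For $H_2>\frac12$, $\mathbb{E}[o_j(t)o_k(t)]$ contains the factor $\int_{\mathcal O}\int_{\mathcal O}\phi_j(z_1)\phi_k(z_2)|z_1-z_2|^{2H_2-2}\,\mathrm{d}z_1\mathrm{d}z_2$. This vanishes when $j+k$ is odd but not in general. Your $\dot H^\beta$ estimates are unaffected, since that norm is diagonal in the basis.

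You can repair the step in either of two ways:
\begin{itemize}
\item Apply Minkowski's inequality in $L^2(\Omega)$, which gives $\|o(t,x)-o(t,z)\|_{L^2(\Omega)}\le C|x-z|\sum_{j\ge1}j^{(3-8H_1-2H_2)/2}$. This converges iff $8H_1+2H_2>5$, i.e.\ everywhere except $H_1=H_2=\frac12$. At that point the modes are independent and your orthogonal sum is exact.
\item Use your alternative, which is exactly the paper's proof. By \eqref{ineq: Ito Isometry 2} and H\"older's inequality, $\|o(t,x)-o(t,z)\|_{L^{2p}(\Omega)}\le C\big(\int_0^t\int_{\mathcal O}|G_{t-s}(x,y)-G_{t-s}(z,y)|^2\,\mathrm{d}y\,\mathrm{d}s\big)^{1/2}\le C|x-z|$, and no mode correlations appear.
\end{itemize}
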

\begin{proof}
	Utilizing  the orthonormality of $\left\{\phi_j\right\}_{j\geq 0} $ and \eqref{ineq: Ito Isometry},  we obtain, for any $p \geq 1$ and  $0\leq\beta<4H_1+H_2-1$,
 
		\begin{align}
			& \|o(t,\cdot)-o(s,\cdot)\|_{L^{2p}(\Omega,\dot{H}^\beta)} \nonumber\\
			= & 	\Big\|\sum_{j=0}^{\infty}\lambda_j^\beta \sigma^2\Big\langle\int_0^s \int_{\mathcal{O}} \Big(G_{t-r}(\cdot, y)-G_{s-r}(\cdot, y)\Big) B^H(\mathrm{d} r, \mathrm{d} y)+\int_s^t \int_{\mathcal{O}} G_{t-r}(\cdot, y)  B^H(\mathrm{d} r, \mathrm{d} y),\phi_j(\cdot) \Big\rangle^2_{L^2 } 	\Big\|_{L^p(\Omega,\mathbb{R})}^\frac{1}{2} \nonumber\\
			=&\Big\|\sum_{j=0}^{\infty}\lambda_j^\beta\sigma^2 \Big| \int_0^s \int_{\mathcal{O}}   \Big(e^{-\lambda_j^2 (t-r)}- e^{-\lambda_j^2 (s-r)}\Big)  \phi_j(y)  B^H(\mathrm{d} r, \mathrm{d} y) +\int_s^t \int_{\mathcal{O}}   e^{-\lambda_j^2 (t-r)}   \phi_j(y)  B^H(\mathrm{d} r, \mathrm{d} y) \Big|^2	\Big\|_{L^p(\Omega,\mathbb{R})}^\frac{1}{2} \nonumber \\
			\leq &C\Big(  \sum_{j=0}^{\infty}\lambda_j^\beta \Big(\big(1-e^{-\lambda_j^2(t-s)}\big)^2 \Big\|  \int_0^s \int_{\mathcal{O}}   g_{1,j}(r)  \phi_j(y)  B^H(\mathrm{d} r, \mathrm{d} y)\Big\|_{L^{2p}(\Omega,\mathbb{R})}^2  \nonumber\\
			&\qquad +	\Big\|  \int_s^t \int_{\mathcal{O}}   g_{2,j}(r)    \phi_j(y)  B^H(\mathrm{d} r, \mathrm{d} y) \Big\|_{L^{2p}(\Omega,\mathbb{R})}^2 \Big)\Big)^\frac{1}{2}   \nonumber \\
			\leq &C\Big( (t-s)^{2H_1} + \sum_{j=1}^{\infty}\lambda_j^\beta\Big( \big(1-e^{-\lambda_j^2(t-s)}\big)^2 
			\mathcal{I}(H_1,g_{1,j},0,s) \mathcal{I}(H_2,\phi_j,0,\pi)  + \mathcal{I}(H_1,g_{2,j},s,t) \mathcal{I}(H_2,\phi_j,0,\pi)\Big) 
			\Big)^\frac{1}{2} ,   \label{regularity of o(t,x)  1}
		\end{align} 
	where $g_{1,j}(r):=e^{-\lambda_j^2(s-r)}$ and $g_{2,j}(r):=e^{-\lambda_j^2(t-r)}$ for $r\in[0,T]$.
	\par  
	It is easy to check that, for any $\alpha\in[0,\infty)$ and $\hat{\alpha}\in[0,1)$, there exist  a constant $C$ such that
	\begin{equation} \label{estimate: e^{-x} 2}
		e^{-x} \leq C  x^{-\alpha}, \quad \forall\ x> 0,
	\end{equation}  
	and
	\begin{equation} \label{estimate: e^{-x} 1}
		1-	e^{-x} \leq C  x^{\hat{\alpha}}, \quad \forall\ x> 0.
	\end{equation}  
	When $H_1=\frac{1}{2}$, by \eqref{estimate: e^{-x} 1} one has
	\begin{equation}  \label{regularity of o(t,x)  1.1} 
		\begin{aligned}
			\mathcal{I}(H_1,g_{1,j},0,s)= &  \int_0^{s} e^{-2\lambda_j^2 (s-r)}\mathrm{d}r=\frac{1}{2\lambda_j^2}\big(1-e^{-2\lambda_j^2s}\big)\leq C\lambda_j^{-2+2\hat{\alpha}_1}s^{\hat{\alpha}_1},\ \forall\ \hat{\alpha}_1\in[0,1).
		\end{aligned}
	\end{equation}
	When $H_1>\frac{1}{2}$, taking $r_1=sr_4$ and $r_2=r_1r_3$ and using \eqref{estimate: e^{-x} 2}, there exist $\alpha_1,\ \alpha_2\geq 0$ such that  
	\begin{align*}
		\mathcal{I}(H_1,g_{1,j},0,s) 
		= & 2 \int_0^s\int_0^{r_1} e^{-\lambda_j^2 (s-r_1)} e^{-\lambda_j^2 (s-r_2)} (r_1-r_2)^{2H_1-2}\mathrm{d}r_2 \mathrm{d} r_1\\
		= & 2 \int_0^1\int_0^{1} e^{-\lambda_j^2 s(1-r_4)} e^{-\lambda_j^2 s(1-r_3r_4)} s^{2H_1}r_4^{2H_1-1}(1-r_3)^{2H_1-2}\mathrm{d}r_3 \mathrm{d} r_4\\
		\leq & C\lambda_j^{-2(\alpha_1+\alpha_2)}s^{2H_1-(\alpha_1+\alpha_2)} \int_0^1\int_0^{1} (1-r_4)^{-\alpha_1}(1-r_3r_4)^{-\alpha_2} r_4^{2H_1-1}(1-r_3)^{2H_1-2}\mathrm{d}r_3 \mathrm{d} r_4.
	\end{align*} 
	Note that when taking $\alpha_1\in[0,1)$ and $\alpha_2\in[0,2H_1-1)$, we have
	\begin{equation}\label{regularity of o(t,x)  1.2} 
		\begin{aligned}
			\mathcal{I}(H_1,g_{1,j},0,s) \leq & C\lambda_j^{-2(\alpha_1+\alpha_2)}s^{2H_1-(\alpha_1+\alpha_2)}.
		\end{aligned}
	\end{equation}
	Likewise, we can obtain 
	\begin{equation}\label{regularity of o(t,x)  1.3} 
		\begin{aligned}
			\mathcal{I}(H_1,g_{2,j},s,t) \leq 
			\left\{\begin{array}{ll}
				C\lambda_j^{-2+2\hat{\alpha}_2}(t-s)^{\hat{\alpha}_2},  &H_1=\frac{1}{2},\  \hat{\alpha}_2\in[0,1), \\
				C\lambda_j^{-2(\alpha_3+\alpha_4)}(t-s)^{2H_1-(\alpha_3+\alpha_4)},  &  H_1 \in (\frac{1}{2},1),\ \alpha_3\in[0,1),\ \alpha_4\in[0,2H_1-1).
			\end{array}\right. 
		\end{aligned}
	\end{equation}
	As to the term $\mathcal{I}(H_2,\phi_j,0,\pi)$, when $H_2=\frac{1}{2}$ it is easy to check
	\begin{equation}\label{regularity of o(t,x)  1.4} 
		\begin{aligned}
			\mathcal{I}(H_2,\phi_j,0,\pi)\leq C.
		\end{aligned}
	\end{equation}
	When $H_2>\frac{1}{2}$, we use the fact that
	$ \int_{\mathcal{O}} \int_{\mathcal{O}} \sin(jz_1)\sin(jz_2)|z_1-z_2|^{2H_2-2}\mathrm{d}z_2 \mathrm{d}z_1\geq0 $
	to get
	\begin{equation*} 
		\begin{aligned}
			\mathcal{I}(H_2,\phi_j,0,\pi) 
			\leq&  \int_{\mathcal{O}} \int_{\mathcal{O}} \cos(jz_1)\cos(jz_2)|z_1-z_2|^{2H_2-2}\mathrm{d}z_2 \mathrm{d}z_1+\int_{\mathcal{O}} \int_{\mathcal{O}} \sin(jz_1)\sin(jz_2)|z_1-z_2|^{2H_2-2}\mathrm{d}z_2 \mathrm{d}z_1 \\
			=& \int_{\mathcal{O}} \int_{\mathcal{O}} \cos(j|z_1-z_2|) |z_1-z_2|^{2H_2-2}\mathrm{d}z_2 \mathrm{d}z_1.
		\end{aligned}
	\end{equation*}
	By letting $z_3=\frac{z_1-z_2}{2}$, $z_4=\frac{z_1+z_2}{2}$ and $z_5=jz_3$, it follows from integration by parts and H\"{o}lder's inequality that
	\begin{equation}  \label{regularity of o(t,x)  1.5} 
		\begin{aligned}
			\mathcal{I}(H_2,\phi_j,0,\pi) 
			\leq&C\int_{0}^\pi \int_{-\frac{\pi}{2}}^{\frac{\pi}{2}} \cos(2j|z_3|) |z_3|^{2H_2-2}\mathrm{d}z_3\mathrm{d}z_4\\
			\leq&C  \int_{0}^{\frac{\pi}{2}} \cos(2jz_3) z_3^{2H_2-2}\mathrm{d}z_3\\
			\leq&\frac{C}{j}\bigg(  \sin(2jz_3)z_3^{2H_2-2}\Big|_{z_3=0}^{\frac{\pi}{2}} -(2H_2-2) \int_{0}^{\frac{\pi}{2}}  \sin(2jz_3) z_3^{2H_2-3}\mathrm{d}z_3\bigg)\\
			\leq&\frac{C}{j}  \Big(\int_{0}^{\frac{\pi}{2}} \Big|\frac{\sin(2jz_3)}{z_3}\Big|^{\frac{1}{2H_2-1-\epsilon}}  \mathrm{d}z_3\Big)^{2H_2-1-\epsilon}\Big(\int_{0}^{\frac{\pi}{2}} z_3^{-\frac{2-2H_2}{2-2H_2+\epsilon}}  \mathrm{d}z_3\Big)^{2-2H_2+\epsilon} \\
			\leq& Cj^{1-2H_2+\epsilon}\Big(  \int_{0}^{\frac{j\pi}{2}} \Big|\frac{\sin(2z_5)}{z_5}\Big|^{\frac{1}{2H_2-1-\epsilon}}  \mathrm{d}z_5\Big)^{2H_2-1-\epsilon}  \\
			\leq& C\lambda_j^{\frac{1}{2}-H_2+\frac{\epsilon}{2}},
		\end{aligned}
	\end{equation}
	where $\epsilon$ is an arbitrary small positive number.
	Taking $\alpha_1=1-\frac{\epsilon}{8},\  \alpha_2=2H_1-1-\frac{\epsilon}{8},\ \alpha_3=-\frac{4H_1+H_2-1-\beta}{2}+1+\epsilon,$ and $\alpha_4=2H_1-1-\frac{\epsilon}{4}$    in  \eqref{regularity of o(t,x)  1.2} and \eqref{regularity of o(t,x)  1.3},  
	substituting the estimates and \eqref{regularity of o(t,x)  1.5} into  \eqref{regularity of o(t,x)  1}, and applying \eqref{estimate: e^{-x} 1} with $\hat{\alpha}=\frac{4H_1+H_2-1-\beta
		-2\epsilon}{4}$, we can obtain
	\begin{equation}  \label{regularity of o(t,x)  1,6}
		\begin{aligned}
			\|o(t,\cdot)-o(s,\cdot)\|_{L^{2p}(\Omega,\dot{H}^{\beta})} 
			\leq  C(t-s)^{\frac{4H_1+H_2-1-\beta}{4}
				-\frac{\epsilon}{2}},\quad   H_1,\ H_2\in (\frac{1}{2},1).
		\end{aligned}
	\end{equation}
	Furthermore, by \eqref{regularity of o(t,x)  1.1}, \eqref{regularity of o(t,x)  1.3}  and \eqref{regularity of o(t,x)  1.4}, we know that \eqref{regularity of o(t,x)  1,6} also holds for $H_1=\frac{1}{2}$ and $H_2=\frac{1}{2}$. 
	Therefore by  H\"{o}lder's inequality one has 
	\begin{equation} \label{regularity of o(t,x)  5}
		\begin{aligned}
			\mathbb{E}\left[\|o(t, \cdot)-o(s, \cdot)\|^p_{\dot{H}^{\beta} }\right]\leq \|o(t,\cdot)-o(s,\cdot)\|_{L^{2p}(\Omega,\dot{H}^{\beta})}^p\leq C(t-s)^{\frac{(4H_1+H_2-1-\beta
					-2\epsilon)p}{4}},\quad   H_1,\ H_2\in [\frac{1}{2},1).
		\end{aligned}
	\end{equation} 
	Due to the arbitrariness of $p$ in \eqref{regularity of o(t,x)  5},  we apply  \cite[Theorem 2.1]{revuz2013continuous}  to obtain that there exists a constant $C>0$ such that
	$$
	\mathbb{E}\left[\left(\sup _{t\neq s} \frac{ \|o(t, \cdot)-o(s, \cdot)\|_{\dot{H}^{\beta}} }{|t-s|^{\frac{4H_1+H_2-1-\beta}{4}
			-\frac{\epsilon}{2}-\frac{2}{p^*}}}   \right)^{p^*} \right] \leq C,
	$$
	where $p^*\geq \max\{p, \frac{8}{4H_1+H_2-1-\beta
		-4\epsilon}\}.$ 
	This result yields
	\begin{equation*} \label{es: sup_t O^N }
		\begin{aligned}
			\bigg(\mathbb{E}\bigg[\sup_{t\in[0, T]}\|o(t, \cdot)\|_{\dot{H}^{\beta}}^p  \bigg]\bigg)^\frac{1}{p}\leq&
			\bigg(\mathbb{E}\bigg[\Big(\sup_{t\in[0, T]}\|o(t, \cdot)-o(0, \cdot)\|_{\dot{H}^{\beta}}  \Big)^{p^*} \bigg] \bigg)^\frac{1}{p^*}
			\leq	  CT^{\frac{4H_1+H_2-1-\beta}{4}
				-\frac{\epsilon}{2}-\frac{2}{p^*}}
			\leq  C.
		\end{aligned}
	\end{equation*}
	Finally, it follows from \eqref{ineq: Ito Isometry 2},  H{\"o}lder's inequality and \eqref{es:  G_t(x,y)-G_t(z,y)} that 
	\begin{equation} \label{ineq: o(t,x)-o(t,y)}
		\begin{aligned}
			\left\|o(t,x)-o(t,z)\right\|_{L^{2p}(\Omega,\mathbb{R})}  
			\leq&\bigg(\int_{0}^{t}\bigg(\int_{\mathcal{O}}  |G_{t-s}(x,y)-G_{t-s}(z,y)|^\frac{1}{H_2} \mathrm{d} y\bigg)^\frac{H_2}{H_1}\mathrm{d} s\bigg)^{H_1} \\ 
			\leq& 
			\bigg( \int_{0}^{t}\int_{\mathcal{O}} |G_{t-s}(x,y)-G_{t-s}(z,y)|^2 \mathrm{d}y\mathrm{d} s\bigg)^\frac{1}{2}\\ 
			\leq&C|x-z|,
		\end{aligned} 
	\end{equation}
	which yields \eqref{ineq: futher property of O_t}. The proof is completed.
\end{proof}

The regularity of the mild solution \eqref{eq: CH_FractionalNoise_solution_mild} is presented in the following theorem.
\begin{theorem}  \label{th:  u_regularity}
	Suppose that Assumption \ref{assu:2} holds.
	Then for any $p \geq 1$ and $0\leq\beta<4H_1+H_2-1$, 
	the unique mild solution \eqref{eq: CH_FractionalNoise_solution_mild} enjoys
	the following regularity property
	\begin{equation}\label{ineq: u_regularity}
		\sup_{t\in[0,T]}	\mathbb{E}\left[\|u(t, \cdot)\|^p_{\dot{H}^{\beta}}\right] < \infty.
	\end{equation}
\end{theorem}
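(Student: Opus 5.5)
We split the mild solution as in \eqref{eq: CH_FractionalNoise_solution_mild}, $u=u_1+u_f+o$, and bound each term in $L^p(\Omega,\dot H^\beta)$ uniformly in $t$. The stochastic convolution is already handled by \eqref{ineq: regularity of o(t,x)} in Lemma \ref{lem: 3.1 regularity of o(t,x)}. For the initial term we have $\langle u_1(t),\phi_j\rangle_{L^2}=e^{-\lambda_j^2t}\langle u_0,\phi_j\rangle_{L^2}$, so $\|u_1(t)\|_{\dot H^\beta}\le\|u_0\|_{\dot H^\beta}$. Since $\beta<4H_1+H_2-1<4$, it suffices that $u_0\in\dot H^4$ (or at least $\dot H^\beta$). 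Under Assumption \ref{assu:2}, $u_0\in\mathcal C^5$, and integrating by parts against $\cos(jx)$ gives the required decay of the coefficients. One caveat: if $u_0$ does not satisfy the Neumann compatibility conditions, the boundary terms limit the decay to $\lambda_j^{-1}$. In that case I would note that the paper implicitly requires $u_0\in\dot H^\beta$, or observe that boundary terms still give the bound for $\beta<3/2$ and rely on the compatibility conditions otherwise.

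The main work is the nonlinear term $u_f$. Its coefficients are
\[
\langle u_f(t),\phi_j\rangle_{L^2}=-\int_0^t\lambda_j e^{-\lambda_j^2(t-s)}\langle f(u(s)),\phi_j\rangle_{L^2}\,\mathrm ds .
\]
Hence, in operator form, $u_f(t)=-\int_0^t A e^{-A^2(t-s)}f(u(s))\,\mathrm ds$ with $A=-\Delta$. The standard smoothing estimate $\|A^{\gamma}e^{-A^2 r}\|\le C r^{-\gamma/2}$ then yields
\[
\|u_f(t)\|_{\dot H^\beta}\le C\int_0^t (t-s)^{-\frac{2+\beta-\theta}{4}}\|f(u(s))\|_{\dot H^\theta}\,\mathrm ds .
\]
This is integrable provided $\beta-\theta<2$. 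For $\beta<2$ we take $\theta=0$ and use $\|f(u)\|_{L^2}\le C(1+\|u\|_{L^6}^3)$ together with the moment bound \eqref{eq: CH_FractionalNoise_solution_mild  boundness} (choosing $p\ge6$). Minkowski's inequality in $L^p(\Omega)$ then gives $\sup_t\mathbb E\|u_f(t)\|_{\dot H^\beta}^p<\infty$, and hence the result for $\beta<\min(2,4H_1+H_2-1)$.

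For larger $\beta$ we bootstrap. Once $u\in L^p(\Omega,\dot H^{\beta_0})$ uniformly in $t$ for some $\beta_0>1/2$ (so that $\dot H^{\beta_0}\hookrightarrow L^\infty$ in one dimension), the algebra/Moser-type estimate $\|f(u)\|_{H^{\theta}}\le C(1+\|u\|_{L^\infty}^2)(1+\|u\|_{H^\theta})$ holds for $\theta\le\beta_0$. We then need to identify $H^\theta$ with $\dot H^\theta$, which is fine for $\theta<3/2$ since only the first Neumann condition appears there. Feeding this into the convolution estimate raises the regularity to any $\beta<\min(\theta+2,4H_1+H_2-1)$. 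One step with $\theta<3/2$ reaches every $\beta<4H_1+H_2-1$, because $4H_1+H_2-1<4<\theta+2$ for $\theta$ close to $3/2$. The moments are controlled by Hölder's inequality together with the bounds from the previous step, since all $p$ are allowed.

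The main obstacle is this bootstrap step. It requires a product estimate for $f(u)$ in fractional Sobolev norms compatible with the Neumann-type spaces $\dot H^\theta$, where the identification $\dot H^\theta=H^\theta$ fails for $\theta\ge3/2$, and it needs the random $L^\infty$ factor to have all moments uniformly in time. If the product estimate proves delicate, I would first try the alternative route via \eqref{es: Delta G_t(x,y)}: bound $\sup_x|u_f|$ pathwise, and then control $\|A^{\theta/2}f(u)\|_{L^2}$ through Hölder differences of $u$, using \eqref{ineq: futher property of O_t} and \eqref{es:  DeltaG_t(x,y)-DeltaG_t(z,y)}.
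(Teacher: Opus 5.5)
Your handling of $o$, of $u_1$, and of $u_f$ for $\beta<2$ via $\theta=0$ is sound and matches the paper's Step 1. Your remark that the paper's integration by parts tacitly uses the compatibility conditions $u_0'=u_0'''=0$ at $0,\pi$ is also correct.

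The gap is in the last bootstrap step. You restrict to $\theta<3/2$ so that $\dot H^\theta=H^\theta$. You then assert that one step reaches every $\beta<4H_1+H_2-1$ ``because $4H_1+H_2-1<4<\theta+2$ for $\theta$ close to $3/2$''. That inequality is false: $\theta<3/2$ forces $\theta+2<7/2$. Your convolution bound requires $\beta-\theta<2$, so it only yields the estimate for $\beta<\min\{7/2,\,4H_1+H_2-1\}$. But $4H_1+H_2-1$ can be arbitrarily close to $4$; the paper's own Case 3, $H_1=0.95$, $H_2=0.75$, already gives $3.55$. So the range $7/2\le\beta<4H_1+H_2-1$ is left open. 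There one genuinely needs $f(u(s))\in\dot H^\theta$ with $\theta>3/2$, which is exactly the obstacle you flagged and then sidestepped through the arithmetic slip. Your fallback via pathwise Green-function bounds does not address this either.

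The missing idea is that the Neumann condition passes through the nonlinearity.
\begin{itemize}
\item Your $\theta<3/2$ step gives $\sup_s\mathbb{E}[\|u(s,\cdot)\|_{\dot H^{2}}^q]<\infty$ for all $q$ whenever $4H_1+H_2-1>2$, which holds in the problematic regime.
\item Then $\partial_x u(s,\cdot)$ is continuous and vanishes at $0$ and $\pi$, hence so does $\partial_x f(u)=f'(u)\partial_x u$.
\item Integration by parts gives $\sum_{j\geq1}\lambda_j^2\langle f(u),\phi_j\rangle_{L^2}^2=\|\partial_{xx}f(u)\|_{L^2}^2$, with $\partial_{xx}f(u)=f''(u)(\partial_xu)^2+f'(u)\partial_{xx}u$. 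The moments of this are controlled by those of $\|u\|_{\dot H^2}$ and $\|u\|_{L^\infty}$.
\item Taking $\theta=2$, the kernel becomes $(t-s)^{-\beta/4}$, which is integrable for every $\beta<4$.
\end{itemize}
This is precisely the paper's Step 3. The paper's Steps 2 and 3 do with integer derivatives and the chain rule what you attempt with a fractional Moser estimate. With this step added your argument closes; as written, it proves the theorem only for $\beta<\min\{7/2,\,4H_1+H_2-1\}$.
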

\begin{proof} 
	Owing to \eqref{eq: CH_FractionalNoise_solution_mild}, we split $\|u(t,x)\|_{L^{p}(\Omega,\dot{H}^{\beta})}$ as 
	\begin{equation} \label{regularity of u(t,x)  1}
		\begin{aligned}
			\|u(t,\cdot)\|_{L^{p}(\Omega,\dot{H}^{\beta})} \leq 
			\|u_1(t,\cdot)\|_{L^{p}(\Omega,\dot{H}^{\beta})} +\|u_f(t,\cdot)\|_{L^{p}(\Omega,\dot{H}^{\beta})} +\|o(t,\cdot)\|_{L^{p}(\Omega,\dot{H}^{\beta})}. 
		\end{aligned}
	\end{equation}
	Now we estimate $\|u_1(t,\cdot)\|_{L^{p}(\Omega,\dot{H}^{\beta})}$. \par   
	Using \eqref{eq:CH_FractionalNoise b} and integration by parts, it is easy to show
	$$  \int_{\mathcal{O}} u_0(x)\phi_j(x) \mathrm{d}x= \frac{1}{\lambda_j^2}\int_{\mathcal{O}} u_0^{(4)}(x)\phi_j(x) \mathrm{d}x, \quad \forall\ j\geq 1.$$
	Then  by the orthonormality of $\left\{\phi_j\right\}_{j\geq 0} $ and Assumption \ref{assu:2},  we can obtain  
	\begin{align}
		\|u_1(t,x)\|_{L^{p}(\Omega,\dot{H}^{\beta})}=&\Big(\sum_{j=0}^{\infty} \lambda_j^\beta e^{-2\lambda_j^2t} \langle u_0,\phi_j   \rangle^2_{L^2 }  \Big)^\frac{1}{2}\nonumber\\
		\leq&\Big(\langle u_0,\phi_0   \rangle^2_{L^2} +\sum_{j=1}^{\infty} \lambda_j^\beta   \langle u_0,\phi_j   \rangle^2_{L^2 }  \Big)^\frac{1}{2}\nonumber\\
		\leq&\Big(\|u_0\|^2_{L^2 } +\sum_{j=1}^{\infty} \lambda_j^{\beta-4}  \langle u_0^{(4)},\phi_j   \rangle^2_{L^2 }  \Big)^\frac{1}{2}\nonumber\\
		\leq&\big(\|u_0\|^2_{L^2 } +\|u_0^{(4)}\|^2_{L^2 }   \big)^\frac{1}{2}\nonumber\\
		\leq& C. \label{regularity of u(t,x) u_1}
	\end{align} 
	
	Since we have already estimated $o(t,x)$ in Lemma \ref{lem: 3.1 regularity of o(t,x)}, the remainder of this proof is to  analyze $\|u_f(t,\cdot)\|_{\dot{H}^{\beta}}$.  It follows from the orthonormality of $\left\{\phi_j\right\}_{j\geq 0} $, H\"{o}lder's inequality and \eqref{estimate: e^{-x} 2} that
	\begin{equation} \label{eq: regularity of u(t,x) u_f 1}
		\begin{aligned}
			\|u_f(t,\cdot)\|_{\dot{H}^{\beta}}^2=& \sum_{j=1}^{\infty} \lambda_j^\beta \bigg|\int_0^t   \lambda_j e^{-\lambda_j^2 (t-s)}  \langle f(u(s,\cdot)),\phi_j(\cdot)  \rangle_{L^2 } \mathrm{d} s \bigg|^2\\
			\leq & \sum_{j=1}^{\infty} \lambda_j^\beta  \int_0^t   \lambda_j^2 e^{-\lambda_j^2 (t-s)}\mathrm{d} s   \int_0^te^{-\lambda_j^2 (t-s)}\langle f(u(s,\cdot)),\phi_j(\cdot)  \rangle_{L^2 }^2 \mathrm{d} s \\
			\leq & C     \int_0^t   (t-s)^{-\alpha}\sum_{j=1}^{\infty} \lambda_j^{\beta-2\alpha} \langle f(u(s,\cdot)),\phi_j(\cdot)  \rangle_{L^2 }^2 \mathrm{d} s,
		\end{aligned}
	\end{equation}
	where $\alpha$ is an arbitrarily nonnegative number.  In the following we estimate $\|u_f(t,\cdot)\|_{\dot{H}^{\beta}}$. \par  
	\noindent \textbf{ Step 1:} $4H_1+H_2-1\leq2$ \par 
	Taking $\alpha=\frac{\beta}{2}$ in \eqref{eq: regularity of u(t,x) u_f 1}, it follows from H\"{o}lder's inequality, \eqref{eq: regularity of u(t,x) u_f 1} and  \eqref{eq: CH_FractionalNoise_solution_mild  boundness} that
	\begin{align*}
		\|u_f(t,\cdot)\|_{L^{p}(\Omega,\dot{H}^{\beta})}\leq& 	\|u_f(t,\cdot)\|_{L^{2p}(\Omega,\dot{H}^{\beta})}\\
		\leq & C	\left\| \int_0^t   (t-s)^{-\frac{\beta}{2}} \| f(u(s,\cdot)) \|_{L^2 }^2 \mathrm{d} s\right\|_{L^p(\Omega,\mathbb{R})}^\frac{1}{2}\\ 
		\leq & C	\left\| \int_0^t   (t-s)^{-\frac{\beta}{2}}\left(1+ \|u(s,\cdot)\|_{L^6 }^6\right)\mathrm{d} s\right\|_{L^p(\Omega,\mathbb{R})}^\frac{1}{2}\\ 
		\leq&  C   \bigg(  \int_0^t   (t-s)^{-\frac{\beta}{2}}   \left(1+\|(u(s,\cdot))\|_{L^{6p}(\Omega,L^6)}^6 \right)\mathrm{d} s \bigg)^\frac{1}{2} \\
		\leq& C.
	\end{align*}
	Combining this with \eqref{ineq: regularity of o(t,x)}, \eqref{regularity of u(t,x)  1} and \eqref{regularity of u(t,x) u_1}  shows \eqref{ineq: u_regularity} holds for $0\leq \beta<4H_1+H_2-1\leq2$. 
	
	\noindent \textbf{ Step 2:}  $2<4H_1+H_2-1\leq3$. \par 
	Define $ L^2_0(\mathcal{O}):=\{v\in L^2(\mathcal{O})| v(0)=v(\pi)=0\}$ and $\partial_x u(t,x):=\partial 	u(t, x)/\partial x.$  Let  $\psi_j(x)= 
	\sqrt{2/ \pi} \sin (j x)$ for $j>0. $ Since $\left\{\psi_j\right\}_{j > 0}$ forms an orthonormal basis of  $L^2_0(\mathcal{O})$, we use integration by parts and  the Neumann boundary condition \eqref{eq:CH_FractionalNoise b}  to get  
	\begin{equation} \label{regularity of u(t,x) u_f 2.1}
		\begin{aligned}
			\|\partial_x u(t,\cdot)\|_{L^2}^2=  \sum_{j=0}^{\infty}  \langle\partial_x u(t,\cdot),\psi_j(\cdot)  \rangle_{L^2 }^2 
			=     \sum_{j=1}^{\infty}\lambda_j  \langle u(t,\cdot),\phi_j(\cdot)  \rangle_{L^2 }^2 
			= \|u (t,\cdot)\|_{\dot{H}^1}^2  .   
		\end{aligned}
	\end{equation}
	
	Note that the result in step 1 implies $ \|u_f(t,\cdot)\|_{L^{p}(\Omega,\dot{H}^1)}\leq C$. It follows from \eqref{regularity of u(t,x) u_f 2.1} that
	\begin{equation}\label{ineq: u_regularity   u_x}
		\sup_{t\in[0,T]}	\mathbb{E}\left[\|\partial_x u(t,\cdot)\|^p_{L^2}\right] < \infty.
	\end{equation}
	Further, we take $\alpha=\frac{\beta-1}{2}$ in \eqref{eq: regularity of u(t,x) u_f 1} and employ H\"{o}lder's inequality, integration by parts,  the Neumann boundary condition\eqref{eq:CH_FractionalNoise b}, \eqref{eq: CH_FractionalNoise_solution_mild  boundness} and \eqref{ineq: u_regularity   u_x} to show
	\begin{equation*} \label{regularity of u(t,x) u_f 3}
		\begin{aligned}
			\|u_f(t,\cdot)\|_{L^{p}(\Omega,\dot{H}^{\beta})} \leq & C	\left\| \int_0^t   (t-s)^{-\frac{\beta-1}{2}}\sum_{j=1}^{\infty} \langle f^\prime(u(s,\cdot))\partial_xu(s,\cdot),\psi_j(\cdot)  \rangle_{L^2 }^2 \mathrm{d} s\right\|_{L^p(\Omega,\mathbb{R})}^\frac{1}{2}\\ 
			\leq & C	\left\| \int_0^t   (t-s)^{-\frac{\beta-1}{2}}\|f^\prime(u(s,\cdot))\partial_xu(s,\cdot)\|_{L^2 }^2 \mathrm{d} s\right\|_{L^p(\Omega,\mathbb{R})}^\frac{1}{2}\\  
			\leq & C	\left\| \int_0^t   (t-s)^{-\frac{\beta-1}{2}}\Big(1+\|u(s,\cdot)\|_{L^8 }^4\Big)\|\partial_xu(s,\cdot)\|_{L^4 }^2 \mathrm{d} s\right\|_{L^p(\Omega,\mathbb{R})}^\frac{1}{2}\\ 
			\leq&  C   \bigg(  \int_0^t   (t-s)^{-\frac{\beta-1}{2}} \Big(1+\|(u(s,\cdot))\|_{L^{8p}(\Omega,L^{8})}^4\Big) \|\partial_xu(s,\cdot)\|_{L^{4p}(\Omega,L^4)}^2 \mathrm{d} s \bigg)^\frac{1}{2} \\
			\leq& C,
		\end{aligned}    
	\end{equation*} 
	combined with  \eqref{ineq: regularity of o(t,x)}, \eqref{regularity of u(t,x)  1},\eqref{regularity of u(t,x) u_1} and the result in step 1  leads to  \eqref{ineq: u_regularity} holds for  $\beta<4H_1+H_2-1\leq 3.$ 
	
	\noindent \textbf{ Step 3:}  $3<4H_1+H_2-1<4$. \par 
	Define  $\partial_{xx} u(t,x):=\partial^2 	u(t, x)/\partial x^2.$ 
	Owing to the Neumann boundary condition\eqref{eq:CH_FractionalNoise b}, we obtain
	\begin{equation} \label{regularity of u(t,x) u_f appendix}
		\begin{aligned}
			\|\partial_{xx} u(t,\cdot)\|_{L^2}^2= \sum_{j=0}^{\infty}  \langle\partial_{xx} u(t,\cdot),\phi_j(\cdot)  \rangle_{L^2 }^2 
			=   \sum_{j=1}^{\infty}\lambda_j^2  \langle  u(t,\cdot),\phi_j(\cdot)  \rangle_{L^2 }^2 
			=   \|u (t,x)\|_{\dot{H}^2}^2.
		\end{aligned}
	\end{equation}
	Letting $\alpha=\frac{\beta-2}{2}$ in \eqref{eq: regularity of u(t,x) u_f 1} and utilizing H\"{o}lder's inequality, integration by parts, \eqref{eq: CH_FractionalNoise_solution_mild  boundness} and \eqref{ineq: u_regularity   u_x} to show 
	\begin{align*}
		\|u_f(t,\cdot)\|_{L^{p}(\Omega,\dot{H}^{\beta})}
		\leq & C	\Big\| \int_0^t  (t-s)^{-\frac{\beta-2}{2}}\sum_{j=1}^{\infty} \langle f^{\prime\prime}(u(s,\cdot))|\partial_xu(s,\cdot)|^2+f^\prime(u(s,\cdot))\partial_{xx}u(s,\cdot),\phi_j(\cdot)  \rangle_{L^2 }^2  \mathrm{d} s	\Big\|_{L^p(\Omega,\mathbb{R})}^\frac{1}{2}\\ 
		\leq & C		\Big\| \int_0^t  (t-s)^{-\frac{\beta-2}{2}}  \|f^{\prime\prime}(u(s,\cdot))|\partial_xu(s,\cdot)|^2+f^\prime(u(s,\cdot))\partial_{xx}u(s,\cdot)\|_{L^2 }^2  \mathrm{d} s	\Big\|_{L^p(\Omega,\mathbb{R})}^\frac{1}{2} \\
		\leq & C		\Big\| \int_0^t (t-s)^{-\frac{\beta-2}{2}}  \big(\big(1+\|u(s,\cdot)\|_{L^4 }^2\big)\|\partial_xu(s,\cdot)\|_{L^8 }^4+
		\big(1+\|u(s,\cdot)\|_{L^8}^4 \big)\|\partial_{xx}u(s,\cdot)\|_{L^4 }^2 \big) \mathrm{d} s\Big\|_{L^p(\Omega,\mathbb{R})}^\frac{1}{2}\\  
		\leq&  C   \Big(  \int_0^t   (t-s)^{-\frac{\beta-2}{2}} \Big(\big(1+\|(u(s,\cdot))\|_{L^{4p}(\Omega,L^{4})}^2\big) \|\partial_xu(s,\cdot)\|_{L^{8p}(\Omega,L^8)}^4\\
		&\qquad +\big(1+\|(u(s,\cdot))\|_{L^{8p}(\Omega,L^{8})}^4\big) \|\partial_{xx}u(s,\cdot)\|_{L^{4p}(\Omega,L^4)}^2\Big) \mathrm{d} s \Big)^\frac{1}{2}  
		\leq C.
	\end{align*}   
	Using the above estimate, \eqref{ineq: regularity of o(t,x)}, \eqref{regularity of u(t,x)  1}, \eqref{regularity of u(t,x) u_1} and  the results in step 1 and step 2, one deduces that   \eqref{ineq: u_regularity} holds for  $\beta<4H_1-H_2-1<4.$  
	The proof is completed.
\end{proof}

\begin{remark}
	By the Sobolev embedding theorem, it follows that $\mathcal{C}(\mathcal{O})$  can be embedded into $W^{1,2}(\mathcal{O})$. Therefore we can utilize \eqref{eq: CH_FractionalNoise_solution_mild  boundness} and \eqref{ineq: u_regularity   u_x} to obtain 
	\begin{equation} \label{ineq: u_sup_boundness}
		\sup_{t\in[0,T]}\mathbb{E}\left[\sup_{x \in\mathcal{O}}|u(t, x)|^p\right] <\infty .
	\end{equation} 
\end{remark}

\section{Spatial finite difference discretization}\label{section4}
In this section, we introduce the finite difference method (FDM) for spatial discretization of \eqref{eq:CH_FractionalNoise} and investigate its strong convergence rate.  Let $N$ be a positive integer, and define $h=\pi/N$, $x_i=(i-\frac{1}{2})h$ for $i=1,2,\ldots,N$.  
Then the spatial grid points are denoted as $\mathcal{O}_h=\{x_i|1\leq i \leq N\}$. Define $\mathcal{W}_h:=\left\{w^h\mid w^h=(w^h_1,\ w^h_1,\cdots,w^h_N) \right\}$
as the space of grid functions defined on $\mathcal{O}_h$. 
For any grid function $w^h \in \mathcal{W}_h$, in accordance with the homogeneous Neumann boundary condition, we define second and fourth order difference operators with reference to  \cite[Chapter 9]{Sun2023}  as follows:
$$
\delta_h^2 w_i^h:=   
\left\{\begin{array}{ll}
	(-w^h_1+w^h_2)/ h^2, & i=1, \\
	(w^h_{i-1}-2 w^h_i+w^h_{i+1})/ h^2, & i=2,3,\cdots,N-1,\\
	(w^h_{N-1}-w^h_N)/h^2, & i=N,
\end{array}\right.  \  \text{and}\quad \delta_h^4 w_i^h:=\delta_h^2\left(\delta_h^2 w_i^h\right),\ i=1,2,\cdots,N.
$$   

For $t \in (0, T]$ and $i = 1, 2, \ldots, N$,  we consider Eq. \eqref{eq:CH_FractionalNoise} at $(t,x_i)$. By replacing
$\Delta$, $\Delta^2$ and $\frac{\partial B^H(t,x_i)}{ \partial x}$ with $\delta_h^2$, $\delta_h^4$ and $\frac{\ N }{\pi}\left(B^H(t,x_{i}+\frac{1}{2}h)-B^H(t, x_{i}-\frac{1}{2}h)\right) $ respectively, we establish the following difference scheme 
\begin{equation}\label{eq:discrete_CH_FractionalNoise_SDE}
	\begin{aligned}
		& \mathrm{d} u^N(t,x_i)+\delta_h^4 u^N(t, x_i) \mathrm{d} t= \delta_h^2 f\big(u^N(t, x_i)\big) \mathrm{d} t+ \frac{\sigma N }{\pi}  \mathrm{d}\big(B^H(t,x_{i}+\frac{1}{2}h)-B^H(t, x_{i}-\frac{1}{2}h)\big).
	\end{aligned}
\end{equation}
Further, we  use the polygonal interpolation
$$ u^N(t,x)=\left\{\begin{array}{lll}
	u^N(t,x_1), & x\in[0,x_1],\\
	u^N(t,x_i)+ \frac{x-x_i }{h}\left(u^N(t,x_{i+1})-u^N(t,x_i)\right),& x \in[x_{i},x_{i+1}],\ i=1,2,\cdots, N-1,\\
	u^N(t,x_N), & x\in[x_N,\pi].
\end{array}  \right. $$
To solve \eqref{eq:discrete_CH_FractionalNoise_SDE}, we introduce 
$$U^N_t=(u^N(t,x_1),\ u^N(t,x_2),\cdots,\ u^N(t,x_N) )^\top\quad \text{and} \quad  \beta^H_t=( \Delta B^H_1(t),\  \Delta B^H_2(t),\cdots,\ \Delta B^H_N(t) )^\top,$$
where $\Delta B^H_i(t)=  \sqrt{N/\pi} \big(B^H(t,,x_{i}+\frac{1}{2}h)-B^H(t,,x_{i}-\frac{1}{2}h)\big)$ for $i=1,2,\cdots,N$. 
Combined with the initial value conditions, we can obtain the following SCHE after spatial discretization
\begin{subequations}\label{eq:eq:discrete_CH_FractionalNoise_SDE_matrix}
	\begin{numcases}{}
		\mathrm{d} U_t^N+A_N^2 U_t^N \mathrm{d} t=A_N F_N(U_t^N) \mathrm{d} t+ \sqrt{\sigma^2N/\pi}     \mathrm{d} \beta^H_t,\ t\in (0,T],\label{eq:eq:discrete_CH_FractionalNoise_SDE_matrix a} \ \\
		U_0^N=U_0, \label{eq:eq:discrete_CH_FractionalNoise_SDE_matrix b}  \
	\end{numcases}
\end{subequations} 
where
$$A_N:={\footnotesize  \frac{N^2}{\pi^2}\left( \begin{array}{ccccc}
		-1 & 1 & 0 & \cdots &  0 \\
		1 & -2 & 1 & \ddots &  \vdots \\ 
		0 & \ddots &  \ddots & \ddots &  0\\ 
		\vdots & \ddots & 1 & -2 & 1 \\
		0 & \cdots  & 0 & 1 & -1
	\end{array}\right) } \ , \ 
U_0=   
\left( \begin{array}{c}
	u_{0,1}\\
	u_{0,2}\\
	\vdots \\ 
	u_{0,N}
\end{array}\right):=
\left( \begin{array}{c}
	u_0(x_1)\\
	u_0(x_2)\\
	\vdots \\ 
	u_0(x_N)
\end{array}\right) ,  
$$
and  $F_N:\ \mathbb{R}^N\to\mathbb{R}^N$ is a deterministic mapping given by 
$$F_N(w):=(f(w_1),f(w_2),\cdots,f(w_N))^\top,\ \ \forall\ w=(w_1,w_2,\cdots,w_N)^\top\in\mathbb{R}^N.$$  
The matrix $-A_N$ is positive semi-definite with the eigensystem $\left\{\lambda_{N,j}, e_j\right\}_{j=0}^{N-1}$, where $\lambda_{N,j}= \frac{4N^2 \sin^2(\frac{j\pi}{2N})}{\pi^2}$, $e_j=(e_j(1),e_j(2),\cdots,e_j(N))^\top$. 
and $e_j(k)=\sqrt{\pi /N} \phi_j(x_k)$ for $k=1,2,\cdots,N$.
Note that  $\left\{e_j\right\}_{j=0}^{N-1}$ constitutes an orthonormal basis in $\mathbb{R}^{N}$. \par 
In the sequel, we shall define the discrete inner product and norms. 
For any vectors  $\mu=\left(\mu_1, \ldots, \mu_N\right)^{\top}$ and $\nu=\left(\nu_1, \ldots, \nu_N\right)^{\top}$,
we can define
\begin{align*}
	&\langle \mu, \nu\rangle_{l_N^2}=h \sum_{i=1}^{N} \mu_i \nu_i\quad\text{and}\quad \| \mu \|_{l_N^2}=\bigg(h \sum_{i=1}^{N} \mu_i^2\bigg)^\frac{1}{2}.
\end{align*}
The discrete $L^2$-norm  has the following relation with the Euclidean norm on $\mathbb{R}^N$ :
\begin{equation} \label{eq: l2 to Euclidean norm}
	\| \mu \|_{l_N^2}^2=h|\mu|^2. 
\end{equation}
Furthermore, for any $p\geq1$ and $k\in \mathbb{N} $ we can define the discrete $L^p$-norm and $W^{k,2}$-norm  as
$$
\|\mu\|_{l_N^p}= \begin{cases}\left(h \sum_{i=1}^{N}\left|\mu_i\right|^p\right)^{\frac{1}{p}}, & 1 \leq p<\infty, \\ \max _{1 \leq i \leq N}\left|\mu_i\right|, & p=\infty,\end{cases} \quad \text{and} \quad  \|\mu\|_{k,N}=
\begin{cases} \| \mu \|_{l_N^2}, & k=0, \\ \left( \| \mu \|_{l_N^2}^2+ \sum_{i=1}^{k}|\mu|_{i,N}^2 \right)^\frac{1}{2}, & k\geq 1,\end{cases} 
$$
where $|\mu|_{i,N}=\left\|\left(-A_N\right)^{\frac{i}{2}} \mu\right\|_{l_N^2}$ and  $\left(-A_N\right)^\gamma:=\sum_{j=0}^{N-1}\lambda_{N,j}^\gamma e_je_j^\top$ for any $i\in\mathbb{N}^+$ and $\gamma>0$.  

It is crucial to emphasize that, due to the definition of $A_N$, we have
\begin{equation} \label{eq: (-A_N)w to w_i}
	|\mu|_{1,N}^2=\left\langle-A_N \mu, \mu\right\rangle_{l_N^2}=\frac{1}{h} \sum_{j=1}^{N-1}\left|\mu_{j+1}-\mu_j\right|^2.
\end{equation} 

Below, we state some key properties of $A_N$.
\begin{lemma}  
	Let $2 \leq p \leq \infty,\ t,\ s>0$ and $N \geq 2$. Then for any $\mu=\left(\mu_1, \mu_2, \ldots, \mu_N\right)^\top,$ 
	$\nu= (\nu_1,$ $ \nu_2, \ldots, \nu_N )^\top$, $w=\left(w_1, w_2, \ldots, w_N\right)^\top:=
	\left(\mu_1\nu_1, \mu_2\nu_2, \ldots, \mu_N\nu_N\right)^\top,\ \gamma,\ \gamma_1,\ \gamma_2\in[0,\infty),\ \gamma_3\in[0,1)$ and $k=1,2,3$ , one can have
	\begin{align}
		&\|\mu\|_{l_N^{\infty}}   \leq C\|\mu\|_{1,N}  , \label{ineq: l_infty to l2} \\
		&| w|_{k,N}   \leq C\|\mu\|_{k,N}\|\nu\|_{k,N}, \label{ineq: uv to u v} \\  
		&\|\mu\|_{l_N^{6}}   \leq
		C\Big(\left\|A_N \mu\right\|_{l_N^2}^{\frac{1}{6}}\|\mu\|_{l_N^2}^{\frac{5}{6}}+\|\mu\|_{l_N^2}\Big) , \label{ineq: l_6 to l2} \\
		&\left\|(-A_N)^\gamma e^{-A_N^2t} \mu\right\|_{l_N^{2}}  \leq Ct^{-\frac{\gamma}{2}}\left\|\mu\right\|_{l_N^2}, \  
		\label{ineq: e^{-A_N^2t}(-A_N)^r to l2} \\ 
		&\left\|(-A_N)^{\gamma_1} e^{-A_N^2t}\left( I- e^{-A_N^2s}\right)\mu\right\|_{l_N^{2}}  \leq C t^{-\frac{\gamma_1-\gamma_2+2\gamma_3}{2}}s^{\gamma_3}\left\|(-A_N)^{\gamma_2}\mu\right\|_{l_N^2},		\label{ineq: (-A_N)^re^{-A_N^2t}(I- e^{-A_N^2t}) to l2}  
	\end{align}
	where $C>0$ is a constant independent of $\mu,\ \nu$ and $N$.
\end{lemma}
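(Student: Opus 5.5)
The plan is to work throughout in the eigenbasis $\{e_j\}_{j=0}^{N-1}$ of $-A_N$, where $e_j(k)=\sqrt{\pi/N}\,\phi_j(x_k)$ and $\lambda_{N,j}=\frac{4N^2}{\pi^2}\sin^2(\frac{j\pi}{2N})$. Since $\frac{2}{\pi}x\le\sin x\le x$ on $[0,\pi/2]$, we have the two-sided comparison $c\,j^2\le\lambda_{N,j}\le j^2=\lambda_j$, uniformly in $N$. For \eqref{ineq: e^{-A_N^2t}(-A_N)^r to l2} and \eqref{ineq: (-A_N)^re^{-A_N^2t}(I- e^{-A_N^2t}) to l2}, I would expand $\mu=\sum_j\langle\mu,e_j\rangle e_j$ and use that $\|\cdot\|_{l_N^2}^2=h|\cdot|^2$ is diagonal in this orthonormal basis. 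This reduces both bounds to scalar inequalities, uniform in $\lambda=\lambda_{N,j}\ge0$.

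For \eqref{ineq: e^{-A_N^2t}(-A_N)^r to l2} the scalar inequality is $\lambda^\gamma e^{-\lambda^2t}\le Ct^{-\gamma/2}$, which is \eqref{estimate: e^{-x} 2} applied with $x=\lambda^2t$ (the case $\lambda=0$ is trivial). For \eqref{ineq: (-A_N)^re^{-A_N^2t}(I- e^{-A_N^2t}) to l2} I would write
\[
\lambda^{\gamma_1}e^{-\lambda^2t}(1-e^{-\lambda^2s})=\lambda^{\gamma_2}\cdot\lambda^{\gamma_1-\gamma_2}e^{-\lambda^2t}(1-e^{-\lambda^2s}).
\]
Using \eqref{estimate: e^{-x} 1}, $1-e^{-\lambda^2s}\le C\lambda^{2\gamma_3}s^{\gamma_3}$, and then \eqref{estimate: e^{-x} 2}, $\lambda^{\gamma_1-\gamma_2+2\gamma_3}e^{-\lambda^2t}\le Ct^{-(\gamma_1-\gamma_2+2\gamma_3)/2}$. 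This requires $\gamma_1-\gamma_2+2\gamma_3\ge0$, which is implicitly assumed. When the exponent is negative the bound is meaningful only for $\lambda$ bounded below, and I would restrict to that case or note that the mode $j=0$ vanishes.

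For the discrete Sobolev inequalities \eqref{ineq: l_infty to l2} and \eqref{ineq: l_6 to l2}, I would argue with a discrete fundamental theorem of calculus. For any $i,k$ we have $|\mu_i|\le|\mu_k|+\sum_{j}|\mu_{j+1}-\mu_j|$. Averaging over $k$ with weight $h/\pi$, and applying Cauchy--Schwarz together with \eqref{eq: (-A_N)w to w_i}, gives $\|\mu\|_{l_N^\infty}\le C\|\mu\|_{l_N^2}+C|\mu|_{1,N}$, which is \eqref{ineq: l_infty to l2}. The same argument applied to $\mu_i^2$ (a discrete Agmon inequality) yields $\|\mu\|_{l_N^\infty}^2\le C\|\mu\|_{l_N^2}^2+C\|\mu\|_{l_N^2}|\mu|_{1,N}$. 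Then I would use $\|\mu\|_{l_N^6}^6\le\|\mu\|_{l_N^\infty}^4\|\mu\|_{l_N^2}^2$ and the interpolation $|\mu|_{1,N}^2=\langle -A_N\mu,\mu\rangle\le\|A_N\mu\|_{l_N^2}\|\mu\|_{l_N^2}$. Combining these gives $\|\mu\|_{l_N^6}^6\le C\big(\|\mu\|_{l_N^2}^6+\|A_N\mu\|_{l_N^2}\|\mu\|_{l_N^2}^5\big)$, which is \eqref{ineq: l_6 to l2}.

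The main obstacle is the product estimate \eqref{ineq: uv to u v} for $k=1,2,3$. Here I would express $|\cdot|_{k,N}$ through difference quotients. For $k=1$, \eqref{eq: (-A_N)w to w_i} gives $|w|_{1,N}$ directly. For $k=2$ we have $|w|_{2,N}=\|A_Nw\|_{l_N^2}$, and $A_N$ is the Neumann discrete second difference. For $k=3$, $|w|_{3,N}^2=\langle -A_N(A_Nw),A_Nw\rangle$ is the $|\cdot|_{1,N}$ seminorm of $A_Nw$. I would then apply the discrete Leibniz rules
\[
\mu_{j+1}\nu_{j+1}-\mu_j\nu_j=\mu_{j+1}(\nu_{j+1}-\nu_j)+\nu_j(\mu_{j+1}-\mu_j),
\]
together with its second- and third-order analogues involving shifted products of first differences. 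The boundary rows of $A_N$, namely $(-w_1+w_2)/h^2$, fit the same formula with a ghost value $w_0=w_1$, consistent with the Neumann reflection. Each resulting term is bounded by putting the lower-order factor in $l_N^\infty$ via \eqref{ineq: l_infty to l2}, applied to $\mu$ or to its difference quotients, and the highest-order factor in $l_N^2$. For $k=3$, the mixed term (first difference)$\times$(second difference) needs $\|D\mu\|_{l_N^\infty}\lesssim\|\mu\|_{2,N}$, and I would obtain this by applying the discrete Agmon argument to the difference vector. The delicate bookkeeping is checking that these difference-quotient norms are equivalent to $\|\cdot\|_{k,N}$ including the boundary rows. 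I would verify this via the eigen-expansion, where the difference operator maps $e_j$ to the discrete sine basis with factor $\sqrt{\lambda_{N,j}}$.
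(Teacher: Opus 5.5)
Your proposal is correct and follows essentially the same route as the paper's Appendix A. The common ingredients are the discrete Agmon inequality $\|\mu\|_{l_N^\infty}^2\le C(\|\mu\|_{l_N^2}^2+\|\mu\|_{l_N^2}|\mu|_{1,N})$ for the embedding and for the $l_N^6$ bound, discrete Leibniz rules with an $l_N^\infty\times l_N^2$ splitting (including the Neumann boundary rows) for the product estimate, and diagonalization in $\{e_j\}$ together with the scalar bounds $e^{-x}\le Cx^{-\alpha}$ and $1-e^{-x}\le Cx^{\hat\alpha}$ for the two semigroup estimates.

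Two of your points are slightly more careful than the paper. First, you bound the sup norm of the difference quotients by applying Agmon to the difference vector, whereas the paper writes $\|(-A_N)^{1/2}\mu\|_{l_N^\infty}$ for what is really $\max_j|\mu_{j+1}-\mu_j|/h$. Second, you flag that the last estimate needs $\gamma_1-\gamma_2+2\gamma_3\ge 0$. That condition is genuinely necessary: without it the bound fails as $t\to0$ even for modes with $\lambda_{N,j}$ bounded below, so your "restrict to $\lambda$ bounded below" fallback does not rescue that case.
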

\begin{proof}
	See Appendix A.
\end{proof}

\subsection{Key estimates for two auxiliary equations}
To obtain the  moment boundedness and strong convergence rate of the numerical solution,  we need develop delicate estimates for  two auxiliary equations. More precisely, we consider two following auxiliary equations 
\begin{equation}\label{eq: perturbed problem 1}
	\left\{  
	\begin{aligned}
		&\mathrm{d} Y_t+A_N^2 Y_t \mathrm{d} t=A_N F_N(Y_t+Z_t) \mathrm{d} t, \quad t\in (0,T],\\
		&Y_0=y,
	\end{aligned}
	\right.
\end{equation}
and 
\begin{equation}\label{eq: err problem 1}
	\left\{  
	\begin{aligned}
		&\mathrm{d} \hat{Y}_t+A_N^2 \hat{Y}_t \mathrm{d} t= A_N \left[F_N(\hat{X}_t+\hat{Y}_t+\hat{Z}_t)-F_N(\hat{X}_t)\right] \mathrm{d} t, \quad t\in (0,T],\\
		&\hat{Y}_0=0,
	\end{aligned}
	\right.
\end{equation}
where $y,\ Z_t,\ \hat{X}_t,\ \hat{Z}_t \in l^2_N$.
Eq. \eqref{eq: perturbed problem 1} is often called the perturbation equation, which plays a significant role in  proving the  moment boundedness of the numerical solution. The next proposition gives an estimate for 	$\|Y_t\|_{1,N}$.
\begin{proposition}\label{propo: perturbed problem}  
	Let $Y_t$ be the solution of the perturbed problem \eqref{eq: perturbed problem 1}. Then
	for any  $p \geq 1$, there exists a constant $C>0$, independent of $N$, such that 
	\begin{equation}\label{es: solution_perturbed problem}
		\|Y_t\|_{1,N}  \leq C \left(1+\|y\|_{1,N}^{36} + \sup_{s\in[0, t]}\left\|Z_s\right\|_{l^\infty_N}^{27}\right), \quad t \in[0, T].
	\end{equation} 
\end{proposition}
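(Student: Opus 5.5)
The proof proceeds by energy estimates in the discrete norms. It uses the same decomposition $\mathbb{R}^N=\mathrm{span}\{e_0\}\oplus\mathrm{span}\{e_1,\dots,e_{N-1}\}$ that the introduction identifies as necessary, because $A_N$ is singular under the Neumann condition. First, the mean component is conserved. Taking the $\langle\cdot,e_0\rangle$ product of \eqref{eq: perturbed problem 1} and using $A_Ne_0=0$ and the symmetry of $A_N$ gives $\frac{\mathrm d}{\mathrm dt}\langle Y_t,e_0\rangle=0$. Hence $|\langle Y_t,e_0\rangle_{l_N^2}|\le C\|y\|_{l_N^2}$ for all $t$. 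It therefore suffices to control $|Y_t|_{1,N}$ and the $l^2_N$-norm of the zero-mean part, and the latter is dominated by $|Y_t|_{1,N}$ since $\lambda_{N,j}\ge c>0$ for $j\ge1$.

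\textbf{Step 1: an $H^{-1}$-type energy bound.} Let $\bar Y_t$ denote the projection of $Y_t$ onto $\mathrm{span}\{e_1,\dots,e_{N-1}\}$. Test the equation with $(-A_N)^{-1}\bar Y_t$, where the inverse is taken on that subspace. Since $\langle A_NF_N(Y_t+Z_t),(-A_N)^{-1}\bar Y_t\rangle=-\langle F_N(Y_t+Z_t),\bar Y_t\rangle$, this gives
\[
\tfrac12\tfrac{\mathrm d}{\mathrm dt}\|(-A_N)^{-1/2}\bar Y_t\|_{l_N^2}^2+|Y_t|_{1,N}^2=-\langle F_N(Y_t+Z_t),Y_t-\langle Y_t,e_0\rangle e_0\rangle_{l_N^2}.
\]
Write $Y=(Y+Z)-Z$ and use the dissipativity of the cubic, namely $-f(a)a\le -\tfrac{a_0}{2}a^4+C$. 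Together with Young's inequality and \eqref{es: f 2}, this bounds the right-hand side by
\[
-c\|Y_t+Z_t\|_{l_N^4}^4+C\big(1+\|Z_t\|_{l_N^\infty}^4+|\langle Y_t,e_0\rangle|^{4}\big).
\]
Gronwall's lemma then yields $\int_0^t|Y_s|_{1,N}^2\,\mathrm ds+\int_0^t\|Y_s+Z_s\|_{l^4_N}^4\,\mathrm ds\le C(1+\|y\|_{1,N}^{k_1}+\sup_{s\le t}\|Z_s\|_{l_N^\infty}^{k_2})$. Combined with \eqref{ineq: l_infty to l2}, it also gives pointwise-in-time bounds of $l^2$ type.

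\textbf{Step 2: the $|\cdot|_{1,N}$ estimate.} Test with $-A_NY_t$:
\[
\tfrac12\tfrac{\mathrm d}{\mathrm dt}|Y_t|_{1,N}^2+|Y_t|_{3,N}^2=-\langle F_N(Y_t+Z_t),A_N^2Y_t\rangle_{l_N^2}.
\]
One option is to split $F_N(Y+Z)=F_N(Y)+[F_N(Y+Z)-F_N(Y)]$ and move one $A_N$ onto $F_N$, which produces a discrete-gradient form. The cleaner route is to bound the right-hand side by $\|F_N(Y_t+Z_t)\|_{l^2_N}|Y_t|_{2,N}$, interpolate $|Y|_{2,N}\le |Y|_{1,N}^{1/2}|Y|_{3,N}^{1/2}$, and control $\|F_N(Y+Z)\|_{l_N^2}\le C(1+\|Y\|_{l^6_N}^3+\|Z\|_{l^\infty_N}^3)$. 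Then \eqref{ineq: l_6 to l2}, applied to $\|Y\|_{l^6_N}^3$, gives $\|A_NY\|^{1/2}\|Y\|^{5/2}$. After Young's inequality this absorbs into $\tfrac12|Y|_{3,N}^2$, leaving a term that is polynomial in $|Y|_{1,N}$ and $\|Y\|_{l^2_N}$ but only mildly superlinear.

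I expect the main obstacle to be closing this estimate without blow-up. The remaining superlinear term must be handled by a Gronwall argument whose weight, roughly $\|Y\|^{k}_{l^2_N}$ or $\|Y+Z\|_{l^4}^4$, is already integrable by Step 1, so the growth is linear in $|Y|_{1,N}^2$ with an integrable coefficient. If the direct interpolation leaves the wrong exponent, I would instead use the mild formulation $Y_t=e^{-A_N^2t}y+\int_0^tA_Ne^{-A_N^2(t-s)}F_N(Y_s+Z_s)\,\mathrm ds$ together with \eqref{ineq: e^{-A_N^2t}(-A_N)^r to l2} at $\gamma=3/2$, which gives the integrable kernel $(t-s)^{-3/4}$. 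Bounding $\|F_N(Y_s+Z_s)\|_{l_N^2}$ through \eqref{ineq: l_infty to l2} and the Step 1 bounds then allows the quantities to be bootstrapped. Tracking the exponents through these two substitutions, with cubic growth, the $l^6$ interpolation and Young's inequality, produces the powers $36$ on $\|y\|_{1,N}$ and $27$ on $\sup_s\|Z_s\|_{l^\infty_N}$ in \eqref{es: solution_perturbed problem}. All constants come only from the $N$-uniform inequalities of the preceding lemma.
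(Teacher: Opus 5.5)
There is a genuine gap. Your primary Step 2 cannot work, and your Step 1 does not supply what your fallback needs.

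\textbf{Why the $H^1$ energy estimate fails.} The bound $|\langle F_N(Y_t+Z_t),A_N^2Y_t\rangle_{l^2_N}|\le\|F_N(Y_t+Z_t)\|_{l^2_N}|Y_t|_{2,N}$ is off by two powers of $A_N$. Cauchy--Schwarz gives only one of two options:
\begin{itemize}
\item $\|F_N(Y_t+Z_t)\|_{l^2_N}|Y_t|_{4,N}$, which the dissipation $|Y_t|_{3,N}^2$ cannot absorb;
\item $|F_N(Y_t+Z_t)|_{1,N}|Y_t|_{3,N}$, which needs a discrete gradient of $Z_t$.
\end{itemize}
The right-hand side of \eqref{es: solution_perturbed problem} contains only $\sup_s\|Z_s\|_{l^\infty_N}$, so no derivative may ever fall on $F_N(Y+Z)$. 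The same objection rules out your other option of moving an $A_N$ onto $F_N(Y+Z)-F_N(Y)$. The mild formulation with kernel $(t-s)^{-3/4}$ is therefore forced, and that is the paper's route.

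\textbf{Why the fallback lacks its input.} To bound $\int_0^t(t-s)^{-3/4}\|F_N(Y_s+Z_s)\|_{l^2_N}\,\mathrm ds$ you must control $\|Y_s\|_{l^6_N}^3$, and that needs pointwise-in-time control of $\|Y_s\|_{l^2_N}$. Your $H^{-1}$ test gives only $\sup_t\|(-A_N)^{-1/2}\bar Y_t\|_{l^2_N}$ and $\int_0^t|Y_s|_{1,N}^2\,\mathrm ds$. The inequality \eqref{ineq: l_infty to l2} goes in the wrong direction, so the claimed ``pointwise-in-time bounds of $l^2$ type'' do not follow. Interpolating between the $H^{-1}$ and $H^1$ levels makes $\|F_N\|_{l^2_N}$ grow like $|Y|_{1,N}^2$. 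With a singular kernel and only $L^2$-in-time control of $|Y|_{1,N}$, that Volterra inequality does not close.

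\textbf{The missing ingredient: the $L^2$-level energy estimate.} Test \eqref{eq: perturbed problem 1} with $Y_t$ itself:
\[
\tfrac12\|Y_t\|^2_{l^2_N}+\int_0^t\|A_NY_s\|^2_{l^2_N}\,\mathrm ds=\tfrac12\|y\|^2_{l^2_N}+\int_0^t\langle F_N(Y_s+Z_s),A_NY_s\rangle_{l^2_N}\,\mathrm ds .
\]
Here $F_N$ pairs with $A_NY_s$ exactly at the level of the dissipation, so $Z\in l^\infty_N$ suffices. The estimate then closes as follows:
\begin{itemize}
\item Split $F_N(Y+Z)=F_N(Y)+[F_N(Y+Z)-F_N(Y)]$.
\item Handle the cubic part with the discrete monotonicity $\langle Y^3,A_NY\rangle_{l^2_N}\le 0$.
\item Handle the rest with Young's inequality.
\item Control the resulting $\int_0^t\|Y_s+Z_s\|^4_{l^4_N}\,\mathrm ds$ by your (correct) $H^{-1}$ step.
\end{itemize}
This yields
\[
\sup_t\|Y_t\|^2_{l^2_N}+\int_0^t\|A_NY_s\|^2_{l^2_N}\,\mathrm ds\le C\big(1+\|y\|_{1,N}^8+\sup_s\|Z_s\|^6_{l^\infty_N}\big).
\]

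\textbf{How the bootstrap then closes.} The paper bounds $\|Y_t\|_{l^6_N}$ by a second mild-formulation estimate with kernel $(t-s)^{-21/32}$. It applies \eqref{ineq: l_6 to l2} and H\"older's inequality against $\int_0^t\|A_NY_s\|^2_{l^2_N}\,\mathrm ds$. This gives $\|Y_t\|_{l^6_N}\le C(1+\|y\|_{1,N}^{12}+\sup_s\|Z_s\|_{l^\infty_N}^{9})$. Cubing this inside the $(t-s)^{-3/4}$ estimate for $\|(-A_N)^{1/2}Y_t\|_{l^2_N}$ produces the exponents $36$ and $27$.
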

\begin{proof} 
	
	Due to the definition of $\|\cdot\|_{1,N}$, we need estimate $\|Y_t\|_{l^2_N}$ and  $\left\|(-A_N)^{\frac{1}{2}}Y_t\right\|_{l^2_N}$. Hence 	we divide our proof into two steps.\par

	\textbf{Step 1: } We first verify 
	\begin{equation} \label{es: Y }
		\begin{aligned}
			\left\|Y_t\right\|^{2}_{l^2_N} +
			\int_{0}^{t}\left\|A_NY_s\right\|^2_{l^2_N}\mathrm{d} s \leq C \left(1+\left\|  y\right\|^8_{1,N}+ \sup_{s\in[0, t]}\left\|Z_s\right\|_{l^\infty_N}^6\right).  
		\end{aligned}
	\end{equation}
	
	Taking the inner product on \eqref{eq: perturbed problem 1} with $ Y_t $ and integrating with respect to time, we obtain 
	\begin{equation} \label{es: V  1}
		\begin{aligned}
			&\frac{1}{2}\left\| Y_t\right\|^2_{l^2_N}+\int_{0}^{t}\left\|A_N Y_s\right\|^2_{l^2_N}\mathrm{d} s  \\
			= &\frac{1}{2}\left\|y\right\|^2_{l^2_N}
			+\int_{0}^{t}  \big\langle F_N(Y_s+Z_s)-F_N(Y_s), A_NY_s \big\rangle _{l^2_N}\mathrm{d} s +\int_{0}^{t}\big \langle F_N(Y_s), A_N Y_s  \big\rangle _{l^2_N}\mathrm{d} s .
		\end{aligned}
	\end{equation}
	Since $$
	|f(x+y)-f(x)|\leq C(1+|x|^2+|y|^2)|y|,\quad \forall\ x,\ y\in\mathbb{R},
	$$
	it follows from  H\"{o}lder's inequality and Young's inequality that
	\begin{equation} \label{es: < F_N(U_s^N)-F_N(Y_s), A_N Y_s>}
		\begin{aligned}
			&\int_{0}^{t}  \big\langle F_N(Y_s+Z_s)-F_N(Y_s), A_N Y_s \big\rangle _{l^2_N}\mathrm{d} s\\
			\leq&C \int_{0}^{t}  \left\|Z_s \right\|_{l^\infty_N}(1+ \left\| Y_s \right\|^2_{l^4_N}+ \left\| Z_s \right\|^2_{l^4_N})  \left\|A_N Y_s \right\|_{l^2_N} \mathrm{d} s\\ 
			\leq& \frac{1}{4}\int_{0}^{t} \left\|A_N Y_s \right\|^2_{l^2_N} \mathrm{d} s
			+C\sup_{s\in[0, t]}\left\|  Z_s \right\|^2_{l^\infty_N}\left(1+ \int_{0}^{t}\left\|  Y_s+Z_s \right\|^4_{l^4_N} \mathrm{d} s+\int_{0}^{t} \left\|  Z_s \right\|^4_{l^\infty_N} \mathrm{d} s   \right).
		\end{aligned}
	\end{equation}
	Due to the fact $ \big\langle Y_t^3, A_N Y_t \big\rangle _{l^2_N}\leq 0 $, one can have 
	\begin{align}
		\int_{0}^{t} \big\langle F_N(Y_s), A_N Y_s \big\rangle _{l^2_N}\mathrm{d} s 
		\leq &\frac{1}{4}\int_{0}^{t} \left\|A_N Y_s \right\|^2_{l^2_N} \mathrm{d} s+
		C\left(1+\int_{0}^{t} \left\| Y_s\right\|^4_{l^4_N} \mathrm{d} s\right)\nonumber\\
		\leq &\frac{1}{4}\int_{0}^{t} \left\|A_N Y_s \right\|^2_{l^2_N} \mathrm{d} s+
		C\left(1+ \int_{0}^{t} \left\| Y_s+Z_s \right\|^4_{l^4_N} \mathrm{d} s +\int_{0}^{t}\left\|  Z_s \right\|^4_{l^\infty_N} \mathrm{d} s\right).
		\label{es: <  F_N(V^N(s)), A_N V^N(s)>}
	\end{align} 
	Substituting \eqref{es: < F_N(U_s^N)-F_N(Y_s), A_N Y_s>} and \eqref{es: <  F_N(V^N(s)), A_N V^N(s)>}   into \eqref{es: V  1} yields
	\begin{equation} \label{es: Y   1.1}
		\begin{aligned}
			\left\|Y_t\right\|^{2}_{l^2_N} +
			\int_{0}^{t}\left\|A_NY_s\right\|^2_{l^2_N}\mathrm{d} s \leq \left\|y\right\|^2_{l^2_N}+C\Big(1+ \sup_{s\in[0, t]}\left\|  Z_s \right\|^2_{l^\infty_N}\Big)\left(1+ \int_{0}^{t}\left\|  Y_s+Z_s \right\|^4_{l^4_N} \mathrm{d} s+\int_{0}^{t} \left\|  Z_s \right\|^4_{l^\infty_N} \mathrm{d} s   \right).
		\end{aligned}
	\end{equation}
	In what follows we estimate the term $\int_{0}^{t}\left\|  Y_s+Z_s \right\|^4_{l^4_N} \mathrm{d} s$.
	According to \eqref{eq: perturbed problem 1}, for $t\in (0,T]$ one can have $\left\langle Y_t,e_0\right\rangle =\left\langle y,e_0\right\rangle $ and
	
	\begin{equation*} 
		\mathrm{d} \left\langle Y_t-y,e_j\right\rangle +\lambda_{N,j}^2 \left\langle Y_t,e_j\right\rangle \mathrm{d} t=-\lambda_{N,j} \left\langle F_N(Y_t+Z_t),e_j\right\rangle \mathrm{d} t, \ j=1,2,\cdots,N-1.
	\end{equation*}
	Multiplying the above equation by $\lambda_{N,j}^{-1}h\left\langle Y_t-y,e_j\right\rangle$ and integrating with respect to time, for $j=1,2,\cdots,N-1$, we obtain 
	\begin{equation*} 
		\begin{aligned}
			&\frac{h \left\langle Y_t-y,e_j \right\rangle^2 }{2\lambda_{N,j}}+ \int_{0}^{t}h \big\langle(-A_N)^{\frac{1}{2}}Y_s,e_j \big\rangle^2 \mathrm{d} s \\
			= &\int_{0}^{t}h \big\langle(-A_N)^{\frac{1}{2}}Y_s,e_j \big\rangle\big\langle(-A_N)^{\frac{1}{2}}y,e_j \big\rangle \mathrm{d} s -\int_{0}^{t} h\left\langle F_N(Y_s+Z_s),e_j\right\rangle \left\langle Y_s-y,e_j\right\rangle  \mathrm{d} s.
		\end{aligned}
	\end{equation*} 
	Notice $\lambda_{N,0}=0$ and $\left\langle Y_t-y,e_0\right\rangle =0$. By summing up for $j$ from $1$ to $N-1$, we obtain
	\begin{equation}\label{eq: (-A_N)^{1/2}Y(t)  1.1} 
		\begin{aligned}
			&\sum_{j=1}^{N-1}\frac{\left\langle Y_t-y,e_j\right\rangle^2_{l^2_N}}{2\lambda_{N,j}}+ \int_{0}^{t}\left\|(-A_N)^{\frac{1}{2}}Y_s\right\|^2_{l^2_N}\mathrm{d} s \\
			= & \int_{0}^{t}\big\langle(-A_N)^{\frac{1}{2}}Y_s,(-A_N)^{\frac{1}{2}}y \big\rangle_{l^2_N}\mathrm{d} s -\int_{0}^{t} \langle F_N(Y_s+Z_s), Y_s-y \rangle_{l^2_N}\mathrm{d} s,
		\end{aligned}
	\end{equation}
	which leads to
	\begin{equation}\label{eq: (-A_N)^{1/2}Y(t)  1.2} 
		\begin{aligned}
			&\frac{1}{2}	\int_{0}^{t}\left\|(-A_N)^{\frac{1}{2}}Y_s\right\|^2_{l^2_N}\mathrm{d} s +\int_{0}^{t}\langle F_N(Y_s+Z_s),  Y_s+Z_s \rangle _{l^2_N}\mathrm{d} s  \\
			\leq  &\frac{t}{2}\left\|(-A_N)^{\frac{1}{2}}y\right\|^2_{l^2_N}+\int_{0}^{t}\langle F_N(Y_s+Z_s), Z_s+y \rangle _{l^2_N}\mathrm{d} s.
		\end{aligned}
	\end{equation}
	We need to estimate the second term on the left-hand side and  the second term on the right-hand side of Eq.  \eqref{eq: (-A_N)^{1/2}Y(t)  1.2} . Due to the definition of $F_N$, H\"{o}lder's inequality and Young's inequality, one can have 
	\begin{equation}\label{eq: < F_N(Y+Z), Y+Z >} 
		\begin{aligned}
			& \int_{0}^{t} \big\langle F_N(Y_s+Z_s),Y_s+Z_s \big\rangle_{l^2_N}\mathrm{d} s  \geq \frac{7}{8}a_0\int_{0}^{t}  \left\|Y_s+Z_s\right\|_{l^4_N}^4\mathrm{d}s-C,
		\end{aligned}
	\end{equation}
	and
	\begin{equation}\label{eq: < F_N(Y+Z), Z >} 
		\begin{aligned}
			\int_{0}^{t} \big\langle F_N(Y_s+Z_s), Z_s+y \big\rangle _{l^2_N}\mathrm{d} s   
			\leq& \frac{5}{4}a_0 \int_{0}^{t}   \left\|Y_s+Z_s\right\|_{l^4_N}^3 \left\|Z_s+y\right\|_{l^4_N} \mathrm{d} s 
			+C \int_{0}^{t}  \left\|Z_s+y\right\|_{l^1_N} \mathrm{d} s  \\
			\leq& \frac{5}{8}a_0 \int_{0}^{t}   \left\|Y_s+Z_s\right\|_{l^4_N}^4  \mathrm{d} s 
			+C \int_{0}^{t}  \left\|Z_s\right\|_{l^4_N}^4 \mathrm{d} s+C\left(\|y\|_{l^\infty_N}^4+1\right) . \\
		\end{aligned}
	\end{equation}
	By substituting  \eqref{eq: < F_N(Y+Z), Y+Z >}  and \eqref{eq: < F_N(Y+Z), Z >}    into \eqref{eq: (-A_N)^{1/2}Y(t)  1.2}   and applying \eqref{ineq: l_infty to l2}, we derive\begin{equation} \label{es: (A)^{-1/2}Y }
		\begin{aligned}
			\int_{0}^{t}\left\|(-A_N)^{\frac{1}{2}}Y_s\right\|^2_{l^2_N}\mathrm{d} s   +
			\int_{0}^{t}\left\|Y_s+Z_s\right\|^4_{l^4_N}\mathrm{d} s 
			\leq  C \left(1+\left\|y\right\|^4_{1,N}+\int_{0}^{t}  \left\|Z_s\right\|_{l^4_N}^4  \mathrm{d} s\right).
		\end{aligned}
	\end{equation} 
	This combined with \eqref{es: Y   1.1}  leads to  \eqref{es: Y }.
	
	\textbf{Step 2: }		Next we show
	\begin{equation}  \label{es: (-A_n)^1/2 Y}
		\begin{aligned}
			\left\|(-A_N)^\frac{1}{2}Y(t)\right\|_{l^2_N} 
			\leq&   	C \left(1+\|y\|_{1,N}^{36} + \sup_{s\in[0, t]}\left\|Z_s\right\|_{l^\infty_N}^{27}\right).  
		\end{aligned}
	\end{equation}  
	Note that the mild solution to \eqref{eq: perturbed problem 1} can be written as $ Y_t=e^{-A_N^2t}y+\int_0^t A_Ne^{-A_N^2(t-s)}F_N(Y_s+Z_s)\ \mathrm{d} s$. 
	It follows from   \eqref{ineq: e^{-A_N^2t}(-A_N)^r to l2} that
	\begin{equation} \label{es: (-A_n)^1/2 Y 1.1}
		\begin{aligned}
			\left\|(-A_N)^\frac{1}{2}Y_t\right\|_{l^2_N}\leq&   \left\|(-A_N)^\frac{1}{2}e^{-A_N^2t}y\right\|_{l^2_N}+
			\int_{0}^{t}  \left\|(-A_N)^\frac{3}{2}e^{-A_N^2(t-s)}  F_N(Y_s+Z_s)\right\|_{l^2_N}\mathrm{d} s\\
			\leq&  C\left(\left\|(-A_N)^\frac{1}{2}y\right\|_{l^2_N}+ \int_{0}^{t}(t-s)^{-\frac{3}{4}} \left\| F_N(Y_s+Z_s )\right\|_{l^2_N}\mathrm{d} s\right)\\
			\leq&  C\left(1+\left\|(-A_N)^\frac{1}{2}y\right\|_{l^2_N}+\sup_{s\in[0, t]}\left\|Z_s\right\|_{l^\infty_N}^{3}+ \int_{0}^{t}(t-s)^{-\frac{3}{4}}\left\|Y_s\right\|_{l^6_N}^3\mathrm{d} s  \right) 
		\end{aligned}
	\end{equation} 
	Due to \eqref{ineq: l_infty to l2}, \eqref{ineq: e^{-A_N^2t}(-A_N)^r to l2}, \eqref{estimate: e^{-x} 2} with $\alpha=\frac{13}{16}$ and H\"{o}lder's inequality, one can have 
	\begin{align}
		\left\|Y_t\right\|_{l^6_N}\leq&   \left\|e^{-A_N^2t}y\right\|_{l^6_N}+
		\int_{0}^{t}  \left\| A_Ne^{- A_N^2(t-s)}F_N(Y_s+Z_s)\right\|_{l^6_N}\mathrm{d} s\nonumber\\
		\leq&  \left\|e^{-A_N^2t}y\right\|_{l^\infty_N}+
		\int_0^t\Big\|\sum_{j=1}^{N-1} - \lambda_{N,j}e^{-\lambda_{N,j}^2(t-s)}\left\langle F_N(Y_s+Z_s), e_j\right\rangle  e_j\Big\|_{l^6_N} \mathrm{d} s\nonumber \\
		\leq& 
		C  \left\|e^{-A_N^2t}y\right\|_{1,N}
		+  \sum_{j=1}^{N-1} \int_0^t\lambda_{N,j}e^{-\lambda_{N,j}^2(t-s)}\left|\left\langle F_N(Y_s+Z_s), e_j\right\rangle\right|\left\|e_j\right\|_{l^6_N}\mathrm{d} s	\nonumber		
		\\
		\leq&  C\bigg(\|y\|_{1,N}+   \sum_{j=1}^{N-1} \int_0^t \lambda_{N,j}\sqrt{h}e^{-\lambda_{N,j}^2(t-s)} \left|\left\langle F_N(Y_s+Z_s), e_j\right\rangle\right|  \mathrm{d} s	 \bigg)\nonumber\\ 
		\leq&  C\bigg(\|y\|_{1,N} + \int_0^t   \Big(\sum_{j=1}^{N-1}\lambda_{N,j}e^{-\lambda_{N,j}^2(t-s)}\Big)^\frac{1}{2}
		\Big(h\sum_{j=1}^{N-1}\lambda_{N,j}e^{-\lambda_{N,j}^2(t-s)} \left|\left\langle F_N(Y_s+Z_s), e_j\right\rangle\right|^2 \Big)^\frac{1}{2}  \mathrm{d} s\bigg)   \nonumber	 \\ 
		\leq&  C\bigg(\|y\|_{1,N}+\int_0^t (t-s)^{-\frac{13}{32}} \left\| (-A_N)^\frac{1}{2}e^{- \frac{1}{2}A_N^2(t-s)}F_N(Y_s+Z_s)\right\|_{l^2_N}  \mathrm{d} s	 \bigg)  \qquad\qquad\qquad \nonumber\\
		\leq&  C\bigg(\|y\|_{1,N}
		+\int_0^t (t-s)^{-\frac{21}{32}} \left\|  F_N(Y_s+Z_s)\right\|_{l^2_N}  \mathrm{d} s	  \bigg).\nonumber
	\end{align}
	Concerning the above estimate, we utilize  the definition of $f$, \eqref{ineq: l_6 to l2} and H\"{o}lder's inequality  to get
	\begin{align}
		\left\|Y_t\right\|_{l^6_N}	\leq&  C\bigg(1+\|y\|_{1,N}
		+\int_0^t (t-s)^{-\frac{21}{32}} \left(\left\| Y_s\right\|_{l^6_N}^3+\left\| Z_s\right\|_{l^6_N}^3         \right) \mathrm{d} s	  \bigg)\nonumber
		\\
		\leq&  C\bigg(1+\|y\|_{1,N}
		+\int_0^t (t-s)^{-\frac{21}{32}} \left\|A_N Y_s\right\|_{l_N^2}^{\frac{1}{2}}\|Y_s\|_{l_N^2}^{\frac{5}{2}}   \mathrm{d} s \bigg)\nonumber\\
		&+  C\bigg(\int_0^t (t-s)^{-\frac{21}{32}} \left\| Y_s\right\|_{l^2_N}^3    \mathrm{d} s+\int_0^t (t-s)^{-\frac{21}{32}} \left\| Z_s\right\|_{l^6_N}^3    \mathrm{d} s \bigg)\nonumber\\   
		\leq&  C\bigg(1+\|y\|_{1,N}
		+\int_0^t (t-s)^{-\frac{21}{32}} \left\| Y_s\right\|_{l^2_N}^3    \mathrm{d} s  +\int_0^t (t-s)^{-\frac{21}{32}} \left\| Z_s\right\|_{l^6_N}^3    \mathrm{d} s \bigg)\nonumber\\
		&+C \left( \int_0^t   \left\|A_N Y_s\right\|_{l_N^2}^2   \mathrm{d} s\right)^\frac{1}{4}
		\left( \int_0^t  (t-s)^{-\frac{7}{8}} \left\| Y_s\right\|_{l_N^2}^\frac{10}{3}   \mathrm{d} s\right)^\frac{3}{4}.
		\label{es: Y_l6 1}
	\end{align}
	Combining \eqref{es: Y_l6 1} with \eqref{es: Y } and \eqref{es: (-A_n)^1/2 Y 1.1}, we can obtain \eqref{es: (-A_n)^1/2 Y}. \par 
	Finally, gathering \eqref{es: Y } and \eqref{es: (-A_n)^1/2 Y} together  enables us to obtain \eqref{es: solution_perturbed problem},  which completes the proof. 
\end{proof}

As to \eqref{eq: err problem 1}, we can call it the error equation, which is necessary to  establish the strong convergence rate of the numerical solution.  Now we give an estimate for $\hat{Y}_t$.
\begin{proposition}\label{propo: err problem 1}
	Let $\hat{Y}_t$ be the solution of the error equation \eqref{eq: err problem 1}. Then
	for any $p \geq 1$, there exists a constant $C>0$ , independent of $N$, such that  
	\begin{equation}\label{es: err problem 1}
		\begin{aligned} 
			\mathbb{E} \left[ \|\hat{Y}_t \|_{l_N^2}^p\right]   \leq&  CK_2 \int_0^t(t-s)^{-\frac{1}{2}} 
			\left(\mathbb{E}\big[\big\|\hat{Z}_s\big\|_{l_N^2}^{2p}\big]\right)^\frac{1}{2} \mathrm{d} s +	C \sqrt{K_1K_2}  \bigg(  \int_{0}^{T} \left(\mathbb{E}\left[\|\hat{Z}_t\|_{l_N^{4}}^{4p}\right]\right)^\frac{1}{2}\mathrm{d}s\bigg)^\frac{1}{2},\ t\in [0,T],
		\end{aligned}  
	\end{equation} 
	where $$K_1=\max\bigg\{ 1, \sup_{s\in[0, T]}\left( \mathbb{E}\left[\|\hat{X}_s+\hat{Y}_s\|_{l_N^{ 8}}^{8p}\right]
	\right)^\frac{1}{2},\sup_{s\in[0, T]}\left( \mathbb{E}\left[\| \hat{Z}_s\|_{l_N^{ 8}}^{8p}\right]
	\right)^\frac{1}{2} \bigg\}$$ and  $$K_2= \max\bigg\{ 1, \sup_{s\in[0, T]}\left( \mathbb{E}\left[\|\hat{X}_s\|_{1,N}^{4p}\right]
	\right)^\frac{1}{2},\sup_{s\in[0, T]}\left( \mathbb{E}\left[\|\hat{X}_s+\hat{Y}_s\|_{1,N}^{4p}\right]
	\right)^\frac{1}{2},\sup_{s\in[0, T]}\left( \mathbb{E}\left[\|\hat{X}_s+\hat{Y}_s+\hat{Z}_s\|_{1,N}^{4p}\right]
	\right)^\frac{1}{2} \bigg\}. $$ 
\end{proposition}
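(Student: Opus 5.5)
We estimate $\hat Y_t$ by an energy argument in a negative-order norm, because the operator $A_N$ in front of the nonlinearity rules out a direct $l^2_N$ energy estimate. First we note that $\langle \hat Y_t,e_0\rangle=0$ for all $t$: since $A_Ne_0=0$ and $\hat Y_0=0$, the zero mode is never excited. This lets us work with $(-A_N)^{-1/2}$ on the range of $A_N$. We split the nonlinear difference as
\[
F_N(\hat X+\hat Y+\hat Z)-F_N(\hat X)=\big[F_N(\hat X+\hat Y)-F_N(\hat X)\big]+\big[F_N(\hat X+\hat Y+\hat Z)-F_N(\hat X+\hat Y)\big].
\]
The first bracket is handled by monotonicity, and the second is a perturbation driven by $\hat Z$.

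\textbf{Step 1: energy identity in the $(-A_N)^{-1/2}$ norm.} We test the equation with $(-A_N)^{-1}\hat Y_t$ (defined componentwise on $e_1,\dots,e_{N-1}$), exactly as in \eqref{eq: (-A_N)^{1/2}Y(t)  1.1}. This gives
\[
\tfrac12\|(-A_N)^{-1/2}\hat Y_t\|^2_{l^2_N}+\int_0^t|\hat Y_s|^2_{1,N}\,\mathrm ds=-\int_0^t\big\langle F_N(\hat X_s+\hat Y_s+\hat Z_s)-F_N(\hat X_s),\hat Y_s\big\rangle_{l^2_N}\mathrm ds .
\]
For the first bracket we use the one-sided Lipschitz property \eqref{es: f}, which bounds its contribution by $C\int_0^t\|\hat Y_s\|_{l^2_N}^2\,\mathrm ds$. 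We then interpolate $\|\hat Y\|^2_{l^2_N}\le\|(-A_N)^{-1/2}\hat Y\|_{l^2_N}|\hat Y|_{1,N}$ (valid on the mean-zero subspace), absorb half into the dissipation, and keep $C\int\|(-A_N)^{-1/2}\hat Y\|^2$. For the second bracket we use \eqref{es: f 2} and H\"older's inequality. This bounds it by
\[
\int_0^t\|\hat Z_s\|_{l^4_N}\,\big(1+\|\hat X_s+\hat Y_s\|^2_{l^8_N}+\|\hat Z_s\|^2_{l^8_N}\big)\,\|\hat Y_s\|_{l^2_N}\,\mathrm ds .
\]
By Young's inequality, $\|\hat Y_s\|_{l^2_N}$ is again absorbed. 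Taking $p$-th moments, applying Gronwall's inequality, and using Cauchy--Schwarz in $\Omega$ then yields the $\sqrt{K_1}$ factor multiplying $\big(\int_0^T(\mathbb E\|\hat Z\|_{l^4_N}^{4p})^{1/2}\big)^{1/2}$. The constant $K_1$ enters because of the $l^8_N$ moments.

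\textbf{Step 2: from the negative norm to $l^2_N$ via the mild formulation.} We write
\[
\hat Y_t=\int_0^tA_Ne^{-A_N^2(t-s)}\big[F_N(\hat X_s+\hat Y_s+\hat Z_s)-F_N(\hat X_s)\big]\mathrm ds .
\]
We treat the part linear in $\hat Z$ with \eqref{ineq: e^{-A_N^2t}(-A_N)^r to l2} for $\gamma=1$, which gives the kernel $(t-s)^{-1/2}$. We bound $\|F_N(a+\hat Z)-F_N(a)\|_{l^2_N}\le C(1+\|a\|^2_{l^\infty_N}+\|\hat Z\|^2_{l^\infty_N})\|\hat Z\|_{l^2_N}$. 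The $l^\infty_N$ norms are controlled by $\|\cdot\|_{1,N}$ through \eqref{ineq: l_infty to l2}. Cauchy--Schwarz in $\Omega$ then produces the factor $K_2$ times $(\mathbb E\|\hat Z_s\|^{2p}_{l^2_N})^{1/2}$; this gives the first term of \eqref{es: err problem 1}.

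For the part $F_N(\hat X+\hat Y)-F_N(\hat X)$ we use \eqref{ineq: uv to u v} with $k=1$ to write it as $\hat Y$ multiplied by a polynomial in $\hat X$ and $\hat X+\hat Y$. We move $(-A_N)^{1/2}$ onto the semigroup, which gives the kernel $(t-s)^{-3/4}$, and we measure the product in a norm controlled by $|\hat Y|_{1,N}\|\cdot\|_{1,N}^2$. Its time integral is then bounded by Step 1's dissipation term, up to a factor involving $K_2$. Combining the two parts gives \eqref{es: err problem 1}.

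\textbf{Main obstacle.} The hard part is Step 2's handling of the $\hat Y$-dependent piece. We must avoid a circular Gronwall argument in which $\hat Y$ reappears with unbounded random coefficients. We will try to close it purely with Step 1's integrated dissipation $\int|\hat Y|_{1,N}^2$ and Cauchy--Schwarz in $\Omega$, so that $K_2$ enters only as a multiplicative constant. If that fails, the fallback is to keep $\hat Y$ inside the moments $K_1$ and $K_2$, as their definitions allow, and to use Step 1 directly for the $l^2_N$ bound after the interpolation.
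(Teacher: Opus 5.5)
Your plan follows the paper's proof almost step for step. The shared steps are: the energy identity obtained by testing against $(-A_N)^{-1}\hat Y_t$; the one-sided bound \eqref{es: f} for $F_N(\hat X+\hat Y)-F_N(\hat X)$; the interpolation $\|\hat Y\|_{l^2_N}^2\le\|(-A_N)^{-1/2}\hat Y\|_{l^2_N}|\hat Y|_{1,N}$ on the mean-zero subspace; and Gronwall plus H\"older in $\Omega$, giving $\mathbb{E}\big[\big(\int_0^t\|\hat Y_s\|_{1,N}^2\,\mathrm{d}s\big)^p\big]\le CK_1\int_0^T\big(\mathbb{E}\big[\|\hat Z_s\|_{l^4_N}^{4p}\big]\big)^{1/2}\mathrm{d}s$. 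Then the mild formula is split into a $\hat Z$-part with kernel $(t-s)^{-1/2}$ and a $\hat Y$-part controlled by Lemma \ref{lem: F local Lipschitz} with $k=1$.

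One step fails as you wrote it: the kernel for the $\hat Y$-part is $(t-s)^{-1/4}$, not $(t-s)^{-3/4}$. Writing $A_N=-(-A_N)^{1/2}(-A_N)^{1/2}$ and using \eqref{ineq: e^{-A_N^2t}(-A_N)^r to l2} with $\gamma=\frac12$,
\[
\big\|A_Ne^{-A_N^2(t-s)}w\big\|_{l^2_N}\le C(t-s)^{-\frac14}\big\|(-A_N)^{\frac12}w\big\|_{l^2_N}=C(t-s)^{-\frac14}|w|_{1,N}.
\]
The exponent matters. Step 1 controls $\|\hat Y_s\|_{1,N}$ only in $L^2$ in time, so the only way to use it is Cauchy--Schwarz in time, which needs the square of the kernel to be integrable. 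With $(t-s)^{-3/4}$ you would need $\int_0^t(t-s)^{-3/2}\,\mathrm{d}s<\infty$, which fails. You would then be pushed back to pointwise-in-time control of $\|\hat Y_s\|_{1,N}$, which is exactly the circularity you were worried about. With the correct kernel,
\[
\Big(\int_0^t(t-s)^{-\frac14}a_s\|\hat Y_s\|_{1,N}\,\mathrm{d}s\Big)^2\le\int_0^t(t-s)^{-\frac12}a_s^2\,\mathrm{d}s\int_0^t\|\hat Y_s\|_{1,N}^2\,\mathrm{d}s,\qquad a_s:=1+\|\hat X_s+\hat Y_s\|_{1,N}^2+\|\hat X_s\|_{1,N}^2 .
\]
Cauchy--Schwarz in $\Omega$ then closes the estimate at once, exactly as in the paper, so your ``main obstacle'' disappears and no fallback is needed.

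A bookkeeping remark on the constant: since $\mathbb{E}[a_s^{2p}]\le CK_2^2$, one has $\big(\mathbb{E}\big[\big(\int_0^t(t-s)^{-1/2}a_s^2\,\mathrm{d}s\big)^p\big]\big)^{1/2}\le CK_2$. So this argument produces the factor $K_2\sqrt{K_1}$ in the second term, not the $\sqrt{K_1K_2}$ written in the statement. The paper's own chain of inequalities yields the same $K_2\sqrt{K_1}$, so this is a slip in the statement, not in your plan. It is harmless where the proposition is applied, because $K_1$ and $K_2$ are bounded independently of $N$ there.
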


In order to prove this proposition, we introduce the following lemma, which presents the local Lipschitz continuity and polynomial growth of $F_N$ under the semi-norm $|\cdot |_{k,N}$ for $k=1,2,3$. 
\begin{lemma}\label{lem: F local Lipschitz}  
	For any $\mu,\ \nu \in \mathbb{R}^{N}$, there exists a constant $C>0$, independent of $\mu,\ \nu$ and $N$, such that 
	\begin{align} & | \left(F_N(\mu)-F_N(\nu)\right) |_{k,N} 
		\leq C\Big(1+	\|\mu\|_{k,N}^2
		+ \|\nu \|_{k,N}^2 \Big)\|\mu-\nu\|_{k,N}, \qquad k=1,2,3, \label{ineq:  F local Lipschitz}\\
		&|F_N(\mu)\ |_{k,N}  \leq C\left(1+	\|\mu\|_{k,N}^3\right) , \qquad\qquad\qquad\qquad \qquad\qquad\qquad\ \  k=1,2,3.  \label{ineq:  F local Lipschitz   2}
	\end{align} 
\end{lemma}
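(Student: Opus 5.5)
The plan is to reduce both estimates to the algebra inequality \eqref{ineq: uv to u v}, exploiting that $F_N$ acts componentwise as a cubic polynomial. For the growth bound \eqref{ineq:  F local Lipschitz   2}, write $F_N(\mu)=a_0\mu^{3}+a_1\mu^{2}+a_2\mu+a_3\mathbf{1}$, where powers are componentwise (Hadamard) products and $\mathbf{1}=(1,\dots,1)^\top$.

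The constant vector is harmless: $\mathbf{1}$ is parallel to $e_0$, so $(-A_N)^{k/2}\mathbf{1}=0$ and $|\mathbf{1}|_{k,N}=0$. The linear term is bounded by $|\mu|_{k,N}\le\|\mu\|_{k,N}$. For the quadratic and cubic terms I apply \eqref{ineq: uv to u v} with $w=\mu\cdot\nu$. Taking $\nu=\mu$ gives $|\mu^2|_{k,N}\le C\|\mu\|_{k,N}^2$. For $\mu^3=\mu\cdot\mu^2$, however, \eqref{ineq: uv to u v} needs the full norm $\|\mu^2\|_{k,N}$, not just the seminorm. So I first record
\[
\|\mu\cdot\nu\|_{k,N}\le C\|\mu\|_{k,N}\|\nu\|_{k,N}.
\]
The $l^2_N$ part follows from $\|\mu\nu\|_{l^2_N}\le\|\mu\|_{l^\infty_N}\|\nu\|_{l^2_N}$ and \eqref{ineq: l_infty to l2}, using $\|\cdot\|_{1,N}\le\|\cdot\|_{k,N}$. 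For the intermediate seminorms $|\cdot|_{i,N}$ with $i<k$, I use \eqref{ineq: uv to u v} at level $i$, together with $\|\cdot\|_{i,N}\le\|\cdot\|_{k,N}$, which holds by the definition of these norms. This yields $|\mu^3|_{k,N}\le C\|\mu\|_{k,N}^3$. Collecting terms and applying Young's inequality ($x^2+x\le C(1+x^3)$) gives \eqref{ineq:  F local Lipschitz   2}.

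For the Lipschitz bound \eqref{ineq:  F local Lipschitz}, I factor each difference componentwise:
\[
\mu^2-\nu^2=(\mu+\nu)\cdot(\mu-\nu),\qquad \mu^3-\nu^3=(\mu^2+\mu\nu+\nu^2)\cdot(\mu-\nu).
\]
Applying \eqref{ineq: uv to u v} to each product, and the full-norm product bound above to the quadratic factors, gives
\[
|F_N(\mu)-F_N(\nu)|_{k,N}\le C\bigl(1+\|\mu\|_{k,N}+\|\nu\|_{k,N}+\|\mu\|_{k,N}^2+\|\nu\|_{k,N}^2\bigr)\|\mu-\nu\|_{k,N}.
\]
The linear powers are then absorbed into $1+\|\mu\|_{k,N}^2+\|\nu\|_{k,N}^2$.

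The only real obstacle is the bookkeeping that upgrades seminorm control to full-norm control for iterated products, i.e.\ the full-norm product bound above. Everything hinges on \eqref{ineq: uv to u v} holding uniformly in $N$ for each $k\le3$ (proved in Appendix A) and on $l^\infty_N$ control via \eqref{ineq: l_infty to l2}. Once that closure of $(\mathbb{R}^N,\|\cdot\|_{k,N})$ under componentwise products with an $N$-independent constant is in place, the rest is routine polynomial algebra.
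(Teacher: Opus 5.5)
Your proposal is correct and follows essentially the paper's argument: write $F_N(\mu)-F_N(\nu)$ componentwise as a quadratic factor in $(\mu,\nu)$ times $(\mu-\nu)$, apply \eqref{ineq: uv to u v}, and handle the constant term through $|\mathbf{1}|_{k,N}=0$. The paper instead obtains the growth bound by setting $\nu=0$ in the Lipschitz bound, which silently uses that same fact about $F_N(0)=a_3\mathbf{1}$. Your full-norm product estimate $\|\mu\cdot\nu\|_{k,N}\le C\|\mu\|_{k,N}\|\nu\|_{k,N}$ is needed to bound $\|\theta_1\|_{k,N}$ by $1+\|\mu\|_{k,N}^2+\|\nu\|_{k,N}^2$. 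The paper uses this step without comment, so your write-up is if anything more complete.
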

\begin{proof} 
	See Appendix B. 
\end{proof} 

Now we are ready to prove Proposition \ref{propo: err problem 1}

\begin{proof}[Proof of  Proposition \ref{propo: err problem 1}]
	Similar to \eqref{eq: (-A_N)^{1/2}Y(t)  1.1}, one can have
	$$		\begin{aligned}
		& \sum_{j=1}^{N-1}\frac{\big\langle \hat{Y}_t,e_j\big\rangle^2_{l^2_N}}{2\lambda_{N,j}}+ \int_{0}^{t}\left\|(-A_N)^{\frac{1}{2}}\hat{Y}_s\right\|^2_{l^2_N}\mathrm{d} s
		=-\int_{0}^{t} \left\langle F_N(\hat{X}_s+\hat{Y}_s+\hat{Z}_s)-F_N(\hat{X}_s),\hat{Y}_s\right\rangle_{l^2_N}  \mathrm{d} s, 
	\end{aligned}$$ 
	which yields   
	\begin{align}
		& \sum_{j=1}^{N-1}\frac{\big\langle \hat{Y}_t,e_j\big\rangle^2_{l^2_N}}{2\lambda_{N,j}}+ \int_{0}^{t}\left\|(-A_N)^{\frac{1}{2}}\hat{Y}_s\right\|^2_{l^2_N}\mathrm{d} s \nonumber \\ 
		\leq&  \int_{0}^{t} 
		\Big\langle F_N(\hat{X}_s )-F_N(\hat{X}_s+\hat{Y}_s),  \hat{Y}_s \Big\rangle_{l^2_N}\mathrm{d}s
		+
		\int_{0}^{t} \Big\langle F_N(\hat{X}_s+\hat{Y}_s)-F_N(\hat{X}_s+\hat{Y}_s+\hat{Z}_s),  \hat{Y}_s\Big\rangle_{l^2_N} 
		\mathrm{d}s\nonumber \\
		\leq&C\left(\int_{0}^{t} \left\|\hat{Y}_s\right\|_{l^2_N}	^2		\mathrm{d}s
		+ \int_{0}^{t} \left\| F_N(\hat{X}_s+\hat{Y}_s)-F_N(\hat{X}_s+\hat{Y}_s+\hat{Z}_s)\right\|_{l^2_N}^2			\mathrm{d}s\right).\label{es: (-A_N)^{-1/2}(U_t-tilde{U}(t))    1}
	\end{align}
	From \eqref{eq: err problem 1}, it is evident that $\big\langle \hat{Y}_t, e_0\big\rangle_{l^2_N}^2 = 0$ for any $t \in [0, T]$. 
	Hence, one can deduce 
	\begin{equation} \label{es: (-A_N)^{-1/2}(U_t-tilde{U}(t))    2.1}
		\begin{aligned}
			\int_{0}^{t} \|\hat{Y}_s \|_{l^2_N}	^2		\mathrm{d}s= \int_{0}^{t} \sum_{j=1}^{N-1}\big\langle \hat{Y}_s,e_j\big\rangle^2_{l^2_N}	\mathrm{d}s 
			\leq  \int_{0}^{t} \bigg( \sum_{j=1}^{N-1}\frac{\big\langle \hat{Y}_s,e_j\big\rangle^2_{l^2_N}}{\lambda_{N,j}}\bigg)^\frac{1}{2}
			\left\|(-A_N)^{\frac{1}{2}}\hat{Y}_s\right\|_{l^2_N} \mathrm{d}s.
		\end{aligned}
	\end{equation}
	It follows from \eqref{es: f 2} that
	\begin{equation}\label{es: (-A_N)^{-1/2}(U_t-tilde{U}(t))    2.2}
		\begin{aligned}
			\left\|F_N(\hat{X}_s+\hat{Y}_s)-F_N(\hat{X}_s+\hat{Y}_s+\hat{Z}_s)\right\|^2_{l^2_N}
			\leq C \left(1+\|\hat{X}_s+\hat{Y}_s\|_{l_N^{ 8}}^4+\|\hat{Z}_s\|_{l_N^8}^4\right)\|\hat{Z}_s\|_{l_N^{4}}^2.
		\end{aligned}
	\end{equation} 
	Upon substituting \eqref{es: (-A_N)^{-1/2}(U_t-tilde{U}(t))    2.1} and \eqref{es: (-A_N)^{-1/2}(U_t-tilde{U}(t))    2.2} into \eqref{es: (-A_N)^{-1/2}(U_t-tilde{U}(t))    1} and employing Young's inequality, we can deduce that 
	\begin{equation}  \label{es: int ||(-A)^{-1/2} E^N||    1 modify}
		\begin{aligned}
			& \sum_{j=1}^{N-1}\frac{\big\langle \hat{Y}_t,e_j\big\rangle^2_{l^2_N}}{\lambda_{N,j}}+
			\int_{0}^{t}\left\|(-A_N)^{\frac{1}{2}}\hat{Y}_s\right\|_{l^2_N}^2\mathrm{d}s \\
			\leq  & C \bigg( \int_{0}^{t}   \sum_{j=1}^{N-1}\frac{\big\langle \hat{Y}_s,e_j\big\rangle^2_{l^2_N}}{\lambda_{N,j}} \mathrm{d}s+\int_{0}^{t}   \left(1+\|\hat{X}_s+\hat{Y}_s\|_{l_N^{ 8}}^4+\|\hat{Z}_s\|_{l_N^8}^4\right)\|\hat{Z}_s\|_{l_N^{4}}^2\mathrm{d}s\bigg).
		\end{aligned}
	\end{equation}
	Furthermore, it follows from the Gronwall lemma that 
	$$\sum_{j=1}^{N-1}\frac{\big\langle \hat{Y}_t,e_j\big\rangle^2_{l^2_N}}{\lambda_{N,j}} 
	\leq  C\int_{0}^{t}   \left(1+\|\hat{X}_s+\hat{Y}_s\|_{l_N^{ 8}}^4+\|\hat{Z}_s\|_{l_N^8}^4\right)\|\hat{Z}_s\|_{l_N^{4}}^2\mathrm{d}s,  $$
	which combined with \eqref{es: int ||(-A)^{-1/2} E^N||    1 modify} yields
	\begin{equation*} \label{es: int ||(-A)^{-1/2} E^N||    1}
		\begin{aligned}
			\int_{0}^{t}\bigg(\sum_{j=1}^{N-1}\frac{\big\langle \hat{Y}_s,e_j\big\rangle^2_{l^2_N}}{\lambda_{N,j}}+ \left\|(-A_N)^{\frac{1}{2}}\hat{Y}_s\right\|_{l^2_N}^2\bigg)\mathrm{d}s 
			\leq  C \int_{0}^{T}   \left(1+\|\hat{X}_s+\hat{Y}_s\|_{l_N^{ 8}}^4+\|\hat{Z}_s\|_{l_N^8}^4\right)\|\hat{Z}_s\|_{l_N^{4}}^2\mathrm{d}s .
		\end{aligned}
	\end{equation*}
	By the above estimate and  H{\"o}lder's inequality, one deduces 
	
	\begin{equation*}
		\begin{aligned}
			\mathbb{E}\bigg[ \bigg|\int_{0}^{t}\bigg(\sum_{j=1}^{N-1}\frac{\big\langle \hat{Y}_s,e_j\big\rangle^2_{l^2_N}}{\lambda_{N,j}} +
			\left\|(-A_N)^{\frac{1}{2}}\hat{Y}_s\right\|_{l^2_N}^2\bigg)\mathrm{d}s\bigg|^p  \bigg]  
			\leq& C \mathbb{E}\left[ \left| \int_{0}^{T}\left(1+\|\hat{X}_s+\hat{Y}_s\|_{l_N^{ 8}}^4+\|\hat{Z}_s\|_{l_N^8}^4\right)\|\hat{Z}_s\|_{l_N^{4}}^2  \mathrm{d}s\right|^p  \right] \\
			\leq& C K_1   \int_{0}^{T}  \left(\mathbb{E}\left[\|\hat{Z}_s\|_{l_N^{4}}^{4p}\right]\right)^\frac{1}{2}\mathrm{d}s ,
		\end{aligned}
	\end{equation*}
	where $$K_1=\max\bigg\{ 1, \sup_{s\in[0, T]}\left( \mathbb{E}\left[\|\hat{X}_s+\hat{Y}_s\|_{l_N^{ 8}}^{8p}\right]
	\right)^\frac{1}{2},\sup_{s\in[0, T]}\left( \mathbb{E}\left[\| \hat{Z}_s\|_{l_N^{ 8}}^{8p}\right]
	\right)^\frac{1}{2} \bigg\}.$$
	Combining this with \eqref{es: (-A_N)^{-1/2}(U_t-tilde{U}(t))    2.1} shows
	
	\begin{equation}  \label{ineq: int  Y 1,N}
		\begin{aligned}
			\mathbb{E}\bigg[
			\bigg|\int_0^t  \| \hat{Y}_s \|_{1,N}^2\mathrm{d} s\bigg|^p   \bigg]&\leq 
			C\mathbb{E}\bigg[ \bigg|\int_{0}^{t}\sum_{j=1}^{N}\frac{\big\langle \hat{Y}_s,e_j\big\rangle^2_{l^2_N}}{\lambda_{N,j}} +
			\left\|(-A_N)^{\frac{1}{2}}\hat{Y}_s\right\|_{l^2_N}^2\mathrm{d}s\bigg|^p  \bigg]  
			\leq CK_1   \int_{0}^{T} \left(\mathbb{E}\left[\|\hat{Z}_s\|_{l_N^{4}}^{4p}\right]\right)^\frac{1}{2}\mathrm{d}s .
		\end{aligned}
	\end{equation}   
	Since the mild solution to \eqref{eq: err problem 1} can be written as $$ \hat{Y}_t=\int_0^t A_Ne^{-A_N^2(t-s)}\left(F_N(\hat{X}_t+\hat{Y}_t+\hat{Z}_t)-F_N(\hat{X}_t)\right) \mathrm{d} s,$$  it follows from \eqref{ineq: e^{-A_N^2t}(-A_N)^r to l2} and  Lemma \ref{lem: F local Lipschitz} that 
	\begin{equation} \label{ineq: ||Y_t||_l^2}
		\begin{aligned}
			\|\hat{Y}_t \|_{l_N^2} \leq& \int_0^t\left\|A_N  e^{-A_N^2(t-s)}\left(F_N(\hat{X}_t+\hat{Y}_t+\hat{Z}_t)-F_N(\hat{X}_t)\right)\right\|_{l_N^2} \mathrm{d} s \\
			\leq& C \int_0^t(t-s)^{-\frac{1}{2}}  \left\|F_N(\hat{X}_s+\hat{Y}_s+\hat{Z}_s)-F_N(\hat{X}_s+\hat{Y}_s) \right\|_{l_N^2}\mathrm{d} s\\
			&+C \int_0^t(t-s)^{-\frac{1}{4}}  \left\|\left(-A_N\right)^{\frac{1}{2}}\left(F_N(\hat{X}_s+\hat{Y}_s)-F_N(\hat{X}_s)\right) \right\|_{l_N^2}\mathrm{d} s  \\
			\leq& C \int_0^t(t-s)^{-\frac{1}{2}}\left(1+\big\|\hat{X}_s+\hat{Y}_s+\hat{Z}_s\big\|_{l_N^{\infty}}^2+\big\|\hat{X}_s+\hat{Y}_s \big\|_{l_N^{\infty}}^2\right)\big\|\hat{Z}_s\big\|_{l_N^2}\mathrm{d} s \\
			&+ C \int_0^t(t-s)^{-\frac{1}{4}}\left(1 + 	\|\hat{X}_s+\hat{Y}_s\|_{1,N}^2+	\|\hat{X}_s\|_{1,N}^2\right)\|\hat{Y}_s\|_{1,N}\mathrm{d} s.
		\end{aligned}
	\end{equation} 
	Using  H{\"o}lder's inequality, \eqref{ineq: ||Y_t||_l^2}, \eqref{ineq: int  Y 1,N} and \eqref{ineq: l_infty to l2}, we can obtain 
	\begin{align*}
		\mathbb{E} \left[ \|\hat{Y}_t \|_{l_N^2}^p\right] 
		\leq 	&C \int_0^t(t-s)^{-\frac{1}{2}}\mathbb{E}\bigg[\left(1+	\big\|\hat{X}_s+\hat{Y}_s+\hat{Z}_s\big\|_{l_N^{\infty}}^2+\big\|\hat{X}_s+\hat{Y}_s\big\|_{l_N^{\infty}}^2\right)^p\big\|\hat{Z}_s\big\|_{l_N^2}^p\bigg]\mathrm{d} s \\
		&+C	\mathbb{E}\bigg[ \bigg| \int_0^t(t-s)^{-\frac{1}{2}}\left(1+\|\hat{X}_s+\hat{Y}_s \|_{1,N}^4+  \|  \hat{X}_s  \|_{1,N}^4\right)\mathrm{d} s    
		\int_0^t  \|\hat{Y}_s \|_{1,N}^2\mathrm{d} s\bigg|^\frac{p}{2} \bigg]\\
		\leq&  C \int_0^t(t-s)^{-\frac{1}{2}}\left(1+	\left(\mathbb{E}\Big[\big\|\hat{X}_s+\hat{Y}_s+\hat{Z}_s\big\|_{l_N^{\infty}}^{4p}\Big]\right)^\frac{1}{2}+\left(\mathbb{E}\Big[\big\|\hat{X}_s+\hat{Y}_s \big\|_{l_N^{\infty}}^{4p}\Big]\right)^\frac{1}{2}\right)
		\left(\mathbb{E}\Big[\big\|\hat{Z}_s\big\|_{l_N^2}^{2p}\Big]\right)^\frac{1}{2} \mathrm{d} s \\
		&+	C \bigg(\mathbb{E}\bigg[
		\bigg|\int_0^t(t-s)^{-\frac{1}{2}}\left(1+\|\hat{X}_s+\hat{Y}_s \|_{1,N}^4+\|\hat{X}_s  \|_{1,N}^4\right)\mathrm{d} s\bigg|^p   \bigg]\bigg)^\frac{1}{2} \bigg(\mathbb{E}\bigg[
		\bigg|\int_0^t \| \hat{Y}_s \|_{1,N}^2\mathrm{d} s\bigg|^p   \bigg]\bigg)^\frac{1}{2} \\ 
		\leq&  CK_2 \int_0^t(t-s)^{-\frac{1}{2}} 
		\left(\mathbb{E}\Big[\big\|\hat{Z}_s\big\|_{l_N^2}^{2p}\Big]\right)^\frac{1}{2} \mathrm{d} s +	C \sqrt{K_1K_2}  \bigg(  \int_{0}^{T}\Big(\mathbb{E}\Big[\|\hat{Z}_s\|_{l_N^{4}}^{4p}\Big]\Big)^\frac{1}{2}\mathrm{d}s\bigg)^\frac{1}{2},
	\end{align*} 
	where $$K_2= \max\bigg\{ 1, \sup_{d\in[0, T]}\left( \mathbb{E}\left[\|\hat{X}_s\|_{1,N}^{4p}\right]
	\right)^\frac{1}{2},\sup_{s\in[0, T]}\left( \mathbb{E}\left[\|\hat{X}_s+\hat{Y}_s\|_{1,N}^{4p}\right]
	\right)^\frac{1}{2},\sup_{s\in[0, T]}\left( \mathbb{E}\left[\|\hat{X}_s+\hat{Y}_s+\hat{Z}_s\|_{1,N}^{4p}\right]
	\right)^\frac{1}{2} \bigg\}. $$
	The proof is completed.
\end{proof}   
\subsection{Well-posedness of the semi-discrete numerical solution}
Our first target is to show the well-posedness of Eq. \eqref{eq:eq:discrete_CH_FractionalNoise_SDE_matrix}.
\begin{theorem} 
	Suppose that $u_0 \in \mathcal{C}(\mathcal{O})$, then the problem \eqref{eq:eq:discrete_CH_FractionalNoise_SDE_matrix} has a
	unique mild solution $U^N_t\in \mathcal{C}\left([0, T] , \mathbb{R}^{N}\right) $, which satisfies
	\begin{equation} \label{eq: mild_solution_space_discrete}
		\begin{aligned}
			U_t^N= & e^{-A_N^2 t}  U_0+\int_0^t A_N e^{-A_N^2(t-s)}  F_N(U_s^N) \mathrm{d} s  +\sqrt{\sigma^2N/\pi}  \int_0^t e^{-A_N^2(t-s)} \mathrm{d} \beta^H_s, \quad t \in[0, T].
		\end{aligned}
	\end{equation}
\end{theorem}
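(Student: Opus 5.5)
The plan is to reduce \eqref{eq:eq:discrete_CH_FractionalNoise_SDE_matrix} to a random ODE by subtracting the stochastic convolution, and then to apply Proposition \ref{propo: perturbed problem} as an a priori bound that excludes blow-up.

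\textbf{The linear part.} Set
\[
Z_t:=\sqrt{\sigma^2N/\pi}\int_0^t e^{-A_N^2(t-s)}\,\mathrm{d}\beta^H_s .
\]
This is a finite-dimensional Gaussian process. Each component of $\beta^H$ is, as a function of $t$, a fractional Brownian motion with Hurst index $H_1\ge\frac12$, up to the spatial-increment factor. The integrand is deterministic and smooth in $s$. Hence $Z$ is well defined through \eqref{ineq: Ito Isometry}. Since $A_N^2$ is a bounded matrix, integration by parts gives
\[
Z_t=\sqrt{\sigma^2N/\pi}\Big(\beta^H_t-\int_0^t A_N^2e^{-A_N^2(t-s)}\beta^H_s\,\mathrm{d}s\Big).
\]
It follows that $Z$ has continuous paths, because $\beta^H$ does by Kolmogorov's criterion. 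In particular $\sup_{s\in[0,T]}\|Z_s\|_{l^\infty_N}<\infty$ almost surely.

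\textbf{The random ODE.} Write $U^N_t=Y_t+Z_t$. Then $Y$ solves \eqref{eq: perturbed problem 1} pathwise, with $y=U_0$ and this $Z$. For each $\omega$, this is an ODE in $\mathbb{R}^N$ with a continuous-in-$t$ forcing. The vector field $-A_N^2Y+A_NF_N(Y+Z_t)$ is locally Lipschitz in $Y$ by \eqref{es: f 2}. The Picard–Lindelöf theorem therefore gives a unique local solution on some maximal interval $[0,T^*)$. Adaptedness follows because $Y$ is a continuous functional of the path of $Z$ up to time $t$.

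\textbf{Globality.} This is the main step. Proposition \ref{propo: perturbed problem} gives, on $[0,T^*)$,
\[
\|Y_t\|_{1,N}\le C\Big(1+\|U_0\|_{1,N}^{36}+\sup_{s\le T}\|Z_s\|_{l^\infty_N}^{27}\Big)<\infty .
\]
Here $\|U_0\|_{1,N}$ is finite since $u_0\in\mathcal{C}(\mathcal{O})$ and $N$ is fixed. The bound is uniform in $t<T^*$, so $Y$ cannot blow up and $T^*>T$. One must check that Proposition \ref{propo: perturbed problem} is applied only on $[0,t]$ with $t<T^*$. Its proof is a pathwise energy argument valid up to any time at which the solution exists, so no change is needed. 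If one prefers to avoid even that, a cruder fixed-$N$ energy estimate suffices: take the inner product with $-A_N^{-1}$ restricted to the mean-zero part, and use the sign of the cubic term.

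\textbf{Conclusion.} By variation of constants applied to the ODE for $Y$,
\[
Y_t=e^{-A_N^2t}U_0+\int_0^tA_Ne^{-A_N^2(t-s)}F_N(Y_s+Z_s)\,\mathrm{d}s .
\]
Adding $Z_t$ yields \eqref{eq: mild_solution_space_discrete}, and $U^N\in\mathcal{C}([0,T],\mathbb{R}^N)$.

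For uniqueness, take any other continuous mild solution $\tilde U$. Then $\tilde U-Z$ is a mild, hence classical, solution of the same locally Lipschitz ODE. A Gronwall argument on the common interval where both stay bounded shows $\tilde U=U^N$.
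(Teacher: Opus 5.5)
Your proof is correct, but it takes a different route from the paper.

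The paper only observes, via \eqref{es: f 2}, that the drift $-A_N^2\mu+A_NF_N(\mu)$ is Lipschitz on balls of $\mathbb{R}^N$ with a constant $L_N^R$. It then invokes a general existence--uniqueness result for SDEs driven by fractional noise \cite[Remark 3.1.5]{Mishura2008} to get a unique continuous adapted strong solution, and reads off \eqref{eq: mild_solution_space_discrete} from the variation of constants formula.

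You instead exploit that the noise is additive. Subtracting the Ornstein--Uhlenbeck part $Z_t$, which is precisely $\mathscr{O}^N_t$, turns the problem into a random ODE that you solve pathwise by Picard--Lindel\"{o}f. Blow-up is then excluded either by the pathwise bound of Proposition \ref{propo: perturbed problem}, or by the fixed-$N$ energy estimate in the $(-A_N)^{-1}$-weighted norm on the mean-zero part combined with conservation of $\langle Y_t,e_0\rangle$.

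The paper's version is shorter. However, a local Lipschitz bound alone only yields a local solution, so non-explosion for the cubically growing drift is left to the citation rather than argued. Your version makes that step explicit. It also uses nothing about $\beta^H$ beyond continuity of its paths, so $H_1=\frac12$ and $H_1>\frac12$ are handled together without any SDE theory for fractional noise. Finally, it already sets up the splitting $U^N_t=V^N_t+\mathscr{O}^N_t$ that the paper uses later for Theorem \ref{th: U^N regularity}.

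The cost is that well-posedness now depends on Proposition \ref{propo: perturbed problem}, though, as you remark, a cruder fixed-$N$ energy estimate would suffice.
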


\begin{proof}
	Utilizing  \eqref{es: f 2}, we can obtain that for $\mu, \nu\in \mathbb{R}^{N}$, $|\mu|, |\nu| \leq R$ and $R \in (0, \infty)$, there exists a  positive  constant $L_N^R$ such that 
	$$
	\begin{aligned}
		\left|\left(A_N^2\mu+ A_NF_N(\mu)\right)-\left(A_N^2\nu+ A_NF_N(\nu)\right)\right| \leq L_N^R |\mu-\nu|.
	\end{aligned}
	$$
	By applying \cite[Remark 3.1.5]{Mishura2008}, 
	we deduce that  there is a unique strong solution $\{U^N_t\}_{t \in[0, T]}$ to the problem \eqref{eq:eq:discrete_CH_FractionalNoise_SDE_matrix} , and it is continuous and $\left\{\mathscr{F}_t\right\}$-adapted. Furthermore, it follows from the variations of constants formula that $U^N_t$ satisfies \eqref{eq: mild_solution_space_discrete}. The proof is completed. 
\end{proof}

Next, For any $w\in \mathcal{C}(\mathcal{O})$ we define the operators 
$$\mathrm{P}_N w(x):=\left\{\begin{array}{lll}
	w(x_1), & x\in[0,x_1],\\
	w(x_i)+ \frac{x-x_i }{h}\left(w(x_{i+1})-w(x_i)\right),& x \in[x_{i},x_{i+1}],\ i=1,2,\cdots, N-1,\\
	w(x_N), & x\in[x_N,\pi].
\end{array}  \right.$$
and
$$\Delta_N w(x):=\left\{\begin{array}{lll}
	\left(-w(x_1)+w(x_2)\right) / h^2, & x \in [0,x_1+\frac{h}{2}), \\ 
	\left(w(x_{i-1})-2w(x_i)+w(x_{i+1})\right) / h^2, & x \in [x_i-\frac{h}{2},x_i+\frac{h}{2}),\ i=2,3,\cdots, N-1,\\
	\left(w(x_{N-1})-w(x_N)\right) / h^2, &x \in [x_N-\frac{h}{2},\pi] .
\end{array}  \right.$$ 
Further, we introduce a discrete Green function $$
G_t^N(x, y)=\sum_{j=0}^{N-1} e^{-\lambda_{N,j}^2 t} \phi_{N,j}(x) \phi_j\left(\kappa_N(y)\right),
$$
where $
\kappa_N(x):=\left\{\begin{array}{ll}
	x_i, & x\in[x_i-\frac{h}{2},x_i+\frac{h}{2}),\ i=1,2,\cdots, N-1,\\
	x_N,& x\in[x_N-\frac{h}{2},x_N+\frac{h}{2}],
\end{array}  \right. 
$
and 
$
\phi_{N,j}(x):= \mathrm{P}_N  \phi_j(x).
$
Similar to   \cite[section 2]{Gyongy1998}, by \eqref{eq: mild_solution_space_discrete} and $u^N(t, x_i)=\sum_{j=1}^{N-1}\left\langle U^N_t, e_j\right\rangle e_j(k)$ it is easy to check 
\begin{equation} \label{eq:eq:discrete_CH_FractionalNoise_SDE_spatial} 
	\begin{aligned}
		u^N(t, x):= & \int_{\mathcal{O}} G_t^N(x, y) u_0\left(\kappa_N(y)\right) \mathrm{d} y+\int_0^t \int_{\mathcal{O}} \Delta_N G_{t-s}^N(x, y) f(u^N\left(s, \kappa_N(y)\right)) \mathrm{d} y \mathrm{d} s \\
		& +\sigma\int_0^t \int_{\mathcal{O}} G_{t-s}^N(x, y)  B^H(\mathrm{d} s, \mathrm{d} y), \qquad\qquad\qquad\quad \forall \ (t, x) \in[0, T] \times \mathcal{O} .
	\end{aligned}
\end{equation} 
Since $\Delta_N \phi_j\left(\kappa_N(y)\right)= -\lambda_{N,j} \phi_j\left(\kappa_N(y)\right)$, then $ \Delta_N G_{t-s}^N(x, y) =-\sum_{j=0}^{N-1}\lambda_{N,j} e^{-\lambda_{N,j}^2 t} \phi_{j, N}(x) \phi_j\left(\kappa_N(y)\right).$  \par 
The relationship between $G$ and $G^N$ are revealed in the following lemma, which plays an essential role in the error analysis. 
\begin{lemma}   \cite[Lemma 3.2]{Cardon2000}
	Given $\alpha_1 \in (0,1)$ and $\alpha_2 \in (0,2)$, there exists a constant $C>0$ such that for every $t \in(0, T]$
	\begin{align}
		& \sup_{x \in\mathcal{O}}\int_{\mathcal{O}}\left|\Delta_N G_t^N(x, y)-\Delta G_t(x, y)\right| \mathrm{d} y   \leq C  t^{-\frac{2\alpha_1+5}{8}} N^{-\alpha_1} ,\label{es: Delta_N G_s^N-Delta G_s(x, y) } \\
		& \sup_{x \in\mathcal{O}} \int_{\mathcal{O}}\left|G_t^N(x, y)-G_t(x, y)\right|^2 \mathrm{d} y  \leq C t^{-\frac{\alpha_2+1}{4}} N^{-\alpha_2} . \label{es: G_s^N-G_s(x, y) }
	\end{align}
	
\end{lemma}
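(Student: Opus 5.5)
We will reproduce the argument of Cardon-Weber for the Neumann problem and work entirely with the spectral representations
\[
G_t(x,y)=\sum_{j\ge0}e^{-\lambda_j^2t}\phi_j(x)\phi_j(y),\qquad G^N_t(x,y)=\sum_{j=0}^{N-1}e^{-\lambda_{N,j}^2t}\phi_{N,j}(x)\phi_j(\kappa_N(y)).
\]
Fix $x$ and split the difference into three pieces. The first is the tail $\sum_{j\ge N}e^{-\lambda_j^2 t}\phi_j(x)\phi_j(y)$. The second is the eigenvalue error $\sum_{j<N}(e^{-\lambda_{N,j}^2t}-e^{-\lambda_j^2t})\phi_{N,j}(x)\phi_j(\kappa_N(y))$. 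The third is the eigenfunction/interpolation error
\[
\sum_{j<N}e^{-\lambda_j^2t}\bigl(\phi_{N,j}(x)\phi_j(\kappa_N(y))-\phi_j(x)\phi_j(y)\bigr).
\]
For $\Delta_NG^N-\Delta G$ we treat the same three pieces with an extra factor $\lambda_{N,j}$ or $\lambda_j$, plus a fourth piece $(\lambda_{N,j}-\lambda_j)$ times the exponentials.

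\textbf{Elementary inputs.} Three facts about the discrete spectrum do most of the work. First, from $\lambda_{N,j}=\frac{4N^2}{\pi^2}\sin^2\frac{j\pi}{2N}$ and $\frac{2}{\pi}\theta\le\sin\theta\le\theta$ we get $c j^2\le\lambda_{N,j}\le j^2$ for $j<N$, so the discrete semigroup decays like the continuous one. Second, $0\le\lambda_j-\lambda_{N,j}\le Cj^4N^{-2}$. Third, $|\phi_{N,j}(x)-\phi_j(x)|+|\phi_j(\kappa_N(y))-\phi_j(y)|\le C\min(1,jN^{-1})$.

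For each piece we take $|a-b|\le|a-b|^{\alpha}(|a|+|b|)^{1-\alpha}$. This converts $j^4N^{-2}t$ or $jN^{-1}$ into $(j/N)^{\alpha}$-type factors at the cost of polynomial weights in $j$. The mean value theorem with the first fact gives
\[
\bigl|e^{-\lambda_{N,j}^2t}-e^{-\lambda_j^2t}\bigr|\le C\,t\,j^6N^{-2}e^{-cj^4t},
\]
to which the same interpolation applies.

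\textbf{Estimate \eqref{es: G_s^N-G_s(x, y) }.} This is an $L^2$ bound in $y$, so for the tail and eigenvalue pieces we use orthogonality. For the tail piece, $\{\phi_j\}$ are orthonormal in $y$. For the eigenvalue piece, $\{\phi_j(\kappa_N(\cdot))\}_{j<N}$ are orthonormal in $L^2(\mathcal{O})$ up to the factor $h$, by discrete orthogonality of $e_j$. Hence each of these pieces is bounded by $\sum_j(\text{coefficient})^2$. The tail gives
\[
\sum_{j\ge N}e^{-2j^4t}\le \sum_{j\ge N}(j/N)^{\alpha_2}e^{-2j^4t}\le CN^{-\alpha_2}t^{-(\alpha_2+1)/4},
\]
by comparing the sum with an integral after substituting $j=t^{-1/4}s$. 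The eigenvalue piece gives $\sum_j\min(1,tj^6N^{-2})^2e^{-cj^4t}$, which, with exponent $\alpha_2/2$ interpolation, is bounded by the same quantity. The interpolation piece is not orthogonal in $y$, so there we use the triangle inequality together with the third fact, giving $\bigl(\sum_j (j/N)^{\alpha_2/2}e^{-cj^4t}\bigr)^2$. The Cauchy–Schwarz inequality then yields $t^{-(\alpha_2+1)/4}N^{-\alpha_2}$, using $\alpha_2<2$ so that the weights remain summable against the Gaussian-type decay.

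\textbf{Estimate \eqref{es: Delta_N G_s^N-Delta G_s(x, y) } and the main obstacle.} This is the $L^1$ bound in $y$, and it is where the real difficulty lies. A crude bound $\int_{\mathcal{O}}|\cdot|\,dy\le C\|\cdot\|_{L^2}$ loses exactly the amount we cannot afford. Applying the $L^2$ argument above with the extra factor $j^2$ gives the rate $t^{-(2\alpha_1+5)/8}N^{-\alpha_1}$ only if we use $\int|g|\le\sqrt{\pi}\|g\|_{L^2}$, and this is in fact what matches. Indeed, squaring $\sum_j j^4(j/N)^{2\alpha_1}e^{-cj^4t}\sim N^{-2\alpha_1}t^{-(5+2\alpha_1)/4}$ and taking the square root gives $t^{-(2\alpha_1+5)/8}N^{-\alpha_1}$. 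So the plan is to follow the $L^2$ route with the weight $\lambda_j^2$ and restrict to $\alpha_1<1$.

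The remaining difficulty is the nonorthogonal interpolation term carrying the weight $j^2$. Here we control $\phi_{N,j}-\phi_j$ by $\min(1,j^2N^{-2})$, using the piecewise linear interpolation error $h^2\|\phi_j''\|_\infty$, rather than by $jN^{-1}$. This gives room for exponents $\alpha_1<1$. If that fails, we fall back on the Gaussian kernel bound \eqref{es: Delta G_t(x,y)} and its discrete analogue to handle the region where $|x-y|$ is large.
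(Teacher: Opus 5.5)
The paper gives no proof of this lemma (it is quoted from Cardon-Weber), so your argument has to stand on its own. Most of it is sound. The spectral splitting, orthogonality in $y$ for the $L^2$ estimate, and $\int_{\mathcal{O}}|g|\,\mathrm{d}y\le\sqrt{\pi}\|g\|_{L^2}$ for the $L^1$ estimate are the right tools, and the last of these produces exactly the exponent $(2\alpha_1+5)/8$. Your treatment of the tail, the eigenvalue piece and the $(\lambda_{N,j}-\lambda_j)$ piece is correct. Note that $\{\phi_j(\kappa_N(\cdot))\}_{j<N}$ is in fact exactly orthonormal, since the cells of $\kappa_N$ have length $h$.

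The gap is the interpolation piece. Your termwise bound does not give the claimed rate: $\bigl(\sum_j (j/N)^{\alpha_2/2}e^{-cj^4t}\bigr)^2\approx N^{-\alpha_2}t^{-(\alpha_2+2)/4}$, which misses the target by a factor $t^{-1/4}$. Cauchy--Schwarz cannot recover this, because it costs precisely the number $\approx t^{-1/4}$ of active modes.

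The loss is intrinsic to applying the triangle inequality over $j$. Each term has $L^2(\mathrm{d}y)$-norm of order $j/N$ (take $x=0$). So at $t=N^{-4}$, every termwise bound on the $L^2$-norm is of order $\sum_{j<N}j/N\approx N$, i.e.\ $N^2$ after squaring, against a target of order $N$. With the extra weight $\lambda_j$ you get order $N^3$ against the target $N^{5/2}$ in the $L^1$ estimate.

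Your proposed remedy, the second-order bound $\min(1,j^2N^{-2})$ for $\phi_{N,j}-\phi_j$, does not touch this. The loss comes from discarding orthogonality, not from the order of the local error. Moreover, the other half of the term, $\phi_j(\kappa_N(y))-\phi_j(y)$, is genuinely only first order in $h$. The fallback to Gaussian kernel bounds is too vague to close the gap.

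The missing idea is to keep orthogonality in this piece as well. Split
\[
\phi_{N,j}(x)\phi_j(\kappa_N(y))-\phi_j(x)\phi_j(y)=\bigl(\phi_{N,j}(x)-\phi_j(x)\bigr)\phi_j(\kappa_N(y))+\phi_j(x)\bigl(\phi_j(\kappa_N(y))-\phi_j(y)\bigr),
\]
multiply by $\lambda_j^m e^{-\lambda_j^2t}$ ($m=0$ for the $L^2$ estimate, $m=1$ for the $L^1$ estimate), and sum over $j<N$.

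The first sum is orthogonal in $y$, so its squared $L^2$-norm is at most $C\sum_{j<N}j^{4m}(j/N)^2e^{-2j^4t}$. The first-order bound $|\phi_{N,j}-\phi_j|\le Cj/N$ already suffices here.

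The second sum equals $g(\kappa_N(y))-g(y)$ with $g:=\sum_{j<N}\lambda_j^m e^{-\lambda_j^2t}\phi_j(x)\phi_j(\cdot)$. On each cell $|g(\kappa_N(y))-g(y)|\le\int_{\text{cell}}|g'|$, hence $\|g\circ\kappa_N-g\|_{L^2}\le h\|g'\|_{L^2}$. By orthonormality of $\{\sqrt{2/\pi}\sin(j\cdot)\}_{j\ge1}$, $\|g'\|_{L^2}^2=\sum_{j<N}j^{2+4m}e^{-2j^4t}\phi_j(x)^2$.

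Since $(j/N)^2\le(j/N)^{\alpha}$ for $j<N$ and $\alpha\le 2$, both sums are bounded by $CN^{-\alpha}\sum_{j\ge1}j^{4m+\alpha}e^{-2j^4t}\le CN^{-\alpha}t^{-(4m+\alpha+1)/4}$. Taking $m=0$, $\alpha=\alpha_2$ gives the $L^2$ estimate. Taking $m=1$, $\alpha=2\alpha_1$, followed by Cauchy--Schwarz in $y$, gives the $L^1$ estimate. With this replacement the rest of your argument goes through, and the kernel-bound fallback is unnecessary.
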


\subsection{Moment boundedness  of  the semi-discrete numerical solution} 
In order to establish the moment boundedness of $U^N_t$ in \eqref{eq: mild_solution_space_discrete}, we introduce the intermediate solution
\begin{equation*}  
	\begin{aligned}
		V^N_t:=U^N_t-\mathscr{O}^N_t= & e^{-A_N^2 t}  U_0+\int_0^t A_N e^{-A_N^2(t-s)}  F_N(U_s^N) \mathrm{d} s, \quad t \in[0, T].
	\end{aligned}
\end{equation*}
where  $$\mathscr{O}^N_t:=(o^N(t,x_1),\cdots,o^N(t,x_{N}))^\top\quad \text{and} \quad o^N(t,x)=\sigma\int_{0}^{t}\int_{\mathcal{O}} G^N_{t-s}(x,y)   \  B^H(\mathrm{d} s, \mathrm{d} y).$$
Then we can obtain the corresponding perturbed differential equation 
\begin{equation} \label{eq: perturbed differential equation V^N }
	\left\{  
	\begin{aligned}
		&\mathrm{d} V^N_t+A_N^2 V^N_t \mathrm{d} t=A_N F_N(V^N_t+\mathscr{O}^N_t) \mathrm{d} t, \quad t\in (0,T],\\
		&V^N_0=U_0.
	\end{aligned}
	\right.
\end{equation}

Drawing upon Proposition \ref{propo: perturbed problem}, the regularity of $\mathscr{O}^N_t$ becomes essential to establish the moment boundedness of $U^N_t$. The following lemma provides the regularity of $\mathscr{O}^N_t$, which is a discrete version of Lemma \ref{lem: 3.1 regularity of o(t,x)}. The proof is similar and omitted.
\begin{lemma}  \label{lem: 4.4 regularity of O^N_t} 
	For any $p \geq 1$ and $0\leq\beta<4H_1+H_2-1$, the  semi-discrete stochastic convolution $\mathscr{O}^N_t$   enjoys 
	the following regularity property
	\begin{equation}\label{ineq: regularity of o^N(t,x)}
		\mathbb{E}\left[\sup_{t\in[0,T]} \left\|(-A_N)^{\frac{\beta}{2}}\mathscr{O}^N_t \right\|^p_{l^2_N}\right] < \infty.
	\end{equation}
	Moreover,  there exists a constant $C>0$, independent of $N$, such that for any $0\leq s<t\leq T$,
	\begin{equation}\label{ineq: continuous of o^N(t,x)}
		\mathbb{E}\left[\left\|(-A_N)^{\frac{\beta}{2}}\left(\mathscr{O}^N_t-\mathscr{O}^N_s\right) \right\|^p_{l^2_N}\right]\leq C(t-s)^{\left(\frac{4H_1+H_2-1-\beta}{4}
			-\frac{\epsilon}{2}\right)p},
	\end{equation}
	where $\epsilon$ is an arbitrary small positive number, and for any $x,\ z \in \mathcal{O}$,  
	\begin{equation}\label{ineq: futher property of O^N_t}
		\sup_{t\in[0,T]}\mathbb{E}\left[|o^N(t, x)-o^N(t, z)|^p\right]\leq C|x-z|^p. 
	\end{equation} 
\end{lemma}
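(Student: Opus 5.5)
The plan is to mirror the proof of Lemma \ref{lem: 3.1 regularity of o(t,x)}, working with the discrete eigensystem $\{\lambda_{N,j},e_j\}_{j=0}^{N-1}$ of $-A_N$ instead of $\{\lambda_j,\phi_j\}$. First I would write the components of $\mathscr{O}^N_t$ explicitly. Since $G^N_{t}(x_k,y)=\sum_{j=0}^{N-1}e^{-\lambda_{N,j}^2t}\phi_{N,j}(x_k)\phi_j(\kappa_N(y))$ and $\phi_{N,j}(x_k)=\phi_j(x_k)=\sqrt{N/\pi}\,e_j(k)$, we have
\begin{equation*}
\big\langle \mathscr{O}^N_t, e_j\big\rangle_{l^2_N}=\sigma h\sqrt{N/\pi}\int_0^t\int_{\mathcal{O}} e^{-\lambda_{N,j}^2(t-r)}\phi_j(\kappa_N(y))\,B^H(\mathrm{d}r,\mathrm{d}y),
\end{equation*}
so $\|(-A_N)^{\beta/2}(\mathscr{O}^N_t-\mathscr{O}^N_s)\|_{l^2_N}^2$ is, up to the harmless factor $h$, a sum over $j\le N-1$ of $\lambda_{N,j}^{\beta}$ times squares of Wiener-type integrals with separated integrands $g(r)\,\phi_j(\kappa_N(y))$. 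As in \eqref{regularity of o(t,x)  1}, I would split each integral into the part over $[0,s]$, with factor $(1-e^{-\lambda_{N,j}^2(t-s)})$, and the part over $[s,t]$. Then Minkowski's inequality in $L^p(\Omega)$ and \eqref{ineq: Ito Isometry} reduce everything to the products $\mathcal{I}(H_1,\cdot)\,\mathcal{I}(H_2,\phi_j\circ\kappa_N,0,\pi)$.

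The temporal factors are handled exactly as in \eqref{regularity of o(t,x)  1.1}--\eqref{regularity of o(t,x)  1.3}, with $\lambda_j$ replaced by $\lambda_{N,j}$. This only uses \eqref{estimate: e^{-x} 2} and \eqref{estimate: e^{-x} 1}, so the constants do not depend on $N$. The key uniformity fact is $\frac{4}{\pi^2}j^2\le\lambda_{N,j}\le j^2$ for $0\le j\le N-1$, which follows from $\sin x\ge 2x/\pi$ on $[0,\pi/2]$. With it, powers of $\lambda_{N,j}$ can be traded for powers of $\lambda_j=j^2$ with $N$-independent constants, and the resulting series over $j\le N-1$ is dominated by the convergent infinite series from the continuous case.

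The main obstacle is the spatial factor $\mathcal{I}(H_2,\phi_j\circ\kappa_N,0,\pi)$: the piecewise constant function $\phi_j(\kappa_N(\cdot))$ is no longer a pure cosine, so the integration-by-parts argument in \eqref{regularity of o(t,x)  1.5} does not apply directly. For $H_2=\tfrac12$ it is easy, since $\int_{\mathcal{O}}|\phi_j(\kappa_N(y))|^2\mathrm{d}y=h\sum_k\phi_j(x_k)^2=1$ by orthonormality of $e_j$. For $H_2>\tfrac12$ I would first try to write $\phi_j(\kappa_N(y))=\phi_j(y)+(\phi_j(\kappa_N(y))-\phi_j(y))$ and use that $\mathcal{I}(H_2,\cdot)$ is a positive semidefinite quadratic form, with the square root acting as a seminorm. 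The first piece is bounded by \eqref{regularity of o(t,x)  1.5}. The second piece has sup norm at most $Cjh\le C$ for $j<N$ and is bounded using $\mathcal{I}(H_2,g)\le C\|g\|_{L^{1/H_2}}^2$, but this only yields a bound $O(j^2h^2)$. If that is too crude, the fallback is to compute the Fourier transform of the step function directly: it equals $\hat\phi_j$ times a $\mathrm{sinc}(\xi h/2)$ factor summed over aliases. This should give $\mathcal{I}\le C\lambda_j^{\frac12-H_2+\frac{\epsilon}{2}}$ uniformly in $N$, because $j<N$ keeps the aliasing frequencies at distance at least of order $N$.

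With these uniform bounds, choosing the exponents $\alpha_1,\dots,\alpha_4$ and $\hat\alpha$ exactly as in the proof of Lemma \ref{lem: 3.1 regularity of o(t,x)} gives \eqref{ineq: continuous of o^N(t,x)}. The Kolmogorov argument (\cite[Theorem 2.1]{revuz2013continuous}) with large $p^*$ then yields \eqref{ineq: regularity of o^N(t,x)}, since $\mathscr{O}^N_0=0$. Finally, \eqref{ineq: futher property of O^N_t} follows from \eqref{ineq: Ito Isometry 2}, H\"older's inequality and the discrete analogue of \eqref{es:  G_t(x,y)-G_t(z,y)}. That analogue can be obtained from \eqref{es: G_s^N-G_s(x, y) } together with the Lipschitz property of the piecewise linear $\phi_{N,j}$, or quoted from \cite{Cardon2000}.
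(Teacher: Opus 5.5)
Your proposal is correct and is essentially the argument the paper intends: it omits this proof as a discrete copy of Lemma~\ref{lem: 3.1 regularity of o(t,x)}, with $\{\lambda_{N,j},e_j\}$ in place of $\{\lambda_j,\phi_j\}$. You are also right to single out the spatial factor, which the paper glosses over. There the crude $O(j^2h^2)$ bound is genuinely too weak: for $j$ of order $N$ it is no better than the white-in-space bound, so it only gives the lemma with $4H_1+H_2-1$ replaced by $4H_1-\frac12$. Your aliasing fallback does work, however. Write $\mathcal{I}(H_2,\phi_j\circ\kappa_N,0,\pi)=\frac{2}{\pi}h^{2H_2}\sum_{k,l=1}^{N}\cos(jx_k)\cos(jx_l)\rho(k-l)$ with $\rho(m)=\int_0^1\int_0^1|m+u-v|^{2H_2-2}\,\mathrm{d}u\,\mathrm{d}v$; the spectral density of $\rho$ on $[-\pi,\pi]$ is at most $C|\lambda|^{1-2H_2}$, and integrating it against the squared Dirichlet kernels centred at $\pm jh$ gives $\mathcal{I}\le Ch^{2H_2}N(jh)^{1-2H_2}=Cj^{1-2H_2}$ uniformly in $N$.

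For \eqref{ineq: futher property of O^N_t}, you do not need \eqref{es: G_s^N-G_s(x, y) }, which on its own would leave an additive $N^{-\alpha_2}$ term with $\alpha_2<2$. The Lipschitz bound $|\phi_{N,j}(x)-\phi_{N,j}(z)|\le Cj|x-z|$ together with the $L^2(\mathcal{O})$-orthonormality of $\{\phi_j\circ\kappa_N\}_{j=0}^{N-1}$ already gives $\int_0^t\int_{\mathcal{O}}|G^N_{t-r}(x,y)-G^N_{t-r}(z,y)|^2\,\mathrm{d}y\,\mathrm{d}r\le C|x-z|^2\sum_{j\ge1}j^{-2}$.
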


\begin{remark}
	Notice that $4H_1+H_2-1\geq\frac{3}{2}$. It follows from Lemma \ref{lem: 4.4 regularity of O^N_t} and  \eqref{ineq: l_infty to l2}  that 
	\begin{equation} \label{ineq: O^N_sup_boundness}
		\mathbb{E}\left[\sup_{t\in[0,T]} \|\mathscr{O}^N_t \|^p_{l^\infty_N}\right]\leq C\mathbb{E}\left[\sup_{t\in[0,T]} \|\mathscr{O}^N_t \|^p_{1,N}\right] < \infty.
	\end{equation} 
\end{remark}
Now, we are ready to derive the moment boundedness of $U^N_t$.
\begin{theorem}\label{th: U^N regularity}  
	Suppose that Assumption \ref{assu:2} holds, and let $k$ be the largest integer smaller than $4H_1+H_2-1$.
	Then for $p \geq 1$ and $1\leq i\leq k $, there exists a constant $C>0$, independent of $N$,  such that
	\begin{equation}\label{ineq: U_t_l^infty}
		\mathbb{E}\left[	\left\|  U^N_t \right\|_{i,N}^p\right]\leq C, \quad \forall\ t \in[0, T].
	\end{equation} 
\end{theorem}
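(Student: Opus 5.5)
We write $U^N_t=V^N_t+\mathscr{O}^N_t$, where $V^N_t$ solves the perturbed equation \eqref{eq: perturbed differential equation V^N }, and bound the two pieces separately in $\|\cdot\|_{i,N}$. The stochastic convolution is handled by Lemma \ref{lem: 4.4 regularity of O^N_t}. For $i\le k<4H_1+H_2-1$, taking $\beta=i$ and $\beta=0$ there gives $\mathbb{E}[\sup_t\|\mathscr{O}^N_t\|_{i,N}^p]\le C$ uniformly in $N$. Since $4H_1+H_2-1\ge 3/2$, we always have $k\ge1$, and \eqref{ineq: O^N_sup_boundness} also gives $\mathbb{E}[\sup_t\|\mathscr{O}^N_t\|_{l^\infty_N}^p]<\infty$.

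\textbf{Case $i=1$.} Apply Proposition \ref{propo: perturbed problem} with $Y_t=V^N_t$, $Z_t=\mathscr{O}^N_t$ and $y=U_0$. This gives the pathwise bound
$$\|V^N_t\|_{1,N}\le C\big(1+\|U_0\|_{1,N}^{36}+\sup_{s\le t}\|\mathscr{O}^N_s\|_{l^\infty_N}^{27}\big).$$
Under Assumption \ref{assu:2}, $\|U_0\|_{1,N}$ is bounded uniformly in $N$. Indeed, by \eqref{eq: (-A_N)w to w_i}, $|U_0|_{1,N}^2=h^{-1}\sum_i|u_0(x_{i+1})-u_0(x_i)|^2\le \pi\|u_0'\|_\infty^2$. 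Raising the pathwise bound to the power $p$ and using \eqref{ineq: O^N_sup_boundness} with exponent $27p$ settles $i=1$.

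\textbf{Cases $i=2,3$.} Here $k\ge2$, and we bootstrap through the mild formula
$$V^N_t=e^{-A_N^2t}U_0+\int_0^tA_Ne^{-A_N^2(t-s)}F_N(U^N_s)\,\mathrm{d}s.$$
For the initial term we need $|U_0|_{i,N}\le C$. For $i=2$, $A_NU_0$ is the vector of discrete second differences of $u_0$, which are bounded by $\|u_0''\|_\infty$. For $i=3$, we bound the first differences of those second differences using $u_0\in\mathcal C^3$. The boundary rows need care, but the Neumann-compatible structure of $\delta_h^2$ keeps them bounded because $u_0'(0)=u_0'(\pi)=0$.

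For the nonlinear term with $i=2$, write $(-A_N)^{i/2}A_N=-(-A_N)^{1/2}(-A_N)^{(i-1)/2+1}$. Apply \eqref{ineq: e^{-A_N^2t}(-A_N)^r to l2} with $\gamma=\frac{i+1}{2}-(i-1)/2\cdot 0$ arranged so that only $|F_N(U^N_s)|_{i-1,N}$ appears. Concretely,
$$\big\|(-A_N)^{i/2}A_Ne^{-A_N^2(t-s)}F_N\big\|_{l^2_N}\le C(t-s)^{-\frac{3}{4}}\,|F_N(U^N_s)|_{i-1,N}.$$
Then \eqref{ineq: F local Lipschitz 2} of Lemma \ref{lem: F local Lipschitz} gives $|F_N(U^N_s)|_{i-1,N}\le C(1+\|U^N_s\|_{i-1,N}^3)$. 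The kernel $(t-s)^{-3/4}$ is integrable, so taking $L^p(\Omega)$ norms with Minkowski's inequality yields
$$\|V^N_t\|_{L^p(\Omega,\|\cdot\|_{i,N})}\le C\Big(1+\int_0^t(t-s)^{-3/4}\big(1+\|U^N_s\|^3_{L^{3p}(\Omega,\|\cdot\|_{i-1,N})}\big)\,\mathrm{d}s\Big),$$
and the right-hand side is bounded by the previous step, applied for all $p$. The same argument with $i=3$ uses $|F_N|_{2,N}$, which is available from the $i=2$ step when $k=3$. The $l^2_N$ part of $\|\cdot\|_{i,N}$ comes from the case $i=1$.

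\textbf{Main obstacle.} The delicate part is the first step, where a superlinear, non-globally-Lipschitz nonlinearity sits behind the unbounded $A_N$. That is exactly why Proposition \ref{propo: perturbed problem}, with its energy estimates and the isolation of the $e_0$ mode, is needed: it gives a pathwise a priori bound rather than a Gronwall argument in expectation. A secondary technical point is the uniform-in-$N$ bound of $|U_0|_{3,N}$ at the boundary rows. If that bound proved awkward, I would instead rely on the smoothing of $e^{-A_N^2t}$ together with the extra regularity of $u_0\in\mathcal C^5$, computing $\langle U_0,e_j\rangle$ through discrete summation by parts, analogously to \eqref{regularity of u(t,x) u_1}.
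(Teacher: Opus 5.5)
Your proposal is correct and follows the paper's proof essentially step for step. You split $U^N_t=V^N_t+\mathscr{O}^N_t$ and use the pathwise bound from the perturbed-problem proposition for $i=1$. For $i=2,3$ you bootstrap through the mild formula: $(-A_N)^{3/2}e^{-A_N^2(t-s)}$ produces the integrable factor $(t-s)^{-3/4}$, and the local Lipschitz lemma bounds $|F_N(U^N_s)|_{i-1,N}$ by $1+\|U^N_s\|_{i-1,N}^3$. Your stated choice of $\gamma$ is garbled; it should simply be $\gamma=\tfrac32$ applied to $(-A_N)^{(i-1)/2}F_N(U^N_s)$.

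The only real deviation is the initial datum. You bound $|U_0|_{i,N}$ directly through discrete differences of $u_0$, using $u_0'(0)=u_0'(\pi)=0$, which is valid. The paper instead derives $\|(-A_N)^2U_0\|_{l^2_N}\le C$ from a Taylor consistency estimate for $\delta_h^4$ and dominates the lower seminorms by it.
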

\begin{proof}	
	According to \eqref{eq: perturbed differential equation V^N }, it follows from Proposition \ref{propo: perturbed problem}   that 
	\begin{equation}\label{es:  V to O} 
		\|V^N_t\|_{1,N} \leq C\left(1+\|U_0\|_{1,N}^{36}+ \sup_{s\in[0, t]}\left\|\mathscr{O}^N_s\right\|_{l^\infty_N}^{27}\right), \quad t \in[0, T].
	\end{equation}  
	By Assumption \ref{assu:2}, the Neumann boundary condition \eqref{eq:CH_FractionalNoise b} and the Taylor formular, it is straightforward to show that 
	$$\left|\delta_h^4 u_{0,i}-u_{0}^{(4)}(x_i)\right|\leq Ch,\quad i=1,2,\cdots,N,   $$
	which yields
	\begin{equation} \label{es:   U_0 4,N}
		\left\| (-A_N)^2U_0\right\|_{l^2_N}^2=h\sum_{i=1}^{N}\left|\delta_h^4 u_{0,i}\right|^2\leq C h\sum_{i=1}^{N}\left( \left|u_{0}^{(4)}(x_i)\right|^2+h^2\right)\leq C.
	\end{equation}
	Then
	\begin{equation} \label{es:   U_0 k,N}
		\begin{aligned}
			\left\|U_0\right\|_{1,N}^2\leq \left\|U_0\right\|_{l_N^2}^2+\left\|(-A_N)^\frac{1}{2}U_0\right\|_{l^2_N}^2 
			\leq   h \sum_{j=1}^{N}u_0(x_{j})^2+ \left\|(-A_N)^2U_0\right\|_{l^2_N}^2
			\leq  C.
		\end{aligned}
	\end{equation} 
	It follows from \eqref{es:  V to O},  \eqref{ineq: O^N_sup_boundness} and \eqref{es:   U_0 k,N} that
	\begin{equation}    \label{es:   U_0 1,N}
		\begin{aligned}
			\mathbb{E}\left[\left\|  U^N_t \right\|^p_{1,N}\right]\leq&
			C \left(\mathbb{E}\left[\left\|  V^N_t \right\|^p_{1,N}\right]+\mathbb{E}\left[\left\|  \mathscr{O}^N_t \right\|^p_{1,N}\right]    \right)  
			\leq  C \bigg(1+\|U_0\|_{1,N}^{36p}+ \mathbb{E} \Big[\sup_{s\in[0, t]}\left\|\mathscr{O}^N_s\right\|_{l^\infty_N}^{27p} \Big] \bigg)     
			\leq C.
		\end{aligned}
	\end{equation} 
	When $ 4H_1+H_2-1>2$,  by  \eqref{ineq: e^{-A_N^2t}(-A_N)^r to l2}, Lemma \ref{lem: 4.4 regularity of O^N_t}, \eqref{es:   U_0 4,N}, Lemma \ref{lem: F local Lipschitz} and \eqref{es:   U_0 1,N} we have
	\begin{equation}    \label{es:   U_0 2,N  1}
		\begin{aligned}
			&\left\| (-A_N)  U^N_t \right\|_{L^p(\Omega,{l^2_N})}\\
			\leq&   
			\left\|	(-A_N) e^{-A_N^2 t}  U_0\right\|_{l^2_N}+\int_0^t  \left\|(-A_N)^2 e^{-A_N^2(t-s)}  F_N(U_s^N) \right\|_{L^p(\Omega,{l^2_N})}   \mathrm{d} s + \left\|(-A_N)  \mathscr{O}^N_t \right\|_{L^p(\Omega,{l^2_N})}   \\
			\leq&C\left(   
			\left\|	(-A_N) U_0\right\|_{l^2_N}+ \int_0^t  (t-s)^{-\frac{3}{4}}\left\|(-A_N)^\frac{1}{2}    F_N(U_s^N) \right\|_{L^p(\Omega,{l^2_N})}   \mathrm{d} s +1  \right)   \\ 
			\leq&C\left(  1+ \int_0^t  (t-s)^{-\frac{3}{4}}\left(1+ 	\left(\mathbb{E}\left[\left\|  U^N_s \right\|^{3p}_{1,N}\right]\right)^\frac{1}{p} \right)  \mathrm{d} s   \right)   \\
			\leq& C.
		\end{aligned}
	\end{equation}  
	Using above estimate and \eqref{es:   U_0 1,N}, one deduces
	\begin{equation}    \label{es:   U_0 2,N}
		\begin{aligned}
			\mathbb{E}\left[\left\|  U^N_t \right\|^p_{2,N}\right]\leq  C.
		\end{aligned}
	\end{equation} 
	When  $ 4H_1+H_2-1>3$,  by the similar derivation of \eqref{es:   U_0 2,N  1}  we have    
	\begin{align*}
		&\left\| (-A_N)^\frac{3}{2}  U^N_t \right\|_{L^p(\Omega,{l^2_N})}\\
		\leq&   
		\left\|	(-A_N)^\frac{3}{2} e^{-A_N^2 t}  U_0\right\|_{l^2_N}+\int_0^t  \left\|(-A_N)^\frac{5}{2} e^{-A_N^2(t-s)}  F_N(U_s^N) \right\|_{L^p(\Omega,{l^2_N})}   \mathrm{d} s + \left\|(-A_N)^\frac{3}{2}  \mathscr{O}^N_t \right\|_{L^p(\Omega,{l^2_N})}   \\
		\leq&C\left(   
		\left\|	(-A_N)^\frac{3}{2} U_0\right\|_{l^2_N}+ \int_0^t  (t-s)^{-\frac{3}{4}}\left\|(-A_N)     F_N(U_s^N) \right\|_{L^p(\Omega,{l^2_N})}   \mathrm{d} s +1  \right)   \\
		\leq&C\left(   
		\left\|	(-A_N)^2 U_0\right\|_{l^2_N}+ \int_0^t  (t-s)^{-\frac{3}{4}}\left(1+ 	\left(\mathbb{E}\left[\left\|  U^N_s \right\|^{3p}_{2,N}\right]\right)^\frac{1}{p} \right)  \mathrm{d} s +1  \right)   
		\leq C.
	\end{align*} 
	Combining this with \eqref{es:   U_0 2,N} shows
	\begin{equation*}    
		\begin{aligned}
			\mathbb{E}\left[\left\|  U^N_t \right\|^p_{3,N}\right]\leq  C.
		\end{aligned}
	\end{equation*}
	This completes the proof.
\end{proof}

\subsection{Convergence of the semi-discrete numerical solution} 
To establish the strong convergence rate of $U^N_t$ in \eqref{eq: mild_solution_space_discrete}, we introduce the auxiliary process $  \{\tilde{u}^N(t, x) \}_{(t, x) \in [0, T] \times \mathcal{O}}$ by
\begin{equation}\label{eq:auxiliary_CH_FractionalNoise_solution_mild}
	\begin{aligned}
		\tilde{u}^N(t, x)= & \int_{\mathcal{O}} G_t^N(x, y) u_0\left(\kappa_N(y)\right) \mathrm{~d} y+\int_0^t \int_{\mathcal{O}} \Delta_N G_{t-s}^N(x, y) f\left(u\left(s, \kappa_N(y)\right)\right) \mathrm{~d} y \mathrm{d} s \\
		& +\sigma\int_0^t \int_{\mathcal{O}} G_{t-s}^N(x, y) \  B^H(\mathrm{d} s, \mathrm{d} y), \qquad\qquad\qquad\quad\forall\ (t, x) \in[0, T] \times \mathcal{O}.
	\end{aligned}
\end{equation}
Let $\tilde{U}^N_t=\big(\tilde{U}^N_{1,t} ,\cdots,\tilde{U}^N_{N,t}\big)^\top:=\big(\tilde{u}^N(t, x_1) ,\cdots,\tilde{u}^N(t, x_{N})\big)^\top $, which satisfies
\begin{equation}\label{eq:eq:discrete_CH_FractionalNoise_SDE_matrix auxiliary process}
	\left\{  
	\begin{aligned}
		&\mathrm{d}\tilde{U}^N_t+A_N^2 \tilde{U}^N_t \mathrm{d} t=A_N F_N(U_t) \mathrm{d} t+\sqrt{\sigma^2N/\pi}    \mathrm{d} \beta^H_t, \quad t\in (0,T],\\
		&\tilde{U}_0=U_0.
	\end{aligned}
	\right.
\end{equation}
Here $U_t:=\big(u(t, x_1),u(t, x_2),\cdots,u(t, x_{N})\big)^\top .$  \par  
The main result of this subsection is presented in the following theorem.
\begin{theorem}\label{thm: ||u^n-u||}  
	Suppose that Assumption \ref{assu:2} holds.
	Then for $p \geq 1$, there exists a constant $C>0$, independent of $N$,  such that  
	\begin{equation}\label{ineq: ||u^n-u||}
		\sup_{t \in[0, T]}\left\|u^N(t, \cdot)-u(t,\cdot)\right\|_{L^p(\Omega,L^\infty)} \leq Ch^{1-\epsilon},
	\end{equation}  
	where $\epsilon$ is an arbitrary small positive number.
\end{theorem}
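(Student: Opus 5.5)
We split the error with the auxiliary process $\tilde u^N$ from \eqref{eq:auxiliary_CH_FractionalNoise_solution_mild}:
\[
u^N-u=(u^N-\tilde u^N)+(\tilde u^N-u).
\]
The two pieces are treated separately. The first is a purely discrete, nonlinear error, which we control with Proposition \ref{propo: err problem 1}. The second is a linear consistency error, controlled by the Green function estimates \eqref{es: Delta_N G_s^N-Delta G_s(x, y) }--\eqref{es: G_s^N-G_s(x, y) } together with the regularity of $u$. Since both $u^N(t,\cdot)$ and the interpolant $\mathrm P_N$ are piecewise linear, the sup over $x\in\mathcal O$ of the first piece reduces to $\|U^N_t-\tilde U^N_t\|_{l^\infty_N}$. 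For the second piece, we estimate pointwise in $x$ and upgrade to $L^\infty$ via a Kolmogorov/Garsia-type argument. That upgrade uses the spatial H\"older bounds \eqref{ineq: futher property of O_t}, \eqref{ineq: futher property of O^N_t} and \eqref{es:  DeltaG_t(x,y)-DeltaG_t(z,y)}, and costs only an $\epsilon$ in the exponent.

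\textbf{Consistency term $\tilde u^N-u$.} We write $\tilde u^N-u$ as three differences: the initial datum, the drift and the noise.

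For the initial datum, the smoothness $u_0\in\mathcal C^5$ (Assumption \ref{assu:2}) gives $O(h)$ after comparing $G^N_t$ with $G_t$ and $u_0(\kappa_N(\cdot))$ with $u_0$, using spectral expansion or a Taylor argument as in \eqref{es:   U_0 4,N}.

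For the noise term $\sigma\int_0^t\int (G^N_{t-s}-G_{t-s})B^H(ds,dy)$, we apply \eqref{ineq: Ito Isometry 2} and H\"older to reduce it to $\big(\int_0^t\int|G^N-G|^2\big)^{1/2}$. We then apply \eqref{es: G_s^N-G_s(x, y) } with $\alpha_2=2-2\epsilon$; the time singularity $s^{-(3-2\epsilon)/4}$ is integrable, giving $O(h^{1-\epsilon})$.

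For the drift term we split it as
\[
\int_0^t\!\!\int (\Delta_N G^N-\Delta G) f(u(s,\kappa_N y))\,dy\,ds
\;+\;\int_0^t\!\!\int \Delta G\,\big[f(u(s,\kappa_N y))-f(u(s,y))\big]\,dy\,ds .
\]
The first part is handled by \eqref{es: Delta_N G_s^N-Delta G_s(x, y) } with $\alpha_1=1-\epsilon$, whose singularity $s^{-(7-2\epsilon)/8}$ is integrable, together with \eqref{ineq: u_sup_boundness}. The second part uses \eqref{es: f 2}, \eqref{es: Delta G_t(x,y)} (which gives $\int|\Delta G|\,dy\le Ct^{-3/4}$) and the spatial Lipschitz-in-mean bound $\mathbb E|u(s,y)-u(s,\kappa_N y)|^p\le Ch^p$. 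That bound follows from \eqref{ineq: u_regularity   u_x} via Sobolev embedding in $L^p(\Omega)$, or directly from \eqref{ineq: futher property of O_t} plus smoothness of $u_1+u_f$. This yields $\sup_t\|\tilde U^N_t-U_t\|_{L^p(\Omega,l^\infty_N)}\le Ch^{1-\epsilon}$, and the $L^\infty$ version for $\tilde u^N-u$ follows.

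\textbf{Discrete term $U^N-\tilde U^N$.} From \eqref{eq:eq:discrete_CH_FractionalNoise_SDE_matrix} and \eqref{eq:eq:discrete_CH_FractionalNoise_SDE_matrix auxiliary process}, the difference $\hat Y_t:=U^N_t-\tilde U^N_t$ satisfies \eqref{eq: err problem 1} with $\hat X_t=\tilde U^N_t$, $\hat Z_t=\tilde U^N_t-U_t$ and initial value zero. Indeed,
\[
F_N(U^N)-F_N(U)=F_N(\hat X+\hat Y)-F_N(\hat X+\hat Z)
\]
with $\hat Z=U-\tilde U^N$, and after a relabelling of signs this matches \eqref{eq: err problem 1}. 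Proposition \ref{propo: err problem 1} then bounds $\mathbb E\|\hat Y_t\|^p_{l^2_N}$ by
\[
K_2\sup_s\|\hat Z_s\|_{L^{2p}(\Omega, l^2_N)}+\sqrt{K_1K_2}\Big(\int\|\hat Z_s\|^2_{L^{4p}(\Omega, l^4_N)}\Big)^{1/2}.
\]
The previous step shows that both $\hat Z$-norms are $O(h^{1-\epsilon})$. The constants $K_1,K_2$ are uniformly bounded by Theorem \ref{th: U^N regularity} (for $U^N=\hat X+\hat Y$), by \eqref{ineq: u_sup_boundness} together with \eqref{ineq: u_regularity   u_x} (for $U$), and by a $\|\cdot\|_{1,N}$ bound on $\tilde U^N$. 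The last bound comes from its mild form, \eqref{ineq: e^{-A_N^2t}(-A_N)^r to l2}, \eqref{es:   U_0 k,N} and Lemma \ref{lem: 4.4 regularity of O^N_t}.

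\textbf{Main obstacle.} The bound above only controls $\|\hat Y_t\|_{l^2_N}$, whereas we need $l^\infty_N$. To upgrade it, we return to the mild form of $\hat Y$ and estimate $\|\hat Y_t\|_{1,N}$ using \eqref{ineq: e^{-A_N^2t}(-A_N)^r to l2}, \eqref{ineq: l_infty to l2} and Lemma \ref{lem: F local Lipschitz} with $k=1$. The trouble is the $\hat Z$ contribution, where we need $\|(-A_N)^{1/2}\hat Z\|$, i.e.\ a discrete $H^1$-type consistency estimate. Since $\hat Z$ is only $O(h^{1-\epsilon})$ in $l^\infty$, we would instead place $(-A_N)^{3/4}$ on the semigroup, accepting a $(t-s)^{-3/4}$ singularity, and use the $l^2$ bound on $F_N$-differences. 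Alternatively we interpolate: combine the $l^2$ rate with a uniform $\|\cdot\|_{2,N}$ bound through \eqref{ineq: l_infty to l2} and a Gagliardo--Nirenberg inequality, losing only $\epsilon$. Closing this $l^2\to l^\infty$ step without degrading the rate below $h^{1-\epsilon}$ is where I expect the real work to lie.
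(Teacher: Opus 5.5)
Your overall architecture is the paper's. You split through $\tilde u^N$, use Green-function consistency bounds plus a Kolmogorov argument for $\tilde u^N-u$, and use Proposition~\ref{propo: err problem 1} for the discrete error. The step you single out as the main obstacle is closed at once by your first option, which is exactly what the paper does.

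Apply Proposition~\ref{propo: err problem 1} with $\hat X=U^N$, $\hat Y=\tilde U^N-U^N$, $\hat Z=U-\tilde U^N$. Your choice $\hat X=\tilde U^N$ does not have the form $F_N(\hat X+\hat Y+\hat Z)-F_N(\hat X)$. With the corrected labels, $K_1,K_2$ involve only $U^N,\tilde U^N,U$, and $\mathbb E\|\hat Y_t\|^q_{l^2_N}\le Ch^{q(1-\epsilon)}$. Since $\tilde U^N_t-U^N_t=\int_0^tA_Ne^{-A_N^2(t-s)}[F_N(U_s)-F_N(U^N_s)]\,\mathrm{d}s$, you can apply \eqref{ineq: e^{-A_N^2t}(-A_N)^r to l2} with $\gamma=\frac32$ (the operator is $(-A_N)^{3/2}$, not $(-A_N)^{3/4}$) together with \eqref{es: f 2}. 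This gives
\[
\big\|(-A_N)^{\frac12}(\tilde U^N_t-U^N_t)\big\|_{l^2_N}\le C\int_0^t(t-s)^{-\frac34}\big(1+\|U_s\|^2_{l^\infty_N}+\|U^N_s\|^2_{l^\infty_N}\big)\big(\|\hat Y_s\|_{l^2_N}+\|\hat Z_s\|_{l^2_N}\big)\,\mathrm{d}s .
\]
H\"older in $\omega$ and \eqref{ineq: l_infty to l2} then yield $h^{1-\epsilon}$ in $L^p(\Omega,l^\infty_N)$. No discrete $H^1$-norm of $\hat Z$ is ever needed.

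Your interpolation alternative would not work. From Appendix A one only has $\|\mu\|_{l^\infty_N}\le C(\|\mu\|_{l^2_N}+\|\mu\|_{l^2_N}^{3/4}\|\mu\|_{2,N}^{1/4})$. An $O(1)$ bound on $\|\cdot\|_{2,N}$ therefore gives only $h^{3(1-\epsilon)/4}$, a fixed loss rather than an $\epsilon$-loss. Moreover, such a bound is not available at all when $4H_1+H_2-1\le 2$.

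The step that is genuinely underdeveloped is the initial datum. Comparing $G^N_t$ with $G_t$ through \eqref{es: G_s^N-G_s(x, y) } gives bounds of the form $Ct^{-(\alpha_2+1)/8}h^{\alpha_2/2}$. These blow up as $t\to0$ for every admissible $\alpha_2$, while the theorem needs $\sup_{t\in[0,T]}$. The paper removes this singularity with the identity
\[
\int_{\mathcal O}G_t(x,y)u_0(y)\,\mathrm{d}y=u_0(x)-\int_0^t\int_{\mathcal O}\Delta G_{t-s}(x,y)u_0''(y)\,\mathrm{d}y\,\mathrm{d}s,
\]
which uses the Neumann condition on $u_0$, together with its discrete analogue in terms of $\mathrm P_Nu_0$, $\Delta_N$ and $G^N$. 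The initial-data error then splits into three parts:
\begin{itemize}
\item the interpolation error $\mathrm P_Nu_0-u_0=O(h^2)$;
\item a time-integrated term against $\Delta_NG^N-\Delta G$, controlled by \eqref{es: Delta_N G_s^N-Delta G_s(x, y) } with an integrable singularity;
\item a time-integrated term against $\Delta G$ with the factor $\Delta_Nu_0(\kappa_N(\cdot))-u_0''=O(h)$.
\end{itemize}
These merge naturally with your two drift pieces.

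A smaller point concerns $\mathbb E|u(s,y)-u(s,\kappa_N(y))|^p\le Ch^p$. Your first justification fails, since \eqref{ineq: u_regularity   u_x} with Sobolev embedding gives only $h^{1/2}$. Your second one works. So does the paper's route through \eqref{ineq: futher property of O_t} and \eqref{es:  DeltaG_t(x,y)-DeltaG_t(z,y)} with $\alpha=1-\epsilon$ (using the identity above for $u_1$), which gives $h^{1-\epsilon}$ and suffices.
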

In order to estimate the error of  the spatial FDM for Eq. \eqref{eq:CH_FractionalNoise}, we separate 
$\left\|u(t, \cdot)-u^N(t, \cdot)\right\|_{L^p(\Omega,L^\infty)}$
into two parts
$$\left\|u^N(t, \cdot)-u(t, \cdot)\right\|_{L^p(\Omega,L^\infty)}\leq\left\|u^N(t,\cdot)-\tilde{u}^N(t, \cdot)\right\|_{L^p(\Omega,L^\infty)}+\left\|\tilde{u}^N(t,\cdot)-u(t, \cdot)\right\|_{L^p(\Omega,L^\infty)}.$$
Then Theorem \ref{thm: ||u^n-u||} is a straightforward consequence of two auxiliary results stated below.

\begin{proposition}\label{lem: ||tilde{u^n}-u ||}  
	Suppose that Assumption \ref{assu:2} holds.
	Then for $p \geq 1$, there exists a constant $C>0$, independent of $N$,  such that  
	\begin{equation}\label{ineq: ||tilde{u^n}-u ||}
		\sup_{t \in[0, T]}\left\|\tilde{u}^N(t,\cdot)-u(t, \cdot)\right\|_{L^p(\Omega,L^\infty)} \leq Ch^{1-\epsilon},
	\end{equation}  
	where $\epsilon$ is an arbitrary small positive number.
\end{proposition}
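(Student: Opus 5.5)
We compare the mild formulas \eqref{eq:auxiliary_CH_FractionalNoise_solution_mild} and \eqref{eq: CH_FractionalNoise_solution_mild} term by term and write
\begin{equation*}
\tilde{u}^N(t,x)-u(t,x)=J_1(t,x)+J_2(t,x)+J_3(t,x),
\end{equation*}
where $J_1$ is the difference of the initial-value terms, $J_2$ the difference of the nonlinear drift terms and $J_3=o^N(t,x)-o(t,x)$ the difference of the stochastic convolutions. Two reductions organise the argument. First, because $\tilde{u}^N(t,\cdot)$ is the polygonal interpolation of its grid values, its sup over $\mathcal{O}$ is controlled by the grid values. Second, $u(t,\cdot)$ varies by $O(h)$ between grid points in $L^p(\Omega)$, by \eqref{ineq: futher property of O_t} for $o$ and by the $\dot H^1$ regularity of Theorem \ref{th: u_regularity} for the remaining terms. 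So it suffices to bound $\sup_i|\tilde{u}^N(t,x_i)-u(t,x_i)|$ in $L^p(\Omega)$. To pass from pointwise moment bounds to the sup over $i$ we use the crude estimate $\mathbb{E}[\max_i|\cdot|^p]\le N\max_i\mathbb{E}[|\cdot|^p]$ with $p$ large, which costs only $N^{1/p}=h^{-\epsilon}$. This is where the $\epsilon$ loss in \eqref{ineq: ||tilde{u^n}-u ||} comes from.

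\textbf{Deterministic term $J_1$.} We expand both terms in the eigenbases: $\langle u_0(\kappa_N(\cdot)),\phi_j(\kappa_N(\cdot))\rangle$ in the discrete case against $\langle u_0,\phi_j\rangle$. Since $u_0\in\mathcal{C}^5$ satisfies the Neumann conditions, the discrete and continuous coefficients agree up to $O(h^2)$ and both decay like $j^{-4}$, as shown by the integration-by-parts identity used in \eqref{regularity of u(t,x) u_1}. Combined with $|e^{-\lambda_{N,j}^2t}-e^{-\lambda_j^2t}|\le C$ and $|\phi_{N,j}(x)-\phi_j(x)|\le C\min(1,j^2h^2)$, this gives $|J_1|\le Ch$ uniformly in $t$ and $x$.

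\textbf{Nonlinear term $J_2$.} We split it as
\begin{equation*}
J_2=\int_0^t\!\!\int_{\mathcal{O}}\big(\Delta_NG^N_{t-s}-\Delta G_{t-s}\big)(x,y)\,f(u(s,\kappa_N(y)))\,\mathrm{d}y\,\mathrm{d}s+\int_0^t\!\!\int_{\mathcal{O}}\Delta G_{t-s}(x,y)\big(f(u(s,\kappa_N(y)))-f(u(s,y))\big)\mathrm{d}y\,\mathrm{d}s.
\end{equation*}
For the first piece we use \eqref{es: Delta_N G_s^N-Delta G_s(x, y) } with $\alpha_1=1-\epsilon$. The time singularity is then $(t-s)^{-(7-2\epsilon)/8}$, which is integrable, and together with \eqref{ineq: u_sup_boundness} and Minkowski's inequality this piece is bounded in $L^p(\Omega)$ uniformly in $x$ by $Ch^{1-\epsilon}$. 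For the second piece we use \eqref{es: Delta G_t(x,y)}, which gives $\int|\Delta G_{t-s}|\,\mathrm{d}y\le C(t-s)^{-1/2}$, together with \eqref{es: f 2}, \eqref{ineq: u_sup_boundness} and the estimate $\|u(s,\kappa_N(y))-u(s,y)\|_{L^{2p}(\Omega)}\le Ch$. That last estimate follows from \eqref{ineq: futher property of O_t} for the $o$ part and from $\dot H^1\subset$ Lipschitz-in-$L^2$ (Sobolev/Hölder with exponent $1/2$ is not enough, so we use $\beta>3/2$, which is available because $4H_1+H_2-1\ge 3/2$; $\dot H^{\beta}$ embeds into $\mathcal{C}^{\beta-1/2}$, and for $\beta$ near $2$ we get a Lipschitz bound in $L^p(\Omega)$) for $u_1+u_f$. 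This piece is therefore $O(h)$.

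\textbf{Stochastic term $J_3$.} We apply \eqref{ineq: Ito Isometry 2} followed by Hölder's inequality exactly as in \eqref{ineq: o(t,x)-o(t,y)}. This reduces $\|J_3\|_{L^{2p}(\Omega)}$ to $\big(\int_0^t\int_{\mathcal{O}}|G^N_{t-s}-G_{t-s}|^2\,\mathrm{d}y\,\mathrm{d}s\big)^{1/2}$. We then use \eqref{es: G_s^N-G_s(x, y) } with $\alpha_2=2-2\epsilon$, for which the time exponent $(3-2\epsilon)/4<1$ is integrable, and obtain the bound $Ch^{1-\epsilon}$.

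The step we expect to be the main obstacle is the second piece of $J_2$, specifically justifying a spatial Lipschitz-type bound for $u_1+u_f$ in $L^p(\Omega)$ under the weakest Hurst parameters. If the Sobolev route gives only $h^{1-\epsilon}$, that is still acceptable for the claimed rate. Finally, taking $p$ large and absorbing $N^{1/p}$ into the $\epsilon$ loss yields \eqref{ineq: ||tilde{u^n}-u ||}.
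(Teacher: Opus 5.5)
Your proof is sound in outline but takes a different route from the paper's.

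\textbf{Comparison of routes.} The paper rewrites both initial-value terms through the identity $\int_{\mathcal{O}}G_t(x,y)u_0(y)\,\mathrm{d}y=u_0(x)-\int_0^t\int_{\mathcal{O}}\Delta G_{t-s}(x,y)u_0''(y)\,\mathrm{d}y\,\mathrm{d}s$ and its discrete analogue with $\mathrm{P}_Nu_0$ and $\Delta_Nu_0$. The initial-datum error then becomes the interpolation error $\mathrm{P}_Nu_0-u_0=O(h^2)$ plus integrals of exactly the drift type, so no eigenvalue or Fourier-coefficient comparison is ever needed. The paper then handles $\sup_x$ term by term:
\begin{itemize}
\item the $(\Delta_NG^N-\Delta G)$ term through the $\sup_x$ form of the kernel estimate;
\item the $\Delta G$ term through a pointwise bound, a H\"older-$\frac34$ increment bound and Kolmogorov's theorem;
\item $o^N-o$ through Kolmogorov plus a union bound over the $N$ cells.
\end{itemize}
It controls $u(s,\kappa_N(y))-u(s,y)$ from the mild formula and the $\Delta G$-increment estimate, not by Sobolev embedding.

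Your reduction is more economical. Since $\tilde u^N(t,\cdot)$ is a combination of the $\phi_{N,j}$, it is the polygonal interpolant of its nodal values. So the $L^\infty$ error is at most the nodal error plus the modulus of continuity of $u(t,\cdot)$ at scale $h$, and one union bound at the nodes replaces both Kolmogorov arguments. The price is that you need the full $\dot H^\beta$ regularity of Theorem~\ref{th:  u_regularity} rather than only kernel bounds, and genuine spectral bookkeeping for the initial term.

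\textbf{First repair: the exponential factor in $J_1$.} Write $\hat a_j$ for the discrete coefficients. Your bound $|e^{-\lambda_{N,j}^2t}-e^{-\lambda_j^2t}|\le C$ only gives
\[
\sum_{j<N}|e^{-\lambda_{N,j}^2t}-e^{-\lambda_j^2t}|\,|\hat a_j|=O(1).
\]
You need $0\le\lambda_j-\lambda_{N,j}\le j^4h^2/12$ and $\lambda_{N,j}\ge 4j^2/\pi^2$. These give
\[
|e^{-\lambda_{N,j}^2t}-e^{-\lambda_j^2t}|\le t(\lambda_j^2-\lambda_{N,j}^2)e^{-\lambda_{N,j}^2t}\le C\min(1,j^2h^2)
\]
uniformly in $t$, and this contribution is then $O(h^2)$. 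Similarly, the midpoint rule gives coefficient agreement $O(j^2h^2)$, not uniformly $O(h^2)$. Combined with the $j^{-4}$ decay this still suffices, because $\sum_j\min(j^2h^2,j^{-4})=O(h)$.

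\textbf{Second repair: the modulus-of-continuity step.} Your reduction requires $\mathbb{E}\big[\sup_{|x-z|\le h}|u(t,x)-u(t,z)|^p\big]$, with the supremum inside the expectation. The two-point moment bound \eqref{ineq: futher property of O_t} for $o$ does not give this, and $\dot H^1$ only yields $h^{1/2}$.

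Use instead the pathwise estimate $|v(x)-v(z)|\le C\|v\|_{\dot H^\beta}|x-z|^{\beta-1/2}$ for $\frac12<\beta<\frac32$, which follows from Cauchy--Schwarz on the cosine series. Apply it to all of $u(t)$, including $o$, with $\beta=\frac32-\epsilon$; Theorem~\ref{th:  u_regularity} always allows this choice.

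Note that $\beta>\frac32$ is \emph{not} available when $H_1=H_2=\frac12$. There $4H_1+H_2-1=\frac32$ and $\beta$ must be strictly smaller, contrary to your remark. So both the reduction and the second piece of $J_2$ give $h^{1-\epsilon}$ rather than $h$, which is still enough for the claimed rate.
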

\begin{proposition}\label{lem: ||u^n-tilde{u^n} ||}  
	Suppose that Assumption \ref{assu:2} holds.
	Then for $p \geq 1$, there exists a constant $C>0$, independent of $N$,  such that  
	\begin{equation}\label{ineq: ||u^n-tilde{u^n} ||}
		\sup_{t \in[0, T]}\left\|u^N(t, \cdot)-\tilde{u}^N(t,\cdot)\right\|_{L^p(\Omega,L^\infty)} \leq Ch^{1-\epsilon},
	\end{equation}  
	where $\epsilon$ is an arbitrary small positive number.
\end{proposition}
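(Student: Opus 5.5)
The plan is to treat $E^N_t:=U^N_t-\tilde U^N_t$ as the solution of the error equation \eqref{eq: err problem 1}. Subtracting \eqref{eq:eq:discrete_CH_FractionalNoise_SDE_matrix auxiliary process} from \eqref{eq:eq:discrete_CH_FractionalNoise_SDE_matrix}, the noise cancels and $E^N_0=0$, so
\begin{equation*}
\mathrm{d}E^N_t+A_N^2E^N_t\,\mathrm{d}t=A_N\big[F_N(U^N_t)-F_N(U_t)\big]\mathrm{d}t .
\end{equation*}
Writing $U^N_t=\tilde U^N_t+E^N_t$ and $U_t=\tilde U^N_t+(U_t-\tilde U^N_t)$, this becomes \eqref{eq: err problem 1} with the following choices:
\begin{equation*}
\hat X_t:=U_t,\qquad \hat Y_t:=E^N_t,\qquad \hat Z_t:=\tilde U^N_t-U_t .
\end{equation*}
Then $\hat X+\hat Y+\hat Z=U^N$ and $\hat X+\hat Y=U+E^N$. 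Proposition \ref{propo: err problem 1} then gives $\mathbb{E}\|E^N_t\|_{l^2_N}^p$ in terms of moments of $\hat Z$ in $l^2_N$ and $l^4_N$. These are controlled by $\sup_t\|\tilde u^N(t,\cdot)-u(t,\cdot)\|_{L^{q}(\Omega,L^\infty)}$ for suitable $q$, because $\|\cdot\|_{l^r_N}\le C\|\cdot\|_{l^\infty_N}$. By Proposition \ref{lem: ||tilde{u^n}-u ||} this is $O(h^{1-\epsilon})$. Hence both terms on the right of \eqref{es: err problem 1} are $O(h^{1-\epsilon})$, provided $K_1,K_2$ are bounded uniformly in $N$.

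\textbf{Bounding $K_1,K_2$.} This is the step I expect to be the main obstacle. Several pieces are available directly:
\begin{itemize}
\item $\|U^N\|_{1,N}$ moments are bounded by Theorem \ref{th: U^N regularity}.
\item $\|U_t\|_{l^8_N}$ is bounded by \eqref{ineq: u_sup_boundness}.
\item $\|U_t\|_{1,N}$ is controlled via \eqref{eq: (-A_N)w to w_i}: $|U_t|_{1,N}^2=h^{-1}\sum|u(t,x_{j+1})-u(t,x_j)|^2\le\|\partial_xu\|_{L^2}^2$, which is bounded by \eqref{ineq: u_regularity   u_x}.
\end{itemize}
For $\hat X+\hat Y=U_t+E^N_t$, note that $E^N=U^N-\tilde U^N$. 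It therefore suffices to bound $\tilde U^N$ in $\|\cdot\|_{1,N}$. I would do this from its mild form: the linear part is handled by \eqref{es:   U_0 k,N}, the stochastic part by Lemma \ref{lem: 4.4 regularity of O^N_t}, and the drift part by \eqref{ineq: e^{-A_N^2t}(-A_N)^r to l2}, giving $\int_0^t(t-s)^{-3/4}|F_N(U_s)|_{1,N}\,\mathrm{d}s$. That integral is bounded using \eqref{ineq:  F local Lipschitz   2} and the bound on $\|U_s\|_{1,N}$ above. With $l^8_N\le C\,l^\infty_N\le C\|\cdot\|_{1,N}$ from \eqref{ineq: l_infty to l2}, all entries of $K_1$ and $K_2$ are then uniformly bounded.

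\textbf{Passing to $L^\infty$.} The remaining step upgrades the grid $l^2_N$ bound to $\sup_x$. Since $u^N-\tilde u^N$ is the polygonal interpolation of $E^N$, we have $\|u^N(t,\cdot)-\tilde u^N(t,\cdot)\|_{L^\infty}=\|E^N_t\|_{l^\infty_N}$. To estimate $\|E^N_t\|_{l^\infty_N}$, I would reuse the mild-form estimate of $E^N_t$, in the same way as the $l^6_N$ computation in Proposition \ref{propo: perturbed problem}. There $\|e_j\|_{l^\infty_N}\le C\sqrt h$ is used together with the splitting $F_N(U^N)-F_N(U)=[F_N(U^N)-F_N(U+E^N)]+[F_N(U+E^N)-F_N(U)]$. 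The first difference is bounded in $l^2_N$ by $\|\hat Z\|$ times polynomial moments. The second is bounded in $|\cdot|_{1,N}$ by \eqref{ineq:  F local Lipschitz} times $\|E^N\|_{1,N}$, whose time integral was already estimated in \eqref{ineq: int  Y 1,N}. The singular kernels $(t-s)^{-1/2-\delta}$ and $(t-s)^{-1/4-\delta}$ remain integrable, so after H\"older in $\omega$ we get $\|E^N_t\|_{L^p(\Omega,l^\infty_N)}\le Ch^{1-\epsilon}$ uniformly in $t$, which is the claim.
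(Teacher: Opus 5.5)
Your proof is correct, and its first half is the paper's argument. The paper also applies Proposition~\ref{propo: err problem 1}, but with the mirrored assignment $\hat X=U^N_t$, $\hat Y=\tilde U^N_t-U^N_t$, $\hat Z=U_t-\tilde U^N_t$. Then $\hat X+\hat Y=\tilde U^N_t$ and $\hat X+\hat Y+\hat Z=U_t$, so $K_1,K_2$ come straight from Theorem~\ref{th: U^N regularity}, Lemma~\ref{lem: ||tilder U || 1,N} (which is exactly your mild-form bound for $\tilde U^N_t$) and Theorem~\ref{th:  u_regularity}. Your assignment only costs one triangle inequality for $U_t+E^N_t$.

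The genuine difference is the passage to $l^\infty_N$. The paper uses $\|\cdot\|_{l^\infty_N}\le C\|\cdot\|_{1,N}$ and bounds $|E^N_t|_{1,N}$ from the mild form by $C\int_0^t(t-s)^{-3/4}\|F_N(U_s)-F_N(U^N_s)\|_{l^2_N}\,\mathrm{d}s$. It estimates the integrand by $C(1+\|U_s\|_{l^\infty_N}^2+\|U^N_s\|_{l^\infty_N}^2)(\|E^N_s\|_{l^2_N}+\|U_s-\tilde U^N_s\|_{l^2_N})$. It then feeds back the $l^2_N$ rate for $E^N_s$ just obtained (valid in $L^{2p}(\Omega)$ for every $p$), together with the bound on $\tilde u^N-u$. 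This bootstrap takes two lines and uses Proposition~\ref{propo: err problem 1} purely as a black box.

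You instead rerun that proposition's two-term splitting directly in $l^\infty_N$, via $\|e_j\|_{l^\infty_N}\le C\sqrt h$ and the internal energy estimate \eqref{ineq: int  Y 1,N}. This is valid, and it would even make your preliminary $l^2_N$ step unnecessary. But it is longer, and it reopens the proof rather than citing the statement.

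One bookkeeping correction: the loss from the eigenvector trick is not a small $\delta$. Since $\sum_{j\ge 1}\lambda_{N,j}e^{-\lambda_{N,j}^2 s}\le Cs^{-3/4}$ uniformly in $N$, your kernels are really $(t-s)^{-5/8}$ and $(t-s)^{-3/8}$. This is harmless: the Cauchy--Schwarz step against $\int_0^t\|E^N_s\|_{1,N}^2\,\mathrm{d}s$ only needs the square of the second kernel to be integrable, and $(t-s)^{-3/4}$ is.
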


It remains to prove Propositions \ref{lem: ||tilde{u^n}-u ||}  and \ref{lem: ||u^n-tilde{u^n} ||}   

\begin{proof}[Proof of Proposition \ref{lem: ||tilde{u^n}-u ||}]
	By using \eqref{eq:CH_FractionalNoise b}, the orthonormality of $\left\{\phi_j\right\}_{j\geq 0} $ and Assumption \ref{assu:2}, we have 
	\begin{equation*} 
		\begin{aligned}
			u_0(x)-\int_0^t \int_{\mathcal{O}} \Delta G_{t-s}(x, y) u_0^{\prime\prime}(y) \mathrm{d} y\mathrm{d}s  
			=& \sum_{j=0}^{\infty}\bigg(   \langle  u_0,\phi_j\rangle_{L^2}\phi_j(x)-\ \lambda_j^2\int_0^t  e^{-\lambda_j^2(t-s)}\mathrm{d} s \langle  u_0,\phi_j\rangle_{L^2}\phi_j(x)\bigg)		\\
			=&\int_{\mathcal{O}} G_t(x, y) u_0\left(y\right) \mathrm{d}y.
		\end{aligned}
	\end{equation*} 
	Likewise, we can obtain 
	\begin{equation*} 
		\begin{aligned}
			\tilde{u}^N_0(x)-\int_{0}^{t}\int_{\mathcal{O}}\Delta_NG_{t-s}^N(x, y)\Delta_Nu_0((\kappa_N(y))\mathrm{d}y\mathrm{d}s=	\int_{\mathcal{O}} G_t^N(x, y) u_0\left(\kappa_N(y)\right) \mathrm{d}y,
		\end{aligned}
	\end{equation*} 
	where
	$  \tilde{u}^N_0(x):= \mathrm{P}_Nu_0(x).$
	
	Using the above equations	and subtracting \eqref{eq: CH_FractionalNoise_solution_mild} from \eqref{eq:auxiliary_CH_FractionalNoise_solution_mild}, one has  
	\begin{equation*} 
		\begin{aligned}
			&\tilde{u}^N(t, x)-u(t, x)\\
			= &\big(	\tilde{u}^N_0(x)-u_0(x)\big)+\int_{0}^{t}\int_{\mathcal{O}}
			\left(\Delta_NG_{t-s}^N(x, y)-\Delta G_{t-s}(x, y)\right)\left(f\left(u\left(s, \kappa_N(y)\right)\right)-\Delta_N u_0((\kappa_N(y)) \right)\mathrm{d}y\mathrm{d}s \\ 
			&+\int_0^t \int_{\mathcal{O}}
			\Delta G_{t-s}(x, y)\Big(\left( f\left(u\left(s, \kappa_N(y)\right)\right)-f(u(s,y)) \right) -\left( \Delta_Nu_0(\kappa_N(y))-u^{\prime\prime}_0(y)\right)\Big) \mathrm{d} y \mathrm{d} s \\
			& +\sigma\int_0^t \int_{\mathcal{O}} G_{t-s}^N(x, y)-G_{t-s}(x, y) \  B^H(\mathrm{d} s, \mathrm{d} y), \\
			=&: \mathcal{I}_1(x)+\mathcal{I}_2(t,x)+\mathcal{I}_3(t,x)+\mathcal{I}_4(t,x).  
		\end{aligned}
	\end{equation*}
	For any $0\leq t\leq T$,  it follows from Assumption \ref{assu:2},  \eqref{ineq: u_sup_boundness} and  \eqref{es: Delta_N G_s^N-Delta G_s(x, y) } with $\alpha_1=1-\epsilon$ that
	\begin{equation} \label{ineq: ||tilde{u^n}-u || 1}
		\begin{aligned}
			&\left\|\mathcal{I}_1(t,\cdot)\right\|_{L^p(\Omega,L^\infty)}+\left\|\mathcal{I}_2(t,\cdot)\right\|_{L^p(\Omega,L^\infty)}\\
			\leq&	\sup_{x\in\mathcal{O}}\big|\tilde{u}^N_0(x)-u_0(x)\big|+\int_{0}^{t}\sup_{x \in\mathcal{O}}\int_{\mathcal{O}}
			\left|\Delta_NG_{t-s}^N(x, y)-\Delta G_{t-s}(x, y)\right|\left(\|f\left(u\left(s, \kappa_N(y)\right)\right)\|_{L^p(\Omega,\mathbb{R})}+|\Delta_Nu_0(y) | \right)\mathrm{d}y\mathrm{d}s  \\  
			\leq&Ch^2 +C\int_{0}^{t}\sup_{x \in\mathcal{O}}\int_{\mathcal{O}}
			\left|\Delta_NG_{t-s}^N(x, y)-\Delta G_{t-s}(x, y)\right| \mathrm{d}y\mathrm{d}s \\
			\leq& Ch^{1-\epsilon}.
		\end{aligned}
	\end{equation}
	
	To estimate of  $\mathcal{I}_3(t,x)$, we need to investigate $u\left(t, \kappa_N(x)\right)-u(t,x)$. Taking an arbitrary small positive number $\epsilon$, we can choose a sufficiently large number $q$ such that $q>\max\left\{ p+2,3/\epsilon \right\}$. For any $0\leq t\leq T$ and $x\in \mathcal{O}$, utilizing Assumption \ref{assu:2}, \eqref{ineq: futher property of O_t}, H{\"o}lder's inequality,  \eqref{ineq: u_sup_boundness} and \eqref{es:  DeltaG_t(x,y)-DeltaG_t(z,y)} with $\alpha=1-\epsilon$, we obtain 
	\begin{align*}
		& \left\|u\left(t, \kappa_N(x)\right)-u(t,x)\right\|_{L^{2q}(\Omega,\mathbb{R})}\\
		\leq&|u_0(\kappa_N(x))-u_0(x)|+ \left\|o(t,\kappa_N(x))-o(t,x)\right\|_{L^{2q}(\Omega,\mathbb{R})}\\
		&+\int_0^t \int_{\mathcal{O}}   \left|\Delta G_{t-s}(\kappa_N(x), y)- \Delta G_{t-s}(x, y)\right| \left(|u^{\prime\prime}_0\left(y\right)|+\left\|f\left(u\left(s, y\right)\right)\right\|_{L^{2q}(\Omega,\mathbb{R})}\right)  \mathrm{d} y \mathrm{d} s  \\
		\leq&Ch+   C\int_0^t \int_{\mathcal{O}}   \left|\Delta G_{t-s}(\kappa_N(x), y)- \Delta G_{t-s}(x, y)\right| \left(1+\left\|u\left(s, y\right)\right\|_{L^{6q}(\Omega,\mathbb{R})}^3\right)  \mathrm{d} y \mathrm{d} s\\
		\leq&Ch+ C\Big(1+\sup_{(t,x) \in[0, T] \times \mathcal{O}}\left\|u\left(s, x\right)\right\|_{L^{6q}(\Omega,\mathbb{R})}^3\Big)   \int_0^t  \int_{\mathcal{O}}   \left|\Delta G_{t-s}(\kappa_N(x), y)- \Delta G_{t-s}(x, y)\right|   \mathrm{d} y \mathrm{d} s \\ 
		\leq&Ch^{1-\epsilon}.
	\end{align*} 
	
	It follows from the aforementioned estimate, \eqref{es: Delta G_t(x,y)} and \eqref{ineq: u_sup_boundness} that
	\begin{equation} \label{ineq: ||tilde{u^n}-u || 1.1}
		\begin{aligned}
			&\left\|\mathcal{I}_3(t,x)\right\|_{L^{q}(\Omega,\mathbb{R})}\\
			\leq& \int_0^t \int_{\mathcal{O}} \left|\Delta G_s(x, y)\right| 
			\left( 	|\Delta_Nu_0(y)-u^{\prime\prime}_0(y)|+	\|f\left(u\left(s, \kappa_N(y)\right)\right)-f(u(s,y)) \|_{L^{q}(\Omega,\mathbb{R})} \right)   \mathrm{d}y\mathrm{d}s   \\
			\leq&  C\int_0^t \int_{\mathcal{O}} 
			s^{-\frac{3}{4}}e^{-c\frac{|x-y|^{4/3}}{s^{1/3}}}
			\Big(h^2+	\Big(1+\sup_{x\in\mathcal{O}}\|u(s,x) \|_{L^{4q}(\Omega,\mathbb{R})}^2\Big)\|u\left(s, \kappa_N(y)\right)-u(s,y) \|_{L^{2q}(\Omega,\mathbb{R})}\Big)\mathrm{d}y\mathrm{d}s\\
			\leq& Ch^{1-\epsilon} .
		\end{aligned}
	\end{equation} 
	Moreover, for any $x,\ z\in \mathcal{O}$, by \eqref{es:  DeltaG_t(x,y)-DeltaG_t(z,y)} with $\alpha=\frac{3}{4}$ we obtain
	\begin{equation*} 
		\begin{aligned}
			\left\|\mathcal{I}_3(t,x)-\mathcal{I}_3(t,z)\right\|_{L^{q}(\Omega,\mathbb{R})}
			\leq  Ch^{1-\epsilon}\int_0^t \int_{\mathcal{O}} \left|\Delta G_{t-s}(x, y)-\Delta G_{t-s}(z, y)\right|   \mathrm{d}y\mathrm{d}s   
			\leq Ch^{1-\epsilon}|x-z|^\frac{3}{4}.  
		\end{aligned}
	\end{equation*} 
	Thus we employ \cite[Theorem 2.1]{revuz2013continuous}  to obtain 
	\begin{equation*}   
		\begin{aligned}
			\mathbb{E}\left[\sup_{x\neq z} \left( 
			\frac{\frac{1}{h^{1-\epsilon}}|\mathcal{I}_3(t,x)-\mathcal{I}_3(t,z)|}{|x-z|^{\frac{3}{4}-\frac{2}{q}}} \right)^q\right] \leq C,
		\end{aligned}
	\end{equation*}  
	which combined with \eqref{ineq: ||tilde{u^n}-u || 1.1} yields
	\begin{equation}  \label{ineq: ||tilde{u^n}-u || 2}
		\begin{aligned}
			\left\|\mathcal{I}_3(t,\cdot)\right\|_{L^p(\Omega,L^\infty)}\leq	\left\|\mathcal{I}_3(t,\cdot)\right\|_{L^q(\Omega,L^\infty)}
			\leq	\left(	\mathbb{E}\left[\sup_{x\neq 0}
			|\mathcal{I}_3(t,x)-\mathcal{I}_3(t,0)|^q\right]\right)^\frac{1}{q}+ 	\left\|\mathcal{I}_3(t,0)\right\|_{L^{q}(\Omega,\mathbb{R})}\leq Ch^{1-\epsilon}.
		\end{aligned}
	\end{equation} 
	As to  $\mathcal{I}_4(t,x)$, we employ \eqref{ineq: Ito Isometry 2}, H{\"o}lder's inequality and  \eqref{es: G_s^N-G_s(x, y) } with $\alpha_2=2-\epsilon$ to get
	\begin{equation} \label{ineq: ||tilde{u^n}-u || 2.1}
		\begin{aligned}
			\left\|\mathcal{I}_4(t,x)\right\|_{L^{q}(\Omega,\mathbb{R})}
			\leq& C   \bigg(\int_{0}^{t}\bigg(\int_{\mathcal{O}}\left|G_{t-s}^N(x, y)-G_{t-s}(x, y)\right|^\frac{1}{H_2}\mathrm{d}y\bigg)^\frac{H_2}{H_1} \mathrm{d} s\bigg)^{H_1}   \\
			\leq& C \left( \int_{0}^{t} \int_{\mathcal{O}}\left|G_{t-s}^N(x, y)-G_{t-s}(x, y)\right|^2\mathrm{d} y  \mathrm{d} s   \right)^{\frac{1}{2}}\\
			\leq& Ch^{1-\frac{\epsilon}{2}}.
		\end{aligned}
	\end{equation}
	According to \eqref{ineq: futher property of O_t} and \eqref{ineq: futher property of O^N_t}, one has 
	\begin{equation*} 
		\begin{aligned}
			\left\|\mathcal{I}_4(t,x)-\mathcal{I}_4(t,z)\right\|_{L^{q}(\Omega,\mathbb{R})} 
			\leq C|x-z|.
		\end{aligned}
	\end{equation*} 
	Combining this with  \cite[Theorem 2.1]{revuz2013continuous}  leads to
	
	\begin{equation}   \label{ineq: ||tilde{u^n}-u || 2.2}
		\begin{aligned}
			\mathbb{E}\left[\sup_{x\neq z} \left(\frac{|\mathcal{I}_4(t,x)-\mathcal{I}_4(t,z)|}{|x-z|^{1-\frac{2}{q}}} \right)^q\right] \leq C.
		\end{aligned}
	\end{equation}  
	Then it can follow from \eqref{ineq: ||tilde{u^n}-u || 2.1} and \eqref{ineq: ||tilde{u^n}-u || 2.2} that
	\begin{equation}    \label{ineq: ||tilde{u^n}-u || 3}
		\begin{aligned}
			\left\|\mathcal{I}_4(t,\cdot)\right\|_{L^p(\Omega,L^\infty)} 
			\leq&\left\|\mathcal{I}_4(t,\cdot)\right\|_{L^q(\Omega,L^\infty)}\\ 
			\leq &
			C\left(\mathbb{E}\left[\sup_{1\leq i\leq N}\left| \mathcal{I}_4(t,x_i)\right|^q\right]\right)^\frac{1}{q} +C\left(\mathbb{E}\left[\sup_{1\leq i\leq N}\sup_{x \in[(i-1)h, ih]}\left|\mathcal{I}_4(t,x_i)-\mathcal{I}_4(t,x) \right|^q\right]\right)^\frac{1}{q}\\
			\leq &
			C\left(\sum_{i=1}^{N}\mathbb{E}\left[\left| \mathcal{I}_4(t,x_i)\right|^q\right]\right)^\frac{1}{q} +C\left(\sum_{i=1}^{N}\mathbb{E}\left[ \sup_{x \in[(i-1)h, ih]}\left|\mathcal{I}_4(t,x_i)-\mathcal{I}_4(t,x)\right|^q\right]\right)^\frac{1}{q}\\
			\leq &
			Ch^{1-\epsilon}.
		\end{aligned}
	\end{equation} 
	
	Evidently, \eqref{ineq: ||tilde{u^n}-u || 1} together with \eqref{ineq: ||tilde{u^n}-u || 2} and  \eqref{ineq: ||tilde{u^n}-u || 3} suffices to obtain \eqref{ineq: ||tilde{u^n}-u ||}. This completes the proof. 
\end{proof} 

Before proofing the Proposition \ref{lem: ||u^n-tilde{u^n} ||}, we have to give the moment boundedness of $\tilde{U}^N_t$.
\begin{lemma}\label{lem: ||tilder U || 1,N}  
	Suppose that Assumption \ref{assu:2} holds, and let $k$ be the largest integer smaller than $4H_1+H_2-1$.
	Then for $p \geq 1$ and $1\leq i\leq k $, there exists a constant $C>0$, independent of $N$,  such that
	\begin{equation}\label{ineq: ||tilder U || 1,N}
		\mathbb{E}\left[\big\| \tilde{U}^N_t 	\big\|_{i,N}^p\right]\leq C, \quad \forall\ t \in[0, T].
	\end{equation}  
\end{lemma}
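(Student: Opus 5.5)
The plan is to work directly with the mild form of \eqref{eq:eq:discrete_CH_FractionalNoise_SDE_matrix auxiliary process}:
\begin{equation*}
\tilde{U}^N_t=e^{-A_N^2t}U_0+\int_0^t A_Ne^{-A_N^2(t-s)}F_N(U_s)\,\mathrm{d}s+\mathscr{O}^N_t .
\end{equation*}
Unlike the case of $U^N_t$, the nonlinearity here is evaluated at the exact solution $U_s=(u(s,x_1),\dots,u(s,x_N))^\top$, not at the unknown itself. So no perturbation argument (Proposition \ref{propo: perturbed problem}) is needed. Everything reduces to moment bounds on $\|U_s\|_{i-1,N}$, or on the simpler quantity $\|F_N(U_s)\|_{l^2_N}$, which come from the regularity of $u$. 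I will bound the three terms separately in $L^p(\Omega,l^2_N)$, for $\|\cdot\|_{l^2_N}$ and for $|\cdot|_{i,N}=\|(-A_N)^{i/2}\cdot\|_{l^2_N}$ with $1\le i\le k$.

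\textbf{Initial and stochastic terms.} For the first term, $e^{-A_N^2t}$ commutes with $(-A_N)^{i/2}$ and is a contraction. Hence $|e^{-A_N^2t}U_0|_{i,N}\le \|U_0\|_{i,N}$, which is bounded by $C(\|U_0\|_{l^2_N}+\|(-A_N)^2U_0\|_{l^2_N})\le C$ by \eqref{es:   U_0 4,N}, using $\lambda_{N,j}^{i/2}\le 1+\lambda_{N,j}^2$. The stochastic term $\mathscr{O}^N_t$ is controlled by Lemma \ref{lem: 4.4 regularity of O^N_t} with $\beta=i<4H_1+H_2-1$.

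\textbf{Nonlinear term.} For the drift term I apply \eqref{ineq: e^{-A_N^2t}(-A_N)^r to l2}:
\begin{equation*}
\Big\|(-A_N)^{\frac{i}{2}}\int_0^tA_Ne^{-A_N^2(t-s)}F_N(U_s)\,\mathrm{d}s\Big\|_{l^2_N}\le C\int_0^t(t-s)^{-\frac{i+2-m}{4}}\big|F_N(U_s)\big|_{m,N}\,\mathrm{d}s ,
\end{equation*}
where $m\in\{0,i-1\}$ is chosen so the exponent is below $1$; with $m=i-1$ the exponent is $3/4$. For $m=0$, I use $\|F_N(U_s)\|_{l^2_N}\le C(1+\sup_x|u(s,x)|^3)$, which has bounded moments by \eqref{ineq: u_sup_boundness}. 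This settles $i=1$ directly, since $(t-s)^{-3/4}$ is integrable. For $i=2,3$ I take $m=i-1$ and apply \eqref{ineq:  F local Lipschitz   2}, giving $|F_N(U_s)|_{i-1,N}\le C(1+\|U_s\|_{i-1,N}^3)$. Minkowski's inequality in $L^p(\Omega)$ then reduces the problem to showing $\sup_s\mathbb{E}\|U_s\|_{i-1,N}^{3p}\le C$ for $i-1\le k-1$.

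\textbf{Main obstacle.} The key step is bounding the discrete Sobolev norms of the grid restriction $U_s$ of the exact solution by the continuous norms from Theorem \ref{th:  u_regularity}. For $|U_s|_{1,N}$, \eqref{eq: (-A_N)w to w_i} gives
\begin{equation*}
|U_s|_{1,N}^2=\frac1h\sum_j|u(s,x_{j+1})-u(s,x_j)|^2\le\|\partial_xu(s,\cdot)\|_{L^2}^2 ,
\end{equation*}
by Cauchy--Schwarz on each cell, and \eqref{ineq: u_regularity   u_x} bounds the right-hand side. For $|U_s|_{2,N}=\|A_NU_s\|_{l^2_N}$, I would write each second difference as an average of $\partial_{xx}u$ over $[x_{j-1},x_{j+1}]$, and at the boundary cells use the Neumann condition $\partial_xu(s,0)=\partial_x u(s,\pi)=0$. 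This gives $|U_s|_{2,N}\le C\|\partial_{xx}u(s,\cdot)\|_{L^2}=C\|u(s,\cdot)\|_{\dot H^2}$ by \eqref{regularity of u(t,x) u_f appendix}, which is finite in moments when $k\ge3$, that is, $4H_1+H_2-1>3$, via Theorem \ref{th:  u_regularity}. The boundary rows of $A_N$ are where care is needed: the half-cell offset $x_1=h/2$ must match the reflection structure so that no $O(h^{-1})$ boundary term survives. If this is delicate, a fallback is to rewrite $|U_s|_{2,N}^2$ as $\frac1{h^3}\sum$ of squared second differences plus two boundary terms $\frac{1}{h^3}|u(s,x_2)-u(s,x_1)|^2$, and to bound the latter by $h\|\partial_{xx}u\|_{L^\infty}^2$, or more weakly by $Ch\|\partial_{xx} u\|_{L^2(0,2h)}^2/h$, using $\partial_xu(s,0)=0$. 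Collecting the three bounds gives \eqref{ineq: ||tilder U || 1,N} uniformly in $t$ and $N$.
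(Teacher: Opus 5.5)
Your proposal is correct and follows essentially the same route as the paper's proof in Appendix C. Both use the mild form of \eqref{eq:eq:discrete_CH_FractionalNoise_SDE_matrix auxiliary process}, the smoothing bound $(t-s)^{-3/4}|F_N(U_s)|_{i-1,N}\le C(t-s)^{-3/4}(1+\|U_s\|_{i-1,N}^3)$ via Lemma \ref{lem: F local Lipschitz}, Lemma \ref{lem: 4.4 regularity of O^N_t} for $\mathscr{O}^N_t$, and moment bounds on the grid restriction $U_s$ from the regularity of $u$. Where you differ is only in detail: you explicitly compare $|U_s|_{1,N}$ and $|U_s|_{2,N}$ with $\|\partial_x u\|_{L^2}$ and $\|\partial_{xx}u\|_{L^2}$, using $\partial_xu(s,0)=\partial_x u(s,\pi)=0$ at the boundary rows, which the paper only asserts; and for $i=1$ you use the sup-norm bound \eqref{ineq: u_sup_boundness}, which is cleaner than the paper's bound $\|F_N(U_s)\|_{l^2_N}\lesssim 1+\|U_s\|_{0,N}^3$, since that bound is not uniform in $N$.
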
 
\begin{proof}
	See Appendix C. 
\end{proof}

\vspace{0.2cm}
Equipped with the previously derived lemma, we can prove Proposition \ref{lem: ||u^n-tilde{u^n} ||}.

\begin{proof}[Proof of Proposition \ref{lem: ||u^n-tilde{u^n} ||}]
	We denote $E^N_t:= \tilde{U}^N_t-U^N_t$. This allows us to derive the error equation 
	\begin{equation} \label{eq: U-tilder(U)^N_matrix}
		\left\{\begin{aligned} 
			&\mathrm{d}  E_t^N+A_N^2 E_t^N \mathrm{d}  t=A_N\left[F_N\left(U_t^N+E_t^N+(U_t-\tilde{U}^N_t)\right)-F_N\left(U_t^N\right)\right]\mathrm{d}  t,\ t \in(0, T], \\
			&E_0^N=0.
		\end{aligned}\right.
	\end{equation}
	For any $t\in[0,T]$, by utilizing  Theorem \ref{th:  u_regularity}, Theorem \ref{th: U^N regularity}, Lemma \ref{lem: ||tilder U || 1,N}, Proposition \ref{propo: err problem 1}  and Proposition \ref{lem: ||tilde{u^n}-u ||}, one can obtain
	\begin{equation*} \label{ineq: ||u^n-u|| 1} 
		\begin{aligned}
			\mathbb{E} \left[\left\|E_t^N\right\|_{l_N^2}^q\right]  
			\leq&  Ch^{q(1-\epsilon)}, \quad \forall \ q\geq1.
		\end{aligned}
	\end{equation*}
	Combining this with  \eqref{ineq: l_infty to l2},  \eqref{ineq: e^{-A_N^2t}(-A_N)^r to l2},
	\eqref{ineq: ||tilde{u^n}-u ||}, \eqref{ineq: u_sup_boundness} and \eqref{ineq: U_t_l^infty} shows 
	
	\begin{align*}
		\left\|u^N(t, \cdot)-\tilde{u}^N(t,\cdot)\right\|_{L^p(\Omega,L^\infty)}
		=& \left\|E^N_t\right\|_{L^p(\Omega,l^\infty_N)}
		\leq C \left(	\mathbb{E}\left[	\left\| E^N_t \right\|_{1,N}^p\right]
		\right)^\frac{1}{p}\\
		\leq&C\left\| \int_0^t\left\| (-A_N)^\frac{3}{2}  e^{-A_N^2(t-s)}\left[F_N\left(U_s\right)-F_N\left(U_s^N\right)\right] 
		\right\|_{l^2_N} \mathrm{d} s  \right\|_{L^{p}(\Omega,\mathbb{R})}+Ch^{1-\epsilon}\\ 
		\leq&C  \int_0^t(t-s)^{-\frac{3}{4}}  \left\|1+\|U_s\|_{l_N^{\infty}}^2+\|U_s^N\|_{l_N^{\infty}}^2\right\|_{L^{2p}\left(\Omega, \mathbb{R}\right)} \|U_s-U_s^N\|_{L^{2p}\left(\Omega, l_N^2\right)}  \mathrm{d} s +Ch^{1-\epsilon} \\ 
		\leq&C  \int_0^t(t-s)^{-\frac{3}{4}} \left(\|E^N_s\|_{L^{2p}\left(\Omega, l_N^2\right)} +\|U_t-\tilde{U}^N_t\|_{L^{2p}\left(\Omega, l_N^2\right)}\right)  \mathrm{d} s +Ch^{1-\epsilon} \\ 
		\leq&Ch^{1-\epsilon}.
	\end{align*} 
	This proof is completed.
\end{proof}

\section{Fully discrete scheme}\label{section5}
In this section, we present a fully discrete scheme and show its strong convergence rate.
Take a positive integer $M$. We define $\tau = T/M$ and $t_i = i\tau$ for $i = 0, 1, \ldots, M$. Let $\lfloor t \rfloor$ denote the largest time grid point no greater than $t$, such that $\lfloor t \rfloor=t_i$ for $t \in [t_i, t_{i+1}),\ i = 0, 1, \ldots, M-1$.  
We apply the tamed exponential Euler scheme to \eqref{eq:eq:discrete_CH_FractionalNoise_SDE_matrix} and obtain the full discretization
\begin{equation} \label{eq: mild_solution_full_discrete}
	\begin{aligned}
		U^{M,N}_t= & e^{-A_N^2 t}U_0+\int_0^t A_N e^{-A_N^2(t-\lfloor s\rfloor)} \tilde{F}_N(	U^{M,N}_{\lfloor s\rfloor})  \mathrm{d} s +\mathscr{O}_t^{M,N}, \quad \forall\ t \in[0, T].
	\end{aligned}
\end{equation}  
where $$\mathscr{O}_t^{M,N}:=\sqrt{\sigma^2 N/\pi}\int_0^t e^{-A_N^2(t-\lfloor s\rfloor)} \mathrm{d} \beta^H_s\quad \text{and} \quad\tilde{F}_N(w):=F_N(w)/(1+\tau\left\|F_N(w)\right\|_{l^2_N})  $$
for any $w\in l^2_N$.
Through the interpolation of  $U^{M,N}_t$, we can get
\begin{equation} \label{eq: full discretization}
	\begin{aligned}
		u^{M, N}(t, x):= & \int_{\mathcal{O}} G_t^N(x, y) u_0\left(\kappa_N(y)\right) \mathrm{d} y
		+\int_0^t \int_{\mathcal{O}}  \frac{\Delta_N G_{t-\lfloor s\rfloor}^N(x, y)f\left(u^{M, N}\left(\lfloor s\rfloor, \kappa_N(y)\right)\right)}{ 1+\tau \left\|f\left(u^{M, N}\left(\lfloor s\rfloor, \eta_N(\cdot)\right)\right) \right\|_{L^2}} \mathrm{~d} y \mathrm{d} s \\
		& +\sigma\int_0^t \int_{\mathcal{O}} G_{t-\lfloor s\rfloor}^N(x, y)  B^H(\mathrm{d} s, \mathrm{d} y) , \qquad\qquad\qquad\qquad\quad\ \forall \ (t, x) \in[0, T] \times \mathcal{O},
	\end{aligned}
\end{equation} 
where $o^{M,N}(t,x):=\sigma\int_{0}^{t}\int_{\mathcal{O}} G^N_{t-\lfloor s\rfloor}(x,y)   \  B^H(\mathrm{d} s, \mathrm{d} y)$.
\subsection{Moment boundedness of the fully discrete numerical solution}
An argument similar to the one used in Lemma \ref{lem: 3.1 regularity of o(t,x)} shows the regularity and continuity of stochastic convolution $\mathscr{O}^{M,N}_t$.
\begin{lemma}\label{lem: O^{M,N}_continuous}  
	For any $p \geq 1$ and $0\leq\beta<4H_1+H_2-1$, the fully discrete stochastic convolution $\mathscr{O}^{M,N}_t$   enjoys 
	the following regularity property
	\begin{equation}\label{ineq: regularity of O^{M,N}(t,x)}
		\mathbb{E}\left[\sup_{t\in[0,T]} \left\|(-A_N)^{\frac{\beta}{2}}\mathscr{O}^{M,N}_t \right\|^p_{l^2_N}\right] < \infty.
	\end{equation}
	Moreover,  there exists a constant $C>0$, independent of $M$ and $N$, such that for any $0\leq s<t\leq T$,
	\begin{equation}\label{ineq: continuous of O^{M,N}(t,x)}
		\mathbb{E}\left[\left\|(-A_N)^{\frac{\beta}{2}}\left(\mathscr{O}^{M,N}_t-\mathscr{O}^{M,N}_s\right) \right\|^p_{l^2_N}\right]\leq C(t-s)^{ \left(\frac{4H_1+H_2-1-\beta}{4}
			-\frac{\epsilon}{2}\right) p},
	\end{equation} 
	where $\epsilon$ is an arbitrary small positive number, and for any $x,\ z \in \mathcal{O}$,  
	\begin{equation*} 
		\sup_{t\in[0,T]}\mathbb{E}\left[|o^{M,N}(t, x)-o^{M,N}(t, z)|^p\right]\leq C|x-z|^p. 
	\end{equation*} 
\end{lemma}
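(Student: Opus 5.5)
We follow the proof of Lemma \ref{lem: 3.1 regularity of o(t,x)} in the discrete eigenbasis $\{e_j\}_{j=0}^{N-1}$ of $-A_N$. Since $\mathscr{O}^{M,N}_t=\sqrt{\sigma^2N/\pi}\int_0^t e^{-A_N^2(t-\lfloor s\rfloor)}\mathrm{d}\beta^H_s$, its coordinates satisfy
\[
\big\langle \mathscr{O}^{M,N}_t,e_j\big\rangle=\sigma\int_0^t\int_{\mathcal{O}} e^{-\lambda_{N,j}^2(t-\lfloor r\rfloor)}\,\phi_j(\kappa_N(y))\,B^H(\mathrm{d}r,\mathrm{d}y),
\]
up to the constant factor relating $e_j$ and $\phi_j(\kappa_N(\cdot))$. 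Hence $\|(-A_N)^{\beta/2}(\mathscr{O}^{M,N}_t-\mathscr{O}^{M,N}_s)\|_{l^2_N}^2$ is a sum over $j$ of $\lambda_{N,j}^\beta$ times the square of one such scalar Wiener-type integral with separable integrand $g_{j}(r)\phi_j(\kappa_N(y))$. We split the increment as in \eqref{regularity of o(t,x)  1}. The part on $[0,s]$ has time kernel $e^{-\lambda_{N,j}^2(s-\lfloor r\rfloor)}\big(e^{-\lambda_{N,j}^2(t-s)}-1\big)$, and the part on $[s,t]$ has kernel $e^{-\lambda_{N,j}^2(t-\lfloor r\rfloor)}$. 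Applying Minkowski's inequality in $L^p(\Omega)$ and then \eqref{ineq: Ito Isometry} reduces everything to bounding $\mathcal{I}(H_1,\cdot)$ for these time kernels and $\mathcal{I}(H_2,\phi_j\circ\kappa_N,0,\pi)$.

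For the time factors, the key observation is that $t-\lfloor r\rfloor\ge t-r$. This gives the pointwise bounds $0\le e^{-\lambda_{N,j}^2(t-\lfloor r\rfloor)}\le e^{-\lambda_{N,j}^2(t-r)}$. Because $H_1\ge\frac12$, the kernel $|r_1-r_2|^{2H_1-2}$ is nonnegative, so $\mathcal{I}(H_1,\cdot)$ is monotone in nonnegative integrands. The bounds \eqref{regularity of o(t,x)  1.1}--\eqref{regularity of o(t,x)  1.3} therefore carry over verbatim with $\lambda_j$ replaced by $\lambda_{N,j}$. We use $\lambda_{N,j}\simeq j^2$ uniformly in $N$, which holds since $\frac{2}{\pi}x\le\sin x$ on $[0,\frac{\pi}{2}]$.

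The spatial factor is the main obstacle. Here $\phi_j(\kappa_N(y))$ is a piecewise-constant cosine, and we need the discrete analogue of \eqref{regularity of o(t,x)  1.5}, namely $\mathcal{I}(H_2,\phi_j\circ\kappa_N,0,\pi)\le C\lambda_j^{\frac12-H_2+\frac\epsilon2}$ uniformly in $N$ and $j\le N-1$. We would first try a Fourier/positivity argument. Write $\phi_j(\kappa_N(y))=\sqrt{\pi/N}\sum_k e_j(k)\mathbf{1}_{[x_k-h/2,x_k+h/2)}(y)$, so that $\mathcal{I}(H_2,\phi_j\circ\kappa_N,0,\pi)=\frac{\pi}{N}e_j^\top R\,e_j$, where $R_{kl}=\int\!\!\int_{\text{cells}}|z_1-z_2|^{2H_2-2}\mathrm{d}z_1\mathrm{d}z_2$ is a Toeplitz, fractional-Gaussian-noise-type matrix. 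Its spectral density behaves like $h^{2H_2}|\theta|^{1-2H_2}$, and $e_j$ is a discrete cosine at frequency $\theta=j\pi/N$. This gives $\frac{\pi}{N}e_j^\top Re_j\lesssim h\cdot h^{2H_2-1}(j/N)^{1-2H_2}\simeq j^{1-2H_2}$. If that route is too delicate, the fallback is to repeat the integration-by-parts computation of \eqref{regularity of o(t,x)  1.5} with a discrete (Abel) summation by parts, using $|\sum_k\cos(jx_k)\cdots|\lesssim 1/\sin(j\pi/(2N))$. In the worst case we accept the cruder bound $C$, obtained from \eqref{ineq: Ito Isometry 2} together with $\|\phi_j\circ\kappa_N\|_{L^{1/H_2}}\le C$, at the cost of a smaller range of $\beta$.

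With these ingredients, the choice of exponents $\alpha_1,\dots,\alpha_4,\hat\alpha$ made after \eqref{regularity of o(t,x)  1.5} yields \eqref{ineq: continuous of O^{M,N}(t,x)} with constants independent of $M,N$. Applying the Kolmogorov criterion \cite[Theorem 2.1]{revuz2013continuous} exactly as before, with $\mathscr{O}^{M,N}_0=0$, gives \eqref{ineq: regularity of O^{M,N}(t,x)}. For the spatial Lipschitz bound we use \eqref{ineq: Ito Isometry 2} and H\"older's inequality as in \eqref{ineq: o(t,x)-o(t,y)}. This reduces the claim to $\int_0^t\int_{\mathcal{O}}|G^N_{t-\lfloor s\rfloor}(x,y)-G^N_{t-\lfloor s\rfloor}(z,y)|^2\mathrm{d}y\mathrm{d}s\le C|x-z|^2$. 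By the discrete orthogonality of $\phi_j(\kappa_N(\cdot))$, this integral equals $\sum_j\int_0^t e^{-2\lambda_{N,j}^2(t-\lfloor s\rfloor)}\mathrm{d}s\,|\phi_{N,j}(x)-\phi_{N,j}(z)|^2$. We bound it by $\sum_j\lambda_{N,j}^{-2}\min(j^2|x-z|^2,1)\le C|x-z|$, and refine to $C|x-z|^2$ using the Lipschitz constant $j$ of the piecewise-linear $\phi_{N,j}$, exactly as in the proof of \eqref{ineq: futher property of O^N_t}.
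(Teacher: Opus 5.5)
Your plan follows the paper's route, and it is sound once the fixes below are made. The paper gives no separate argument; it only asserts that the proof of the regularity lemma for $o(t,x)$ in Section~3 carries over. Your proposal is a more careful version of exactly that: the discrete eigenbasis, the same splitting of the increment, $e^{-\lambda_{N,j}^2(t-\lfloor r\rfloor)}\le e^{-\lambda_{N,j}^2(t-r)}$ together with monotonicity of $\mathcal{I}(H_1,\cdot)$ and $\lambda_{N,j}\simeq j^2$ to import the time estimates, Kolmogorov for the supremum, and orthonormality of $y\mapsto\phi_j(\kappa_N(y))$ in $L^2(\mathcal{O})$ for the Lipschitz bound. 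You are also right that the one step that does not transfer verbatim is the uniform bound on $\mathcal{I}(H_2,\phi_j\circ\kappa_N,0,\pi)$; the paper passes over it in silence.

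\textbf{Normalization.} It is inverted. Since $e_j(k)=\sqrt{\pi/N}\,\phi_j(x_k)$, one has $\phi_j(\kappa_N(y))=\sqrt{N/\pi}\sum_k e_j(k)\mathbf{1}_{[x_k-h/2,\,x_k+h/2)}(y)$. Hence $\mathcal{I}(H_2,\phi_j\circ\kappa_N,0,\pi)=\frac{N}{\pi}\,e_j^\top R\,e_j$, with $R=c_{H_2}h^{2H_2}\Gamma$ and $\Gamma$ the covariance matrix of fractional Gaussian noise. Your displayed chain $h\cdot h^{2H_2-1}(j/N)^{1-2H_2}$ is off by a factor of order $N$.

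\textbf{Spectral bound.} With the normalization fixed, the spectral route closes rigorously. The fGn spectral density satisfies $f_{H_2}(\theta)\le C|\theta|^{1-2H_2}$ on $[-\pi,\pi]$. The function $\frac{1}{2\pi}|\sum_k e_j(k)e^{\mathrm{i}k\theta}|^2$ is a unit-mass Fej\'er-type kernel peaked at $\pm\theta_j$, where $\theta_j=jh$. On $\{|\theta|<\theta_j/2\}$ it is bounded by $C/(N\theta_j^2)$. Hence $e_j^\top\Gamma e_j\le C\theta_j^{1-2H_2}(1+(N\theta_j)^{-1})\le C\theta_j^{1-2H_2}$, because $N\theta_j=j\pi$. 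Therefore $\mathcal{I}(H_2,\phi_j\circ\kappa_N,0,\pi)\le Ch^{2H_2-1}(jh)^{1-2H_2}=Cj^{1-2H_2}$, uniformly in $N$ and $1\le j\le N-1$.

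Commit to this argument. Your fallback $\mathcal{I}\le C$ would only give $\beta<4H_1-\frac12$ with exponent $\frac{4H_1-1/2-\beta}{4}$ (up to $\epsilon$). That is strictly weaker than the lemma whenever $H_2>\frac12$.

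\textbf{Exponents.} Do not reuse the paper's exponents literally. Its choice $\alpha_3=1+\epsilon-\frac{4H_1+H_2-1-\beta}{2}$ is negative when $4H_1+H_2-1-\beta>2+2\epsilon$, which is not allowed. Only the sum matters: $\alpha_3+\alpha_4=\frac{\beta+1-H_2}{2}+\frac{3\epsilon}{4}$. For small $\epsilon$ this lies in $(0,2H_1)$, so it can always be split with $\alpha_3\in[0,1)$ and $\alpha_4\in[0,2H_1-1)$.

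\textbf{Lipschitz estimate.} The detour through $C|x-z|$ is unnecessary. Since $\lambda_{N,j}^{-2}\min(j^2|x-z|^2,1)\le Cj^{-2}|x-z|^2$, the sum gives $C|x-z|^2$ directly.
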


Following Lemma \ref{lem: O^{M,N}_continuous}, we obtain the moment boundedness of $U^{M,N}_t$.
\begin{theorem} \label{thm: the moment boundedness of the numerical solution U^{M,N}}
	Suppose that Assumption \ref{assu:2} holds.
	Then for any $p \geq 2$, there exists a constant $C>0$ such that
	\begin{equation}\label{es: U^{M,N} }
		\sup _{M, N \in \mathbb{N}} \sup _{0 \leq i \leq M} \mathbb{E}\left[\left\|U_{t_i}^{M, N}\right\|_{1,N}^p\right]\leq C.
	\end{equation}
\end{theorem}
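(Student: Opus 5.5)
We follow the standard route for tamed exponential Euler schemes (e.g. Cai et al., Hutzenthaler--Jentzen type arguments): we compare $U^{M,N}$ with the perturbation equation of Proposition \ref{propo: perturbed problem}. Set $V^{M,N}_t:=U^{M,N}_t-\mathscr{O}^{M,N}_t$. By \eqref{eq: mild_solution_full_discrete},
\[
V^{M,N}_{t}=e^{-A_N^2t}U_0+\int_0^t A_Ne^{-A_N^2(t-\lfloor s\rfloor)}\tilde F_N(V^{M,N}_{\lfloor s\rfloor}+\mathscr{O}^{M,N}_{\lfloor s\rfloor})\,\mathrm{d}s .
\]
At grid points this is a discrete analogue of \eqref{eq: perturbed problem 1} with $Z=\mathscr{O}^{M,N}$. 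By \eqref{es:   U_0 k,N} we have $\|U_0\|_{1,N}\le C$. By Lemma \ref{lem: O^{M,N}_continuous} and \eqref{ineq: l_infty to l2}, every moment of $\sup_t\|\mathscr{O}^{M,N}_t\|_{l^\infty_N}$ and of $\sup_t\|\mathscr{O}^{M,N}_t\|_{1,N}$ is bounded uniformly in $M,N$. It therefore suffices to bound moments of $\|V^{M,N}_{t_i}\|_{1,N}$ uniformly.

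\textbf{Step 1 (a priori pathwise bound on a good event).} We redo Steps 1 and 2 of the proof of Proposition \ref{propo: perturbed problem} in discrete time, with $Y_{t_i}=V^{M,N}_{t_i}$. We use the energy identity obtained by pairing with $Y$, and the $(-A_N)^{-1}$-weighted identity analogous to \eqref{eq: (-A_N)^{1/2}Y(t)  1.1}. The zero mode $\langle Y,e_0\rangle$ is again conserved, since $A_Ne_0=0$.

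Two new features appear.
\begin{itemize}
\item The taming factor $(1+\tau\|F_N\|_{l^2_N})^{-1}\in(0,1]$ multiplies the nonlinearity. It preserves the sign of the dissipative term $\langle F_N(Y+Z),Y+Z\rangle$, and it gives the crude deterministic bound $\tau\|\tilde F_N(w)\|_{l^2_N}\le 1$.
\item The freezing $\lfloor s\rfloor$ produces remainder terms $\langle \tilde F_N(\cdot),Y_{t_{i+1}}-Y_{t_i}\rangle$. We control them by the exponential-integrator smoothing \eqref{ineq: e^{-A_N^2t}(-A_N)^r to l2} and \eqref{ineq: (-A_N)^re^{-A_N^2t}(I- e^{-A_N^2s}) to l2}.
\end{itemize}
We define the event
\[
\Omega_{R,i}=\Big\{\sup_{j\le i}\|\mathscr{O}^{M,N}_{t_j}\|_{1,N}\le R\Big\}\cap\Big\{\sup_{j\le i}\|V^{M,N}_{t_j}\|_{1,N}\le R\Big\},
\]
with $R=\tau^{-\delta}$ for a small $\delta>0$. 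On this event the remainders are of order $\tau^{\theta}R^{m}$ for some $\theta>0$ and power $m$, so they are harmless. The continuous-time argument then gives
\[
\|V^{M,N}_{t_i}\|_{1,N}\le C\Big(1+\|U_0\|_{1,N}^{36}+\sup_{j\le i}\|\mathscr{O}^{M,N}_{t_j}\|_{l^\infty_N}^{27}\Big)
\]
on $\Omega_{R,i-1}$.

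\textbf{Step 2 (complement event via taming).} On $\Omega_{R,i}^c$ we use only the taming bound. By \eqref{ineq: e^{-A_N^2t}(-A_N)^r to l2} with $\gamma=3/2$ we get
\[
\|V^{M,N}_{t_i}\|_{1,N}\le C\Big(1+\sum_{j<i}\int_{t_j}^{t_{j+1}}(t_i-t_j)^{-3/4}\,\mathrm{d}s\ \tau^{-1}\Big)\le C\tau^{-1},
\]
a deterministic polynomial bound in $\tau^{-1}$. The probability $\mathbb{P}(\Omega_{R,i}^c)$ is at most
\[
\mathbb{P}\Big(\sup_t\|\mathscr{O}^{M,N}_t\|_{1,N}>R\Big)+\mathbb{P}(\Omega_{R,i-1}\cap\{\|V_{t_i}\|_{1,N}>R\}).
\]
The first term is $O(\tau^{q})$ for every $q$ by Markov's inequality and Lemma \ref{lem: O^{M,N}_continuous}. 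The second term is bounded by Step 1 and Markov's inequality together with induction over $i$. Hence $\mathbb{E}[\|V_{t_i}\|^p_{1,N}\mathbf{1}_{\Omega^c_{R,i}}]\le C\tau^{-p}\,\mathbb{P}(\Omega^c_{R,i})^{1/2}$, which is bounded. Combining the two events and adding back $\mathscr{O}^{M,N}$ yields \eqref{es: U^{M,N} }.

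\textbf{Main obstacle.} The delicate point is Step 1: transferring the $l^6_N$ and $(-A_N)^{1/2}$ estimates of Step 2 of Proposition \ref{propo: perturbed problem} to the frozen-time scheme. The energy identity is no longer exact, because $\tilde F_N$ is evaluated at $t_j$ while it is paired with $Y$ at time $s$. We would handle this by comparing $Y_s$ with $Y_{\lfloor s\rfloor}$ through the mild formula and \eqref{ineq: (-A_N)^re^{-A_N^2t}(I- e^{-A_N^2s}) to l2}, using the Hölder regularity $(t-s)^{\frac{4H_1+H_2-2}{4}-\frac{\epsilon}{2}}$ from Lemma \ref{lem: O^{M,N}_continuous} for the noise part. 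We would choose $\delta$ small enough that the resulting powers of $R$ are absorbed by the positive power of $\tau$.
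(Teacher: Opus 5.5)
Your architecture matches the paper's in three places:
\begin{itemize}
\item a good event with $R=\tau^{-\delta}$ (the paper takes $\delta=\frac{1}{12}$, localizes only on $\sup_{j\le i}\|U^{M,N}_{t_j}\|_{1,N}\le R$, and handles the noise through moments rather than putting it in the event);
\item the deterministic bound of order $\tau^{-1}$ on the complement, coming from $\tau\|\tilde F_N\|_{l^2_N}\le 1$;
\item Markov's inequality summed over the $M$ steps.
\end{itemize}
That part of your argument is fine.

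The genuine difference is how the a priori bound on the good event is produced. You keep $Z=\mathscr{O}^{M,N}$ and propose to re-derive both steps of Proposition~\ref{propo: perturbed problem} for the frozen, tamed recursion. The paper instead changes the unknown so that the proposition applies verbatim. It sets
\[
V^{M,N}_t:=U^{M,N}_t-\int_0^tA_Ne^{-A_N^2(t-s)}F_N\big(U^{M,N}_s\big)\,\mathrm{d}s .
\]
Then $\bar U^{M,N}=U^{M,N}-V^{M,N}$ solves \emph{exactly} the continuous-time perturbed problem with $y=0$ and $Z=V^{M,N}$, which gives $\|\bar U^{M,N}_t\|_{1,N}\le C(1+\sup_{s\le t}\|V^{M,N}_s\|^{27}_{l^\infty_N})$.

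All discretization defects are thereby moved into $V^{M,N}$ as the linear terms $\mathcal{J}_2,\mathcal{J}_3,\mathcal{J}_4$. These defects are the taming, the frozen argument $U^{M,N}_{\lfloor s\rfloor}$ versus $U^{M,N}_s$, and the frozen semigroup $e^{-A_N^2(t-\lfloor s\rfloor)}$ versus $e^{-A_N^2(t-s)}$. On $\Omega_{R,t_{i-1}}$ those terms are bounded by quantities $\tau^{\theta}R^{m}$ that stay $O(1)$ for $R=\tau^{-1/12}$, plus noise-increment terms with bounded moments. Only the smoothing estimates for $e^{-A_N^2t}$ and Lemma~\ref{lem: F local Lipschitz} are used. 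What this buys is that no energy identity for the scheme is ever needed: the nonlinear dissipative analysis is done once, in continuous time, and reused.

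Your route should also close on the good event, but it costs a full re-verification of that dissipative argument. Two places need care:
\begin{itemize}
\item The energy identity for $V=U^{M,N}-\mathscr{O}^{M,N}$ pairs $e^{-A_N^2(s-\lfloor s\rfloor)}\tilde F_N(U^{M,N}_{\lfloor s\rfloor})$ with $A_NV_s$. The sign $\langle Y^3,A_NY\rangle_{l^2_N}\le 0$ is therefore lost and must be recovered by comparison with $F_N$ evaluated at time $s$.
\item The $l^6_N$ step via \eqref{ineq: l_6 to l2} now needs $\|A_NV_{t_j}\|_{l^2_N}$ at grid points, which enter through the discrete mild formula. The energy identity only controls $\int_0^t\|A_NV_s\|^2_{l^2_N}\,\mathrm{d}s$, so you must transfer between grid and off-grid times, again with remainders on the good event.
\end{itemize}
These are exactly the points your Step~1 leaves as a sketch. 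Carried out naturally, they amount to rewriting the frozen tamed nonlinearity as $F_N(U^{M,N}_s)$ plus remainders, which is the paper's decomposition. At that point re-proving Proposition~\ref{propo: perturbed problem} is unnecessary.
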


\begin{proof}
	We introduce a sequence of decreasing events
	$$
	\Omega_{R, t_i}:=\left\{\omega \in \Omega: \sup _{0 \leq j \leq i}\left\|U_{t_j}^{M, N}\right\|_{1,N} \leq R\right\}, \quad\forall \ R \in(0, \infty),\ 0 \leq i \leq M.
	$$ 
	$\textbf{B}^C$ and $\mathbb{I}_\textbf{B}$ are employed to represent the complement and indicator functions of a specified set $\textbf{B}$, respectively.\par 
	The forthcoming proof can be segmented into two steps.\par 
	\textbf{Step 1: }
	For any $q\in[2,\infty),$ we verify 
	\begin{equation} \label{es: U^{M,N} I_Omega 1}
		\begin{aligned}
			\sup _{M, N \in \mathbb{N}} \sup _{0 \leq i \leq M} \mathbb{E}\left[ \mathbb{I}_{\Omega_{R, t_i}}\left\|U_{t_i}^{M, N}\right\|_{1,N}^q\right]\leq C.
		\end{aligned}
	\end{equation} 
	The proof of \eqref{es: U^{M,N} I_Omega 1} heavily relies on the utilization of Proposition \ref{propo: perturbed problem}. We introduce the process  $\{V^{M,N}_t\}_{t\in[0,T]}$ by
	\begin{equation} \label{eq: V_t^{M, N} defination}
		\begin{aligned}
			V_t^{M, N}:= & e^{-A_N^2t} U_0 +\int_0^t A_N e^{-A_N^2(t-\lfloor s\rfloor)}\tilde{F}_N\big(U_{\lfloor s\rfloor}^{M, N}\big)\mathrm{d} s -\int_0^t A_N e^{-A_N^2(t-s)}F_N\left(U_s^{M, N}\right)\mathrm{d} s+\mathscr{O}_t^{M,N}.
		\end{aligned}
	\end{equation}
	Let $\bar{U}^{M,N}_t:=U^{M,N}_t-V^{M,N}_t.$ Then one can have
	\begin{equation*} 
		\left\{  
		\begin{aligned}
			&\mathrm{d} \bar{U}^{M,N}_t+A_N^2 \bar{U}^{M,N}_t \mathrm{d} t=A_N F_N(\bar{U}^{M,N}_t+V^{M,N}_t) \mathrm{d} t, \quad t\in (0,T],\\
			&\bar{U}^{M,N}_0=0.
		\end{aligned}
		\right.
	\end{equation*}
	It follows from Proposition \ref{propo: perturbed problem} that 
	
	\begin{equation}\label{es:  bar{U} to V}
		\left\|\bar{U}^{M,N}_t\right\|_{1,N}\leq C\left( 1+ \sup_{s\in[0, t]}\left\|V^{M,N}_s\right\|_{l^\infty_N}^{27}\right), \quad \forall\ t \in[0, T].  
	\end{equation} 
	For $t \in\left[0, t_i\right], 0 \leq i \leq M$, using \eqref{eq: V_t^{M, N} defination} we have
	\begin{equation}  \label{es: Omega V^M,N    1}
		\begin{aligned}
			\mathbb{I}_{\Omega_{R, t_{i-1}}}\left\|V_t^{M, N}\right\|_{1,N}\leq & \left\|e^{-A_N^2 t} U_0\right\|_{1,N}+\mathbb{I}_{\Omega_{R, t_{i-1}}}\left\|\int_0^t A_N e^{-A_N^2(t-\lfloor s\rfloor)}\left[\tilde{F}_N\left(U_{\lfloor s\rfloor}^{M, N}\right)-F_N\left(U_{\lfloor s\rfloor}^{M, N}\right)\right] \mathrm{d} s\right\|_{1,N} \\
			& +\mathbb{I}_{\Omega_{R, t_{i-1}}}\left\|\int_0^t A_N e^{-A_N^2(t-\lfloor s\rfloor)}\left[F_N\left(U_{\lfloor s\rfloor}^{M, N}\right)-F_N\left(U_s^{M, N}\right)\right] \mathrm{d} s\right\|_{1,N} \\
			& +\mathbb{I}_{\Omega_{R, t_{i-1}}}\left\|\int_0^t A_N e^{-A_N^2(t-s)}\left(I-e^{-A_N^2(s-\lfloor s\rfloor)}\right) F_N\left(U_s^{M, N}\right) \mathrm{d} s\right\|_{1,N}+\left\|\mathscr{O}_t^{M, N}\right\|_{1,N}\\
			=&: \mathcal{J}_1+\mathcal{J}_2+\mathcal{J}_3+\mathcal{J}_4+\left\|\mathscr{O}_t^{M, N}\right\|_{1,N} .
		\end{aligned}
	\end{equation} 
	Using  \eqref{ineq: e^{-A_N^2t}(-A_N)^r to l2} and \eqref{es:   U_0 k,N}, one can have
	\begin{equation} \label{es: I_1   U^MN  1} \mathcal{J}_1\leq C \left\| U_0\right\|_{1,N}\leq C.\end{equation}
	From \eqref{ineq: e^{-A_N^2t}(-A_N)^r to l2}, we derive
	\begin{align}
		\mathcal{J}_2\leq& \mathbb{I}_{\Omega_{R, t_i}} \int_0^t   \tau\left\| F\big(U_{\lfloor s\rfloor}^{M, N}\big)\right\|_{l^2_N}   \left\| A_Ne^{-A_N^2(t-\lfloor s\rfloor)} F\big(U_{\lfloor s\rfloor}^{M, N}\big)\right\|_{1,N}\mathrm{d} s\nonumber  \\
		\leq& \mathbb{I}_{\Omega_{R, t_i}}C \tau(1+R^3)\int_0^t
		\left(t-\lfloor s\rfloor\right)^{-\frac{3}{4}} 
		\left\|  F\big(U_{\lfloor s\rfloor}^{M, N}\big)\right\|_{l^2_N}   \mathrm{d} s  
		\leq C\tau \left(1+R^6\right). \label{es: I_2   U^MN}
	\end{align}
	
	To estimate $	\mathcal{J}_3$, we rewrite \eqref{eq: mild_solution_full_discrete} as
	$$
	U_t^{M, N}=e^{-A_N^2(t-\lfloor t\rfloor)}U_{\lfloor t\rfloor}^{M, N}
	+(t-\lfloor t\rfloor) A_Ne^{-A_N^2(t-\lfloor t\rfloor)}\tilde{F}_N\big(U_{\lfloor t\rfloor}^{M, N}\big)  
	+\mathscr{O}_t^{M,N}-e^{-A_N^2(t-\lfloor t\rfloor)} \mathscr{O}_{\lfloor t\rfloor}^{M,N},\ \forall t \in [0,t_i].
	$$
	For any $s<t$,	combining this with \eqref{ineq: l_infty to l2} and \eqref{ineq: e^{-A_N^2t}(-A_N)^r to l2}, we obtain 
	\begin{equation} \label{es: I_{Omega_{R, t_i}} U_t^{M, N}}
		\begin{aligned}
			\mathbb{I}_{\Omega_{R, t_{i-1}}}\left\|U_s^{M, N}\right\|_{1,N} 
			\leq &\mathbb{I}_{\Omega_{R, t_{i-1}}}\bigg(\left\|e^{-A_N^2(s-\lfloor s\rfloor)} U_{\lfloor s\rfloor}^{M, N} \right\|_{1,N}
			+ (s-\lfloor s\rfloor)\left\|A_Ne^{-A_N^2(s-\lfloor s\rfloor)}\tilde{F}_N\big(U_{\lfloor s\rfloor}^{M, N}\big)\right\|_{1,N} \\
			&\qquad\quad +\left\| \mathscr{O}_s^{M,N}\right\|_{1,N}
			+\left\|e^{-A_N^2(s-\lfloor s\rfloor)}  \mathscr{O}_{\lfloor s\rfloor}^{M,N}\right\|_{1,N}\bigg) \\
			\leq &  \mathbb{I}_{\Omega_{R, t_{i-1}}}C \bigg(  \left\|U_{\lfloor s\rfloor}^{M, N} \right\|_{1,N}
			+(s-\lfloor s\rfloor)^\frac{1}{4}\left\|\tilde{F}_N\big(U_{\lfloor s\rfloor}^{M, N}\big)\right\|_{l^2_N}  +\left\| \mathscr{O}_s^{M,N}\right\|_{1,N}
			+\left\| \mathscr{O}_{\lfloor s\rfloor}^{M,N}\right\|_{1,N}\bigg) \\ 
			\leq & C\left(R+\tau^{\frac{1}{4}} (1+R^3)+\mathscr{R}_s  \right),
		\end{aligned}
	\end{equation}
	where $\mathscr{R}_s:=\left\| \mathscr{O}_s^{M,N}\right\|_{1,N}+\left\| \mathscr{O}_{\lfloor s\rfloor}^{M,N}\right\|_{1,N}$.
	Utilizing \eqref{eq: mild_solution_full_discrete}, we can get 
	\begin{align}
		& \left\|U_s^{M, N}-U_{\lfloor s\rfloor}^{M, N}\right\|_{l_N^2} \nonumber\\
		\leq&\left\|\left(e^{-A_N^2s}- e^{-A_N^2\lfloor s\rfloor}\right)U_0\right\|_{l^2_N} 
		+\int_{0}^{\lfloor s\rfloor}\left\|A_Ne^{-A_N^2(\lfloor s\rfloor-\lfloor r\rfloor)}\left(I-e^{-A_N^2(s-\lfloor s\rfloor)}\right) \tilde{F}_N\big(U_{\lfloor r\rfloor}^{M, N}\big) \right\|_{l^2_N}  \mathrm{d}r\nonumber\\
		& +(s-\lfloor s\rfloor)\left\|A_Ne^{-A_N^2(s-\lfloor s\rfloor)}\tilde{F}_N\big(U_{\lfloor s\rfloor}^{M, N}\big)\right\|_{l^2_N}  
		+\left\|\mathscr{O}_s^{M,N}-\mathscr{O}_{\lfloor s\rfloor }^{M,N}\right\|_{l_N^2}\nonumber \\
		=&: \mathscr{K}_1+\mathscr{K}_2+\mathscr{K}_3+\left\|\mathscr{O}_s^{M,N}-\mathscr{O}_{\lfloor s\rfloor }^{M,N}\right\|_{l_N^2} . \label{eq: U_t^{M, N} continuous   1}
	\end{align}
	
	By combining \eqref{ineq: (-A_N)^re^{-A_N^2t}(I- e^{-A_N^2t}) to l2} with \eqref{es:   U_0 4,N}, we have
	\begin{equation} \label{es: eq: U_t^{M, N} continuous  I1}
		\begin{aligned}
			\mathscr{K}_1 = \left\|e^{ -A_N^2 \lfloor s\rfloor}\left(e^{-A_N^2( s-\lfloor s\rfloor)}-I\right)U_0\right\|_{l^2_N}    
			\leq C( s-\lfloor s\rfloor) \left\| (-A_N)^2U_0\right\|_{l^2_N} 
			\leq C\tau.
		\end{aligned}
	\end{equation}
	It follows from \eqref{ineq: (-A_N)^re^{-A_N^2t}(I- e^{-A_N^2t}) to l2} and \eqref{ineq:  F local Lipschitz   2} that
	\begin{equation} \label{es: eq: U_t^{M, N} continuous  I2}
		\begin{aligned}
			\mathscr{K}_2    	\leq&  C\int_0^{ \lfloor s\rfloor }(\lfloor s\rfloor-\lfloor r\rfloor)^{-1+\epsilon}(s-\lfloor s\rfloor)^{\frac{3}{4}-\epsilon}
			\left\|   (-A_N)^{\frac{1}{2}}\tilde{F}_N\big(U_{\lfloor r\rfloor}^{M, N}\big) \right\|_{l^2_N}  \mathrm{d}r \\
			\leq&  C\tau^{\frac{3}{4}-\epsilon} \int_0^{ \lfloor s\rfloor }(\lfloor s\rfloor-r)^{-1+\epsilon} \left(1+\left\|  U_{\lfloor r\rfloor}^{M, N}\right\|_{1,N}^3 \right)  \mathrm{d} r ,
		\end{aligned}
	\end{equation}
	where $\epsilon$ is an arbitrary small positive number. 
	Utilizing \eqref{ineq: e^{-A_N^2t}(-A_N)^r to l2}  and Lemma \ref{lem: F local Lipschitz}, one can have
	\begin{equation}\label{es: eq: U_t^{M, N} continuous  I3}
		\begin{aligned}
			\mathscr{K}_3
			\leq&  
			C (s-\lfloor s\rfloor)^\frac{3}{4}\left\|   (-A_N)^{\frac{1}{2}}\tilde{F}_N\big(U_{\lfloor s\rfloor}^{M, N}\big) \right\|_{l^2_N}      
			\leq   C\tau^{\frac{3}{4}}\left(1+\left\| U_{\lfloor s\rfloor}^{M, N}\right\|_{1,N}^3 \right). 
		\end{aligned}
	\end{equation}
	By substituting \eqref{es: eq: U_t^{M, N} continuous  I1}-\eqref{es: eq: U_t^{M, N} continuous  I3} into \eqref{eq: U_t^{M, N} continuous   1}, we obtain  
	\begin{equation}
		\begin{aligned} \label{es: I_{Omega_{R, t_i}} U_t^{M, N}-U_t^{M, N}}
			\mathbb{I}_{\Omega_{R, t_{i-1}}}\left\|	U_s^{M, N}-U_{\lfloor s\rfloor}^{M, N}\right\|_{l^2_N}  
			\leq&C\tau^{\frac{3}{4}-\epsilon}(1+R^3)+
			\left\|\mathscr{O}_s^{M,N}- \mathscr{O}_{\lfloor s\rfloor}^{M,N}\right\|_{l^2_N},
		\end{aligned}
	\end{equation}
	which combined with \eqref{ineq: l_infty to l2} and \eqref{es: I_{Omega_{R, t_i}} U_t^{M, N}}  yields
	\begin{equation} \label{es: I_3   U^MN}
		\begin{aligned}  
			\mathcal{J}_3\leq&  \mathbb{I}_{\Omega_{R, t_{i-1}}}C\int_0^t (t-s)^{-\frac{3}{4}}\left\|F\big(U_{\lfloor s\rfloor}^{M, N}\big)-F\left(U_s^{M, N}\right)\right\|_{l^2_N} \mathrm{d} s\\
			\leq&  \mathbb{I}_{\Omega_{R, t_{i-1}}}C\int_0^t (t-s)^{-\frac{3}{4}} \bigg(1+	\left\|U_{\lfloor s\rfloor}^{M, N}\right\|^2_{l^\infty_N}+	\left\|U_{s}^{M, N}\right\|^2_{l^\infty_N}\bigg) 
			\left\|U_{\lfloor s\rfloor}^{M, N}-U_s^{M, N}\right\|_{l^2_N}\mathrm{d} s\\
			\leq&  \mathbb{I}_{\Omega_{R,t_{i-1}}}C\int_0^t (t-s)^{-\frac{3}{4}}
			\left(1+R^2+\tau^{\frac{1}{2}}(1+ R^6)+\mathscr{R}_s^2\right)
			\left\|U_{\lfloor s\rfloor}^{M, N}-U_s^{M, N}\right\|_{l^2_N} \mathrm{d} s\\
			\leq&C\tau^{\frac{3}{4}-\epsilon}(1+R^3)\int_0^t (t-s)^{-\frac{3}{4}}
			\left(1+R^2+\tau^{\frac{1}{2}}(1+ R^6)+\mathscr{R}_s^2\right) \mathrm{d} s\\
			&+C\int_0^t (t-s)^{-\frac{3}{4}}	\left(1+R^2+\tau^{\frac{1}{2}}(1+ R^6)+\mathscr{R}_s^2\right)\left\|\mathscr{O}_s^{M,N}- \mathscr{O}_{\lfloor s\rfloor}^{M,N}\right\|_{l^2_N}\mathrm{d} s.
		\end{aligned}
	\end{equation}  
	It follows from   \eqref{ineq: (-A_N)^re^{-A_N^2t}(I- e^{-A_N^2t}) to l2}, \eqref{eq: U_t^{M, N} continuous   1}  and Lemma \ref{lem: F local Lipschitz} that
	\begin{equation}   \label{es: I_4   U^MN}
		\begin{aligned}
			\mathcal{J}_4 \leq &
			\mathbb{I}_{\Omega_{R, t_{i-1}}}\int_0^t  \left\|A_N e^{-A_N^2(t- s)}\left(I-e^{-A_N^2(s-\lfloor s\rfloor)}\right)F\left(U_s^{M, N}\right)\right\|_{1,N}   \mathrm{d} s  \\
			\leq & C\mathbb{I}_{\Omega_{R, t_{i-1}}}\int_0^t  (t-s)^{-\frac{3}{4}}
			(s-\lfloor s\rfloor)^{\frac{1}{4}}
			\left\|(-A_N)^\frac{1}{2}  F\left(U_s^{M, N}\right)\right\|_{l^2_N}   \mathrm{d} s  \\
			\leq & C\tau^{\frac{1}{4}}\mathbb{I}_{\Omega_{R, t_{i-1}}}\int_0^t  (t-s)^{-\frac{3}{4}}
			\left(1+\left\| U_s^{M, N} \right\|_{1,N}^3\right)   \mathrm{d} s   \\
			\leq & C\tau^{\frac{1}{4}}(1+ R^3). \\
		\end{aligned}
	\end{equation}
	By setting $R=\tau^{-\frac{1}{12}}$, it can be inferred from Lemma \ref{lem: O^{M,N}_continuous} , \eqref{es: Omega V^M,N    1}--\eqref{es: I_2   U^MN}, \eqref{es: I_3   U^MN} and  \eqref{es: I_4   U^MN} that 
	
	$$\mathbb{E} \left[\sup_{s\in[0, t]}\mathbb{I}_{\Omega_{R, t_{i-1}}}\left\|V^{M,N}_s\right\|_{1,N}^{27q}\right]\leq C.$$ 
	Consequently, it follows from this inequality and \eqref{es:  bar{U} to V} that
	\begin{equation} \label{es: U^{M,N} I_Omega i-1}
		\begin{aligned}
			\sup _{M, N \in \mathbb{N}} \sup _{0 \leq i \leq M} \mathbb{E}\left[ \mathbb{I}_{\Omega_{R, t_{i-1}}}\left\|U_{t_i}^{M, N}\right\|_{1,N}^q\right]\leq C,
		\end{aligned}
	\end{equation} 
	which yields \eqref{es: U^{M,N} I_Omega 1}.\par 
	\textbf{Step 2: } We show 
	\begin{equation} \label{es: U^{M,N} I_Omega }
		\begin{aligned}
			\sup _{M, N \in \mathbb{N}} \sup _{0 \leq i \leq M} \mathbb{E}\left[ \mathbb{I}_{\Omega^C_{R, t_i}} \left\|U_{t_i}^{M, N}\right\|_{1,N}^p \right]\leq C.
		\end{aligned}
	\end{equation} 
	It is clear that 
	\begin{equation*}
		\begin{aligned}
			\mathbb{I}_{\Omega_{R, t_m}^c}=&\mathbb{I}_{\Omega_{R, t_{m-1}}^c}+\mathbb{I}_{\Omega_{R, t_{m-1}}} \cdot \mathbb{I}_{\big\{\left\|U_{t_m}^{M, N}\right\|_{1,N}>R\big\}}
			=\sum_{i=1}^m \mathbb{I}_{\Omega_{R, t_{i-1}}} \cdot \mathbb{I}_{\big\{\left\|U_{t_i}^{M, N}\right\|_{1,N} >R\big\}}.
		\end{aligned}
	\end{equation*}
	By \eqref{eq: mild_solution_full_discrete}, \eqref{ineq: e^{-A_N^2t}(-A_N)^r to l2}, \eqref{es:   U_0 k,N} and Lemma \ref{lem: O^{M,N}_continuous}, it is simple to show that 
	
	$$ \mathbb{E}\Big[\left\|U_{t_m}^{M, N}\right\|_{1,N}^{2p}\Big] \leq C\left(1+\tau^{-2p}\right). $$
	Additionally, setting $R=\tau^{-\frac{1}{12}}$ and utilizing the Chebyshev inequality and \eqref{es: U^{M,N} I_Omega i-1}, we can obtain 
	\begin{align*}
		\mathbb{E}\left[\mathbb{I}_{\Omega_{R, t_m}^c}\left\|U_{t_m}^{M, N}\right\|_{1,N}^p\right] =&
		\sum_{i=1}^m \mathbb{E}\bigg[\left\|U_{t_m}^{M, N}\right\|_{1,N}^p\cdot \mathbb{I}_{\Omega_{R, t_{i-1}}} \mathbb{I}_{\big\{\left\|U_{t_i}^{M, N}\right\|_{1,N}>R\big\}}\bigg] \\
		\leq&  \sum_{i=1}^m\left(\mathbb{E}\Big[\left\|U_{t_m}^{M, N}\right\|_{1,N}^{2p}\Big]\right)^{\frac{1}{2}}\bigg(\mathbb{E}\Big[\mathbb{I}_{\Omega_{R, t_{i-1}}} \mathbb{I}_{\big\{\left\|U_{t_i}^{M, N}\right\|_{1,N}>R\big\}}\Big]\bigg)^{\frac{1}{2}} \\
		\leq& \sum_{i=1}^m C\left(1+\tau^{-p}\right)\left(\mathbb{P}\left(\omega \in \Omega_{R, t_{i-1}}:\ \left\|U_{t_i}^{M, N}\right\|_{1,N}>R\right)\right)^{\frac{1}{2}} \\ 
		\leq& C\left(1+\tau^{-p}\right) \sum_{i=1}^m\Bigg(\frac{\mathbb{E}\Big[\mathbb{I}_{\Omega_{R, t_{i-1}}}\left\|U_{t_i}^{M, N}\right\|_{1,N}^{24(p+1)} \Big]}{ R^{24(p+1)} }   \Bigg)^{\frac{1}{2}} \\
		\leq& C.
	\end{align*} 
	The proof is completed.
\end{proof}
\begin{remark}
	Based on Theorem \ref{thm: the moment boundedness of the numerical solution U^{M,N}} and Lemma \ref{lem: O^{M,N}_continuous}, one can repeat the analysis of \eqref{eq: U_t^{M, N} continuous   1} directly to show that  
	\begin{equation}\label{ineq: continuous of U^{M,N}(t,x)}
		\| U_t^{M, N}-U_{\lfloor t\rfloor}^{M, N}  \|_{L^p(\Omega,l^2_N)} \leq C\tau^{(H_1+\frac{H_2}{4}-\frac{1}{4}  
			-\frac{\epsilon}{2})\wedge(\frac{3}{4}-\epsilon) }, \qquad\forall\ t\in[0,T],
	\end{equation}
	where $\epsilon$ is an arbitrary small positive number.
\end{remark}
Further, we can improve the moment boundedness of $U^{M,N}_t$.
\begin{theorem}\label{thm: U^{M,N}_regularity}  
	Suppose that Assumption \ref{assu:2} holds, and let $k$ be the largest integer smaller than $4H_1+H_2-1$.	Then for $p \geq 1$ and $1\leq i\leq k $, there exists a constant $C>0$, independent of $M$ and $N$, such that 
	\begin{equation}\label{ineq: regularity of U^{M,N}(t,x)}
		\mathbb{E}\left[	\left\| U_t^{M, N} \right\|_{i,N}^p\right]\leq C, \quad \forall\ t \in[0, T]. 
	\end{equation}
\end{theorem}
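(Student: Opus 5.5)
We argue by bootstrapping through the levels $i=1,\dots,k$, following the proof of Theorem \ref{th: U^N regularity} with the continuous-time nonlinearity replaced by the tamed, frozen one in \eqref{eq: mild_solution_full_discrete}.

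\textbf{Level $i=1$ at all times.} Theorem \ref{thm: the moment boundedness of the numerical solution U^{M,N}} gives the bound only at grid points $t_i$, so we first extend it to all $t\in[0,T]$. Write $U_t^{M,N}$ through the one-step representation used in \eqref{es: I_{Omega_{R, t_i}} U_t^{M, N}}:
$$U_t^{M,N}=e^{-A_N^2(t-\lfloor t\rfloor)}U^{M,N}_{\lfloor t\rfloor}+(t-\lfloor t\rfloor)A_Ne^{-A_N^2(t-\lfloor t\rfloor)}\tilde F_N(U^{M,N}_{\lfloor t\rfloor})+\mathscr O^{M,N}_t-e^{-A_N^2(t-\lfloor t\rfloor)}\mathscr O^{M,N}_{\lfloor t\rfloor}.$$
By \eqref{ineq: e^{-A_N^2t}(-A_N)^r to l2}, the second term is bounded in $\|\cdot\|_{1,N}$ by $C\tau^{1/4}\|F_N(U^{M,N}_{\lfloor t\rfloor})\|_{l^2_N}\le C(1+\|U^{M,N}_{\lfloor t\rfloor}\|_{1,N}^3)$, using \eqref{ineq: l_infty to l2}. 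Taking moments and applying Theorem \ref{thm: the moment boundedness of the numerical solution U^{M,N}} together with Lemma \ref{lem: O^{M,N}_continuous} yields $\sup_t\mathbb E\|U^{M,N}_t\|_{1,N}^p\le C$.

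\textbf{Level $i=2$, when $4H_1+H_2-1>2$.} Apply $(-A_N)$ to \eqref{eq: mild_solution_full_discrete}. The initial term is bounded by $\|(-A_N)U_0\|_{l^2_N}\le C$ via \eqref{es:   U_0 4,N}, and the stochastic term is bounded by Lemma \ref{lem: O^{M,N}_continuous} with $\beta=2$. For the drift, use $\|\tilde F_N(w)\|_{1,N}\le\|F_N(w)\|_{1,N}$, since the taming factor is a scalar in $(0,1]$, and then \eqref{ineq: e^{-A_N^2t}(-A_N)^r to l2}:
$$\Big\|\int_0^t(-A_N)^2e^{-A_N^2(t-\lfloor s\rfloor)}\tilde F_N(U^{M,N}_{\lfloor s\rfloor})\,\mathrm ds\Big\|_{l^2_N}\le C\int_0^t(t-\lfloor s\rfloor)^{-3/4}\|(-A_N)^{1/2}F_N(U^{M,N}_{\lfloor s\rfloor})\|_{l^2_N}\,\mathrm ds .$$
Lemma \ref{lem: F local Lipschitz} bounds the integrand by $C(1+\|U^{M,N}_{\lfloor s\rfloor}\|_{1,N}^3)$. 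Since $t-\lfloor s\rfloor\ge t-s$, the kernel is integrable uniformly in $\tau$. Taking $L^p(\Omega)$ norms and using the level-one bound at grid points gives the $\|\cdot\|_{2,N}$ bound.

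\textbf{Level $i=3$, when $4H_1+H_2-1>3$.} Repeat with $(-A_N)^{3/2}$, bounding the drift integrand by $\|(-A_N)F_N(\cdot)\|_{l^2_N}\le C(1+\|U^{M,N}_{\lfloor s\rfloor}\|_{2,N}^3)$ via Lemma \ref{lem: F local Lipschitz} with $k=2$. This uses the level-two estimate at grid times, which the previous step supplies since it holds for all $t$. The initial term needs $\|(-A_N)^{3/2}U_0\|_{l^2_N}\le C(\|U_0\|_{l^2_N}+\|(-A_N)^2U_0\|_{l^2_N})\le C$.

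\textbf{Main obstacle.} The only delicate point is the frozen time argument $\lfloor s\rfloor$ in the semigroup. The substitution $(t-\lfloor s\rfloor)^{-3/4}\le(t-s)^{-3/4}$ makes the singular kernel harmless, and the scalar taming only decreases norms, so no $\tau$-dependent constants enter. The remaining requirement is that the lower-level bounds hold at grid points with $M$-independent constants. That is exactly Theorem \ref{thm: the moment boundedness of the numerical solution U^{M,N}}, which is why the level-one bound must be established first.
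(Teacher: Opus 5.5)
Your proposal is correct and follows essentially the paper's route. Both start from the grid-point $\|\cdot\|_{1,N}$ bound of Theorem \ref{thm: the moment boundedness of the numerical solution U^{M,N}} and bootstrap through the mild formula as in Appendix C, using that the drift involves only the frozen values $U^{M,N}_{\lfloor s\rfloor}$, the kernel bound $(t-\lfloor s\rfloor)^{-3/4}\le (t-s)^{-3/4}$, the fact that taming only shrinks norms, and Lemma \ref{lem: F local Lipschitz}. The one minor difference is how you extend from grid points to all $t$: you use the one-step representation to obtain the $\|\cdot\|_{1,N}$ bound directly, whereas the paper first derives an $l^2_N$ bound at all times from the time-continuity estimate for $U^{M,N}_t-U^{M,N}_{\lfloor t\rfloor}$ and then appeals to the Appendix C argument.
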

\begin{proof}
	For  $t \in\left[0, T\right]$ and $p \geq 1$, combining \eqref{es: U^{M,N} } with \eqref{ineq: continuous of U^{M,N}(t,x)} yields
	\begin{equation}\label{ineq: regularity of U^{M,N}(t,x)  1}
		\mathbb{E}\left[	\left\| U_t^{M, N} \right\|_{l^2_N}^p\right]\leq C	\bigg(\mathbb{E}\left[	\left\| U_t^{M, N}-U_{\lfloor t\rfloor}^{M, N} \right\|_{l^2_N}^p\right]+	\mathbb{E}\left[	\left\| U_{\lfloor t\rfloor}^{M, N} \right\|_{l^2_N}^p\right]\bigg) \leq C.
	\end{equation}
	By the similar argument in the proof  of Lemma \ref{lem: ||tilder U || 1,N}, one can improve the moment boundedness of $ U_t^{M, N}$ to get \eqref{ineq: regularity of U^{M,N}(t,x)}.
\end{proof}

\begin{remark}
	Due to Theorem \ref{thm: U^{M,N}_regularity}, \eqref{ineq: continuous of U^{M,N}(t,x)} can be improved into 
	\begin{equation}\label{ineq: continuous of U^{M,N}(t,x) improve}
		\| U_t^{M, N}-U_{\lfloor t\rfloor}^{M, N}  \|_{L^p(\Omega,l^2_N)} \leq C\tau^{ H_1+\frac{H_2}{4}-\frac{1}{4}  
			-\frac{\epsilon}{2} }, \quad\forall\ t\in[0,T],
	\end{equation}
	where $\epsilon$ is an arbitrary small positive number.
\end{remark}
\subsection{Convergence of the fully discrete numerical solution}
To analyze the error of the fully discrete scheme, we introduce an auxiliary process
\begin{equation} \label{eq: solution_auxiliary_full_discrete}
	\begin{aligned}
		\tilde{U}^{M,N}_t= & e^{-A_N^2 t}U_0+\int_0^t A_N e^{-A_N^2(t-s)} F_N(	U^{M,N}_{s}) \mathrm{d} s+\mathscr{O}^N_t, \quad t \in[0, T].
	\end{aligned}
\end{equation}  
Base on Theorem \ref{thm: U^{M,N}_regularity}, we can obtain the moment boundedness of the auxiliary process $\tilde{U}^{M,N}_t$  in the following lemma. The proof of this result is quite similar to that given in Lemma \ref{lem: ||tilder U || 1,N} and so is omitted.
\begin{lemma}\label{lem: ||tilder U^{M,N} || 1,N}  
	Suppose that Assumption \ref{assu:2} holds, and let $k$ be the largest integer smaller than $4H_1+H_2-1$.
	Then for $p \geq 1$ and $1\leq i\leq k $, there exists a constant $C>0$, independent of $N$,  such that
	\begin{equation}\label{ineq: ||tilder U^{M,N} || 1,N}
		\mathbb{E}\left[\big\| \tilde{U}^{M,N}_t 	\big\|_{i,N}^p\right]\leq C,  \quad \forall\ t \in[0, T].
	\end{equation}  
\end{lemma}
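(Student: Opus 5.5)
We bound the three terms of the mild formulation \eqref{eq: solution_auxiliary_full_discrete} separately in $|\cdot|_{i,N}$, together with the $l^2_N$ part, and proceed by induction on $i=1,\dots,k$, mirroring the proof of Theorem \ref{th: U^N regularity}. The key difference from Lemma \ref{lem: ||tilder U || 1,N} is that the nonlinearity is evaluated at the fully discrete solution $U^{M,N}_s$ rather than at $U_s$. Its moments in $\|\cdot\|_{i,N}$ are already controlled uniformly in $M,N$ by Theorem \ref{thm: U^{M,N}_regularity}.

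The first term is $e^{-A_N^2t}U_0$. By \eqref{ineq: e^{-A_N^2t}(-A_N)^r to l2} with $\gamma=0$ and \eqref{es:   U_0 4,N}, we have $\|(-A_N)^{i/2}e^{-A_N^2t}U_0\|_{l^2_N}\le C\|(-A_N)^{i/2}U_0\|_{l^2_N}\le C(\|U_0\|_{l^2_N}+\|(-A_N)^2U_0\|_{l^2_N})\le C$ for $i\le 3$. Here we interpolate through the eigenbasis, and the $j=0$ mode contributes nothing. The stochastic term $\mathscr{O}^N_t$ is handled directly by Lemma \ref{lem: 4.4 regularity of O^N_t} with $\beta=i<4H_1+H_2-1$. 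This is exactly why $i$ must be at most $k$.

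The main step is the drift term $\int_0^tA_Ne^{-A_N^2(t-s)}F_N(U^{M,N}_s)\,\mathrm{d}s$. For $i=1$, we write the integrand as $(-A_N)^{\frac32}e^{-A_N^2(t-s)}F_N(U^{M,N}_s)$ and apply \eqref{ineq: e^{-A_N^2t}(-A_N)^r to l2} with $\gamma=\frac32$. This gives the integrable kernel $(t-s)^{-3/4}$ times $\|F_N(U^{M,N}_s)\|_{l^2_N}$. We bound the latter by $C(1+\|U^{M,N}_s\|_{l^\infty_N}^3)\le C(1+\|U^{M,N}_s\|_{1,N}^3)$ using \eqref{ineq: l_infty to l2}. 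For $i=2,3$, we move $(-A_N)^{\frac{i-1}{2}}$ onto $F_N$ and keep $(-A_N)^{\frac32}$ on the semigroup, so the kernel is again $(t-s)^{-3/4}$. We then use \eqref{ineq:  F local Lipschitz   2} to get $|F_N(U^{M,N}_s)|_{i-1,N}\le C(1+\|U^{M,N}_s\|_{i-1,N}^3)$. Taking $L^p(\Omega)$ norms via Minkowski's integral inequality and invoking Theorem \ref{thm: U^{M,N}_regularity} at level $i-1$ with moment $3p$ yields a bound $C\int_0^t(t-s)^{-3/4}\,\mathrm{d}s\le C$, uniformly in $t$, $M$ and $N$.

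The $l^2_N$ component of $\|\tilde U^{M,N}_t\|_{i,N}$ follows the same way using $\gamma=1$. Collecting the pieces gives \eqref{ineq: ||tilder U^{M,N} || 1,N}. The only real obstacle is making sure the needed moments of $U^{M,N}_s$ are available at every continuous time $s$, not only at grid points. Theorem \ref{thm: U^{M,N}_regularity} provides exactly this for all $t\in[0,T]$, so the argument closes without any Gronwall step.
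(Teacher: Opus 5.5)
Your proposal is correct and follows essentially the same route as the paper, whose omitted proof mirrors Appendix C. Both split the mild formula \eqref{eq: solution_auxiliary_full_discrete} into initial, drift and stochastic terms, put $(-A_N)^{3/2}$ on the semigroup to obtain the kernel $(t-s)^{-3/4}$, move $(-A_N)^{\frac{i-1}{2}}$ onto $F_N$, and close with Lemma \ref{lem: F local Lipschitz}, Lemma \ref{lem: 4.4 regularity of O^N_t} and the moment bounds of Theorem \ref{thm: U^{M,N}_regularity}. Your handling of $i=1$ via $\|F_N(\mu)\|_{l^2_N}\le C(1+\|\mu\|_{l^\infty_N}^3)$ and \eqref{ineq: l_infty to l2} is in fact slightly more careful than Appendix C, which writes $\|U_s\|_{0,N}^3$ at that point.
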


The main result of this subsection is given in the following theorem.
\begin{theorem}\label{thm: ||u^{M,N}-u||}  
	Suppose that Assumption \ref{assu:2} holds and $h^{-1}\tau^9\leq1$.
	Then for $p \geq 1$, there exists a constant $C>0$, independent of $M$ and $N$,  such that  
	\begin{equation}\label{ineq: ||u^{M,N}-u||}
		\sup_{t \in[0, T]}\left\|u(t,\cdot)-u^{M,N}(t, \cdot)\right\|_{L^p(\Omega,L^\infty)}   \leq C\left(h^{1-\epsilon}+ \tau^{H_1-\frac{1}{8}-\frac{\epsilon}{2}} \right),
	\end{equation} 
	where $\epsilon$ is an arbitrary small positive number.
\end{theorem}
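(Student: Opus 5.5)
Since Theorem \ref{thm: ||u^n-u||} already gives $\sup_t\|u(t,\cdot)-u^N(t,\cdot)\|_{L^p(\Omega,L^\infty)}\le Ch^{1-\epsilon}$, it suffices to bound the temporal error $\|u^N(t,\cdot)-u^{M,N}(t,\cdot)\|_{L^p(\Omega,L^\infty)}$. We use the triangle inequality through the auxiliary process $\tilde U^{M,N}_t$ of \eqref{eq: solution_auxiliary_full_discrete}. Both $u^N$ and $u^{M,N}$ are interpolations of their grid values, so the $L^\infty$ norm is controlled by the $l^\infty_N$ norm. By \eqref{ineq: l_infty to l2} it is then enough to estimate
\[
\|U^N_t-\tilde U^{M,N}_t\|_{L^p(\Omega,1,N)}\quad\text{and}\quad \|\tilde U^{M,N}_t-U^{M,N}_t\|_{L^p(\Omega,1,N)}.
\]

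\textbf{The consistency part $\tilde U^{M,N}_t-U^{M,N}_t$.} Write it as
\[
\int_0^tA_N\big[e^{-A_N^2(t-s)}F_N(U^{M,N}_s)-e^{-A_N^2(t-\lfloor s\rfloor)}\tilde F_N(U^{M,N}_{\lfloor s\rfloor})\big]\mathrm ds+(\mathscr O^N_t-\mathscr O^{M,N}_t),
\]
and split the drift part, as in $\mathcal J_2,\mathcal J_3,\mathcal J_4$ of the proof of Theorem \ref{thm: the moment boundedness of the numerical solution U^{M,N}}, into three pieces.
\begin{itemize}
\item The taming error $F_N-\tilde F_N$ is $O(\tau)$ by \eqref{ineq: e^{-A_N^2t}(-A_N)^r to l2} and Theorem \ref{thm: U^{M,N}_regularity}.
\item The piece $F_N(U^{M,N}_s)-F_N(U^{M,N}_{\lfloor s\rfloor})$ is handled by the local Lipschitz estimate \eqref{es: f 2} together with \eqref{ineq: continuous of U^{M,N}(t,x) improve}. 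This gives $\tau^{H_1+H_2/4-1/4-\epsilon/2}$, measured in $l^2_N$ against the kernel $(t-s)^{-3/4}$.
\item The semigroup difference $(I-e^{-A_N^2(s-\lfloor s\rfloor)})$ is treated with \eqref{ineq: (-A_N)^re^{-A_N^2t}(I- e^{-A_N^2t}) to l2}. It is there that the smoothing is traded against time regularity, using Lemma \ref{lem: F local Lipschitz} with $k=1,2$ and the higher moment bounds of Theorem \ref{thm: U^{M,N}_regularity}.
\end{itemize}
For the noise difference, the same spectral computation as in Lemma \ref{lem: 3.1 regularity of o(t,x)} applies with $g_{2,j}$ replaced by $e^{-\lambda_{N,j}^2(t-r)}-e^{-\lambda_{N,j}^2(t-\lfloor r\rfloor)}$. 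We expect a rate of order $\tau^{\frac{4H_1+H_2-1-\beta}{4}-\frac\epsilon2}$ with $\beta=1$ in the $\|\cdot\|_{1,N}$ norm. Since $H_2\ge\frac12$, this gives at least $\tau^{H_1-\frac18-\frac\epsilon2}$. We expect this noise term to be the limiting one that produces the stated exponent.

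\textbf{The stability part $E_t:=U^N_t-\tilde U^{M,N}_t$.} It satisfies an error equation of the type \eqref{eq: err problem 1}:
\[
\mathrm dE_t+A_N^2E_t\,\mathrm dt=A_N\big[F_N(U^N_t)-F_N(U^{M,N}_t)\big]\mathrm dt .
\]
Writing $U^{M,N}_t=U^N_t-E_t-(\tilde U^{M,N}_t-U^{M,N}_t)$, this is \eqref{eq: err problem 1} with $\hat X=U^{M,N}$ and $\hat Z=\tilde U^{M,N}-U^{M,N}$, possibly after a sign rearrangement. Proposition \ref{propo: err problem 1} then bounds $\mathbb E\|E_t\|^p_{l^2_N}$ by moments of $\hat Z$, i.e.\ by the consistency rate above. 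The constants $K_1,K_2$ are finite by Theorem \ref{th: U^N regularity}, Theorem \ref{thm: U^{M,N}_regularity} and Lemma \ref{lem: ||tilder U^{M,N} || 1,N}. Finally, we upgrade from $l^2_N$ to $\|\cdot\|_{1,N}$ by the mild formulation: apply $(-A_N)^{3/2}e^{-A_N^2(t-s)}$ to the nonlinear difference and use \eqref{es: f 2} with moment bounds, exactly as at the end of the proof of Proposition \ref{lem: ||u^n-tilde{u^n} ||}.

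\textbf{Expected obstacle.} The delicate point is the factor $\|\cdot\|_{l^\infty_N}$ coming from $l^2_N$ versus $l^4_N$ and $l^8_N$ norms of $\hat Z$ in Proposition \ref{propo: err problem 1}. The condition $h^{-1}\tau^9\le1$ is presumably used here: some inverse-type estimate costs a power of $h^{-1}$ (e.g.\ passing from $l^2_N$ to $l^\infty_N$ via $\|\cdot\|_{l^\infty_N}\le h^{-1/2}\|\cdot\|_{l^2_N}$, or controlling the tamed term), and this power is absorbed by a surplus power of $\tau$. The plan is to carry out the splitting carefully so that every term is $O(\tau^{H_1-1/8-\epsilon/2})$ after this absorption. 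Combining the two parts with Theorem \ref{thm: ||u^n-u||} then gives \eqref{ineq: ||u^{M,N}-u||}.
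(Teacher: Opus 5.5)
You have the same architecture as the paper: the triangle inequality through $\tilde U^{M,N}_t$, the four consistency pieces, and Proposition~\ref{propo: err problem 1} with $\hat X=U^{M,N}$, $\hat Z=\tilde U^{M,N}-U^{M,N}$, followed by the $(-A_N)^{3/2}e^{-A_N^2(t-s)}$ upgrade. However, the noise step, which you correctly identify as limiting, does not give the stated rate. You bound $\mathscr O^N_t-\mathscr O^{M,N}_t$ in $\|\cdot\|_{1,N}$ and pass to $l^\infty_N$ via \eqref{ineq: l_infty to l2}. With $\beta=1$ the exponent $\frac{4H_1+H_2-1-\beta}{4}$ equals $H_1+\frac{H_2}{4}-\frac12$. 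This is $H_1-\frac38$ at $H_2=\frac12$, and it stays below $H_1-\frac14$ for every $H_2<1$. Your inequality ``$\ge H_1-\frac18$'' is what you get with $\beta=0$, i.e.\ in $l^2_N$.

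Sharper estimates do not rescue this route. For space--time white noise, the discrete $\dot H^1$-size of $\mathscr O^N_t-\mathscr O^{M,N}_t$ is, uniformly in $N$, no better than $\tau^{1/8}$. Even the sharp interpolation $\|\mu\|_{l^\infty_N}^2\le C(\|\mu\|_{l^2_N}^2+\|\mu\|_{l^2_N}|\mu|_{1,N})$ only yields $H_1+\frac{H_2}{4}-\frac38$. So any argument that controls the noise in $l^\infty_N$ through a discrete Sobolev norm falls short of $H_1-\frac18$.

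The missing idea is to estimate $\mathcal L_4=\mathscr O^{M,N}_t-\mathscr O^N_t$ pointwise and take the maximum over the $N$ grid values by brute force. At each $x_i$, use \eqref{ineq: Ito Isometry 2}, then H\"older in $y$ (which removes $H_2$), then discrete orthonormality of $\phi_j(\kappa_N(\cdot))$. This gives
\[
\big\|o^N(t,x_i)-o^{M,N}(t,x_i)\big\|_{L^q(\Omega,\mathbb R)}\le C\Big(\int_0^t\Big(\sum_{j=1}^{N-1}e^{-2\lambda_{N,j}^2(t-s)}\big(1-e^{-\lambda_{N,j}^2(s-\lfloor s\rfloor)}\big)^2\Big)^{\frac{1}{2H_1}}\mathrm{d}s\Big)^{H_1}\le C\tau^{H_1-\frac18-\frac{\epsilon}{8}},
\]
where the last step uses $e^{-x}\le Cx^{-\alpha}$ and $1-e^{-x}\le Cx^{\hat\alpha}$ with $\alpha=2H_1-\frac{\epsilon}{8}$ and $\hat\alpha=H_1-\frac18-\frac{\epsilon}{8}$. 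The bound $\mathbb E\max_i|X_i|^q\le\sum_i\mathbb E|X_i|^q$ then costs only a factor $N^{1/q}\sim h^{-1/q}$.

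This is where the hypothesis $h^{-1}\tau^9\le1$ is used. It gives $h^{-1/q}\le\tau^{-9/q}$, which is only an $\epsilon$-loss once $q>12/\epsilon$. Your suggested mechanism, an inverse estimate $\|\cdot\|_{l^\infty_N}\le h^{-1/2}\|\cdot\|_{l^2_N}$, would cost $\tau^{-9/2}$ and cannot work. In the stability part only $l^2_N$ and $l^4_N$ moments of $\hat Z$ enter, and for the noise component these follow from the pointwise bound without any maximum.

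A smaller point concerns the semigroup-difference drift piece. There you need Lemma~\ref{lem: F local Lipschitz} with $k$ the largest integer below $4H_1+H_2-1$, so $k=3$ when $H_1>\frac78$. With $k\le2$ the kernel $(t-s)^{-\frac14-\gamma_3}$ forces $\gamma_3<\frac34$, which is too weak for $H_1>\frac78$.
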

In order to estimate the error of the full discrete scheme, we decompose $\left\|u^{M,N}(t,\cdot)-u^{N}(t, \cdot)\right\|_{L^p(\Omega,L^\infty)}$ into two components:
\begin{equation*}
	\begin{aligned}
		\left\|u^{M,N}(t,\cdot)-u^{N}(t, \cdot)\right\|_{L^p(\Omega,L^\infty)}&\leq \left\|U^{M,N}_t-{U}^{N}_t\right\|_{L^p(\Omega,l^\infty_N)}\leq \left\|U^{M,N}_t-\tilde{U}^{M,N}_t\right\|_{L^p(\Omega,l^\infty_N)}+\left\|\tilde{U}^{M,N}_t-U^N_t\right\|_{L^p(\Omega,l^\infty_N)}.
	\end{aligned}
\end{equation*}
Therefore, by virture of Theorem \ref{thm: ||u^n-u||},  Theorem \ref{thm: ||u^{M,N}-u||}   follows directly from the two auxiliary results stated below.
\begin{proposition}\label{prop: ||tilde{u^{M,N}}-u^{M,N} ||}  
	Suppose that Assumption \ref{assu:2} holds and $h^{-1}\tau^9\leq1$.
	Then for $p \geq 1$, there exists a constant $C>0$, independent of $M$ and $N$, such that  such that  
	\begin{equation}\label{ineq: ||tilde{u^{M,N}}-u^{M,N} ||}
		\left\|U^{M,N}_t-\tilde{U}^{M,N}_t\right\|_{L^p(\Omega,l^\infty_N)} \leq C\left(h^{1-\epsilon}+ \tau^{H_1-\frac{1}{8}-\frac{\epsilon}{2}} \right),\quad \forall\ t \in[0, T],
	\end{equation} 
	where $\epsilon$ is an arbitrary small positive number.
\end{proposition}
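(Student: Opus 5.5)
Set $E_t:=U^{M,N}_t-\tilde U^{M,N}_t$. Subtracting \eqref{eq: solution_auxiliary_full_discrete} from \eqref{eq: mild_solution_full_discrete} gives
\begin{equation*}
E_t=\int_0^t A_N\Big(e^{-A_N^2(t-\lfloor s\rfloor)}\tilde F_N(U^{M,N}_{\lfloor s\rfloor})-e^{-A_N^2(t-s)}F_N(U^{M,N}_s)\Big)\mathrm{d}s+\big(\mathscr{O}^{M,N}_t-\mathscr{O}^N_t\big).
\end{equation*}
There is no $E$ on the right-hand side, because $\tilde U^{M,N}$ is built from $F_N(U^{M,N}_s)$. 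So this is a direct estimate, not a Gronwall argument. By \eqref{ineq: l_infty to l2} it suffices to bound $\|E_t\|_{L^p(\Omega,\|\cdot\|_{1,N})}$. I split the drift as in $\mathcal{J}_2$--$\mathcal{J}_4$ of the proof of Theorem \ref{thm: the moment boundedness of the numerical solution U^{M,N}}:
\begin{align*}
&\int_0^t A_Ne^{-A_N^2(t-\lfloor s\rfloor)}\big[\tilde F_N-F_N\big](U^{M,N}_{\lfloor s\rfloor})\,\mathrm{d}s+\int_0^t A_Ne^{-A_N^2(t-\lfloor s\rfloor)}\big[F_N(U^{M,N}_{\lfloor s\rfloor})-F_N(U^{M,N}_s)\big]\mathrm{d}s\\
&\quad+\int_0^t A_Ne^{-A_N^2(t-s)}\big(e^{-A_N^2(s-\lfloor s\rfloor)}-I\big)F_N(U^{M,N}_s)\,\mathrm{d}s=:\mathcal{D}_1+\mathcal{D}_2+\mathcal{D}_3.
\end{align*}

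\textbf{Drift terms.} For $\mathcal{D}_1$, the taming error is $\tau\|F_N\|_{l^2_N}F_N/(1+\tau\|F_N\|)$. Using \eqref{ineq: e^{-A_N^2t}(-A_N)^r to l2}, Lemma \ref{lem: F local Lipschitz} and Theorem \ref{thm: U^{M,N}_regularity}, this gives $C\tau$ in $L^p(\Omega,\|\cdot\|_{1,N})$.

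For $\mathcal{D}_3$, I will use \eqref{ineq: (-A_N)^re^{-A_N^2t}(I- e^{-A_N^2t}) to l2}. When $k\ge2$ I put one power of $(-A_N)$ on $F_N$ and control it with \eqref{ineq:  F local Lipschitz   2} at level $k=2$. This yields $\tau^{\gamma_3}$ with $\gamma_3$ just below $1/2$. When $4H_1+H_2-1\le2$ only level $1$ is available, which gives $\tau^{1/4}$.

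Since the target rate is $\tau^{H_1-1/8-\epsilon/2}$, I will instead pair $\mathcal{D}_2$ and $\mathcal{D}_3$ and exploit time regularity of $U^{M,N}$ measured in a weaker norm. Concretely, for $\mathcal{D}_2$ I use \eqref{ineq: e^{-A_N^2t}(-A_N)^r to l2} with $\gamma=3/2$, the Lipschitz bound \eqref{es: f 2}, and \eqref{ineq: continuous of U^{M,N}(t,x) improve}. This gives
\begin{equation*}
\int_0^t(t-s)^{-3/4}\,\tau^{H_1+\frac{H_2}{4}-\frac14-\frac{\epsilon}{2}}\,\mathrm{d}s\le C\tau^{H_1-\frac18-\frac\epsilon2}\quad(\text{since }H_2\ge\tfrac12).
\end{equation*}
Exactly here the exponent $H_1-\tfrac18$ appears, as $\tfrac{H_2-1}{4}\ge-\tfrac18$.

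For $\mathcal{D}_3$, I redistribute powers: $(-A_N)^{3/2}e^{-A_N^2(t-s)}$ costs $(t-s)^{-(3/2-\gamma_2+2\gamma_3)/2}$, and $(-A_N)^{\gamma_2}F_N(U^{M,N}_s)$ is bounded via Lemma \ref{lem: F local Lipschitz}. I will choose $\gamma_3\ge H_1-\tfrac18$ while keeping the time singularity integrable. This works if $\gamma_2$ can be taken up to about $1$; when $k=1$ it needs care. The fallback is to write $F_N(U^{M,N}_s)$ through the equation and absorb the difference into a $\mathcal{D}_2$-type term.

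\textbf{Noise term.} $\mathscr{O}^{M,N}_t-\mathscr{O}^N_t=\sqrt{\sigma^2N/\pi}\int_0^t\big(e^{-A_N^2(t-\lfloor s\rfloor)}-e^{-A_N^2(t-s)}\big)\mathrm{d}\beta^H_s$. I expand it in the eigenbasis $e_j$ and apply \eqref{ineq: Ito Isometry} with $g_j(r)=e^{-\lambda_{N,j}^2(t-r)}(e^{\lambda_{N,j}^2(r-\lfloor r\rfloor)}-1)$. I then reuse the $\mathcal{I}(H_1,\cdot)$ and $\mathcal{I}(H_2,\phi_j)\le C\lambda_j^{1/2-H_2+\epsilon/2}$ estimates from Lemma \ref{lem: 3.1 regularity of o(t,x)}, with a factor $(1-e^{-x})\le Cx^{\hat\alpha}$ trading $\lambda_{N,j}$ powers for $\tau^{\hat\alpha}$. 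In the $\|\cdot\|_{1,N}$ norm ($\beta=1$) this should give $\tau^{(4H_1+H_2-2)/4-\epsilon/2}\le C\tau^{H_1-\frac18-\frac\epsilon2}$.

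\textbf{Where $h$ enters, and the main obstacle.} $h$ enters only through discrete-to-continuous comparisons. The $l^2_N$ version is easy, but $l^\infty_N$ via \eqref{ineq: l_infty to l2} needs the $1,N$ norm. Any leftover factor like $\tau^{a}\lambda_{N,N-1}^{b}\sim\tau^ah^{-2b}$ from crude inverse estimates is to be controlled by the hypothesis $h^{-1}\tau^9\le1$, which I expect to be used exactly once, to bound a term of the form $\tau^{9\cdot(\cdot)}h^{-(\cdot)}$ by $h^{1-\epsilon}$.

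The main obstacle is $\mathcal{D}_3$, together with the noise difference, in the $\|\cdot\|_{1,N}$ norm when $4H_1+H_2-1\le2$. With only $\|U\|_{1,N}$ moments available, I first try to bound these terms in $l^2_N$ with the full rate. I then interpolate between $l^2_N$ and $\|\cdot\|_{2,N}$-type bounds, replacing the missing regularity by the inverse inequality $\|(-A_N)^{1/2}v\|\le Ch^{-1}\|v\|$. This loss of $h^{-1}$ is what the condition $h^{-1}\tau^9\le1$ is meant to absorb.
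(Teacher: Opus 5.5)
\textbf{The gap is the noise term} $\mathscr{O}^{M,N}_t-\mathscr{O}^N_t$. Reducing to $\|\cdot\|_{1,N}$ through the discrete Sobolev inequality means estimating this term at level $\beta=1$. That gives the exponent $\frac{4H_1+H_2-2}{4}-\frac{\epsilon}{2}=H_1+\frac{H_2}{4}-\frac12-\frac{\epsilon}{2}$. Your inequality $\tau^{(4H_1+H_2-2)/4-\epsilon/2}\le C\tau^{H_1-1/8-\epsilon/2}$ runs the wrong way: it would require $H_2\ge\frac32$. For white noise this route gives only $\tau^{1/8}$ instead of $\tau^{3/8}$. For $N\gtrsim\tau^{-1/4}$ that is the true size of the $\|\cdot\|_{1,N}$-norm of the noise difference, so the loss is genuine. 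Interpolating via $\|v\|_{l^\infty_N}^2\le C(\|v\|_{l^2_N}^2+\|v\|_{l^2_N}\|(-A_N)^{1/2}v\|_{l^2_N})$ does not rescue it either: it only reaches $H_1+\frac{H_2}{4}-\frac38<H_1-\frac18$.

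The missing idea is to work pointwise at the grid points and pay only a union bound. For fixed $x$, use \eqref{ineq: Ito Isometry 2}, then H\"older in $y$ (bounding the $L^{1/H_2}$-norm by the $L^2$-norm, which discards the $H_2$-correlation), then the discrete orthogonality of $\{\phi_j(\kappa_N(\cdot))\}$. This gives
\[
\int_{\mathcal{O}}\big|G^N_{t-\lfloor s\rfloor}(x,y)-G^N_{t-s}(x,y)\big|^2\mathrm{d}y\le C\sum_{j=1}^{N-1}e^{-2\lambda_{N,j}^2(t-s)}\big(1-e^{-\lambda_{N,j}^2(s-\lfloor s\rfloor)}\big)^2 ,
\]
and hence $\sup_{x}\|o^N(t,x)-o^{M,N}(t,x)\|_{L^q(\Omega,\mathbb{R})}\le C\tau^{H_1-\frac18-\frac{\epsilon}{8}}$. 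Then $\mathbb{E}[\max_{1\le i\le N}|\cdot|^q]\le\sum_{i=1}^N\mathbb{E}|\cdot|^q$ costs a factor $N^{1/q}\le Ch^{-1/q}\le C\tau^{-9/q}$, which is absorbed by taking $q>12/\epsilon$.

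This is the only place the hypothesis $h^{-1}\tau^9\le1$ enters. It can absorb a loss $h^{-\theta}$ only for $\theta$ arbitrarily small. An inverse-inequality loss $h^{-1}$, as in your fallback, would turn into $\tau^{-9}$ and destroy the rate. This term is also what dictates the exponent $H_1-\frac18$. Your $\mathcal{D}_2$ and $\mathcal{D}_3$ give $H_1+\frac{H_2}{4}-\frac14$, which equals $H_1-\frac18$ only when $H_2=\frac12$.

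\textbf{The drift part is otherwise the paper's argument.} Your $\mathcal{D}_1,\mathcal{D}_2,\mathcal{D}_3$ are its $\mathcal{L}_2,\mathcal{L}_3,\mathcal{L}_1$, except that you put the semigroup difference on $F_N(U^{M,N}_s)$ rather than on $\tilde F_N(U^{M,N}_{\lfloor s\rfloor})$. $\mathcal{D}_3$ is not an obstacle. Take $\gamma_1=\frac32$, $\gamma_2=\frac k2$, $\gamma_3=H_1+\frac{H_2}{4}-\frac14-\frac{\epsilon}{2}$ in \eqref{ineq: (-A_N)^re^{-A_N^2t}(I- e^{-A_N^2t}) to l2}. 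The resulting singularity $(t-s)^{-\frac34-\frac14(4H_1+H_2-1-2\epsilon-k)}$ is integrable because $4H_1+H_2-1-k\le1$. Theorem \ref{thm: U^{M,N}_regularity} together with \eqref{ineq:  F local Lipschitz   2} at level $k$ then closes the estimate.

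Your level-$1$ and level-$2$ exponents are miscomputed. Level $1$ permits $\gamma_3<\frac12$ and level $2$ permits $\gamma_3<\frac34$. Moreover $k=1$ forces $H_1\le\frac58$, so level $1$ already suffices, and no interpolation or inverse inequality is needed there. Finally, the kernel in your noise expansion should be $e^{-\lambda_{N,j}^2(t-r)}\big(e^{-\lambda_{N,j}^2(r-\lfloor r\rfloor)}-1\big)$; with a plus sign in the exponent it blows up.
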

\begin{proposition}\label{prop: ||u^N-tilde{u^{M,N}} ||}  
	Suppose that Assumption \ref{assu:2} holds and $h^{-1}\tau^9\leq1$.
	Then for $p \geq 1$, there exists a constant $C>0$, independent of $M$ and $N$, such that  such that  
	\begin{equation}\label{ineq: ||u^N-tilde{u^{M,N}} ||}
		\left\|\tilde{U}^{M,N}_t-U^{N}_t\right\|_{L^p(\Omega,l^\infty_N)} \leq C\left(h^{1-\epsilon}+ \tau^{H_1-\frac{1}{8}-\frac{\epsilon}{2}} \right),\quad \forall\ t \in[0, T],
	\end{equation} 
	where $\epsilon$ is an arbitrary small positive number.
\end{proposition}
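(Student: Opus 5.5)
The statement to prove is Proposition \ref{prop: ||u^N-tilde{u^{M,N}} ||}, which bounds $\tilde{U}^{M,N}_t-U^N_t$. By \eqref{eq: mild_solution_space_discrete} and \eqref{eq: solution_auxiliary_full_discrete}, both processes have the same initial term $e^{-A_N^2t}U_0$ and the same stochastic convolution $\mathscr{O}^N_t$. Hence the difference is a purely deterministic-type convolution:
$$\tilde{U}^{M,N}_t-U^N_t=\int_0^t A_Ne^{-A_N^2(t-s)}\big(F_N(U^{M,N}_s)-F_N(U^N_s)\big)\,\mathrm{d}s .$$
Set $E_t:=U^{M,N}_t-U^N_t$ and $D_t:=U^{M,N}_t-\tilde{U}^{M,N}_t$. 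Then
$$F_N(U^{M,N}_s)-F_N(U^N_s)=F_N\big(U^N_s+(\tilde{U}^{M,N}_s-U^N_s)+D_s\big)-F_N(U^N_s).$$
Thus $\hat{Y}_t:=\tilde{U}^{M,N}_t-U^N_t$ solves the error equation \eqref{eq: err problem 1} with $\hat{X}_t=U^N_t$ and $\hat{Z}_t=D_t$.

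\textbf{Step 1: $l^2_N$-bound.} I would apply Proposition \ref{propo: err problem 1} to this error equation. The constants $K_1,K_2$ involve $l^8_N$ and $\|\cdot\|_{1,N}$ moments of $U^N_t$, $\tilde{U}^{M,N}_t$ and $U^{M,N}_t$. These are uniformly bounded by Theorem \ref{th: U^N regularity}, Theorem \ref{thm: U^{M,N}_regularity}, Lemma \ref{lem: ||tilder U^{M,N} || 1,N} and \eqref{ineq: l_infty to l2}. This gives
$$\mathbb{E}\|\hat Y_t\|^p_{l^2_N}\le C\int_0^t(t-s)^{-1/2}\|D_s\|_{L^{2p}(\Omega,l^2_N)}^p\,\mathrm{d}s+C\Big(\int_0^T\|D_s\|^{2p}_{L^{4p}(\Omega,l^4_N)}\,\mathrm{d}s\Big)^{1/2}.$$
To bound $D_s$ in $l^4_N$, I would use $\|\cdot\|_{l^4_N}\le C\|\cdot\|_{1,N}$, so it suffices to control $\|D_s\|_{L^q(\Omega,1,N)}$. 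That is exactly the content of the companion Proposition \ref{prop: ||tilde{u^{M,N}}-u^{M,N} ||}, at least at the level of its proof. I would invoke the $\|\cdot\|_{1,N}$ version of that estimate, obtained by splitting $D_t$ into four pieces:
\begin{itemize}
\item the taming error, of order $\tau$;
\item the freezing error $F_N(U^{M,N}_{\lfloor s\rfloor})-F_N(U^{M,N}_s)$, controlled through \eqref{ineq: continuous of U^{M,N}(t,x) improve};
\item the semigroup error $e^{-A_N^2(t-\lfloor s\rfloor)}-e^{-A_N^2(t-s)}$, controlled through \eqref{ineq: (-A_N)^re^{-A_N^2t}(I- e^{-A_N^2t}) to l2};
\item the noise difference $\mathscr{O}^{M,N}_t-\mathscr{O}^N_t$, estimated as in Lemma \ref{lem: 3.1 regularity of o(t,x)}.
\end{itemize}
Together these give $\|D_s\|\le C(h^{1-\epsilon}+\tau^{H_1-\frac18-\frac\epsilon2})$. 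The loss from $H_1+\frac{H_2}{4}-\frac14$ down to $H_1-\frac18$ comes from spending $(-A_N)^{1/2}$, i.e.\ $\tau^{1/8}$. The hypothesis $h^{-1}\tau^9\le1$ is used there to absorb residual $N$-dependent factors. As a result, $\mathbb{E}\|\hat Y_t\|^p_{l^2_N}$ is of that order. Note that the square root on the second term of Step 1 would halve the rate. To avoid this I would redo the energy estimate in Proposition \ref{propo: err problem 1} with the bound placed inside before taking roots, so that linear order is retained.

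\textbf{Step 2: upgrade to $l^\infty_N$.} As in the proof of Proposition \ref{lem: ||u^n-tilde{u^n} ||}, I would use $\|\hat Y_t\|_{l^\infty_N}\le C\|\hat Y_t\|_{1,N}$. Then I would estimate
$$(-A_N)^{1/2}\hat Y_t=\int_0^t(-A_N)^{3/2}e^{-A_N^2(t-s)}\big(F_N(U^{M,N}_s)-F_N(U^N_s)\big)\,\mathrm{d}s .$$
Using \eqref{ineq: e^{-A_N^2t}(-A_N)^r to l2}, this is bounded by $\int_0^t(t-s)^{-3/4}(1+\|U^{M,N}_s\|^2_{l^\infty_N}+\|U^N_s\|^2_{l^\infty_N})\|\hat Y_s+D_s\|_{l^2_N}\,\mathrm{d}s$. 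Applying Hölder in $\omega$ with the moment bounds, then Step 1 and the bound on $D$, yields the claim.

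\textbf{Main obstacle.} The delicate part is Step 1's treatment of $\hat{Z}=D$: obtaining the sharp rate $\tau^{H_1-\frac18}$ in the norm required by Proposition \ref{propo: err problem 1} without the square-root rate loss. If the energy argument cannot be redone to avoid this loss, I would fall back on a direct mild-form Gronwall argument in $l^2_N$. There, $(-A_N)^{1/2}$ is moved onto $F_N$ via Lemma \ref{lem: F local Lipschitz}, at the cost of needing $\|\hat Y\|_{1,N}$; this is closed with a singular Gronwall inequality on $\|\hat Y_t\|_{L^p(\Omega,1,N)}$.
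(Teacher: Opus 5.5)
Your architecture matches the paper's. You write the difference as a solution of the error equation \eqref{eq: err problem 1}, get an $l^2_N$ bound from Proposition~\ref{propo: err problem 1}, and upgrade to $l^\infty_N$ via $\|\cdot\|_{l^\infty_N}\le C\|\cdot\|_{1,N}$ and the $(t-s)^{-3/4}$ smoothing estimate. Your labelling $\hat X=U^N$, $\hat Z=D=U^{M,N}-\tilde U^{M,N}$ mirrors the paper's ($\hat X=U^{M,N}$, $\hat Z=\tilde U^{M,N}-U^{M,N}$) and is equally valid, and your Step~2 coincides with the paper's.

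\textbf{The gap is in how you control $\hat Z=D$.} You reduce to $\|D_s\|_{L^q(\Omega,1,N)}$ and assert it is $O(h^{1-\epsilon}+\tau^{H_1-\frac18-\frac\epsilon2})$. This is false because of the noise piece $\mathscr{O}^{M,N}_t-\mathscr{O}^N_t$. Its $j$-th spectral coefficient has kernel $e^{-\lambda_{N,j}^2(t-s)}\big(e^{-\lambda_{N,j}^2(s-\lfloor s\rfloor)}-1\big)$. For $\lambda_{N,j}^2\tau\gtrsim1$ this is comparable to the full kernel. For $H_1=H_2=\frac12$ it gives $\mathbb{E}\|(-A_N)^{1/2}(\mathscr{O}^{M,N}_t-\mathscr{O}^N_t)\|_{l^2_N}^2\gtrsim\sum_{\tau^{-1/4}\lesssim j<N}\lambda_{N,j}\cdot\lambda_{N,j}^{-2}\approx\tau^{1/4}$, once $N\gg\tau^{-1/4}$ (which $h^{-1}\tau^9\le1$ permits). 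So this piece converges only like $\tau^{1/8}$ in $\|\cdot\|_{1,N}$, against the target $\tau^{3/8}$.

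In general, this term is only of order $\tau^{H_1+\frac{H_2}{4}-\frac12}$ in $\|\cdot\|_{1,N}$. Spending $(-A_N)^{1/2}$ under $e^{-A_N^2t}$ costs $\tau^{1/4}$, not $\tau^{1/8}$. That rate is below $H_1-\frac18$ for every admissible $H_2$.

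The companion proposition (the $l^\infty_N$ bound on $U^{M,N}_t-\tilde U^{M,N}_t$) does not supply a $1,N$ bound either. Only $\mathcal{L}_1$--$\mathcal{L}_3$ are estimated in $\|\cdot\|_{1,N}$. $\mathcal{L}_4$ is estimated pointwise in $x$ and then lifted to $l^\infty_N$ by the Kolmogorov/union-over-grid argument. That is where $h^{-1}\tau^9\le1$ absorbs $h^{-1/q}$. The drop to $H_1-\frac18$ comes from the H\"older step that discards spatial correlation, not from $(-A_N)^{1/2}$.

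Your fallback has the same problem. A singular Gronwall for $\|\hat Y\|_{1,N}$ needs $\|D\|_{1,N}$ again. Moreover, with the random factor $1+\|U^{M,N}_s\|_{l^\infty_N}^2+\|U^N_s\|_{l^\infty_N}^2$, it does not close in $L^p(\Omega)$. This is why the energy estimate, which uses a deterministic one-sided Lipschitz constant, comes first.

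\textbf{The fix is what the paper does.} Proposition~\ref{propo: err problem 1} only needs $\hat Z$ in $L^{2p}(\Omega,l^2_N)$ and $L^{4p}(\Omega,l^4_N)$, plus bounded $l^8_N$ moments for $K_1$. These follow directly from the $l^\infty_N$ statement of the companion proposition, since $\|\mu\|_{l^q_N}\le\pi^{1/q}\|\mu\|_{l^\infty_N}$. No $1,N$ information on $D$ is needed anywhere; your Step~2 also uses $D$ only in $l^2_N$.

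Your worry about the square root is also unfounded. Set $\delta:=h^{1-\epsilon}+\tau^{H_1-\frac18-\frac\epsilon2}$. The second term is $\big(\int_0^T\|D_s\|^{2p}_{L^{4p}(\Omega,l^4_N)}\,\mathrm{d}s\big)^{1/2}\le C\delta^p$, the same order as the first. So Proposition~\ref{propo: err problem 1} as stated already gives $\|\hat Y_t\|_{L^p(\Omega,l^2_N)}\le C\delta$, and there is no need to redo the energy argument.
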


Now it remains to prove Propositions \ref{prop: ||tilde{u^{M,N}}-u^{M,N} ||} and \ref{prop: ||u^N-tilde{u^{M,N}} ||}.

\begin{proof}[Proof of Propositions \ref{prop: ||tilde{u^{M,N}}-u^{M,N} ||}]
	Subtracting \eqref{eq: solution_auxiliary_full_discrete} from \eqref{eq: mild_solution_full_discrete}, one has 
	\begin{equation*} 
		\begin{aligned}
			U^{M,N}_t-\tilde{U}^{M,N}_t=\mathcal{L}_1+\mathcal{L}_2+\mathcal{L}_3+\mathcal{L}_4,
		\end{aligned}
	\end{equation*}
	where
	\begin{equation*} 
		\begin{aligned}
			&\mathcal{L}_1=-\int_0^t A_N e^{-A_N^2(t- s)}\left(I-e^{-A_N^2(s-\lfloor s\rfloor)}\right)\tilde{F}_N\left(U_{\lfloor s\rfloor}^{M, N}\right) \mathrm{d} s,\ \mathcal{L}_2=\int_0^t  A_N e^{-A_N^2(t-s)}\left[\tilde{F}_N\big(U_{\lfloor s\rfloor}^{M, N}\big)- F_N\big(U_{\lfloor s\rfloor}^{M, N}\big) \right] \mathrm{d} s,\\
			&\mathcal{L}_3=\int_0^t A_N e^{-A_N^2(t-s)}\left[F_N\big(U_{\lfloor s\rfloor}^{M, N}\big)-F_N\left(U_s^{M, N}\right)\right] \mathrm{d} s,\ \mathcal{L}_4=\mathscr{O}^{M,N}_t-\mathscr{O}^N_t.
		\end{aligned}
	\end{equation*}
	Let $k$ be the largest integer smaller than $4H_1+H_2-1$, then $4H_1+H_2-1-k\leq1$.
	It follows from  \eqref{ineq: l_infty to l2}, \eqref{ineq: (-A_N)^re^{-A_N^2t}(I- e^{-A_N^2t}) to l2} and \eqref{ineq:  F local Lipschitz   2}  that
	\begin{equation*} 
		\begin{aligned}
			\left\|\mathcal{L}_1\right\|_{l^\infty_N} \leq &
			C\int_0^t  \left\|A_N e^{-A_N^2(t- s)}\left(I-e^{-A_N^2(s-\lfloor s\rfloor)}\right)\tilde{F}_N\left(U_{\lfloor s\rfloor}^{M, N}\right)\right\|_{1,N}   \mathrm{d} s  \\
			\leq & C\int_0^t  (t-s)^{-\frac{3}{4}-\frac{1}{4}(4H_1+H_2-1-2\epsilon-k)}
			(s-\lfloor s\rfloor)^{\frac{4H_1+H_2-1-2\epsilon}{4}}
			\left\|(-A_N)^\frac{k}{2}  F\left(U_{\lfloor s\rfloor}^{M, N}\right)\right\|_{l^2_N}   \mathrm{d} s  \\
			\leq & C\tau^{H_1+\frac{H_2}{4}-\frac{1}{4}-\frac{\epsilon}{2}}\int_0^t  (t-s)^{-\frac{3}{4}-\frac{1}{4}(4H_1+H_2-1-2\epsilon-k)} 
			\left(1+\left\| U_{\lfloor s\rfloor}^{M, N} \right\|_{k,N}^3\right)   \mathrm{d} s.  \\
		\end{aligned}
	\end{equation*}
	Utilizing \eqref{ineq: l_infty to l2}, \eqref{ineq: e^{-A_N^2t}(-A_N)^r to l2} and \eqref{ineq:  F local Lipschitz}, we obtain 
	\begin{equation*} 
		\begin{aligned}
			\left\|\mathcal{L}_2\right\|_{l^\infty_N}	\leq&  C\int_0^t  \left\| A_N e^{-A_N^2(t-s)}\left[\tilde{F}_N\big(U_{\lfloor s\rfloor}^{M, N}\big)- F\big(U_{\lfloor s\rfloor}^{M, N}\big) \right] \right\|_{1,N}   \mathrm{d} s  \\
			\leq&C\int_0^t (t-s)^{-\frac{3}{4}} \left\|  \tilde{F}_N\big(U_{\lfloor s\rfloor}^{M, N}\big)- F\big(U_{\lfloor s\rfloor}^{M, N}\big)  \right\|_{l^2_N}  \mathrm{d} s\\
			\leq&C\tau\int_0^t (t-s)^{-\frac{3}{4}} \left\|   F\big(U_{\lfloor s\rfloor}^{M, N} \big)  \right\|_{l^2_N}^2 \mathrm{d} s \\
			\leq&C\tau\int_0^t (t-s)^{-\frac{3}{4}}\left( 1+\left\| U_{\lfloor s\rfloor}^{M, N}   \right\|_{1,N}^6\right) \mathrm{d} s, 
		\end{aligned}
	\end{equation*}
	and 
	
	\begin{align*}
		\left\|\mathcal{L}_3\right\|_{l^\infty_N}  
		\leq&\int_0^t  \left\| A_N e^{-A_N^2(t-s)}\left[F\big(U_{\lfloor s\rfloor}^{M, N}\big)-F\left(U_s^{M, N}\right)\right]\right\|_{1,N}  \mathrm{d} s \\
		\leq&C\int_0^t (t-s)^{-\frac{3}{4}} \left\| F\big(U_{\lfloor s\rfloor}^{M, N}\big)-F\left(U_s^{M, N}\right)  \right\|_{l^2_N}  \mathrm{d} s\\ 
		\leq&C\int_0^t (t-s)^{-\frac{3}{4}}
		\left(1+\left\|U_{\lfloor s\rfloor}^{M, N}\right\|_{1,N}^2+ \left\|U_s^{M, N}\right\|_{1,N}^2\right)
		\left\|U_s^{M, N}-U_{\lfloor s\rfloor}^{M, N}\right\|_{l^2_N}  \mathrm{d} s .
	\end{align*} 
	For any $q>\max\left\{ p+2,12/\epsilon \right\},\ t\in[0,T]$ and $x\in \mathcal{O}$, by \eqref{ineq: Ito Isometry}, H{\"o}lder's inequality, \eqref{estimate: e^{-x} 2} and \eqref{estimate: e^{-x} 1} we get  
	\begin{align*}
		\|o^N(t,x)-o^{M,N}(t,x) \|_{L^{q}(\Omega,\mathbb{R})}\leq&   C \bigg(\int_{0}^{t}\bigg(\int_{\mathcal{O}}  \left| G_{t-\lfloor s\rfloor}^N(x, y)-G_{t-s}^N(x, y) \right|^\frac{1}{H_2} \mathrm{d} y\bigg)^\frac{H_2}{H_1}\mathrm{d} s\bigg)^{H_1}   \nonumber  \\
		\leq&   C \bigg(\int_{0}^{t}\bigg(\int_{\mathcal{O}}  \left| G_{t-\lfloor s\rfloor}^N(x, y)-G_{t-s}^N(x, y) \right|^2 \mathrm{d} y\bigg)^\frac{1}{2H_1}\mathrm{d}s\bigg)^{H_1}    \nonumber    \\
		\leq&   C \bigg(\int_{0}^{t}\bigg(\sum_{j=1}^{N-1}e^{-\lambda_{N,j}^2(t-s)}\big(1-e^{-\lambda_{N,j}^2(s-\lfloor s\rfloor)}\big)^2 \bigg)^\frac{1}{2H_1}\mathrm{d} s\bigg)^{H_1}     \nonumber   \\
		\leq&   C \bigg(\int_{0}^{t}(s-\lfloor s\rfloor)^\frac{\hat{\alpha}}{H_1}(t-s)^{-\frac{\alpha}{2H_1}}\bigg(\sum_{j=1}^{N-1} \lambda_{N,j}^{4\hat{\alpha}-2\alpha} \bigg)^\frac{1}{2H_1}\mathrm{d} s\bigg)^{H_1}    \nonumber    \\
		\leq& C\tau^{H_1-\frac{1}{8}-\frac{\epsilon}{8}}, \label{es:O^M,N O^N}
	\end{align*} 
	where $\alpha=2H_1-\frac{\epsilon}{8}$ and $\hat{\alpha}=H_1-\frac{1}{8}-\frac{\epsilon}{8}.$ 
	Similar to \eqref{ineq: futher property of O^N_t}, we can get 
	$$\sup_{t \in[0, T] }\left\|o^{M,N}(t, x )-o^{M,N}(t,z)\right\|_{L^{q}(\Omega,\mathbb{R})}\leq C|x-z|,  $$
	which combined with \eqref{ineq: futher property of O^N_t} yields
	\begin{equation*} 
		\begin{aligned}
			\left\|\left(o^N(t,x)-o^{M,N}(t,x)\right)-\left(o^N(t,z)-o^{M,N}(t,z)\right)\right\|_{L^{q}(\Omega,\mathbb{R})}  
			\leq C|x-z|.
		\end{aligned}
	\end{equation*} 
	Therefore an argument similar to the one used in \eqref{ineq: ||tilde{u^n}-u || 3} shows that
	\begin{equation}     \label{es:O^M,N O^N}
		\begin{aligned}
			\left\|\mathcal{L}_4\right\|_{L^p(\Omega,l^\infty_N)}
			\leq C\left( h^{-\frac{1}{q}} \tau^{H_1-\frac{1}{4}-\frac{\epsilon}{8}}+ h^{1-\epsilon}\right) 
			\leq C\left( \tau^{H_1-\frac{1}{8}-\frac{\epsilon}{2}}+ h^{1-\epsilon}\right) 
		\end{aligned}
	\end{equation} 
	Further, gathering the above estimates together and using \eqref{ineq: regularity of U^{M,N}(t,x)} and \eqref{ineq: continuous of U^{M,N}(t,x) improve} results in
	\begin{equation*} 
		\begin{aligned}  
			\left\|\tilde{U}^{M,N}_t-U^{M,N}_t\right\|_{L^p(\Omega,l^\infty_N)} 
			\leq &\left\|\mathcal{L}_1\right\|_{L^p(\Omega,l^\infty_N)}+\left\|\mathcal{L}_2\right\|_{L^p(\Omega,l^\infty_N)}+\left\|\mathcal{L}_3\right\|_{L^p(\Omega,l^\infty_N)}+\left\|\mathcal{L}_4\right\|_{L^p(\Omega,l^\infty_N)}      \\
			\leq& C\left(h^{1-\epsilon}+ \tau^{H_1-\frac{1}{8}-\frac{\epsilon}{2}} \right).
		\end{aligned}
	\end{equation*}
	This completes the proof of Proposition \ref{prop: ||tilde{u^{M,N}}-u^{M,N} ||}.
\end{proof}
\begin{remark}
	In the proof of Proposition \ref{prop: ||tilde{u^{M,N}}-u^{M,N} ||}  , we employ H{\"o}lder's inequality to expand the spatial integral to study  $o^N(t, x)-o^{M, N}(t, x)$. This allows the utilization of the orthogonality of the basis $\left\{e_j\right\}_{j=0}^{N-1}$, which eliminates the impact of the Hurst parameter $\mathrm{H}_2$ of the noise in space on the strong convergence rate. 
\end{remark}

\begin{proof}[Proof of Propositoin \ref{prop: ||u^N-tilde{u^{M,N}} ||}]
	Define $E^{M,N}_t:=U^N_t-\tilde{U}^{M,N}_t$, which satisfies the following equation:
	\begin{equation} \label{eq:  E^{M,N}_t_matrix}
		\left\{\begin{aligned} 
			&\mathrm{d} E^{M,N}_t+A_N^2E^{M,N}_t \mathrm{d} t=A_N\bigg[ F\left(U_t^{M,N}+E^{M,N}_t+\big(\tilde{U}^{M,N}_t-U_t^{M,N}\big)\right)-\ F\left(U_t^{M,N}\right)\bigg] \mathrm{d} t,\ t \in(0, T], \\
			&E^{M,N}_0=0.
		\end{aligned}\right.
	\end{equation} 
	
	Consequently, according to Theorem \ref{th: U^N regularity}, Theorem \ref{thm: U^{M,N}_regularity}, Lemma \ref{lem: ||tilder U^{M,N} || 1,N}, Proposition \ref{propo: err problem 1} and Proposition \ref{prop: ||tilde{u^{M,N}}-u^{M,N} ||}, for any $q>0$  we deduce 
	\begin{equation} \label{ineq: ||u^m,n-u|| 1} 
		\begin{aligned}
			\mathbb{E} \left[\left\|E_t^{M,N}\right\|_{l_N^2}^{q}\right]  
			\leq&  C  \left(h^{(1-\epsilon)q}+ \tau^{(H_1-\frac{1}{8}-\frac{\epsilon}{2})q} \right).
		\end{aligned}
	\end{equation} 
	Therefore it follows from \eqref{ineq: l_infty to l2}, \eqref{ineq: e^{-A_N^2t}(-A_N)^r to l2}, \eqref{es:O^M,N O^N}, \eqref{ineq: ||u^m,n-u|| 1}, H{\"o}lder's inequality, Theorem \ref{th: U^N regularity} and Theorem \ref{thm: U^{M,N}_regularity} that
	\begin{align*}
		&\left\|E^{M,N}_t\right\|_{L^p(\Omega,l^\infty_N)}\\
		\leq& \left\|E^{M,N}_t+\mathscr{O}_t^{M,N}-\mathscr{O}_t^{N}\right\|_{L^p(\Omega,l^\infty_N)}+
		\left\|\mathscr{O}_t^{M,N}-\mathscr{O}_t^{N}\right\|_{L^p(\Omega,l^\infty_N)}\\
		\leq&  C \left(\left\|(-A_N)^\frac{1}{2} \left(E^{M,N}_t+\mathscr{O}_t^{M,N}-\mathscr{O}_t^{N}\right)\right\|_{L^p(\Omega,l^2_N)}+\left\|E^{M,N}_t\right\|_{L^p(\Omega,l^2_N)}+\left\|\mathscr{O}_t^{M,N}-\mathscr{O}_t^{N}\right\|_{L^p(\Omega,l^\infty_N)}\right)\\
		\leq&C\left\| \int_0^t\left\| (-A_N)^\frac{3}{2}  e^{-A_N^2(t-s)}\left[F\left(U_s^{N}\right)-F\left(U_s^{M,N}\right)\right] 
		\right\|_{l^2_N} \mathrm{d} s  \right\|_{L^{p}(\Omega,\mathbb{R})} + C \left(h^{1-\epsilon}+ \tau^{H_1-\frac{1}{8}-\frac{\epsilon}{2}} \right) \\ 
		\leq&C  \int_0^t(t-s)^{-\frac{3}{4}}  \left\|1+\|U_s^{N}\|_{l_N^{\infty}}^2+\|U_s^{M,N}\|_{l_N^{\infty}}^2\right\|_{L^{2p}(\Omega,\mathbb{R)}} \|U_s^{N}-U_s^{M,N}\|_{L^{2p}(\Omega,l_N^2)}  \mathrm{d} s + C \left(h^{1-\epsilon}+ \tau^{H_1-\frac{1}{8}-\frac{\epsilon}{2}} \right)\\ 
		\leq&C  \int_0^t(t-s)^{-\frac{3}{4}} \left(\|E^{M,N}_s\|_{L^{2p} (\Omega, l_N^2 )}+\|\tilde{U}^{M,N}_s-U_s^{M,N}\|_{L^{2p} (\Omega, l_N^2 )}\right)  \mathrm{d} s +C \left(h^{1-\epsilon}+ \tau^{H_1-\frac{1}{8}-\frac{\epsilon}{2}} \right) \\ 
		\leq&C \left(h^{1-\epsilon}+ \tau^{H_1-\frac{1}{8}-\frac{\epsilon}{2}} \right),
	\end{align*}  
	which completes the proof of Propositoin \ref{prop: ||u^N-tilde{u^{M,N}} ||}.
\end{proof}
\section{Numerical example} \label{section6}
In this section, we present an example to verify the theoretical findings. 
To measure numerical errors in the mean-square sense, we use the Monte-Carlo method over $1000$ independent sample trajectories.

The error is calculated by
$$
\begin{aligned}
	&E(M,N)=\max_{0\leq i\leq M}\left(\frac{1}{1000} \sum_{k=1}^{1000} \max_{0\leq j\leq N}\left|u^{M,N}\left(t_i,x_j,\omega_k\right)-u^{M_{\text{ref }},N_{\text{ref }}}\left(t_i,x_j,\omega_k\right)\right|^2   \right)^{\frac{1}{2}},
\end{aligned}
$$
where $\{t_i,x_j\}_{0\leq i\leq M,0\leq j\leq N}$ represents the discretization of $[0,T] \times [0,\pi]$ with temporal step $T/M$ and spatial step $\pi/N$, and $u^{M,N}\left(t_i,x_j,\omega_k\right)$ is the numerical solution at $k$-th trajectory.  To verify the strong convergence rate, we simulate the reference solution by using the full discretization with $M_{\text {ref }}$ and $N_{\text {ref }}$.
\par 
Given the computational complexity associated with employing the Cholesky method to simulate the fractional Brownian sheet with two parameters, we address the following three cases separately.\\
\textbf{Case 1: }$H_1=0.5,\ H_2=0.75$. \par 
In this case, we set $N_{\text{ref }}=2^{9}$ and $M_{\text{ref }}=2^{13}$. The fractional Brownian sheet possesses temporal increment independence. Consequently,  we generate $M_{\text{ref }}$ independent fractional Brownian motions as $\left\{\frac{1}{\sqrt{\tau_{\text{ref }}}} \left(B(i\tau_{\text{ref }},x) - B((i-1)\tau_{\text{ref }},x)\right)\right\}_{i=1}^{M_{\text{ref }}}.$
These motions are then amalgamated into a fractional Brownian sheet. \\
\textbf{Case 2: }$H_1=0.75,\ H_2=0.5$. \par 
In this case, , we maintain the same $N_{\text{ref }}$ and $M_{\text{ref }}$ as in Case 1. Following a similar methodology, we generate the fractional Brownian sheet.\\
\textbf{Case 3: }$H_1=0.95,\ H_2=0.75$.\par 
In this case, we set $N_{\text{ref }}=2^{9}$ and $M_{\text{ref }}=2^{11}$. The fractional Brownian sheet is simulated using the \textsf{FieldSim} procedure, an R package developed in R and C, as introduced in \cite{Brouste2008}. 
\par 
\begin{example}\label{exm:5-1}
	We consider the SCHE driven by the fractional Brownian sheet as follows 
	\begin{equation} \label{eq: exm 1}
		\left\{
		\begin{aligned} 
			&\frac{\partial u(t,x)}{\partial t}+\Delta^2 u(t,x)=\frac{1}{3}\Delta\left(u^3(t,x)+u^2(t,x)-u(t,x)+1\right)+\frac{1}{3} \frac{\partial^2 B^H(t,x)}{\partial t \partial x}, \quad t\in(0,1],\ x\in(0,\pi), \\ 
			&\frac{\partial u(t,0)}{\partial x}=\frac{\partial u(t,\pi)}{\partial x}=\frac{\partial^3 u(t,0)}{\partial x^3} =\frac{\partial^3 u(t,\pi)}{\partial x^3}=0,\qquad\qquad \qquad \qquad\qquad \quad\ \ t\in(0,1], \\  
			& u(0, x)=\frac{1}{3}+\frac{\sqrt{3}\cos(x)}{3} ,  \quad  \qquad  \qquad\qquad \ \ \qquad\qquad\qquad \qquad\qquad\qquad\quad\ x\in(0,\pi).
		\end{aligned}
		\right.
	\end{equation} 
\end{example}

The errors and strong convergence rates of the fully discrete scheme \eqref{eq: mild_solution_full_discrete} in space and time for different Hurst parameters $H_1$ and $H_2$ are presented in Tables \ref{b1} and \ref{b2}. The results shown in Table
\ref{b1} are consistent with the theoretical results in Theorem \ref{thm: ||u^{M,N}-u||}, which demonstrate the mean-square temporal convergence rates of the full discretization achieve $H_1-\frac{1}{8}-\frac{\epsilon}{2}$. Table \ref{b2} shows the mean-square spatial convergence of the full discretization. When $H_2=\frac{1}{2}$, the spatial convergence rates achieve $(1-\epsilon)$, which are in agreement with theoretical results. When $H_2>\frac{1}{2}$, the convergence rates are slightly higher than the expected rates of $1-\epsilon$. This discrepancy may arise due to our spatial analysis depending on the expression of the mild solution, deviating from conventional finite difference method analysis. Therefore, this approach might not attain the optimal convergence rates.

\begin{table}[htbp]
	\centering
	\caption{Mean-square temporal errors and strong convergence rates with $N=N_{\text{ref }}$.} 
	\label{b1}
	\resizebox{1 \columnwidth}{!}{   \renewcommand\arraystretch{1.5}
		\begin{tabular}{c|cccccc|c} 
			\Xhline{1.5pt}
			$ (H_1,H_2){\big\backslash} M $ &  & $ 8 $ & $16$ & $32$ & $64$ & $128$ & expected rate\\
			\Xhline{0.5pt}
			& $E(M,N)$ & 0.10986 & 0.08345 & 0.06557 & 0.04953   & 0.03684   &    \\
			$ (0.5,0.75) $ & order  &   &  0.39673 &  0.34782  &  0.40458  & 0.42725  & $ 0.375 $ \\
			\Xcline{1-8}{0.5pt}
			& $E(M,N)$ & 0.03244 & 0.02129 & 0.01463 &0.00952 &  0.00618 &    \\
			$ (0.75,0.5) $ & order  &   &  0.60781 & 0.54097  &  0.61960   &0.62411 & $ 0.625 $ \\ 
			\Xcline{1-8}{0.5pt}
			& $E(M,N)$ & 0.03842  &  0.02274  & 0.01330  & 0.00762   &  0.00423   &   \\  
			$ (0.95,0.75) $ & order  &   &0.75694   & 0.77314   & 0.80400  & 0.84960   & $0.825 $ \\ 
			\Xhline{1.5pt}
	\end{tabular} }
\end{table}

\begin{table}[htbp]
	\centering
	\caption{Mean-square  spatial errors and strong convergence rates with $M=M_{\text{ref }}$. } 
	\label{b2}
	\resizebox{1 \columnwidth}{!}{   \renewcommand\arraystretch{1.5}
		\begin{tabular}{c|cccccc|c} 
			\Xhline{1.5pt}
			$ (H_1,H_2){\big\backslash} N $ &  & $ 8 $ & $16$ & $32$ & $64$ & $128$ & expected rate\\
			\Xhline{0.5pt}
			& $E(M,N)$ & 0.03651 & 0.01583 &0.00633 & 0.00261  & 0.00114  &    \\
			$ (0.5,0.75) $ & order  &   &  1.20587 &  1.32277  & 1.27629 &  1.19577  & $ 1 $ \\
			\Xcline{1-8}{0.5pt}
			& $E(M,N)$ & 0.02121 & 0.01067 & 0.00571 &  0.00263 &  0.00129  &    \\
			$ (0.75,0.5) $ & order   &  & 0.99066 & 0.90386 & 1.11485 &  1.02596 &   $1$ \\
			\Xcline{1-8}{0.5pt}
			& $E(M,N)$ & 0.01954  & 0.00833  & 0.00342  & 0.00143 &   0.00060   &   \\  
			$ (0.95,0.75) $ & order  &   &1.23106   & 1.28514 &  1.25394  & 1.26637   & $1$ \\
			\Xhline{1.5pt}
	\end{tabular} }
\end{table} 

\section{Conclusion}
In this study, we have developed and analyzed a numerical scheme for the stochastic Cahn–Hilliard equation (SCHE) driven by a fractional Brownian sheet, utilizing the finite difference method for spatial discretization and the tamed exponential Euler method for temporal discretization. The explicit nature of the tamed exponential Euler scheme, coupled with the finite difference approach, renders the numerical implementation both efficient and straightforward. Our analysis establishes a strong convergence rate of $H_1 - \frac{1}{8} - \frac{\epsilon}{2}$ in time and $1 - \epsilon$ in space, under the $L^\infty$-norm—a more rigorous metric compared to  the $L^2$-norm employed in existing literature. Importantly, our results also encompass the case of white noise, as the Hurst parameters $H_1$ and $H_2$ can each take the value $\frac{1}{2}$.

This work lays the groundwork for several promising research directions. Firstly, our numerical experiments suggest that the established convergence rate may not be optimal when $H_2 > \frac{1}{2}$, indicating a challenging avenue for further refinement. Secondly, while our current analysis is confined to one-dimensional settings, extending the methodology to multidimensional SCHEs could potentially preserve the strong convergence properties by appropriately adapting the finite difference matrix. Lastly, exploring the weak convergence properties of the proposed scheme offers another fruitful area for future investigation. We look forward to addressing these challenges in our subsequent research.


\bibliographystyle{plain}

\section*{Appendix A. Proof of Lemma 4.1}
\begin{proof}
	\textbf{(i)}  
	When $i>j$, from \eqref{eq: (-A_N)w to w_i} and  H{\"o}lder's inequality we can obtain
	\begin{equation*} \label{ineq: l_infty to l2    1}
		\begin{aligned}
			\mu_i^2=&\mu_j^2+\sum_{k=j}^{i-1}\left(	\mu_{k+1}^2-	\mu_k^2\right)\\
			\leq& \mu_j^2+\sum_{k=j}^{i-1}\left|	\mu_k+	\mu_{k+1}\right| \cdot\left|	\mu_{k+1}-	\mu_k\right| \\
			\leq& \mu_j^2+\sum_{k=1}^{N-1}\left|	\mu_k+	\mu_{k+1}\right| \cdot\left|	\mu_{k+1}-	\mu_k\right| \\
			\leq&\mu_j^2+2\|	\mu\|_{l_N^2}\left\|\left(-A_N\right)^{\frac{1}{2}} 	\mu\right\|_{l_N^2} .
		\end{aligned}
	\end{equation*} 
	This result also holds for for $i \leq j$.  Multiplying the above inequality by $h$ and summing from $j=1$ to $N$, we get
	$$
	\pi|\mu_i|^2 \leq \|\mu\|_{l_N^2}^2+2 \pi\|\mu\|_{l_N^2}\left\|\left(-A_N\right)^{\frac{1}{2}} \mu\right\|_{l_N^2}.
	$$
	It easily follows that
	\begin{equation} \label{ineq: l_infty to l2    2}
		\begin{aligned}
			\|\mu\|_{l_N^\infty}^2 \leq C\left( \|\mu\|_{l_N^2}^2+ \|\mu\|_{l_N^2}\left\|\left(-A_N\right)^{\frac{1}{2}} \mu\right\|_{l_N^2}\right),
		\end{aligned}
	\end{equation} 
	which  yields  \eqref{ineq: l_infty to l2}.\\
	
	\textbf{(ii)} When $k=1$, using \eqref{eq: (-A_N)w to w_i} and \eqref{ineq: l_infty to l2} we can obtain
	\begin{equation*} 
		\begin{aligned}
			\left| w\right|_{1,N}^2=&\frac{1}{h} \sum_{j=1}^{N-1}\left|\mu_{j+1}\nu_{j+1}-\mu_j\nu_j\right|^2 \\
			\leq&\frac{2}{h} \sum_{j=1}^{N-1}\left|\nu_{j+1}\right|^2\left|\mu_{j+1}-\mu_j\right|^2+\frac{2}{h} \sum_{j=1}^{N-1}\left|\mu_{j}\right|^2\left|\nu_{j+1}-\nu_j\right|^2 \\
			\leq&2 \|\nu\|_{l_N^\infty}^2\left\|\left(-A_N\right)^{\frac{1}{2}} \mu\right\|_{l_N^2}^2 
			+2 \|\mu\|_{l_N^\infty}^2\left\|\left(-A_N\right)^{\frac{1}{2}} \nu\right\|_{l_N^2}^2 \\
			\leq& C\|\mu\|_{1,N}\|\nu\|_{1,N}.
		\end{aligned}
	\end{equation*} 
	When $k=2$, it follows the definition of $A_N$ and \eqref{ineq: l_infty to l2} that 
	\begin{equation*} 
		\begin{aligned}
			\left|w\right|_{2,N}^2=& \frac{|\mu_2\nu_2-\mu_1\nu_1|^2}{h^3}+\frac{|\mu_N\nu_N-\mu_{N-1}\nu_{N-1}|^2}{h^3}+  
			h\sum_{j=1}^{N-1}\left|\frac{\mu_{j+1}\nu_{j+1}-2\mu_j\nu_j+\mu_{j-1}\nu_{j-1}}{h^2}\right|^2  	\\
			\leq&2h\left(|\mu_2\delta_h^2\nu_1|^2+|\nu_1\delta_h^2\mu_1|^2+|\mu_N\delta_h^2\nu_N|^2+|\nu_{N-1}\delta_h^2\mu_N|^2\right)\\
			&+3h\sum_{j=1}^{N-1}\left( \left| \mu_{j+1}\delta_h^2\nu_j\right|^2+\left| \nu_{j}\delta_h^2\mu_j\right|^2 
			+\left| \frac{(\mu_{j+1}-\mu_{j-1})(\nu_{j}-\nu_{j-1})}{h^2}   \right|^2 \right)   	\\
			\leq&3\Big(\|\mu\|_{l_N^\infty}^2 \left\|\left(-A_N\right) \nu\right\|_{l_N^2}^2+\|\nu\|_{l_N^\infty}^2 \left\|\left(-A_N\right)\mu\right\|_{l_N^2}^2\Big) +12\left\|\left(-A_N\right)^\frac{1}{2} \mu\right\|_{l_N^\infty}^2 \left\|\left(-A_N\right)^\frac{1}{2} \nu\right\|_{l_N^2}^2  	\\
			\leq&C\|\mu\|_{2,N}^2\|\nu\|_{2,N}^2.
		\end{aligned}
	\end{equation*} 
	When $k=3$,  by  the definition of $A_N$ and  \eqref{eq: (-A_N)w to w_i} one can have
	$$ 	\left|w\right|_{3,N}^2= \frac{1}{h} \sum_{j=1}^{N-1}\left|\delta_h^2 w_{j+1}-\delta_h^2w_{j+}\right|^2 =h\sum_{j=1}^{N-1}\left|\delta_h^3 w_{j+\frac{1}{2}}\right|^2, $$
	where the third order difference operator is defined  as follows 
	$$
	\delta_h^3 w_{j+\frac{1}{2}}:=  \frac{1}{h}\left(\delta_h^2 w_{j+1}-\delta_h^2w_j\right),\quad j=1,\ 2,\ \cdots,\ N-1.
	$$  
	It is easy to check that
	\begin{equation*} 
		\begin{aligned}
			& \frac{\mu_{j+2}\nu_{j+2}-3\mu_{j+1}\nu_{i+1}+3\mu_j\nu_j-\mu_{j-1}\nu_{j-1}}{h^3}\\
			=&\nu_{j+1}\delta_h^3\mu_{j+\frac{1}{2}}+\mu_j\delta_h^3v_{j+\frac{1}{2}}
			+\frac{(\nu_{j+2}-\nu_{j+1})(\delta_h^2\mu_{j+1}+\delta_h^2\mu_{j})}{h}
			+\frac{(\mu_{j+1}-\mu_{j-1})(\delta_h^2\nu_{j+1}+\delta_h^2\nu_{j})}{h}\\
			&+\frac{(\nu_j-\nu_{j-1})\delta_h^2\mu_{j}}{h}
			-\frac{(\mu_j-\mu_{j-1})\delta_h^2\nu_{j}}{h} ,\quad j=2,\ 3,\ \cdots,\ N-2.
		\end{aligned}
	\end{equation*} 
	Then it follows from \eqref{ineq: l_infty to l2} that
	\begin{align*}
		\left|w\right|_{3,N}^2=& \frac{|\mu_3\nu_3-3\mu_2\nu_2+2\mu_1\nu_1|^2}{h^5}+\frac{|2\mu_{N}\nu_{N}-3\mu_{N-1}\nu_{N-1}+\mu_{N-2}\nu_{N-2}|^2}{h^5}\\
		&+h\sum_{j=2}^{N-2}\left|\frac{\mu_{j+2}\nu_{j+2}-3\mu_{j+1}\nu_{j+1}+3\mu_j\nu_j-\mu_{j-1}\nu_{j-1}}{h^3}\right|^2  	\\
		\leq&Ch\left(|\mu_2\delta_h^3\nu_1|^2+|\nu_3\delta_h^3\mu_1|^2+|\mu_{N-1}\delta_h^3\nu_N|^2+|\nu_{N-2}\delta_h^3\mu_N|^2 \right)
		+\frac{C|\nu_3-\nu_1|^2|\delta_h^2\mu_1|^2}{h}\\
		&+\frac{C|\nu_{N-2}-\nu_N|^2|\delta_h^2\mu_N|^2}{h}+Ch\sum_{j=2}^{N-2}\left( \left| \nu_{j+1}\delta_h^3\mu_{j+\frac{1}{2}}\right|^2
		+\left| \mu_{j}\delta_h^3\nu_{j+\frac{1}{2}}\right|^2+ \left|\frac{(\nu_{j+2}-\nu_{j+1})(\delta_h^2\mu_{j+1}+\delta_h^2\mu_{j})}{h}\right|^2\right)    \\
		&+Ch\sum_{j=2}^{N-2}\left( \left|\frac{(\mu_{j+1}-\mu_{j-1})(\delta_h^2\nu_{j+1}+\delta_h^2\nu_{j})}{h}\right|^2
		+\left|\frac{(\nu_j-\nu_{j-1})\delta_h^2\mu_{j}}{h}\right|^2
		+	\left|\frac{(\mu_j-\mu_{j-1})\delta_h^2\nu_{j}}{h} \right|^2\right)  \\
		\leq&C\Big(\|u\|_{l_N^\infty}^2 \left\|\left(-A_N\right)^\frac{3}{2} \nu\right\|_{l_N^2}^2+\|\nu\|_{l_N^\infty}^2 \left\|\left(-A_N\right)^\frac{3}{2}  u\right\|_{l_N^2}^2\Big)  	\\
		&+C\Big( \left\|\left(-A_N\right)^\frac{1}{2} u\right\|_{l_N^\infty}^2 \left\|\left(-A_N\right) \nu\right\|_{l_N^2}^2
		+ \left\|\left(-A_N\right)^\frac{1}{2} \nu\right\|_{l_N^\infty}^2 \left\|\left(-A_N\right) u\right\|_{l_N^2}^2\Big) \\   
		\leq&C \|\mu\|_{3,N}^2\|\nu\|_{3,N}^2.
	\end{align*} 
	
	\par   
	\textbf{(iii)} Since
	$$
	\left\|\left(-A_N\right)^{\frac{1}{2}} \mu\right\|_{l_N^2}^2=\left\langle-A_N \mu, \mu\right\rangle_{l_N^2}\leq 
	\left\|A_N \mu\right\|_{l_N^2}\left\| \mu\right\|_{l_N^2},
	$$
	it follows from \eqref{ineq: l_infty to l2    2} that
	\begin{equation*}
		\begin{aligned}
			\|\mu\|_{l_N^6}^6 \leq & h \sum_{i=1}^{N}\left|\mu_i\right|^2\|\mu\|_{l_N^{\infty}}^4   
			\leq  C\left( \|\mu\|_{l_N^2}^6+ \|\mu\|_{l_N^2}^4\left\|\left(-A_N\right)^{\frac{1}{2}} \mu\right\|_{l_N^2}^2\right) 
			\leq  C\left( \|\mu\|_{l_N^2}^6+ \|\mu\|_{l_N^2}^5\left\|A_N \mu\right\|_{l_N^2}\right),
		\end{aligned}
	\end{equation*}
	which shows \eqref{ineq: l_6 to l2}. \\
	\par \textbf{(iv)} 	It follows from the symmetry of $A_N$, \eqref{eq: l2 to Euclidean norm} and \eqref{estimate: e^{-x} 2} with $\alpha=\frac{\gamma}{2}$ that 
	\begin{equation*}  
		\begin{aligned}
			\left\|\left(-A_N\right)^\gamma e^{-A_N^2 t} u\right\|_{l_N^2}^2
			=&h\sum_{j=0}^{N-1} \left\langle \left(-A_N\right)^\gamma e^{-A_N^2 t} u,e_j \right\rangle^2  \\
			=&h\sum_{j=0}^{N-1} \left| \lambda_{N,j}^\gamma e^{-\lambda_{N,j}^2 t}\right|^2\left\langle  u,e_j \right\rangle^2 \\
			\leq&Ct^{-\gamma}h\sum_{j=0}^{N-1} \left\langle  u,e_j \right\rangle^2  \\
			\leq  & C t^{-\gamma}\|u\|_{l_N^2}^2.
		\end{aligned}
	\end{equation*} 
	
	\par \textbf{(v)} By the symmetry of $A_N$, \eqref{eq: l2 to Euclidean norm}, \eqref{estimate: e^{-x} 2} and \eqref{estimate: e^{-x} 1}, we obtain   
	\begin{align*}
		&\left\|(-A_N)^{\gamma_1} e^{-A_N^2t}\left( I- e^{-A_N^2s}\right)\mu\right\|_{l_N^{2}}^2 \\
		=&h\sum_{j=0}^{N-1} \left\langle (-A_N)^{\gamma_1} e^{-A_N^2t}\left( I- e^{-A_N^2s}\right) \mu,e_j \right\rangle^2  \\
		=&h\sum_{j=0}^{N-1} \left| \lambda_{N,j}^{\gamma_1-\gamma_2} e^{-\lambda_{N,j}^2 t}
		\left( 1- e^{-\lambda_{N,j}^2s}\right)
		\right|^2\left\langle (-A_N)^{\gamma_2}\mu,e_j \right\rangle^2 \\
		\leq&Ct^{-2\alpha}s^{2\hat{\alpha}}h\sum_{j=0}^{N-1}\lambda_{N,j}^{2\gamma_1-2\gamma_2-4\alpha+4\hat{\alpha}}\left\langle (-A_N)^{\gamma_2} \mu,e_j \right\rangle^2  \\
		\leq  & C t^{-(\gamma_1-\gamma_2+2\gamma_3)}s^{2\gamma_3}\left\|(-A_N)^{\gamma_2}\mu\right\|_{l_N^2}^2,
	\end{align*}
	where $\alpha=-\frac{\gamma_1-\gamma_2+2\gamma_3}{2}$ and $\hat{\alpha}=\gamma_3.$ The proof is completed.
\end{proof}
\section*{Appendix B. Proof of Lemma 4.4}
\begin{proof}
	Recall $f(x)=a_0x^3+a_1x^2+a_2x+a_3$ with $a_0\in \mathbb{R}^+$ and $\ a_1,\ a_2,\ a_3\in\mathbb{R}$. 
	Let
	$\theta_{i}=(\theta_{i,1},\theta_{i,2},\cdots, \theta_{i,N})^\top$ for $i=1,2,3,$ $4$, where  $\theta_{1,j}=a_0(\mu_j^2+\mu_j\nu_j+\nu_j^2)+a_1(\mu_j+\nu_j)+a_2,\ \theta_{2,j}=\mu_j^2,\ \theta_{3,j}=\mu_j\nu_j$ and $\theta_{4,j}=\nu_j^2$ for $ j=1,2,\cdots,N$.
	Then 
	$$  \left(F_N(\mu)-F_N(\nu)\right)=(\theta_{1,1}(\mu_1-\nu_1),\theta_{1,2}(\mu_2-\nu_2),\cdots, \theta_{1,N}(\mu_N-\nu_N))^\top$$
	It follows from \eqref{ineq: uv to u v} that 
	\begin{equation*} 
		\begin{aligned}
			\left|F_N(\mu)-F_N(\nu)\right|_{k,N} 
			\leq & C \|\theta_1\|_{k,N}\|\mu-\nu\|_{k,N}\\
			\leq & C\Big(a_0( \|\theta_2\|_{k,N}+\|\theta_3\|_{k,N}+\|\theta_4\|_{k,N})+
			|a_1|(\|\mu\|_{k,N}+\|\nu\|_{k,N})+|a_2| \Big)\|\mu-\nu\|_{k,N} \\
			\leq & C\Big(1+	\|\mu\|_{k,N}^2
			+ \|\nu\|_{k,N}^2 \Big)\|\mu-\nu\|_{k,N}.
		\end{aligned}
	\end{equation*} 
	By utilizing the above inequality and letting $\nu=0$, we can obtain \eqref{ineq:  F local Lipschitz   2}.  The proof is completed.
\end{proof}

\section*{Appendix C. Proof of Lemma 4.13}
\begin{proof}  
	Based on Theorem \ref {th:  u_regularity} and Proposition \ref{lem: ||tilde{u^n}-u ||},   it is simple to show that
	\begin{equation} \label{ineq: tilde{U}^N_t l2}
		\begin{aligned}
			\mathbb{E}\left[\left\|  \tilde{U}^N_t \right\|^p_{l^2_N}\right] 
			\leq& C,\qquad \forall\ t\in[0,T].
		\end{aligned}
	\end{equation}
	In the following, we improve the moment boundedness of $ \tilde{U}^N_t$. Let $k$ be the largest integer smaller than $4H_1+H_2-1$. Due to \eqref{eq:eq:discrete_CH_FractionalNoise_SDE_matrix auxiliary process}, $ \tilde{U}^N_t$ can be written as   $$ \tilde{U}^N_t=e^{-A_N^2t}U_0+\int_0^t A_Ne^{-A_N^2(t-s)}F_N(U_s)\ \mathrm{d} s+\mathscr{O}^N_t.$$  
	For any $t\in[0,T]$ and $1\leq i\leq k$, it follow from  \eqref{ineq: e^{-A_N^2t}(-A_N)^r to l2} and Lemma \ref{lem: F local Lipschitz} that 
	\begin{equation}   \label{Appendix   3.1 }
		\begin{aligned}
			\big\| (-A_N)^\frac{i}{2}\tilde{U}^N_t 		\big\|_{l^2_N } 
			\leq&\left\|e^{-A_N^2t}(-A_N)^\frac{i}{2}U_0\right\|_{l^2_N }+\int_{0}^{t}  \left\|(-A_N)^\frac{2+i}{2}e^{-A_N^2(t-s)}  F_N(U_s)\right\|_{l^2_N}\mathrm{d} s+\left\|(-A_N)^\frac{i}{2}\mathscr{O}^N_t \right\|_{l^2_N}\\ 
			\leq& C  \bigg(\left\|(-A_N)^\frac{i}{2}U_0\right\|_{l^2_N}+\int_{0}^{t}(t-s)^{-\frac{3}{4}}  \left\| (-A_N)^\frac{i-1}{2}F_N(U_s)\right\|_{l^2_N}\mathrm{d} s+\left\|(-A_N)^\frac{i}{2}\mathscr{O}^N_t \right\|_{l^2_N}\bigg)\\
			\leq& C  \bigg(\bigg(\left\|(-A_N)^2U_0\right\|_{l^2_N}+\int_{0}^{t}(t-s)^{-\frac{3}{4}}  \left(1+\left\| U_s\right\|_{i-1,N}^3\right)\mathrm{d} s+\left\|(-A_N)^\frac{i}{2}\mathscr{O}^N_t \right\|_{l^2_N}\bigg).
		\end{aligned}
	\end{equation}
	By Theorem \ref{th:  u_regularity},  \eqref{regularity of u(t,x) u_f 2.1} and \eqref{regularity of u(t,x) u_f appendix} one has 
	$$  \sup_{t \in[0, T] } \mathbb{E}\left[ \left\| U_s\right\|_{i-1,N}^q\right]\leq C, \quad  \forall 1\leq i\leq k,\ q\geq  1. $$
	Further, combing this with  \eqref{es:   U_0 4,N}, \eqref{ineq: tilde{U}^N_t l2}, \eqref{Appendix   3.1 } and  Lemma \ref{lem: 4.4 regularity of O^N_t} enables us to obtain   \eqref{ineq: ||tilder U || 1,N}. 
\end{proof}

\end{document}